\newcommand{\oben}[1]{\!\phantom{.}^{#1}}
\newtheorem{theorem}{Theorem}[section] 
\newtheorem{prop}[theorem]{Proposition}
\newtheorem{lem}[theorem]{Lemma}
\newtheorem{ddd}[theorem]{Definition}
\newtheorem{kor}[theorem]{Corollary}
\newtheorem{ass}[theorem]{Assumption}
\newtheorem{con}[theorem]{Conjecture}
\newtheorem{fact}[theorem]{Fact}
\theoremstyle{remark}
\newtheorem{rem}[theorem]{Remark}
\newtheorem{ex}[theorem]{Example}
\newtheorem{nota}[theorem]{Notation}
\newcommand{\CAlg}{\mathbf{CAlg}}
\newcommand{\keywords}[1]{}
\newcommand{\Fun}{{\mathbf{Fun}}}
\DeclareMathOperator{\Sm}{\mathbf{Sm}}
\newcommand{\cC}{{\mathcal{C}}}
\newcommand{\Mon}{{\mathbf{ Mon}}}
\newcommand{\bD}{{\mathbf{D}}}
\newcommand{\Nerve}{{\tt N}}
\newcommand{\Map}{{\tt Map}}
\renewcommand{\P}{{\mathbb{P}}}
\newcommand{\rk}{{\tt rk}}
\newcommand{\Imm}{{\tt Im}}
\newcommand{\Ree}{{\tt Re}}
\newcommand{\bS}{{\mathbf{S}}}
\newcommand{\cH}{{\mathcal{H}}}
\newcommand{\cT}{{\mathcal{T}}}
\newcommand{\cl}{{\tt cl}}
\newcommand{\supp}{{\tt supp}}
\newcommand{\rat}{{\tt rat}}
\newcommand{\tr}{{\tt tr}}
\newcommand{\bC}{{\mathbf{C}}}
\newcommand{\bloc}{{\overline{\tt Loc}}}
\newcommand{\End}{{\tt End}}
\newcommand{\cV}{{\mathcal{V}}}
\newcommand{\cycl}{{\tt cycl}}
\newcommand{\Set}{{\mathbf{Set}}}
\DeclareMathOperator{\Tot}{{\tt Tot}}
\newcommand{\Top}{{\mathbf{Top}}}
\newcommand{\cF}{{\mathcal{F}}}
\newcommand{\Aut}{{\tt Aut}}
\newcommand{\ev}{{\tt ev}}
\newcommand{\im}{{\tt im}}
\newcommand{\id}{{\tt id}}
\newcommand{\Ab}{{\mathbf{Ab}}}
\renewcommand{\ker}{{\tt ker}}
\def\hB{\hspace*{\fill}$\Box$ \newline\noindent}
\newcommand{\nat}{{\mathbb{N}}}
\newcommand{\Z}{{\mathbb{Z}}}
\newcommand{\Q}{{\mathbb{Q}}}
\newcommand{\R}{{\mathbb{R}}}
\newcommand{\C}{{\mathbb{C}}}
\newcommand{\Hom}{{\tt Hom}}
\newcommand{\ch}{{\mathbf{ch}}}
\newcommand{\bV}{{\mathbf{V}}}
\newcommand{\bE}{{\mathbf{E}}}
\newcommand{\Tr}{{\tt Tr}}
\newcommand{\pr}{{\tt pr}}
\renewcommand{\lim}{{\tt lim}}
\newcommand{\colim}{{\tt colim}}
\newcommand{\Sp}{\mathbf{Sp}}
\newcommand{\Mf}{\mathbf{Mf}}
\newcommand{\Diff}{{\tt Diff}}
\newcommand{\Ch}{{\mathbf{Ch}}}
\newcommand{\map}{{\tt map}}
\newcommand{\sSet}{{\mathbf{sSet}}}
\newcommand{\Sh}{{\mathbf{Sh}}}
\newcommand{\Mod}{{\mathbf{Mod}}}
\newcommand{\Gal}{{\tt Gal}}
\newcommand{\cCh}{\mathbf{cCh}}
\newcommand{\cP}{{\mathcal{P}}}
\newcommand{\spp}{{\tt sp}}
\newcommand{\bs}{{\mathbf{s}}}
\newcommand{\pt}{{\mathrm{pt}}}
\newcommand{\Spec}{\mathtt{Spec}}
\newcommand{\beil}{{\tt r^{Beil}_\Z}}
\newcommand{\cbeil}{{\tt r_{\C}^{Beil}}}
\newcommand{\cI}{{\mathcal{I}}}
\newcommand{\Rings}{{\mathbf{Rings}}}
\newcommand{\cZ}{{\mathcal{Z}}}
\newcommand{\bbA}{{\mathbb{A}}}
\newcommand{\cL}{{\mathcal{L}}}
\newcommand{\cW}{{\mathcal{W}}}
\newcommand{\PSh}{{\mathbf{PSh}}}
\newcommand{\bA}{{\mathbf{A}}}
\newcommand{\hcycl}{\widehat{\tt cycl}}
\newcommand{\bK}{{\mathbf{K}}}
\newcommand{\Reg}{{\mathbf{Reg} }}
\newcommand{\Alg}{{\mathbf{Alg}}}
\newcommand{\bbG}{{\mathbb{G}}}
\newcommand{\cO}{{\mathcal{O}}}
\newcommand{\cU}{{\mathcal{U}}}
 \newcommand{\Cone}{{\tt Cone}}
 \newcommand{\DR}{{\mathbf{DR}}}
 \newcommand{\Vect}{{\tt Vect}}
 \newcommand{\CommGroup}{{\mathbf{CommGroup}}}
 \newcommand{\CommMon}{{\mathbf{CommMon}}}
 \newcommand{\Cat}{{\mathbf{Cat}}}
\DeclareMathOperator{\Prim}{{\tt Prim}}
\newcommand{\ku}{{\mathbf{ku}}}
\newcommand{\bReg}{\mathbf{Reg}}
\renewcommand{\Re}{\operatorname{Re}}
\renewcommand{\Im}{\operatorname{Im}}
\renewcommand{\cC}{\bC}
\newcommand{\CPn}{\P^{n}_{\C}}
\begin{document}

\date{September 25, 2015}

\title{Regulators and cycle maps in higher-dimensional differential algebraic $K$-theory}
\author{Ulrich Bunke\thanks{Fakult\"at f\"ur Mathematik,
Universit{\"a}t Regensburg,
93040 Regensburg,
GERMANY, ulrich.bunke@mathematik.uni-regensburg.de} 
~and
Georg Tamme\thanks{Fakult\"at f\"ur Mathematik, Universit\"at Regensburg, 93040 Regensburg, GERMANY, georg.tamme@mathematik.uni-regensburg.de}
}

\keywords{regulators; Beilinson's regulator; differential cohomology; differential algebraic $K$-theory; absolute Hodge cohomology}

\maketitle
\begin{abstract}
We develop differential algebraic $K$-theory of regular  arithmetic schemes. Our approach is based on a new construction of a functorial,  spectrum level Beilinson regulator using differential forms. We construct a cycle map which   represents differential algebraic $K$-theory classes 
by geometric vector bundles. As an application we derive Lott's relation between short exact sequences of geometric bundles with a higher analytic torsion form. 
\end{abstract}

\tableofcontents

\section{Introduction}

In this paper we develop differential algebraic $K$-theory of smooth complex varieties and their arithmetic analogs.
We construct differential algebraic $K$-theory classes from vector bundles with additional geometric structures. 
Our main and motivating application of this theory concerns the relation (called Lott's relation) between these classes for the constituents of a short exact sequence of such bundles in the case of number rings. 
Technically, this paper is based on a new approach to the  Beilinson regulator via characteristic forms 
for vector bundles with geometry on products of  smooth manifolds and   complex varieties.

 The purpose of the first two subsections of this introduction 
is to introduce the background and the motivation for the paper. In  {Subsection} \ref{klewfwefwefewf} we explain the main ideas of differential cohomology and how they are related with a conjecture of Lott. In the second Subsection \ref{saknaslkdsddqwdqwd} we review the differential algebraic $K$-theory for number rings. We state Lott's relation and the {Transfer Index Conjecture} (TIC) with technical details. The third Subsection \ref{apr1301} gives an overview over the central topic  of the present paper: a new construction of Beilinson's regulator, differential algebraic $K$-theory in the higher-dimensional case, and the cycle map.  {Finally,} Subsection \ref{lkdwqdwqdwdd34234} of this introduction explains the structure of the paper.

\subsection{Differential algebraic $K$-theory and Lott's conjecture}\label{klewfwefwefewf}

We first recall some history and general ideas of differential cohomology. A spectrum $E$ represents a generalized cohomology theory $E^{*}$ on topological spaces. In particular, $E^{*}$ is a collection of   functors $$E^{*}:\Top^{op}\to \Ab$$
from topological spaces to abelian groups indexed by $*\in \Z$. A differential extension of $E$ (in the sense of Hopkins-Singer) depends on the choice of  a chain complex $C$ of real vector spaces and a morphism of spectra  \begin{equation}\label{r23rj23olr23r23r}c:E\to H(C)\ .\end{equation}
Here the spectrum $H(C)$ is the Eilenberg-MacLane spectrum associated to $C$ such that
$\pi_{*}(H(C))\cong H^{-*}(C)$. The triple $(E,C,c)$ is called differential data for $E$.
 A differential extension of $E^{*}$ is a collection of functors 
\[
\hat E^{*}\colon \Mf^{op} \to \Ab
\]
from smooth manifolds to abelian groups, again  indexed by $*\in \Z$. For a manifold $M$  an element of $\hat E^{*}(M)$ combines the topological information of a class in $E^{*}(M)$ and  a cycle in the complex $\Omega^{*}(M;C)$ of differential forms with coefficients in $C$. The main properties {of the differential extension} 
are best expressed by the differential cohomology hexagon \cite{MR2365651}:
\begin{equation}\label{diag:diff}
\xymatrix{&\Omega^{*-1}(M;C)/\im(d)\ar[rr]^{d}\ar[dr]^{a} && \Omega^{*}_{\cl}(M;C)\ar[dr]^{ {\ de\ Rham}}& \\
H^{*-1}(M;C)\ar[dr]\ar[ur]&&\hat E^{*}(M)\ar@{->>}[dr]^{I}\ar[ur]^{R}&&H^{*}(M;C) \\
&F^{*-1}(M)\ar[rr]\ar@{^{(}->}[ur]&&E^{*}(M)\ar[ur]^{c}& }
\end{equation}
This is a diagram of natural homomorphisms,
where $\Omega^{*}_{\cl}(M;C)\subseteq \Omega^{*}(M;C)$ denotes closed $C$-valued forms, and $F^{*}$
is the cohomology theory represented by the spectrum $F$ defined by the fibre sequence  \begin{equation}\label{e23e332e2e3} E\xrightarrow{c} H(C)\to F\to \Sigma E\ .\end{equation}
The transformation $R$ is called the curvature map and extracts the differential form part of a differential cohomology class. The  transformation $I$ maps a differential cohomology class to its underlying cohomology   class.
Finally, the transformation $a$ maps differential forms to differential cohomology classes and encodes the 
secondary information contained in differential cohomology.
The upper and the lower parts of the hexagon are segments of long exact sequences. 
Furthermore, the diagonals at $\hat E^{*}(M)$ are exact.

The historically first example of a differential cohomology theory was the differential extension $\widehat{H\Z}^{*}$  of ordinary singular cohomology with $\Z$-coefficients. It has been constructed  by Cheeger and Simons  \cite{MR827262} in terms of 
  differential characters. In this case the differential data is given by $C:=\R$ and the map
 $c:H\Z\to H\R$ is  {the canonical one} induced by the inclusion $\Z\to \R$. This theory receives refined Chern classes for vector bundles with connection. The differential form part of these classes is given by Chern-Weil forms. In particular, these refined characteristic classes induce   non-trivial secondary invariants for flat bundles.

Versions of differential extensions of topological complex  $K$-theory $KU^{*}$ have been constructed and studied by Karoubi \cite{KaroubiAst, KaroubiTheoGen} (under the name multiplicative $K$-theory), and later by Freed and Hopkins
 \cite{MR1769477},  {by the first author and Schick} in \cite{MR2664467}, by Freed and Lott \cite{Freed-Lott}, and by Simons and Sullivan \cite{MR2521641}.
 In this case, the differential data is given by  $C:=\R[b,b^{-1}]$ with $\deg(b):={-2}$  and  the Chern character
 $c:=\ch:KU\to H(C)$.
   
   Differential extensions of bordism theories  and 
 Landweber exact cohomology theories have been considered in \cite{MR2550094}. 

 In all these examples the differential extension has been constructed in terms of geometric cycles and relations.
A general construction of a differential extension using methods of homotopy theory was given in the ground-breaking paper \cite{MR2192936} by Hopkins and Singer. It follows from the axiomatic characterization \cite{MR2608479}  that the geometric examples above are equivalent to the differential extensions defined by the Hopkins-Singer construction.

The Hopkins-Singer construction  was streamlined and formalized in \cite{bg}, where it was applied to the differential extension of  {the cohomology theory defined by} the algebraic $K$-theory  {of} number rings. We will review this version of the  Hopkins-Singer construction in Subsection \ref{dlqkwdjlqdwqdwqdwqdwqdw24234234}.  {We refer the reader} to \cite{Bunke:2013aa},
which embeds the Hopkins-Singer construction into an even more general context and nicely explains the origin of the
differential cohomology hexagon \eqref{diag:diff}.


Let $M\R/\Z$ denote the Moore spectrum of the group $\R/\Z$. The spectrum $E\R/\Z:=E\wedge M\R/\Z$ represents the cohomology theory $E\R/\Z^{*}$, often called the cohomology of $E$ with coefficients in $\R/\Z$. One of the motivations to define multiplicative K-theory in   \cite{KaroubiAst, KaroubiTheoGen} was to  give a geometric construction of $KU\R/\Z^{*}$. 
This works in greater generality.
We call the differential data $(E,C,c)$ strict, if the morphism $c$  (see \eqref{r23rj23olr23r23r}) induces an isomorphism $\pi_{*}(E)\otimes \R\stackrel{\cong}{\to} H^{-*}(C)$. In this case we have an equivalence
$F\simeq E\R/\Z$ (see \eqref{e23e332e2e3} for $F$) and therefore by \eqref{diag:diff} a natural isomorphism
\begin{equation}\label{eq:flat}
E\R/\Z^{*-1}(M)\cong \hat E^{*}_{flat}(M):=\ker\{R:\hat E^{*}(M)\to \Omega_{\cl}^{*}(M;C)\}\ .
\end{equation} Thus, in the case of strict differential data, the differential cohomology $\hat E^{*}(M)$ contains $E\R/\Z^{*-1}(M)$ as a subgroup of flat classes. Note that in general, without any condition on the differential data, the functor $\hat E^{*}_{flat}$ is homotopy invariant.


To a ring $R$ one can associate an algebraic $K$-theory spectrum $KR$ and an associated cohomology theory $KR^{*}$. We refer to Subsection \ref{saknaslkdsddqwdqwd} for more details. In analogy to the construction of $KU\R/\Z^{-1}$  in terms of multiplicative $K$-theory  by Karoubi,  
Lott defines in \cite{MR1724894}  a candidate $\overline{KR}^{0}_{Lott}$ for the flat part of a differential extension $\widehat{KR}^{0}$  {by cycles and relations}.  
However, at that time, there was no differential extension $\widehat{KR}^{0}$ of $KR^{0}$ available. The main obstacle against the geometric construction of $\widehat{KR}^{0}$ is that, in contrast to complex $K$-theory, there is no simple construction of $KR^{0}(M)$ in terms of bundles of $R$-modules on $M$.
 
 The functor $\overline{KR}^{0}_{Lott}$ is homotopy invariant. Another property which it shares with cohomology theories is a push-forward operation for
 a proper submersion
 $$\pi\colon M \to B$$ of smooth manifolds{: Lott defines 
$$\pi_{!}\colon \overline{KR}^{0}_{Lott}(M) \to \overline{KR}^{0}_{Lott}(B)$$
 by analytic means. On the other hand, for}
 every cohomology theory $E^{*}$ represented by a spectrum $E$, we have the Becker-Gottlieb transfer 
$$\tr:E^{*}(M)\to E^{*}(B)\ .$$ 
It is defined as the pull-back along a canonical equivalence class of stable maps $\Sigma_{+}^{\infty}B\to \Sigma_{+}^{\infty}M$, see \cite{MR0377873}.
 {Lott conjectures} that  {the analytic} push-forward  {$\pi_{!}$} is related to the Becker-Gottlieb transfer  in a precise way.  If $R$ is a ring of integers in a number field, then one can formulate his conjecture as follows:
\begin{con}[Lott]\label{conj:Lott}\mbox{}
\begin{enumerate}
\item There is a natural transformation $\overline{KR}^{0}_{Lott} \to KR\R/\Z^{-1}$.
\item The diagram 
$$
\xymatrix{
\overline{KR}^{0}_{Lott}(M)\ar[r]\ar[d]^{\pi_{!}}&KR\R/\Z^{-1}(M)\ar[d]^{\tr}\\
\overline{KR}^{0}_{Lott}(B)\ar[r]&KR\R/\Z^{-1}(B)
}
$$
 commutes.
\end{enumerate}
%
\end{con}

By now, we can apply  the general Hopkins and Singer construction \cite{MR2192936} to the algebraic $K$-theory spectrum $KR$ of the ring $R$. Following  \cite{bg},
we choose the canonical differential data given by $C:=K_{-*}(R)\otimes \R$ and the canonical  morphism $c:KR\to H(C)$. By definition, this differential data is strict. It gives rise to the differential extension $\widehat{KR}^{0}$ and we 
  have $KR\R/\Z^{-1}(M)\cong \widehat{KR}^{0}_{flat}(M)$ by \eqref{eq:flat}.

 {In}  \cite{bg}, a class in 
 $\widehat{KR}^{0}_{flat}(M)$ was defined
   for every cycle used by Lott in his definition of $\overline{KR}^{0}_{Lott}(M)$. 
This construction would provide the transformation conjectured in the first part of  \ref{conj:Lott}, if the relations used by Lott in the definition of $\overline{KR}^{0}_{Lott}(M)$ also hold  true  in 
 $\widehat{KR}^{0}_{flat}(M)$.  We will formulate Lott's relation in detail in Theorem \ref{jan2104} after the introduction of the necessary notation.
  Lott's relation has been verified partially in  \cite{bg}. 
 It is one of the main objectives of the present paper to verify Lott's relations in general and therefore to provide the transformation asked for in  Conjecture \ref{conj:Lott}, 1.
As explained in \cite{bg}, the second part of Lott's  Conjecture \ref{conj:Lott}  is then an immediate consequence of the (still open) Transfer Index Conjecture (TIC)  {recalled as} \ref{conj:TIC}  {below}.

In contrast to cohomology in the sense of homotopy theory,  differential cohomology is not homotopy invariant. The deviation from homotopy invariance is measured by the homotopy formula, see Subsection \ref{jan1965}. The idea to verify  Lott's relation is to deform
a short exact sequence of bundles of $R$-modules on a manifold $M$ to a split one and to apply the homotopy formula to this deformation. Here by a deformation of a flat bundle we mean a flat bundle on the product $[0,1]\times M$. 
Unfortunately, in this sense flat bundles are rigid. But there exists such a   deformation on $M\times \P_{R}^{1}$
which is not flat, but algebraic along the $\P_{R}^{1}$-direction, where $\P^{1}_{R}$ is the projective line  over the number ring $R$.
 {In order to make use of this deformation, we have to generalize} the theory of \cite{bg} from number rings to higher dimensional schemes, {and we have to establish a new homotopy formula in the algebraic direction (Proposition \ref{jan0111}).}

\subsection{Rings of integers: a review of \cite{bg}} \label{saknaslkdsddqwdqwd}

We consider a unital ring $R$. The zeroth algebraic  {$K$-group} $K_{0}(R)$ of $R$ is    the Grothendieck group of the monoid of isomorphism classes of 
finitely generated projective $R$-modules (see Example \ref{kldqwjnldqwd}). According to Quillen, the higher algebraic  {$K$-groups} of $R$ are defined as the homotopy groups of a pointed space:
$$K_{n}(R):=\pi_{n}(K_{0}(R)\times BGL(R)^{+})\ .$$
This pointed space $K_{0}(R)\times BGL(R)^{+}$ is equivalent to the infinite loop space of the connective algebraic $K$-theory spectrum $KR$ of $R$, see Example \ref{kljdklqwdwqdwqdwqdwqdq}. 
So on the one hand,  we can equivalently express the algebraic  {$K$-groups} of $R$ in terms of the homotopy groups of this spectrum by
 $$K_{n}(R)\cong \pi_{n}(KR)\ .$$ 
 On the other hand, the spectrum $KR$ contains more information as it  represents a generalized  cohomology theory $KR^{*}$, which can be evaluated on
  topological spaces, so in particular on a smooth manifold $M$.

In the present paper we are  especially  interested in the    group $KR^{0}(M)$. Even sticking to these degree-zero  groups, we can encode information about all the higher
  algebraic  {$K$-groups} of $R$.  {Indeed}, for a compact manifold  $M$,  we have the Atiyah-Hirzebruch spectral sequence
  $$E_{2}^{p,q}\cong H^{p}(M,K_{-q}(R))\Rightarrow KR^{p+q}(M)\ .$$ 
So, morally, all the groups $H^{p}(M,K_{p}(R))$ for $p\ge 0$ contribute to $KR^{0}(M)$.

In the following we describe cycles representing classes in $KR^{0}(M)$. 
 A locally constant sheaf of finitely generated projective $R$-modules on   a smooth manifold $M$ will be called an $R$-bundle,  {for short}. 
An $R$-bundle gives rise to a class 
\begin{equation}\label{eq:cycleclass}
\cycl(V)\in KR^{0}(M)\ .
\end{equation}
In order to describe this class, let us first assume that the fibres of $V$ are free of rank $\ell\in \nat$. Then the isomorphism class of $V$ is classified by a homotopy class of maps $v:M\to BGL(\ell,R)$. The class
$\cycl(V)$  {is given} by the homotopy class of the composition
$$\hspace{-0.5cm}M\stackrel{v}{\to} BGL(\ell,R)\to BGL(R)\to BGL(R)^{+}\stackrel{\{\ell\}\times \id}{\to}  \Z\times  BGL(R)^{+}\to K_{0}(R)\times  BGL(R)^{+}\simeq \Omega^{\infty} KR\ .$$
Here all non-named maps are the canonical ones. 
If $V$ is  arbitrary and $M$ is connected, then we can reduce to the case of free fibres
by adding a suitable trivial bundle. If $M$ is  non-connected  we construct the class $\cycl(V)$  component-wise. For more details we refer to \cite{bg}.

For $p\in \Z$, let $H(\R[p])$   denote the 
Eilenberg-MacLane spectrum of $\R$ shifted   in such  a way  that its non-trivial homotopy group is $\R$ in degree $p$.
If $H(\R[p])^{*}$ denotes the cohomology theory represented by this spectrum, then we have by construction
$$H(\R[p])^{0}(M)\cong H^{p}(M;\R)\ .$$

 By definition, a regulator is a map of spectra  $r:KR\to H(\R[p])$. It defines a natural homomorphism  between cohomology groups
$r:KR^{0}(M)\to H^{p}(M;\R)$.  We consider the question how to calculate the class $$r(\cycl(V))\in H^{p}(M;\R)\ .$$

Let us assume that $R$ is  the ring of integers in a number field. We choose an embedding $\sigma:R\to \C$  {and an odd positive integer $p$.}
For these choices Borel \cite{MR0387496} introduced a regulator
\begin{equation}\label{dqwdwqnbdwmqndbmwdwqdwqd87687}
r_{\sigma,p}:KR\to H(\R[p])
\end{equation} 
whose induced map on $p$-th homotopy  {groups} generalizes
 the classical Dirichlet regulator 
$$R^\times \cong K_1(R) \to \R\ , \quad u \mapsto \log |\sigma(u)|,$$
in the case $p=1$.

In the following, we give a differential geometric description of  Borel's regulator.  
The complexification  $V\otimes_{R,\sigma}\C$ of the $R$-bundle $V\to M$
is a locally constant sheaf of finite dimensional complex vector spaces on $M$, which we interpret as the sheaf of parallel sections of
 a complex vector bundle $V_{\sigma}\to M$   with a flat connection $\nabla^{V_{\sigma}}$.
 If we choose a hermitian metric $h^{V_{\sigma}}$ on the complex vector bundle $V_{\sigma}$ 
(not necessarily parallel with respect to $\nabla^{V_{\sigma}}$), then we can form the adjoint connection $\nabla^{V_{\sigma},*}$. 
In general,  given two connections $\nabla_{0},\nabla_{1}$ on a  complex vector bundle, we have the transgression Chern character form
 $\tilde \ch(\nabla_{1},\nabla_{0})$ such that $d\tilde \ch(\nabla_{1},\nabla_{0})=\ch(\nabla_{1})-\ch(\nabla_{0})$.
Since in our case $\nabla^{V_{\sigma}}$ and $\nabla^{V_{\sigma},*}$ are flat, the  degree-$p$ component $\tilde \ch_{p}(\nabla^{V_{\sigma},*},\nabla^{V_{\sigma}})$
 of the transgression  Chern character form is closed. Up to normalization, this is the Kamber-Tondeur form of the flat bundle with metric 
 $(V_{\sigma},\nabla^{V_{\sigma}},h^{V_{\sigma}})$ introduced in \cite{MR1303026}. 
Its cohomology class is independent of the choice of the metric $h^{V_{\sigma}}$. The regulator \eqref{dqwdwqnbdwmqndbmwdwqdwqd87687} is now  characterized by 
 $$r_{\sigma,p}(\cycl(V))=[\tilde \ch_{p}(\nabla^{V_{\sigma},*},\nabla^{V_{\sigma}})]\in H^{p}(M;\R)\ .$$
 
The choice of a conjugation invariant collection $h^{V} :=(h^{V_{\sigma}})_{\sigma}$ of metrics $h^{V_{\sigma}}$ for all embeddings $\sigma:R\to \C$ is called a geometry on $V$. One motivation for introducing the differential extension $\widehat{KR}\oben{0}$ of algebraic $K$-theory of $R$ is that it can   capture a refined cycle class
\begin{equation}\label{jan2108}
\hat \cycl(V,h^{V})\in \widehat{KR}\oben{0}(M)
\end{equation} 
which combines the information about the class
$\cycl(V)\in KR^{0}(M)$  and  the collection of characteristic forms  $(\tilde \ch_{p}(\nabla^{V_{\sigma},*},\nabla^{V_{\sigma}}))_{\sigma}$ with secondary invariants.

We now consider a proper submersion $\pi:M\to B$ between manifolds. On the one hand,  an $R$-bundle $V$ can be pushed forward along $\pi$ by taking fibre-wise cohomology. In sheaf theoretic terms,
one considers the higher derived images $R^{i}\pi_{*}(V)$ as $R$-bundles on $B$. 
On the other hand,
for every cohomology theory $E^{*}$ represented by a spectrum $E$,
we have the Becker-Gottlieb transfer  $\tr:E^{*}(M)\to E^{*}(B)$. Via the cycle map  we can compare the sheaf-theoretic push-forward with the Becker-Gottlieb transfer
in algebraic $K$-theory. 
We  {have} the  following equality in 
 $KR^{0}(B)$:
\begin{equation}\label{jan2101}\sum_{i\ge 0} (-1)^{i} \cycl(R^{i}\pi_{*}(V))=\tr(\cycl(V)) \ .\end{equation}
The  Bismut-Lott index theorem  \cite{MR1303026} implies that \eqref{jan2101} holds true in $H^{p}(B;\R)$ after application of {the} regulator $r_{\sigma,p}$ described above. The equality \eqref{jan2101} in $KR^{0}(B)$ itself is a consequence of the Dwyer-Weiss-Williams index theorem 
\cite{MR1982793}.

The fundamental question considered in \cite{bg}  concerned the refinement of \eqref{jan2101} to differential algebraic $K$-theory.  First of all, in 
\cite{bg} we construct a differential Becker-Gottlieb transfer 
$$\hat \tr:\hat E^{*}(M)\to \hat E^{*}(B)$$
 for every (Hopkins-Singer) differential cohomology theory $\hat E^{*}$. This differential refinement of $\tr$ depends on  the additional  choice of a Riemannian structure $g^{\pi}$ on $\pi$.

If $(V,h^{V})$ is a bundle of $R$-modules with geometry, then we define geometries
$h_{L^{2}}^{R^{i}\pi_{*}(V)}$ on the bundles $R^{i}\pi_{*}V$ using fibre-wise Hodge theory. 
These constructions allow  {us} to lift both sides of \eqref{jan2101} to differential algebraic $K$-theory.
A simple check using the local version of the Bismut-Lott index theorem  \cite{MR1303026}
shows that the naive differential version of \eqref{jan2101} is not true. But the theory of  Bismut-Lott provides a natural candidate for a correction term, which can be expressed in terms of a higher analytic torsion form $\cT(\pi,g^{\pi},V,h^{V})\in \Omega^{-1}(B;C)$. We refer to \cite{bg} for details. In the following formula, $a$ denotes the map from differential forms to differential cohomology as in \eqref{diag:diff}.
\begin{con}[{Transfer Index Conjecture (TIC)}]\label{conj:TIC} 
 The equality
\begin{equation}\label{jan2102}
\sum_{i\ge 0} (-1)^{i} \hat \cycl(R^{i}\pi_{*}(V),h_{L^{2}}^{R^{i}\pi_{*}(V)})+a(\cT(\pi,g^{\pi},V,h^{V}))=\hat \tr(\hat \cycl(V,h^{V})) \end{equation} 
 holds true  in $\widehat{KR}\oben{0}(B)$. 
 \end{con}The TIC is an interdisciplinary statement which combines
homotopy theory, global analysis, and arithmetic. It subsumes  known results like
the local version of the Bismut-Lott index theorem  \cite{MR1303026}, the Dwyer-Weiss-Williams index theorem 
\cite{MR1982793}, or a version of the Cheeger-M\"uller theorem \cite{MR528965}, \cite{MR498252}. Further special cases and arithmetic consequences are discussed in
\cite[Sec.~5]{bg}. 
At the moment, it is still wide open.

 We now describe Lott's relation. We consider a short exact sequence 
\begin{equation}\label{jan1980}\cV:0\to V_0\to V_1\to V_2\to 0\ ,\quad g^{\cV}:=(g^{V_i})_{i=0,1,2}\ ,\end{equation}
of $R$-bundles on $M$  together with a choice of geometries. In $KR^{0}(M)$ we then have the relation
\begin{equation}\label{jan2103}
\cycl(V_{0})-\cycl(V_{1})+\cycl(V_{2})=0\ .
\end{equation}
It is again a natural question whether this equality refines to differential algebraic $K$-theory.
By the theory of Bismut-Lott  \cite{MR1303026}, the naive refinement does not hold  in general, but there is a natural correction term given by a finite dimensional version of the higher analytic torsion form
$\cT(\cV,g^{\cV})\in \Omega^{-1}(M;C)$. In \cite{bg} we  conjectured and partially verified the following result:
\begin{theorem}\label{jan2104}
Lott's relation
$$\hat \cycl(V_{0},g^{V_0})-\hat \cycl(V_{1} ,g^{V_1})+\hat \cycl(V_{2}, ,g^{V_2})=a(\cT(\cV,g^{\cV}))$$
holds true in $\widehat{KR}\oben{0}(M)$.
\end{theorem}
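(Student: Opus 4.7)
The plan is to reduce the statement to an equality in the image of $a$, and then to identify the relevant form via a smooth splitting argument together with the transgression formula for Chern character forms. Since $\cycl(V_0) - \cycl(V_1) + \cycl(V_2) = 0$ in $KR^0(M)$ by \eqref{jan2103}, the difference
$$D := \hat\cycl(V_0, g^{V_0}) - \hat\cycl(V_1, g^{V_1}) + \hat\cycl(V_2, g^{V_2})$$
lies in the kernel of the forget map $I : \widehat{KR}\oben{0}(M) \to KR^0(M)$. By the defining exact sequence of the differential extension, this kernel equals the image of $a$, so $D = a(\omega)$ for some form $\omega$ which is determined up to exact forms and forms coming from the rationalized underlying $K$-theory. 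It remains to identify $\omega$ with $\cT(\cV, g^{\cV})$.

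For each embedding $\sigma : R \to \C$ the complexified sequence $0 \to V_{0,\sigma} \to V_{1,\sigma} \to V_{2,\sigma} \to 0$ is a short exact sequence of flat Hermitian complex vector bundles which splits smoothly, though not as flat bundles. I would choose smooth splittings, coherent with conjugation as $\sigma$ varies, and let $g^{V_1}_\oplus$ denote the resulting direct sum geometry on $V_1$, corresponding to $h^{V_{0,\sigma}} \oplus h^{V_{2,\sigma}}$ under the smooth identification $V_{1,\sigma} \cong V_{0,\sigma} \oplus V_{2,\sigma}$. Two properties of the refined cycle map $\hat\cycl$, both to be extracted from its spectrum-level construction in the main body of the paper, then close the argument: firstly, additivity of $\hat\cycl$ on direct sums of geometric bundles gives
$$\hat\cycl(V_1, g^{V_1}_\oplus) = \hat\cycl(V_0, g^{V_0}) + \hat\cycl(V_2, g^{V_2});$$
secondly, the homotopy formula for a change of geometry yields
$$\hat\cycl(V_1, g^{V_1}) - \hat\cycl(V_1, g^{V_1}_\oplus) = a\bigl(\tilde\ch(\nabla^{V_{1,\sigma},*}_\oplus, \nabla^{V_{1,\sigma},*})\bigr),$$
where the right-hand form is the transgression of the Chern character between the adjoint connections associated with the two geometries.

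Combining these, $D$ equals the image under $a$ of a transgressed Chern character form. By the construction of Bismut-Lott \cite{MR1303026}, this transgression is, up to normalization and exact forms, precisely the finite-dimensional higher analytic torsion form $\cT(\cV, g^{\cV})$ attached to the short exact sequence with geometries, completing the proof. The main obstacle will be the first property above: one must verify that the refined cycle map constructed at the spectrum level is additive with respect to direct sums of geometric bundles at the level of differential forms, not merely on the underlying $K$-theory. Once this compatibility is established, together with the transgression formula, the remaining identification with $\cT(\cV, g^{\cV})$ is the content of \cite{MR1303026} specialized to the finite-dimensional case.
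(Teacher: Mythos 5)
There is a genuine gap at the heart of your argument: the claimed identity $\hat\cycl(V_1, g^{V_1}_\oplus) = \hat\cycl(V_0, g^{V_0}) + \hat\cycl(V_2, g^{V_2})$ does not follow from additivity of $\hat\cycl$ on direct sums, because $(V_1, g^{V_1}_\oplus)$ is \emph{not} the direct sum geometric bundle $(V_0 \oplus V_2, g^{V_0}\oplus g^{V_2})$. The smooth splitting $V_{1,\sigma}\cong V_{0,\sigma}\oplus V_{2,\sigma}$ is an isomorphism of smooth Hermitian bundles, not of flat bundles (equivalently, not of sheaves of $R$-modules), so the underlying object $V_1 \in \pi_0(i\Vect_{Mf,\Z})$ is still the nonsplit extension and the flat connection $\nabla^{V_1}=\nabla^{V_0}\oplus\nabla^{V_2}+E$ retains the off-diagonal term $E$. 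Consequently the curvature forms already disagree: with the direct sum metric one computes $\nabla^{V_1,*}_\oplus = \nabla^{V_0,*}\oplus\nabla^{V_2,*}+E^{\dagger}$, and the unitarized curvature acquires the extra contribution $\tfrac14(E+E^{\dagger})^2$, so $R(\hat\cycl(V_1,g^{V_1}_\oplus)) \neq R(\hat\cycl(V_0,g^{V_0}))+R(\hat\cycl(V_2,g^{V_2}))$ already at the form level. Since $R$ is part of the structure maps of differential $K$-theory, the two cycle classes cannot be equal. This is precisely the obstruction that makes Lott's relation nontrivial — if the sequence split as flat bundles, the torsion form would vanish (property 4 in Theorem \ref{jan0401neu}).

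For the same reason, your second step (the homotopy formula for changing the metric on a fixed flat bundle) can only account for the discrepancy between $g^{V_1}$ and $g^{V_1}_\oplus$ on $V_1$; it cannot bridge between the extension and the split bundle $V_0\oplus V_2$. What is needed — and what the paper does in \ref{mar2702} — is a deformation of the \emph{flat} structure, not just the metric. The paper interpolates the holonomy $\rho_1$ to the split one over $\P^1_{\Z}$ via the bundle $\widetilde V_1$ on $M\times X\times\P^1_\Z$, applies the homotopy formula in the algebraic direction (Proposition \ref{jan0111}), and then identifies $\int_{II}\omega(g^{\widetilde V_1})$ with the Bismut--Lott torsion form via the auxiliary construction $\cT'$ of Subsection \ref{jan0470} and the axiomatic characterization \ref{jan0401neu}. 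This use of the higher-dimensional differential algebraic $K$-theory $\widehat\bK^0(M\times X\times\P^1_\Z)$ is exactly the extra room that was unavailable in $\widehat{KR}^0(M)$ alone and is the reason the relation was only conjectural in \cite{bg}; an argument confined to $\widehat{KR}^0(M)$, as you propose, cannot produce the deformation to the split case.
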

The complete proof of this theorem is one of the main achievements of the present paper (see Theorem \ref{jan0430}). As a consequence, one gets the existence of the natural transformation asserted in Lott's Conjecture \ref{conj:Lott}. 
The Transfer Index Conjecture then implies the second statement of Conjecture \ref{conj:Lott}.

\subsection{The higher-dimensional case} \label{apr1301}

The paper  \cite{bg} is devoted to the study of the differential algebraic $K$-theory of  rings of integers in number fields. In the  {current} paper we generalize this theory 
   to general noetherian, regular, and separated schemes $X$ such that  the base change $X \otimes\Q:=\Spec(\Q)\times_{\Spec(\Z)}X$ is of finite type over $ \Spec (\Q)$.
We call them arithmetic schemes.    In this realm, the case of rings of integers in a number field corresponds to  arithmetic schemes $X=\Spec(R)\to\Spec(\Z)$ which are finite over $\Spec(\Z)$.

To a scheme $X$
 we can associate its connective algebraic $K$-theory spectrum. In the present paper, this spectrum is  denoted by   $\bK_{\Z}(X)$ and    defined  in Definition \ref{jul0870} as the evaluation  of a certain sheaf of spectra $\bK_{\Z}$  {on $X$}. The connection with the notation used in Subsection  \ref{saknaslkdsddqwdqwd} is established by
$KR  \simeq  \bK_{\Z}(\Spec(R))$. The  algebraic  {$K$-groups} of $X$ are defined in terms of the homotopy groups of the  algebraic $K$-theory spectrum of $X$ as 
\begin{equation}\label{ddhqwdhqwkjhdwkqdwqd}
\bK_{n}(X):=\pi_{n}(\bK_{\Z}(X)). \end{equation}

In order to construct a differential extension\footnote{We omit the subscript $\Z$ in order to simplify the notation.}  $\widehat{\bK}(X)^{0}$ of the cohomology theory      determined by the spectrum  $\bK_{\Z}(X)$,  we must fix differential data, i.e.~a   chain complex $C$ and the comparison map $c$  as in \eqref{r23rj23olr23r23r}.
 {They should satisfy the following requirements:}

 {Firstly}, the chain complex $C$ should model the real homotopy type of the spectrum $\bK_{\Z}(X)$ as close as possible. We adopt this requirement in order to assure that the flat part of the differential extension may contain a maximum of interesting secondary information.

 {Secondly}, in order to have a chance to generalize the cycle map for geometric bundles, the groups $\Omega_{\cl}(M;C)$ should be the  {recipient} of explicitly given characteristic forms like  the forms $\tilde \ch_{p}(\nabla^{V_{\sigma},*},\nabla^{V_{\sigma}})$ in the number ring case.

For a number ring $R$   the real homotopy type of $KR$  {has been computed by} Borel, see Example \ref{dkqjwdqwdqwdwqdwqdqdwqd} for the statement.
In contrast, for a  higher dimensional scheme $X$ the real homotopy type of $\bK_{\Z}(X)$ is {unknown} and  {one} subject of Beilinson's conjectures, 
which will be reviewed    in Subsection \ref{nov2602}. According to these conjectures, the absolute Hodge cohomology $$ H^{-*}_{Hodge}(X):=\bigoplus_{p\in \Z} H^{2p-*}_{Hodge}(X,\R(p))$$  of $X$ can serve as a good approximation of $\bK_{*}(X)\otimes \R$.  The relation between $K$-theory and absolute Hodge cohomology is implemented by Beilinson's regulator{, which generalizes Borel's regulator from the case of number rings}.
Originally, this regulator and absolute Hodge cohomology were introduced by Beilinson in \cite{MR760999, MR862628} in order to study special values of $L$-functions of algebraic varieties.  

The absolute Hodge cohomology of a scheme $X$ can be defined as the cohomology  of a complex built from differential forms on the complex manifold $X(\C)$ of $\C$-valued points of $X$. 
This chain complex was
constructed by Burgos and Wang \cite{BurgosLogarithmic, MR1621424}.  
In the present paper we denote it by $\DR_{\Z}(X)$, see Definition \ref{nov2220}.
  The Beilinson regulator will be defined in Definition \ref{jul08161} as a natural map   of spectra 
\begin{equation}\label{qwkjdhqwkjdwqdwqd}
\beil:\bK_{\Z}(X)\to H(\DR_{\Z}(X))\ ,
\end{equation}  
where $H(\DR_{\Z}(X))$ is the Eilenberg-MacLane spectrum of $\DR_{\Z}(X)$. {Now, one} could take the differential data
$(\bK_{\Z}(X), H(\DR_{\Z}(X)),\beil)$ in order to define the differential extension  $\widehat{\bK}(X)^{0}$
using the Hopkins-Singer construction. But we shall observe that these choices do not satisfy our second requirement, namely that
$\Omega_{\cl}(M,H(\DR_{\Z}(X)))$ receives appropriate characteristic forms.
We explain the necessary modifications in the  course of reviewing our new approach to the construction of Beilinson's regulator, which occupies Sections \ref{jan1950}, \ref{jan1960}, and \ref{jan1962}.

 Previous constructions {\cite{MR760999, Schechtman, MR1621424}} of the regulator were mainly designed to produce natural homomorphisms
\begin{equation}\label{dqwdwqdwqdqwdqwd}
\pi_{*}(\bK_{\Z}(X))\to  H^{ -*}_{Hodge}(X)
\end{equation}  
of abelian group valued functors.
 In order to define differential algebraic $K$-theory we must lift this to a morphism between sheaves of spectra \eqref{qwkjdhqwkjdwqdwqd}. 
 Our main contribution in this direction is  a construction of such a lift.
 
 We now explain the main ideas going into this construction. 
If $X$ is a scheme as above and $M$ is a smooth manifold, then by a bundle on $M\times X$ we understand a locally free and finitely generated  $\pr^{*}_{X}\cO_{X}$-module, where $\pr_{X}:M\times X\to X$ is the projection. If $X=\Spec(R)$ for a number ring $R$, then a bundle over
 $M\times X$ is the same thing as an $R$-bundle as discussed in Subsection \ref{saknaslkdsddqwdqwd}.
 We consider a site consisting of products of the form $M\times X$ with the topology given by
 open coverings of $M$ and Zariski open coverings of $X$. On this site we get a stack of bundles with a symmetric monoidal structure given by the direct sum. The algebraic $K$-theory sheaf $\bK_{\Z}$ is defined by group completing
 the nerve of this stack. The details will be given in  Section \ref{feb1002}. In view of this construction, a bundle  $V$ on $M\times X$ naturally yields a class
 $$\cycl(V)\in \pi_{0}(\bK_{\Z}(M\times X))\cong \bK_{\Z}(X)^{0}(M)\ .$$
 This is the higher-dimensional generalization of \eqref{eq:cycleclass}.

 A geometry $g^{V}$ on a bundle $V$ on $M\times X$ involves a hermitian metric and a connection $\nabla$
 on its complexification $V(\C)$ over $M\times X(\C)$. The connection captures the holomorphic structure of the bundle in the direction of $X(\C)$. A geometry gives rise to two  Chern-Weil representatives
 of the Chern character of $V(\C)$.  On the one hand, the  degree-$2p$ part of the Chern form $\ch_{2p}(\nabla)$ belongs to the $p$th step of the Hodge filtration along $X(\C)$. On the other hand, the form  $i^{p}\ch_{2p}(\nabla^{u})$ is real, where $\nabla^{u}$ is the unitarization of $\nabla$ with respect to the chosen metric. We  finally have a transgression
 $\tilde{\ch}_{2p-1}(\nabla,\nabla^{u})$ such that $$d \tilde{\ch}_{2p-1}(\nabla,\nabla^{u})=\ch_{2p}(\nabla)-\ch_{2p}(\nabla^{u})\ .$$  We consider the collection of triples  of forms  \begin{equation}\label{gdhjqwgdhgqwjgdwqdwqdqqwd}
(\ch_{2p}(\nabla^{u}),\ch_{2p}(\nabla),\tilde{\ch}_{2p-1}(\nabla,\nabla^{u}))_{p\in \nat}
\end{equation}  as the characteristic form associated to the geometric bundle
 $(V,g^{V})$.   The complex $\DR_{Mf,\Z}(M\times X)$ introduced in Definition \ref{nov2031} is built from forms on the manifold $M\times X(\C)$
 {in exactly such a way that}
 it captures these characteristic forms as cycles of degree zero. 
Moreover,
 there is a natural inclusion  $\Omega (M,\DR_{\Z}(X))\to \DR_{Mf,\Z}(M\times X)$, which is a quasi-isomorphism. 
 This implies that
 $$H^{0}( \DR_{Mf,\Z}(M\times X))\cong  H(\DR_{\Z}(X))^{0}(M),$$
 which is the correct target of the regulator {$\beil$} applied to elements of   $\bK_{\Z}(X)^{0}(M)$.
 

We {thus} define the differential extension $\widehat{\bK}(X)^{0}$ of the algebraic $K$-theory of $X$ by the Hopkins-Singer construction with the slight modification that we replace the complex $\Omega (M,\DR_{\Z}(X))$ by the quasi-isomorphic  {complex} $\DR_{Mf,\Z}(M\times X)$. It is  {then} not difficult to define the differential algebraic $K$-theory class
\begin{equation}\label{qsqqwdwqdwdj}
\widehat{\cycl}(V,g^{V})\in \widehat{\bK}(X)^{0}(M)
\end{equation} 
of a geometric bundle $ {(V,g^{V})}$,
 see Definition \ref{nov2050neu}.
This is our higher-dimensional generalization of \eqref{jan2108}.
 
In the following, we comment on one important aspect which we have neglected in the discussion above.
In order to represent the absolute Hodge
 cohomology of $X$ {when $X(\C)$ is not compact,} the differential forms entering into the complex $\DR_{Mf,\Z}(M\times X)$ must satisfy growth conditions at infinity  {(see Subsection \ref{apr0501} for details)}.
If  {$X(\C)$ is not compact}, then for an arbitrary geometry $g^{V}$ the forms in  \eqref{gdhjqwgdhgqwjgdwqdwqdqqwd} will
 not  {necessarily} satisfy the required growth conditions.
We therefore introduce  the notion of  good  {geometries (Definition \ref{dez2402}), whose associated characteristic forms \eqref{gdhjqwgdhgqwjgdwqdwqdqqwd} are nice in this respect.}
By the work of  Burgos-Wang \cite{MR1621424},
 good geometries exist locally on $M$. {However, if $X(\C)$ is not compact,}
then in general we do not know whether good geometries exist globally on $M$.
Hence we are forced to work with local geometries. For this reason we must further  replace the absolute Hodge
 complex $\DR_{Mf,\Z}(M\times X)$ by its quasi-isomorphic \v{C}ech complex with respect to coverings of the
 manifold $M$ denoted by $\cL\DR_{Mf,\Z}(M\times X)$.

As a consequence of the approach to differential algebraic $K$-theory $\widehat{\bK}(X)^{0}(M)$ sketched above,
it is clear that it is not only functorial in the variable $M$, but also in $X$. In particular, the cycle map
$\widehat{\cycl}$ in \eqref{qsqqwdwqdwdj} is  compatible with this functoriality. Even if we restrict
to number rings, this is an improvement of \cite{bg}.

An aspect truly related to the higher-dimensional generalization of differential algebraic $K$-theory is the homotopy formula in the algebraic direction discussed in \ref{jan0110}. It is the clue in the proof of Lott's relation.

\begin{rem} 
In a recent paper \cite{HopkinsQuick}, Hopkins and Quick introduce a Hodge filtered version of complex bordism. Its construction is very similar in spirit to our definition of differential algebraic $K$-theory and also uses related constructions with differential forms. However, Hodge filtered complex  bordism is not a differential version of complex   bordism since  from its cohomology classes  one cannot recover the differential form representatives of the relevant cohomology classes. 
\end{rem}

\subsection{Organization of the paper}\label{lkdwqdwqdwdd34234}

In Section \ref{jan1950} we define algebraic $K$-theory and  analyze the construction of regulator maps. 
In Subsection \ref{jan1001} we define the algebraic $K$-theory spectrum of a symmetric monoidal category in terms of an $\infty$-categorical version of commutative group completion  of its nerve. 
 {We} introduce the necessary $\infty$-categorical language in the place of its first appearance.
In the following two Subsections \ref{dwqkjdhqwkdwqdqwdqw89769}  and \ref{mar0802} we introduce the notion of a regulator and explain a procedure to construct  a regulator from a characteristic cocycle. In this way we set up  {an} interface
relating classical  {$1$}-categorical objects (characteristic cocycles) with $\infty$-categorical objects (the regulator).

 In Section \ref{jan1960} we introduce the absolute Hodge complex $\DR_{\Z}(X)$ and specialize the definition of algebraic $K$-theory to the cases we need. In Subsection \ref{sec:AbsHodgeComplex} we first consider a version $\DR_{\C}(X)$ for an algebraic variety $X$  {over $\C$,} where most of the technical issues like the growth conditions at $\infty$ have to be resolved.
 In Section \ref{jul1001} we define $\DR_{\Z}(X)$ for  {an arithmetic} scheme $X$
 in terms of $\DR_{\C}$ applied to the base change of $X$ to $\C$. In Subsection \ref{nov2602}
 we  state a version of Beilinson's conjecture concerning kernel and cokernel of the regulator.

The purpose of  Section \ref{jan1962} is our new construction of the Beilinson regulator. We introduce the site of products
$M\times X$ of manifolds and schemes $X$ in Subsection \ref{kjefwefewfewfewfewfe}. In Subsection \ref{apr0501}
 we extend the definition of the absolute Hodge   complex to  such  products  $M\times X$ and define
$\DR_{Mf,\Z}$. In the next two Subsections \ref{nov1101} and \ref{dez2501} we introduce the notion of 
bundles on $M\times X$  {and of} good geometries, and  {we} construct the associated characteristic forms. In the Subsections \ref{jul1060}
and \ref{sep2601} we apply the general theory developed in Section \ref{jan1950} in order to define the algebraic $K$-theory sheaf $\bK_{Mf,\Z}$  {and}  the Beilinson regulator
$\beil:\bK_{Mf,\Z}\to H(\DR_{Mf,\Z})$ which induces \eqref{qwkjdhqwkjdwqdwqd} by evaluation at $*\times X$.
In Subsection \ref{nov2202} we verify that our spectrum level regulator \eqref{qwkjdhqwkjdwqdwqd} really induces the classical regulator \eqref{dqwdwqdwqdqwdqwd}.
In Subsection  \ref{jan1961} we show that the set-up developed in the present paper  
allows  {us} to rederive easily the generalization of the Karoubi regulator, which was first obtained in  \cite{Tamme-Beil}. 

 The final Section \ref{jan1964} is devoted to differential algebraic $K$-theory and the  verification of Lott's relation. 
 Its first three  {subsections} 
 contain the construction of the differential algebraic $K$-theory spectrum $\widehat{\bK}(X)$,  the calculation of its homotopy groups, and the cycle map $\widehat{\cycl}$, see \eqref{qsqqwdwqdwdj}. In Subsection \ref{jan1965}
 we discuss the homotopy formulas  in the manifold and in the algebraic direction. 
We comment on the relation of differential algebraic $K$-theory to arithmetic $K$-theory in Subsection \ref{sec:Relation-to-arithmetic-K}.
Since Lott's relation and the TIC  {have been} stated  {in \cite{bg}} in the set-up developed  for number rings   there, we provide  
the precise relation between \cite{bg} and the specialization of the  theory of the present paper to number rings
in Subsection \ref{mar18001}.
This will be used in   Subsection \ref{mar18002}, where we complete the proof of Lott's relation.

We tried to make the main text self contained. But the  explanation at the place of first appearance of some of the technical details   would take the reader to far apart from the main course of reading. We therefore have collected  {some elements of $\infty$-categorical sheaf theory  in the Appendix~\ref{apr0502}.}

{\em Acknowledgements:} We thank David Gepner and Thomas Nikolaus for valuable hints. We further thank Jakob Scholbach for interesting discussions. {We are grateful to the referee for his numerous and detailed remarks on the manuscript. They led us to restructure the text and to clarify many points in the exposition. }

\section{Regulators}\label{jan1950}

\subsection{Definition of algebraic $K$-theory}\label{jan1001}

In this subsection we describe the algebraic $K$-theory spectrum of a symmetric monoidal category in terms of nerves and group completion. This will later be applied to the categories of finitely generated projective modules  over a ring or of vector bundles on a regular scheme.

We start with the inclusion  of commutative monoids into commutative groups, which is part of an adjunction
$$K_0:\CommMon(\Set)\leftrightarrows\CommGroup(\Set):incl\ .$$
The left-adjoint of $incl$ is the group completion functor $K_0$, also called the Grothendieck construction.

\begin{ex}\label{kldqwjnldqwd}
For a  unital associative ring $R$, we can consider the  category $ \cP(R)$ of  finitely generated projective $R$-modules  {and its maximal subgroupoid $i\cP(R)$, which has the same objects as $\cP(R)$, but only the isomorphisms as morphisms}. The direct sum induces the structure of a commutative monoid on its
set of isomorphism classes $\pi_{0}( {i}\cP(R))$.  We thus have
$\pi_{0}( {i}\cP(R))\in \CommMon(\Set)$ and $K_{0}(R):=K_{0}(\pi_{0}( {i}\cP(R)))$ is the zeroth $K$-theory group of $R$. In order to define higher algebraic $K$-theory in a similar manner we want to apply the Grothendieck construction not to the set of isomorphism classes of $\cP(R)$, but to the category $\cP(R)$ itself. This is best formulated using the language of $\infty$-categories.
\end{ex}

In the present paper, by an $\infty$-category we mean more precisely an $(\infty,1)$-category, i.e.~we require   all  higher ($\ge 2$) morphisms  to be invertible. 
We will model $\infty$-categories by quasi-categories, i.e.~simplicial sets which satisfy {the} inner horn filling condition.
 {These were first introduced by Boardman-Vogt \cite{Boardman-Vogt}, and then developed systematically by Joyal \cite{Joyal} and Lurie. Our main references are the books \cite{HTT} and \cite{highalg}. For a quick overview, we refer the reader to \cite{Groth}.}

 {Given $\infty$-categories $\bD$ and $\bE$, the simplicial mapping space of maps of simplicial sets from $\bD$ to $\bE$ is again an $\infty$-category. We denote it   by $\Fun(\bD,\bE)$.}

If $\cC$ is an ordinary category, then its nerve $\Nerve(\cC)$ is a simplicial set which happens to be an $\infty$-category.
 Most of the $\infty$-categories used in the present paper are of the form $\Nerve(\cC)[W^{-1}]$ for   
some ordinary category $\cC$ and some class of morphisms $W$ in $\cC$. The localization $ \Nerve(\cC)[W^{-1}]$
and  the morphism $ \iota:\Nerve(\cC)\to  \Nerve(\cC)[W^{-1}]$ are characterized essentially uniquely by    {the} universal property  {\cite[Def.~1.3.4.1]{highalg}}: For every $\infty$-category $\bE$, the composition with $\iota$ induces a fully faithful embedding $\Fun(\Nerve(\bC)[W^{-1}],\bE) \to \Fun(\Nerve(\bC), \bE)$ whose  essential image consists of those functors $\Nerve(\bC) \to \bE$ that map each morphism in $W$ to an equivalence in $\bE$.
Usually, we will not write $\iota$ explicitly. If $X$ is an object of $\cC$, then we will use the notation $X$ also for $\iota(X)$ and make clear in the surrounding text that we consider $X$ as an object in the localization.

As an example, we let $\sSet$ be the category of simplicial sets and $W$ be the class of weak equivalences, i.e.~maps which induce isomorphisms in homotopy groups. Then we get the $\infty$-category $\Nerve(\sSet)[W^{-1}]$ of spaces.

  For a pair of objects $X,Y$ of an ordinary category, we have a  set of morphisms $\Hom(X,Y)$.  
In contrast, for two objects $X,Y$ of an $\infty$-category we have a  mapping space $\map(X,Y)\in \Nerve(\sSet)[W^{-1}]$  {\cite[1.2.2.1]{HTT}}.

The theory of symmetric monoidal $\infty$-categories and  the notion of commutative monoids therein is developed in \cite{highalg}. 
In the present paper 
 we abuse language slightly. We
 call an $\infty$-category $\bC$ symmetric monoidal,
if it is  equivalent to the  fibre $\bC_{\langle 1\rangle}$ of a  {cocartesian} fibration
 $\bC^{\otimes}\to \Nerve(\mathcal{F}in_{*})$ which is the data of a symmetric monoidal $\infty$-category  in the sense of \cite[Definition 2.0.0.7]{highalg}.  The underlying symmetric monoidal structure will always be clear from the context.
We define the $\infty$-category of
commutative algebra  objects $\CAlg(\cC)$ in $\cC$ as in \cite[Definition 2.1.3.1]{highalg}.

The 
  $\infty$-category $\Nerve(\sSet)[W^{-1}]$ is  symmetric monoidal  with its cartesian symmetric monoidal structure.  In this special case,
   we call commutative algebras commutative monoids and use the notation 
$$\CommMon(\Nerve(\sSet)[W^{-1}]):=\CAlg(\Nerve(\sSet)[W^{-1}])\ .$$
Commutative monoids in $\Nerve(\sSet)[W^{-1}]$   are also known as  $E_{\infty}$-spaces.
A grouplike   $E_{\infty}${-}space will be called a commutative group. In order to define this notion, we use the symmetric monoidal functor
 $\pi_{0}:\Nerve(\sSet)[W^{-1}]\to \Set$. 
 We define the $\infty$-category of commutative groups in $\Nerve(\sSet)[W^{-1}]$ to be the full subcategory $$\CommGroup(\Nerve(\sSet)[W^{-1}])\subset \CommMon(\Nerve(\sSet)[W^{-1}])$$   
 of commutative monoids which are mapped to groups under $\pi_{0}$.

 The notion of group completion now generalizes.
In fact, the   inclusion of the $\infty$-category of commutative groups  {into} the $\infty$-category of commutative monoids    is again the right-adjoint of an adjunction 
$$\Omega B:\CommMon(\Nerve(\sSet)[W^{-1}])\leftrightarrows \CommGroup(\Nerve(\sSet)[W^{-1}])\ .$$ We use the symbol $\Omega B$ resembling an explicit model for the group completion since we want to reserve the letter
$K$ to denote the  $K$-theory spectrum functor.

\begin{ex}
A simplicial abelian monoid or group is mapped under the canonical map $\Nerve(\sSet)\to \Nerve(\sSet)[W^{-1}]$  to a monoid  respectively a group in 
$\Nerve(\sSet)[W^{-1}]$. 
An example of a commutative group in $\Nerve(\sSet)[W^{-1}]$ which does not come from a simplicial abelian group is the infinite loop space   $\Omega^{\infty}\mathbb{S}$ of the sphere spectrum $ \mathbb{S}$. 
\end{ex}

We let $\Sp$  denote the $\infty$-category of spectra. We refer to  \cite[Subsection  1.4.3]{highalg} for details.  It is related with the $\infty$-category of pointed simplicial sets $\Nerve(\sSet)_{*}[W^{-1}]$ by an adjunction
$$\Sigma^{\infty}_{*}:\Nerve(\sSet_{*}) [W^{-1}]\leftrightarrows \Sp:\Omega^{\infty}_{*}\ .$$
The  infinite loop space functor $\Omega_{*}^{\infty}$  {refines} to  a functor
\begin{equation}\label{bhjfwefewfewfewkjfjkfkjfjewfwef}
\Omega^{\infty}:  \Sp \to \CommGroup(\Nerve(\sSet)[W^{-1}])\ .
\end{equation}

The $\infty$-category $\Sp$ is the basic example of a stable $\infty$-category  {in the sense of \cite[1.1.1.9]{HTT}}.
 {In general, if $X$ and $Y$ are objects of any stable $\infty$-category, then}
we have a mapping spectrum $\Map(X,Y)\in \Sp$. 
The $\infty$-categorical mapping space can be recovered from the mapping spectrum by the equivalence   $$\map(X,Y)\simeq \Omega^{\infty}_{*}\Map(X,Y)$$ in $\Nerve(\sSet)[W^{-1}]$.

The inclusion of the full subcategory of connective spectra into all spectra is part of an adjunction
$$
 \mathrm{incl}:  {\Sp_{\ge 0}} \leftrightarrows \Sp : {\tau_{\ge 0}}\ .
$$
    
The  restriction of \eqref{bhjfwefewfewfewkjfjkfkjfjewfwef} to connective spectra provides an equivalence of $\infty$-categories
\begin{equation}\label{bhjfwefewfewfewkjfjkfkjfjewfwef111}
\Omega^{\infty}:   {\Sp_{\ge 0}} \xrightarrow{\sim}\CommGroup(\Nerve(\sSet)[W^{-1}])\ .
\end{equation}
The composition of its inverse with the forgetful functor from connective to all spectra will be denoted by 
\begin{equation}\label{jul0701}
\spp:\CommGroup(\Nerve(\sSet)[W^{-1}])\to  \Sp \ .
\end{equation}

One source of commutative monoids in simplicial sets are the nerves of symmetric monoidal categories. 
In general, a symmetric monoidal category can be considered as a commutative monoid in the $2$-category {of categories} $\Cat_{2}$. 
If the associator and symmetry transformations are not given by identities, then it is not a commutative monoid in the  {1}-category  {of categories} $\Cat$. But it gives rise to a commutative monoid in the $\infty$-category  $\Nerve(\Cat)[W^{-1}]$ with its cartesian structure, where $W$ is the class of categorical equivalences.
Following the notation for simplicial sets, we will also use the notation $\CommMon$ instead of $\CAlg$ for commutative monoids in $\Nerve(\Cat)[W^{-1}]$.
The nerve functor $\Nerve:\Nerve(\Cat)[W^{-1}]\to \Nerve(\sSet)[W^{-1}]$ is symmetric monoidal   with respect to the cartesian structures and therefore induces a transformation
$$\Nerve:\CommMon(\Nerve(\Cat)[W^{-1}])\to \CommMon(\Nerve(\sSet)[W^{-1}])\ .$$
We have a functor $i: \Cat \to \Cat$ which maps any category $\cC$ to its maximal  subgroupoid  $i\cC$. The induced functor $i: \Nerve(\Cat)[W^{-1}] \to \Nerve(\Cat)[W^{-1}]$ is also symmetric monoidal.

\begin{ddd}\label{jul1501}
We call the composition
$$K:=\spp\circ \Omega B\circ \Nerve{\circ i}:\CommMon(\Nerve(\Cat)[W^{-1}])\to  \Sp$$
the algebraic $K$-theory functor.
 \end{ddd}

\begin{rem}\label{djqwkjdqwdqwdwqdwqd12312321323}
Note that we use the symbol $K$ in two ways depending on the type of its argument. On the one hand, if  $R$ is a ring, then $KR$ and $K_{*}(R)$ denote the algebraic $K$-theory spectrum and {$K$-groups} of $R$,  {respectively}. On the other hand, if $\cC$ is a symmetric monoidal category, then $K(\cC)$ is the spectrum as in Definition \ref{jul1501}.
\end{rem}

\begin{ex}\label{kdjqlkwdqwdqwdwqdwqdwqdqwd} 
We consider the monoid $\nat$ as a symmetric monoidal category with only identity morphisms. This is actually a commutative monoid in $\Cat$, so in particular in $\Nerve(\Cat)[W^{-1}]$.
We have an equivalence $K(\nat)\simeq H\Z$, where $H\Z$ is the Eilenberg-MacLane spectrum characterized by
\[
\pi_{n}(H\Z)\cong
\begin{cases}
\Z, & \text{if }n=0,\\
0 & \text{else.}
\end{cases}
\]
\end{ex}

\begin{ex}\label{kljdklqwdwqdwqdwqdwqdq}
For  a unital associative  ring $R$, we   consider the  category $ \cP(R)$ of finitely generated projective $R$-modules.
It is symmetric monoidal with respect to the {direct} sum $\oplus$.  It is not  {even} a  monoid in the  {1}-category of categories $\Cat$ since the associator {is not the identity.}
But $ \cP(R)$ can naturally be viewed as a commutative monoid in $\Cat_{2}$ and therefore in $\Nerve(\Cat)[W^{-1}]$. 
 {Hence we can apply the algebraic $K$-theory functor $K$ to $\cP(R)$. It is known that}
we have an equivalence {$K(\cP(R)) \simeq KR$} (see e.g.~\cite[IV.4.8, IV.4.11.1]{WeibelKBook}).    
In the present paper we will   actually take this as a definition of the algebraic $K$-theory spectrum $KR$ of the ring $R$.
\end{ex}

The considerations above generalize to diagrams indexed by a simplicial set $\bS$. In order to simplify the notation,
we retain the symbol of a functor in order to denote its object-wise extension to diagrams.
\begin{ddd}\label{nov1901}
The connective algebraic $K$-theory of a diagram
$$\cV\in \Fun(\bS,\CommMon(\Nerve(\Cat)[W^{-1}]))$$ of symmetric monoidal categories
  is the diagram of spectra
$$K(\cV)\in \Fun(\bS, \Sp  )\ ,\quad  K(\cV):= \spp(\Omega B(\Nerve({i}\cV)))\ .$$
\end{ddd}

\subsection{The space of regulators}\label{dwqkjdhqwkdwqdqwdqw89769}

Let $\bV$ be a symmetric monoidal category. To determine the homotopy type of $K(\bV)$ is in general a difficult task. So we are interested, as a first approximation, in some cohomological information about $K(\bV)$. We consider cohomology theories represented by Eilenberg-MacLane spectra. 

 The $\infty$-category  of spectra $\Sp$ has a closed symmetric monoidal structure based on the  smash product. In particular,  we can define the $\infty$-category of commutative ring spectra $\CAlg(\Sp)$.
 
 We let $\Ch$ be the category of chain complexes and chain maps. We form the $\infty$-category $\Nerve(\Ch)[W^{-1}]$, where $W$ is the class of quasi-isomorphisms. 
 It is a presentable and stable $\infty$-category with 
 a symmetric monoidal structure induced by the tensor product of chain complexes. The tensor  
unit is given by the chain complex $\Z$ concentrated in degree zero. 

Since the $\infty$-category $\Nerve(\Ch)[W^{-1}]$ is stable, for a pair of objects $C,D\in \Nerve(\Ch)[W^{-1}]$ we have a mapping spectrum $\Map(C,D)\in \Sp$. In particular,  we can consider the endomorphism spectrum 
$\Map(\Z,\Z)$ which   has the structure of an associative algebra  with product given by the composition. 
Since $\Nerve(\Ch)[W^{-1}]$ is symmetric monoidal and $\Z$ is the tensor unit, this spectrum $\Map(\Z,\Z)$ is actually a commutative algebra which we will denote by
 $H\Z\in \CAlg(\Sp)$. It gives rise to the $\infty$-category of $H\Z$-modules $\Mod(H\Z)$.
 The Eilenberg-MacLane equivalence
   \begin{equation}\label{ynahdkjwdwqdqwdqwdqwd-neu}
H:\Nerve(\Ch)[W^{-1}]\stackrel{\sim}{\to} \Mod(H\Z)
\end{equation} 
is an equivalence of closed symmetric monoidal $\infty$-categories. Its   existence is asserted in  \cite[ {Thm.~7.1.2.13}]{highalg}.  
In particular, it sends $\Z$ to $H\Z$. See also \cite[Subsection 6.8]{bg}.

%
  
\begin{rem} 
The Eilenberg-MacLane correspondence sends a chain complex $A\in \Nerve(\Ch)[W^{-1}]$ to the $H\Z$-module $\Map(\Z,A)$, where the $H\Z=\Map(\Z,\Z)$-module structure is given by precomposition.
 In particular, we compute
\begin{equation}\label{eq:homotopy-of-EilenbergMacLane}
\pi_{*}(H(A)) \cong \pi_{*}(\Map(\Z,A)) \cong H^{-*}(A),
\end{equation}
as expected. 
\end{rem}


We have an adjunction
$$
\mathrm{Free}:\Sp\leftrightarrows\Mod(H\Z): {U}
$$
where $\mathrm{Free}$ maps a spectrum $E$ to the $H\Z$-module $E\wedge H\Z$, and the forgetful functor  {$U$} takes the underlying spectrum of an $H\Z$-module. We will also use the notation $H$ for the composition
\begin{equation}\label{klrsefpbpw34978h}
\Nerve(\Ch)[W^{-1}]\xrightarrow{H} \Mod(H\Z) \xrightarrow{ {U}} \Sp\ .
\end{equation}
This composition of an equivalence and a right-adjoint preserves limits.
By definition, an Eilenberg-MacLane spectrum is a spectrum which can be presented as $H(A)$ for some $A\in \Nerve(\Ch)[W^{-1}]$.

Given a symmetric monoidal category  $\bV\in \CommMon(\Nerve(\Cat)[W^{-1}])$ and a chain complex $F\in \Nerve(\Ch)[W^{-1}]$, we can now consider the commutative group
$$\Omega^{\infty} \Map(K(\bV),H(F))\in \CommGroup(\Nerve(\sSet)[W^{-1}])  \ ,$$
where $ \Omega^{\infty}$ is as in \eqref{bhjfwefewfewfewkjfjkfkjfjewfwef}.
The homotopy groups of 
$\Omega^{\infty} \Map(K(\bV),H(F))$ are  the cohomology groups of $K(\bV)$ with coefficients in
$F$.

\begin{ex}   
Let $\R[-i]$ be the chain complex whose only non-trivial component is $\R$ in cohomological degree $i$.
Then we have
$$\pi_{0} (\Omega^{\infty} \Map(K(\bV),H(\R[ -i ])))\cong H^{i}(K(\bV);\R)\ .$$
\end{ex}

In order to give a precise {functorial} definition of regulators
we  extend these considerations to diagrams.
We consider a simplicial set $\bS$, a diagram of chain complexes  
\begin{equation}\label{jan1004}
F\in  \Fun(\bS,\Nerve(\Ch)[W^{-1}]),
\end{equation} 
and a diagram
 $$\cV\in \Fun(\bS,\CommMon(\Nerve(\Cat)[W^{-1}]))$$
of symmetric monoidal categories. Using that $\Fun(\bS,\Sp)$ is again a stable $\infty$-category, 
  two diagrams of spectra   $X,Y\in  \Fun(\bS, \Sp)$  give rise to  the mapping spectrum
$\Map(X,Y)\in  \Sp $.
\begin{ddd}
 {For $F$ and $\cV$ as above, the}
commutative group of $F$-valued regulators for $\cV$ is defined by
$$\Reg(\cV,F):=\Omega^{\infty}\Map(K(\cV),H(F))\in  \CommGroup(\Nerve(\sSet)[W^{-1}])\ .$$  
\end{ddd}
Below we also use the term regulator  for elements in $\pi_0
(\Reg(\cV,F))$.

\newcommand{\NumberRings}{\mathbf{NumberRings}}
\newcommand{\Domains}{\mathbf{Domains}}

\begin{ex}  \label{kckljcldscsdcsdcsdcdsc}
Let $\Rings$ be the category of  {commutative, associative,} unital rings.  We can define a morphism of $\infty$-categories
$$\cP:\Nerve(\Rings)\to \CommMon(\Nerve(\Cat)[W^{-1}])$$ which   maps a ring $R$ to the symmetric monoidal category $\cP(R)$ (see Example \ref{kljdklqwdwqdwqdwqdwqdq}), and a morphism of rings
$R\to S$ to the functor $\cP(R)\to \cP(S)$ of extension of scalars given by  $M\mapsto M\otimes_{R}S$.
 Then we can consider  $\bS:=\Nerve(\Rings)$ and $\cV:=\cP$.
 For $F$ we take the constant functor  {$\underline{\Z}$} with value $\Z$. A regulator $\phi\in \pi_{0}(\Reg(\cP, {\underline{\Z}}))$
 gives rise to a natural homomorphism
 $\phi:K_{0}(R)\to \Z$.  Such a natural homomorphism is necessarily trivial, i.e.~$\pi_{0}((\Reg(\cP,{\underline{\Z}})))\simeq *$.\footnote{ {Hint: For any ring $R$, consider the class of $R\times 0$ in $K_{0}(R\times R)\cong K_{0}(R)\oplus K_{0}(R)$}.}

 In contrast, if $ {\Domains}\subseteq \Rings$ is the full subcategory of  {integral domains}, and $\bS_{0}:
 =\Nerve( {\Domains})$, then there is an element $r(\omega_{\rk})\in \pi_{0}(\Reg(\cP_{|\bS_{0}}, {\underline{\Z}}_{|\bS_{0}}))$  which induces the   {rank} homomorphism 
 $\rk:K_{0}(R)\to \Z$. A precise construction of $r(\omega_{\rk})$ will be given below in Example \ref{hdewjdhewkjdhwekdhkewde}.
  \end{ex}

\subsection{Regulators and characteristic cocycles}
\label{mar0802}

Because of the $\infty$-categorical nature of the objects involved, it appears to be a complicated task to produce regulators explicitly.
Fortunately, 
the notion of a characteristic cocycle introduced below in Definition \ref{jan1002} amounts to a formidable
reduction of complexity. The main goal of the present  subsection  is to indicate a general machine to produce regulators from characteristic cocycles.

We consider a {simplicial set $\bS$,} a diagram 
$$\cV\in \Fun(\bS,\CommMon(\Nerve(\Cat)[W^{-1}]))$$ 
of symmetric monoidal categories, and a diagram 
$$F\in \Fun(\bS,\Nerve(\Ch))$$ 
of chain complexes. Note that we do not invert the quasi-isomorphisms here on purpose.
{We derive from $\cV$}
the diagram of {commutative} monoids of isomorphism classes of objects {of $\cV$}
$$\pi_{0}({i}\cV)\in \Fun(\bS,\Nerve({\CommMon}{(\Set)}))$$ 
and from $F$ the
diagram of degree-zero cycles
$$Z^{0}(F)\in \Fun(\bS,\Nerve(\Ab))\ .$$

\begin{ddd}\label{jan1002}
A characteristic cocycle is a  
transformation
$$\omega:\pi_{0}({i}\cV)\to Z^{0}(F)$$ between objects of $\Fun(\bS,\Nerve({\CommMon{(\Set)}}))$.
\end{ddd}

We can consider sets as categories with only identity morphisms. Similarly, commutative monoids are symmetric monoidal categories.  In this sense we have  the symmetric monoidal functor
${i}\cV\to \pi_{0}({i}\cV)$ which associates to every object its isomorphism class.
We consider the composition 
$${i}\cV\to \pi_{0}({i}\cV)\xrightarrow{\omega} Z^{0}(F)$$ of morphisms
{in $\Fun(\bS,\CommMon(\Nerve(\Cat)[W^{-1}]))$}
to which we apply   the algebraic $K$-theory functor {from Definition~\ref{nov1901}.}
 We get a map
\begin{equation}\label{jul0710}
K(\cV)\xrightarrow{K(\omega)} K(Z^{0}(F))\ . 
\end{equation}

\begin{rem}
We need to understand the target of \eqref{jul0710}. We claim that we have a commutative diagram
\begin{equation}\label{jul1190}\begin{split}
\xymatrix{
\Nerve(\Ab)\ar[d]^{(-)[0]} \ar[r]^-{\operatorname{cat}}&\CommMon(\Nerve(\Cat)[W^{-1}])\ar[d]^{K}\\
\Nerve(\Ch)[W^{-1}]\ar[r]^{H}& \Sp 
}
\end{split}
\end{equation}
where the left vertical arrow maps an abelian group $A$ to the chain complex $ {A[0]}$ with the group $A$ placed in degree zero, $\operatorname{cat}$ interprets an abelian group $A$ as a symmetric monoidal category with set of objects $A$ and only identity morphisms,
and $H$ and $K$ are as in  \eqref{klrsefpbpw34978h} and Definition \ref{jul1501},  respectively. In fact, both compositions naturally factor through the full subcategory $\Sp_{0}\subset \Sp$ of spectra $E$ satisfying $\pi_{i}(E) = 0$ for all $i\not=0$.
The functor $\pi_{0}: \Sp_{0}\to \Nerve(\Ab)$
is an equivalence, since  for $E,F\in \Sp_{0}$ the mapping space $\map(E,F)$ has contractible components and
$\pi_{0}(\map(E,F))\cong \Hom(\pi_{0}(E),\pi_{0}(F))$.
The claim now follows, since we have a natural isomorphism of functors $\Nerve(\Ab) \to \Nerve(\Ab)$
\[
\pi_{0}\circ H\circ (-)[0] \cong \id_{\Ab} \cong \pi_{0}\circ K \circ \operatorname{cat}.
\]
%
%
%
\end{rem}

 {The square \eqref{jul1190}} gives an equivalence
\begin{equation}\label{jul0720}
K(Z^{0}(F)) \xrightarrow{\simeq} H( {Z^{0}(F)[0]})\ .
\end{equation}
We have a natural map of diagrams of chain complexes
$$
 {Z^{0}(F)[0]}\to F
$$
which induces the map
 \begin{equation}\label{jul0711}
H({Z^{0}(F)[0]})\to H(F)\ .
\end{equation}

\begin{ddd}\label{jul0810}
We define the regulator $$r(\omega)\in \pi_{0} (\Reg(\cV,F))$$ associated to the characteristic cocycle $\omega:\pi_{0}({i}\cV)\to Z^{0}(F)$
to be the {composition}
$$K(\cV) {\xrightarrow{\eqref{jul0711}\circ \eqref{jul0720}\circ \eqref{jul0710}}} H(F)\ .$$
\end{ddd}

\begin{ex}\label{hdewjdhewkjdhwekdhkewde}
We continue  the Example \ref{kckljcldscsdcsdcsdcdsc}.
We let $\bS_{0}:= {\Nerve(\Domains)}$ and $F:=\underline{\Z}$. 
Then we get the characteristic cocycle
$$\omega_{\rk}:\pi_{0}(i\cP)\to Z^{0}(\underline{\Z})\cong \underline{\Z}$$ which maps
the isomorphism class $[M]\in \pi_{0}(i\cP(R))$ of a finitely generated projective $R$-module $M$  to its
rank $\rk(M)\in \Z$.  The regulator 
$$
r(\omega_{\rk})\in  \pi_{0}(\Reg(\cP_{|\bS_{0}}, {\underline{\Z}}_{|\bS_{0}}))
$$  
is the non-trivial 
regulator mentioned in Example \ref{kckljcldscsdcsdcsdcdsc}.
\end{ex}

\section{Absolute Hodge cohomology and Beilinson's regulator}\label{jan1960}

In the previous section we described a
general machinery to produce regulators.  In the present section  we start  with its application to the Beilinson regulator for  {smooth varieties} over $\C$ or arithmetic schemes. 
In particular, we describe    the relevant domain and target for the regulator map. In view of Beilinson's famous conjectures \cite{MR760999, MR862628}, the appropriate cohomology is the absolute Hodge cohomology of the scheme considered. In order to construct a differential version of algebraic $K$-theory for such schemes we need a complex computing absolute Hodge cohomology built out of forms, with a good functorial behaviour. 
This is accomplished in Subsections \ref{sec:AbsHodgeComplex} and \ref{jul1001} building upon work of Burgos \cite{BurgosLogarithmic}.
In Subsection \ref{feb1002}, we introduce the domain of the regulator, a sheaf of spectra representing algebraic $K$-theory. While the actual construction of the regulator is deferred to Section \ref{jan1962}, we formulate Beilinson's conjecture 
in Subsection \ref{nov2602}. 
The constructions  in Section \ref{jan1962}   are technically  involved because of the presence of an additional manifold direction.  The present section can be considered as an introduction to  Section \ref{jan1962},   specialized to the case where the additional manifold is a point.

\subsection{The  absolute Hodge complex $\DR_{\C}$}
\label{sec:AbsHodgeComplex}

 By a variety over $\C$ we will mean a  {reduced,} separated scheme  such that all  {its} connected components are  of finite type over $\C$.
 We let $\Sm_\C$ denote the site of {smooth} varieties over $\C$ with the topology given by Zariski open  coverings.   

\begin{ex}
The site $\Sm_{\C}$ contains for example:\begin{enumerate}\item 
   the multiplicative group $\mathbb{G}_{m,\C}$, \item the affine space $\bbA_{\C}^{n}$, \item  but also large objects like the disjoint union of complex projective spaces
$\bigsqcup_{n\in \nat} \P_{\C}^{n}$. 
\end{enumerate}
 \end{ex}

 Let $X$ be a smooth variety over $\C$.  According to Nagata \cite{Nagata}   and Hironaka \cite{Hironaka} there exists an open immersion $j\colon X \hookrightarrow \overline X$ into a smooth  variety $\overline X$ over $\C$ such that  each connected component of $\overline X$ is proper over $\C$ and each connected component of  $D:=\overline X - X$ is a divisor with normal crossings. We call $\overline X$ a good compactification of $X$.
 
 \begin{ex} \label{jqwhdkjqwhdwqkdhkwqdwqdwqdwqd}\mbox{}
\begin{enumerate}\item  A good compactification of $\mathbb{G}_{m,\C}$ is given by $\P_{\C}^{1}$ with $D=\{0,\infty\}$.
\item A good compactification of the affine space $\bbA_{\C}^{n}$ is given by $\P_{\C}^{n}$ with $D\cong \P_{\C}^{n-1}$.
\item A good compactification of $\bigsqcup_{n\in \nat} \P_{\C}^{n}$ is $\bigsqcup_{n\in \nat} \P_{\C}^{n}$ itself.
\end{enumerate} 
 \end{ex}

 The set of $\C$-valued points $X(\C)$ has a natural structure of a complex manifold. By abuse of notation we will denote it simply by $X$. It will always be clear from the context whether we consider $X$ as an abstract variety or as a complex manifold. 
Burgos introduces in \cite{BurgosLogarithmic} a differential graded algebra $A_{\overline X}(X,\log D)$ of complex valued smooth differential forms on $X$ with logarithmic singularities along $D$.
 {It} is part of a   mixed $\R$-Hodge complex  in the sense of \cite[8.1.5]{HodgeIII}  (see \eqref{jul1010} below).
We first describe its real subcomplex  
$$A_{\overline X,\R}(X,\log D)\subset A_{\overline X}(X,\log D)$$ with its weight filtration 
\begin{equation}\label{eq:WeightFiltration}
\cW_{*}A_{\overline X,\R}(X,\log D)\ .
\end{equation}
For a smooth manifold $Y$,  let 
\begin{equation}\label{frefewfwffewfewfewfewfe}
A_{\R}(Y)\subset A(Y)
\end{equation} 
denote the differential graded algebras of smooth {real, respectively, complex valued differential forms on $Y$}.
We define $A_{\overline X,\R}(X,\log D)\subset A_{\R}(X)$ to be the subcomplex which is  locally generated as an algebra over $A_{\R}(\overline{X})$ by $1$ and the forms
$$\log(z_{i}\bar z_{i})\ ,\quad \Ree(\frac{dz_{i}}{z_{i}})\ , \quad \Imm(\frac{dz_{i}}{z_{i}})\ ,\quad   i\in I\ .$$ Here 
 the $z_{i}$, $i\in I$ are local coordinates of $\overline X$ in the analytic topology which define the divisor $D$ locally by the equation $\prod_{i\in I}z_{i}=0$.
The weight filtration on $A_{\overline X,\R}(X,\log D)$ is the minimal multiplicative increasing filtration  {by subcomplexes}
such that
$\cW_{0}A_{\overline X,\R}(X,\log D)=A_{\R}(\overline X)$
and the forms $\log(z_{i}\bar z_{i})$,  $\Ree(\frac{dz_{i}}{z_{i}})$, and $ \Imm(\frac{dz_{i}}{z_{i}})$ generate the weight-$1$  {subcomplex}.

We let $A_{\overline X}(X,\log D)\subseteq A(X)$ be the  image of  $ A_{\overline X,\R}(X,\log D)\otimes \C$ under the canonical map
 $ A_{ \R}(X )\otimes \C\to A(X)$.
The complex  $A_{\overline X}(X,\log D)$ has a  decreasing multiplicative Hodge filtration 
\begin{equation}\label{eq:HodgeFiltration}
\cF^{*}A_{\overline X}(X,\log D)\ .
\end{equation}
By definition, a form $\omega\in A_{\overline X}(X,\log D)$ belongs to $\cF^{p}A_{\overline X}(X,\log D)$ if it is locally
a sum of forms of the form  {$d\zeta_{i_{1}}\wedge \dots\wedge d\zeta_{i_{p}}\wedge \omega^{\prime}$, where the $\zeta_{i}$} are complex coordinates of $X$.

 Let $$\iota:A_{\overline X,\R}(X,\log D)\otimes_{\R}\C\stackrel{\cong}{\to}A_{\overline{X}}(X,\log D)$$ be the canonical identification.
 {By \cite[Corollary 2.2]{BurgosLogarithmic}, the} triple  
\begin{equation}\label{jul1010}
\left((A_{\overline X,\R}(X,\log D),\cW_{*}),(A_{\overline{X}}(X,\log D), \cW_*,\cF^*),\iota\right) 
\end{equation} 
is a mixed $\R$-Hodge complex. 
Recall that this implies in particular that the cohomology $H^*\left(A_{\overline X,\R}(X,\log D)\right)$ carries a mixed $\R$-Hodge structure  {in the sense of \cite[2.3.1]{HodgeII}.}
 {Precisely, it is given as follows:}
 By  \cite[Theorem 2.1]{BurgosLogarithmic} together with   \cite[3.1.8]{HodgeII}
    the embeddings
\begin{equation}\label{dez0401}
A_{\bar X,\R}(X,\log D)\to A_{\R}(X), \   \quad \quad \ A_{\bar X}(X,\log D)\to A(X)
\end{equation}
are quasi-isomorphisms.
 {So $H^{n}(A_{\overline X,\R}(X,\log D)) \cong H^{n}(X;\R)$ is the singular cohomology of the complex manifold $X(\C)$ with $\R$-coefficients.}
The weight filtration  {on $H^{n}(X;\R)$} is  induced via  \eqref{dez0401} by the filtration \eqref{eq:WeightFiltration},  shifted by $n$,
\[
\cW_kH^n(X;\R) := \im\left(H^n(\cW_{k-n}A_{\bar X,\R}(X,\log D)) \to H^n(A_{\bar X,\R}(X,\log D))\right)\ ,
\]
and the Hodge filtration {on the complexification $H^n(X;\R) \otimes_\R \C \cong H^n(X;\C)$} is induced   via  \eqref{dez0401} by \eqref{eq:HodgeFiltration}.
By \cite[3.2.5]{HodgeII}, the mixed $\R$-Hodge structure on $H^{*}(X;\R)$ is independent of the choice of the  good compactification.

 \begin{ex}\begin{enumerate}\item
We have  {$H^{0}(\bbA^{n}_{\C};\R)\cong \R(0)$}, where the mixed $\R$-Hodge structure $\R(0)$ on the underlying real vector space $V:=\R$
  is characterized by 
$$ {0=\cW_{1}V \subset\cW_{0} V  = V} \ , \quad  {V_{\C}=\cF^{0}V_{\C}\supset \cF^{1} V_{ \C} =0}\ .$$
 Indeed,  the real vector space {$H^{0}(\bbA^{n}_{\C};\R)$} is generated by the class of the constant function $1\in  {A^{0}_{\P_{\C}^{n},\R}(\bbA_{\C}^{n},\log\P_{\C}^{n-1})}$, which belongs to weight- and Hodge filtration zero.
\item
As mixed $\R$-Hodge structures, we have   {$H^{1}(\bbG_{m,\C};\R) \cong \R(-1)$, where $\R(-1)$ denotes the dual of the Tate $\R$-Hodge structure $\R(1)$ (cf.~\cite[2.1.13]{HodgeII})}. The underlying real vector space of  {$\R(-1)$ is   $V:=\R$}. The weight- and Hodge filtrations are  characterized by 
$$
 {0 = \cW_{1}V \subset \cW_{2}V = V}
$$ 
 and
$$
 {V_{\C} = \cF^{1}V_{\C} \supset \cF^{2}V_{\C}=0.}
$$
Indeed, $H^{1}( {\bbG_{m,\C}};\R)$ is generated by the class of
$$
\frac{dz}{z}+\frac{d\bar z}{\bar z}\in  \cW_{ {1}}A^{1}_{ {\P_{\C}^{1}},\R}( {\bbG_{m,\C}},\log\{0,\infty\})\ ,
$$
and  
$H^{1}( {\bbG_{m,\C}};\C)$ is generated by the class of 
$$
\frac{dz}{z} \in \cF^{1}A^{1}_{ {\P_{\C}^{1}}}( {\bbG_{m,\C}},\log\{0,\infty\})\ .
$$
 \end{enumerate}
\end{ex}
 
Recall that the d\'ecalage of the weight filtration \eqref{eq:WeightFiltration} is given by 
\begin{equation}\label{sep0407}
\hat \cW_{k}A_{\overline X,\R}^{n}(X,\log D) := \{ \omega\in \cW_{k-n}A^{n}_{\overline X, \R}(X,\log D) \,|\, d\omega\in \cW_{k-n-1}A^{n+1}_{\overline X, \R}(X,\log D)\}\ .
\end{equation}
It  induces the weight filtration on $H^*(X,\R)$  without the shift by the degree (see \cite[1.3.4]{HodgeII}), and by \cite[3.2.10]{HodgeII} the associated spectral sequence degenerates at $E_1$. In particular, we have
\begin{equation}\label{eq:WeightFiltOnCohom}
\cW_kH^*(X;\R) \cong  H^*(\hat \cW_{k}A_{\overline X,\R}(X,\log D))\ .
\end{equation}
 The same reasoning applies to the cohomology of any mixed $\R$-Hodge complex (see \cite[8.1.9]{HodgeIII}).

In the following, we get rid of the choice of the good compactification and define a {pre}sheaf of mixed $\R$-Hodge complexes on the site $\Sm_\C$. 
For a  smooth variety $X$, the category $I_{X}$ of good  compactifications of $X$ with respect to maps under $X$ {is cofiltered and essentially small}.
For a morphism
$$\xymatrix{&X\ar[dr]\ar[dl]&\\\overline X\ar[rr]&& \overline X^{\prime}} $$
in $I_{X}$
we have an inclusion 
\begin{equation}\label{dez1101}
A_{\overline X^{\prime}}(X,\log (\overline X^{\prime}\setminus X))\subseteq A_{\overline X}(X,\log (\overline X\setminus X))\end{equation}
which is compatible with the real subcomplex and the weight and Hodge filtrations.
 By \cite[Thm. 3.2.5]{HodgeII} and \eqref{eq:WeightFiltOnCohom} it is in fact a bifiltered quasi-isomorphism with respect to $\hat \cW$ and $\cF$. Hence we get functors
\begin{equation}\label{dez0312}
 I^{op}_{X}\to \Ch\ ,\quad (X \to \overline X)\mapsto A_{\overline X}(X,\log (\overline X\setminus X)),
\end{equation}
and subfunctors corresponding to the real subcomplex and the weight and Hodge filtrations.

 {For any 1-category $\bC$, we denote the 1-category of presheaves on $\Sm_{\C}$ with values in $\bC$ by $\PSh_{\bC}(\Sm_{\C})$. There is an equivalence of $\infty$-categories 
\[
\Nerve(\PSh_{\bC}(\Sm_{\C})) \simeq \Fun(\Nerve(\Sm_{\C}^{op}), \Nerve(\bC)).
\]
We will often use similar identifications implicitly when we consider the category of functors between 1-categories as $\infty$-category.
}
\begin{ddd}
\label{dez1401}
We define the presheaf of chain complexes
$$A_{\log}\in \PSh_{\Ch}(\Sm_{\C})$$ by  
$$X \mapsto A_{\log}(X) := \colim_{I_{X}^{{op}}} A_{\overline X}(X,\log (\overline X \setminus X))\ .$$ 
Furthermore, we define
$$ A_{\log,\R},\  \hat \cW_{*}A_{\log,\R},\ \hat \cW_{*} A_{\log},\ \cF^{*}A_{\log}\  \in \PSh_{\Ch}(\Sm_{\C})$$ in a similar manner.
\end{ddd}
{Using the exactness of a filtered colimit it is clear that for any $X\in \Sm_\C$ the triple
\[
\left((A_{\log,\R}(X), \cW_*), (A_{\log}(X), \cW_*, \cF^*), \iota\right)
\]
is a mixed $\R$-Hodge complex computing the mixed $\R$-Hodge structure on $H^*(X;\R)$ described above which moreover depends in a functorial way on $X$.}

Let $S$ be a site, and let $\bS$ be the $\infty$-category $\Nerve(S^{op})$.
Let $\cC$ be a presentable $\infty$-category  {\cite[Ch.~5]{HTT}}. 
Using the covering families of the site  {$S$}, we  define the full subcategory of  sheaves
\begin{equation}\label{eq:sheaves}
 \Fun^{desc}(\bS,\cC)\subseteq  \Fun(\bS,\cC)
\end{equation}
 which are the objects satisfying descent.
See  Subsection~\ref{dez2701} for more details.
 
\begin{ex}\label{ex:sheaves}
Consider the site $\Mf$ of smooth manifolds with the open covering topology, and write $\bS_{Mf}:= \Nerve(\Mf^{op})$.
\begin{enumerate}
\item Let $\underline{\R} \in \Sh_{\Ch}(\Mf)$ be the constant sheaf on $\Mf$ with value $\R$, considered as a complex concentrated in degree 0. 
Then $\underline{\R}$, considered as an object of the $\infty$-category
\(
\Fun(\bS_{Mf}, \Nerve(\Ch)) \simeq \Nerve(\PSh_{\Ch}(\Mf))
\)
is a sheaf.
However, its image $\iota{(\underline{\R})}$ in $\Fun(\bS_{Mf}, \Nerve(\Ch)[W^{-1}])$ is not a sheaf. For example, 
if $U_{\bullet}$ is the \v{C}ech nerve (in the sense of Remark~\ref{rem:sheaves}) of the covering of the 1-sphere $S^{1}\subset \C$ by $S^{1}\setminus\{1\}, S^{1}\setminus\{-1\}$, then 
 $\lim_{\Nerve(\Delta)}\iota(\underline{\R})(U_{\bullet})$ has cohomology $\R$ in degrees 0 and 1, whereas $\iota(\underline{\R})(S^{1})$ has cohomology only in degree 0.
\item The complex of smooth forms $A$ is a sheaf when considered in $\Fun(\bS_{Mf}, \Nerve(\Ch))$  since it consists degree-wise of sheaves in the classical sense. By Lemma \ref{dqwhdlqdwqdqwdwqd}, 2.~it is also a sheaf when considered in $\Fun(\bS_{Mf}, \Nerve(\Ch)[W^{-1}])$.
\end{enumerate}
\end{ex}

 {
We return to the framework of smooth varieties over $\C$.
We consider the $\infty$-category
\[
\bS_{\C} := \Nerve(\Sm_{\C}^{op}).
\]
}
The presheaves of Definition \ref{dez1401} can be seen as objects of the $\infty$-category 
$$\Fun(\bS_{\C}, \Nerve(\Ch))\simeq \Nerve(\PSh_{\Ch}(\Sm_{\C}))$$
or also as objects of $\Fun(\bS_\C, \Nerve(\Ch)[W^{-1}])$.   We claim that they are in fact sheaves when considered as objects in the latter.

\begin{lem}\label{lem:DescentHodgeCplx}
We have 
$$A_{\log}\ ,A_{\log,\R}\ , \hat \cW_{{k}}A_{\log,\R}\ , \cF^{p}\cap \hat \cW_{k} A_{\log}  \in\Fun^{desc}(\bS_{\C}, \Nerve(\Ch)[W^{-1}])\ .$$
\end{lem}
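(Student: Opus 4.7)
The strategy is to handle the unfiltered presheaves $A_{\log}$ and $A_{\log,\R}$ first, by comparison with the classical complexes of smooth forms, and then to propagate descent to the filtered pieces via a strictness argument for the mixed Hodge structure.

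For $A_{\log}$ and $A_{\log,\R}$, the key input is the pair of natural quasi-isomorphisms recalled in \eqref{dez0401}, which, after passing to the colimit over compactifications, give inclusions of presheaves $A_{\log,\R} \hookrightarrow A_{\R}$ and $A_{\log} \hookrightarrow A$ that are object-wise quasi-isomorphisms. Consequently these morphisms are equivalences in $\Fun(\bS_\C, \Nerve(\Ch)[W^{-1}])$, and it suffices to verify descent for the presheaves $A$ and $A_\R$ of smooth forms. This is classical: the sheaves of $C^{\infty}$-valued differential forms are fine (admit partitions of unity), so the augmented \v{C}ech complex associated to any Zariski cover is exact, yielding descent.

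For the filtered presheaves $\hat\cW_{k}A_{\log}$, $\cF^{p}A_{\log}$, and their intersections, the unfiltered comparison above no longer applies directly because there is no natural weight filtration on $A$. Instead, the plan is to exploit strictness. By \cite[3.2.10]{HodgeII} the spectral sequence of the filtered complex $(A_{\log}(X),\hat\cW)$ degenerates at $E_{1}$, and similarly for $\cF$ and the bifiltration, so each filtered inclusion is injective on cohomology with image the corresponding piece of the mixed Hodge structure on $H^{*}(X;\R)$. Descent for the filtered pieces will then follow from descent for $A_{\log}$ together with the fact that the weight and Hodge filtrations on singular cohomology are functorial under Zariski restrictions (i.e.\ Mayer--Vietoris is a sequence of morphisms of MHS). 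The precise formal mechanism is the criterion of \cite[3.1.8]{HodgeII}: a morphism of biregular filtered complexes whose underlying map is a quasi-isomorphism and which is an isomorphism on $E_{1}$ is a filtered quasi-isomorphism. Applied to the comparison from $\hat\cW_{k}A_{\log}(X)$ (resp. $\cF^{p}\cap\hat\cW_{k}A_{\log}(X)$) to the totalization of the filtered \v{C}ech complex, this promotes unfiltered descent together with $E_1$-degeneration to filtered descent.

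The main obstacle I anticipate is the bookkeeping needed to make the strictness argument work at the level of the \v{C}ech totalization: one must know that the induced filtrations on $\holim$ behave correctly, i.e.\ that the totalization commutes with the passage to filtered subcomplexes up to quasi-isomorphism, and that the $E_{1}$-degeneration persists on the \v{C}ech side. This is ultimately a consequence of the compatibility of the MHS formalism with Mayer--Vietoris combined with the biregularity of the filtrations $\hat\cW$ and $\cF$ on each $A_{\log}(U_{i_{0}}\cap\cdots\cap U_{i_{n}})$, but care is required to organize the argument so that all comparison maps in the $\infty$-category $\Nerve(\Ch)[W^{-1}]$ are genuinely induced by filtered maps of complexes.
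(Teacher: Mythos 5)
Your approach is essentially the same as the paper's: descent for $A_{\log}$, $A_{\log,\R}$ by comparison with the complexes $A$, $A_\R$ of all smooth forms via \eqref{dez0401}, then propagation to the filtered pieces via $E_1$-degeneration and strictness of morphisms of mixed Hodge structures. The one obstacle you flag — whether the \v{C}ech totalization $\tot A_{\log,\R}(U_{\bullet})$ itself carries a mixed $\R$-Hodge complex structure so that $E_1$-degeneration and \eqref{eq:WeightFiltOnCohom} can be invoked there — is exactly the non-formal step, and the paper closes it by citing Deligne \cite[8.1.15]{HodgeIII}: the total complex of a cosimplicial mixed $\R$-Hodge complex is again a mixed $\R$-Hodge complex (with the diagonal weight filtration and the levelwise Hodge filtration). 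Combined with the elementary identity $\hat\cW_k\bigl(\tot A_{\log,\R}(U_{\bullet})\bigr) = \tot\bigl(\hat\cW_k A_{\log,\R}(U_{\bullet})\bigr)$ (d\'ecalage of the diagonal filtration commutes with $\tot$), this reduces descent for the filtered pieces to strictness of the MHS morphism $H^*(A_{\log,\R}(X)) \to H^*(\tot A_{\log,\R}(U_{\bullet}))$, which is an isomorphism by unfiltered descent. If you wish to complete your write-up, replacing your appeal to \cite[3.1.8]{HodgeII} with the Deligne totalization result \cite[8.1.15]{HodgeIII} plus the $\hat\cW$-$\tot$ commutation is the cleanest way to supply the missing step.
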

\begin{proof}
The functors $A$ and $A_{\R}$ satisfy descent in the analytic, and hence in the Zariski topology.
Because of the  quasi-isomorphisms \eqref{dez0401}, the inclusions
\begin{equation}\label{dez0402}A_{\log}(X)\to A(X)\ ,\quad A_{\log,\R}(X)\to A_{\R}(X)\end{equation}
are quasi-isomorphisms for all $X\in \Sm_{\C}$.
 Hence the subfunctors
$A_{\log}$ and  $A_{\log,\R}$ satisfy Zariski descent, too.

Next we handle $\hat \cW_k$. Obviously, condition (A) of Remark~\ref{rem:sheaves} is satisfied. We now check (B). Let  \begin{equation}\label{apr1601}
\Tot: \cCh \to \Ch
\end{equation}
be the total complex functor from cosimplicial chain complexes to chain complexes{, given on a cosimplicial complex $[q]\mapsto F^{*}[q]$ by}
 {$\Tot(F)^{n}:=\prod_{p+q=n}F^{p}[q]$}. For $F \in  \cCh$,  we can calculate the limit of the
corresponding object $\iota(F)\in \Fun(\Nerve(\Delta),\Ch[W^{-1}])$ by
$$\lim_{\Nerve(\Delta)} \iota(F)\simeq \Tot(F)\ $$
 {(see \cite[Problem~4.3.2]{skript} for an argument).}

Let $U_{\bullet}$ be the \v{C}ech nerve of a Zariski cover of $X$. 
We have to show that the map
\begin{equation}\label{may1102}
\hat \cW_{k} A_{\log,\R}(X)\to \Tot\left(\hat \cW_{k}A_{\log, \R}(U_{\bullet})\right)
\end{equation} 
is a quasi-isomorphism.
Now 
$$
\left((A_{\log,\R}(U_{\bullet}), \cW_*), (A_{\log}(U_{\bullet}), \cW_*, \cF^*), \iota\right)
$$
is a cosimplicial mixed $\R$-Hodge complex. Deligne has shown  {in} \cite[8.1.15]{HodgeIII}\footnote{ {Note that our complexes are bounded below. Hence it makes no difference, whether we use the direct product total complex $\Tot$ or the direct sum total complex as Deligne does.}} that the associated total complex $ {\Tot} A_{\log,\R}(U_{\bullet})$ inherits the structure of a mixed $\R$-Hodge complex whose weight and Hodge filtrations $\cW_*$ and $\cF^*$ are given by
\[
\cW_{k}(\Tot A_{\log,\R}(U_{\bullet})):= \bigoplus_p \cW_{k+p}A_{\log,\R}(U_{p}) \quad \text{and}\quad
\cF^p(\Tot A_{\log}(U_{\bullet})) := \Tot (\cF^pA_{\log}(U_{\bullet})),
\]
respectively. Hence the cohomology $H^*( {\Tot} A_{\log,\R}(U_{\bullet}))$ carries a mixed $\R$-Hodge structure, and by \eqref{eq:WeightFiltOnCohom} and the remark following it, the horizontal maps in the diagram
\begin{equation}\label{may1101}\begin{split}
\xymatrix{
H^*\left(\hat\cW_k\left(\Tot A_{\log,\R}(U_{\bullet})\right)\right) \ar[r]^{\cong} & \cW_kH^*\left(\Tot A_{\log,\R}(U_{\bullet})\right)\\
H^*\left(\hat\cW_k A_{\log,\R}(X)\right) \ar[r]^{\cong} \ar[u] & \cW_kH^*\left(A_{\log,\R}(X)\right)\ar[u]
}
\end{split}
\end{equation}
are isomorphisms.

It is easy to see that the d\'ecalage of the weight filtration satisfies
\[
\hat \cW_{k} \left(\Tot A_{\log,\R}(U_{\bullet})\right)= \Tot\left(\hat \cW_{k}A_{\log,\R}(U_{\bullet})\right)\ .
\]
Hence, in order to prove that \eqref{may1102} is a quasi-isomorphism, it suffices to check that the right vertical map in \eqref{may1101} is an isomorphism. But $H^*(A_{\log,\R}(X)) \to H^*(\Tot A_{\log,\R}(U_{\bullet}))$ is a morphism of mixed $\R$-Hodge structures, hence strict with respect to both filtrations \cite[1.2.10]{HodgeII}, and an isomorphism by Zariski descent for $A_{\log,\R}$.

The main non-formal input for this argument {was \eqref{eq:WeightFiltOnCohom}}, 
which {is a consequence} of the fact that the  spectral sequence associated to ${\hat\cW_{ * }}$ degenerates at $E_{1}$. 
Since {this holds}
true  for $\cF^{ * }A_{\log}$ and  the induced filtration  $\hat \cW_k\cap \cF^{ {*}}A_{\log}$, we can argue similarly  for the remaining cases. \end{proof}

\begin{nota}\label{nota:cone} 
Let us introduce some notation concerning {complexes and} cones.
Using the convention $C_{-i} = C^{i}$, a chain complex $(C,\partial_C)$ will either be indexed cohomologically, 
$$\dots \to C^i\xrightarrow{\partial_C} C^{i+1} \to \dots\ ,$$
 or homologically, 
 $$\dots \to C_{-i} \xrightarrow{\partial_C}   C_{-i-1} \to \dots \ .$$
 
For an integer $k$, we define the shifted complex $(C[k], \partial_{C[k]})$ by $C[k]^i:=C^{i+k}$, or equivalently $C[k]_i=C_{i-k}$, with differential $\partial_{C[k]}=(-1)^k\partial_{C}$.

If $\phi:E\to F$ is a morphism of chain complexes, then we have 
$$
\Cone(E\to F)^{i}:=E^{i+1}\oplus F^{i}\ .
$$
For $e\in E^{i+1}$ and $f\in F^{i}$ we write 
\begin{equation}\label{wefwefewfewfefewf89798234234234234} 
(e,f)\in \Cone(E\to F)^{i}
\end{equation}
for the corresponding element in the cone. The differential of the cone is given by 
$$d(e,f):=(-de,df-\phi(e))\ .$$
\end{nota}

\begin{ddd}\label{apr1701}
For $p\in\Z$ we define the \emph{absolute Hodge complex   twisted by $p$} as 
\[\hspace{-0.3cm}
\DR_{\C}(p) := \Cone\left(\left((2\pi i)^{p}\hat \cW_{2p}A_{\log, \R}\right) \oplus \left(\hat \cW_{2p} \cap \cF^{p}A_{\log}\right) \to \hat \cW_{2p}A_{\log} \right)[ 2p -1] \in \PSh_{\Ch}(\Sm_{\C}) \] 
where the map defining the cone is given by $(\omega, \eta) \mapsto \omega - \eta$.
Furthermore, we set $$\DR_{\C} :=\prod_{p\ge 0} \DR_{\C}(p)\ .$$
\end{ddd}

Note that by Lemma \ref{lem:DescentHodgeCplx} we can consider
\begin{equation}\label{dez1406}\DR_{\C},\:\DR_{\C}(p)\in \Fun^{desc}(\bS_{\C}, \Nerve(\Ch)[W^{-1}])\ .\end{equation}
 The cohomology of $\DR(p)(X)$ is, up to a shift, the absolute Hodge cohomology of $X$ as defined by Beilinson in \cite{MR862628}:
\begin{equation}\label{jan3001}
H^k(\DR_{\C}(p)(X)) \cong H^{k+2p}_{\text{Hodge}}(X,\R(p)).
\end{equation}

\begin{ex}\label{kdjqlkwdqwdqwdwqdwqdwqdwqd}
As an illustration, we calculate the absolute Hodge cohomology of 
$\Spec(\C)$. We get $$
H^k(\DR_{\C}(p)(\Spec(\C)))\cong 
\begin{cases}
\C/i^{p}\R & \text{if } p\ge 1, k=1-2p,\\
\R &\text{if } p=0,k=0,\\
0 & \text{else.}
\end{cases}
$$
Here we write $\C/i^{p}\R$ instead of $\R$ in order to indicate  the action of the  Galois group $\Gal(\C/\R)\cong \Z/2\Z$ by complex conjugation. This will be further used in Example \ref{wklqdkjkqwdjqwldwqdqdwqdwqd}. 
\end{ex}

 \subsection{Arithmetic sites and the absolute Hodge complex $\DR_{\Z}$} \label{jul1001}
 
In Subsection \ref{sec:AbsHodgeComplex} we described the absolute Hodge complex $\DR_{\C}(X)$ of a smooth variety $X$  {over $\C$}.
In the present subsection we extend this construction to noetherian, regular, and separated schemes $X$ such that  the base change $X\otimes \Q:=\Spec(\Q)\times_{\Spec(\Z)}X$ is of finite type over $ \Spec (\Q)$.  {We call them arithmetic schemes.} The basic idea  is to apply $\DR_{\C}$ to   the complexification $X\otimes \C:= \Spec (\C)\times_{ \Spec(\Z)}X$,  which is a smooth variety over $\C$,  and to take the action of complex conjugation into account. 

We let $\Reg_{\Z}$ be the site of {arithmetic schemes}
 with the topology given by Zariski open coverings. We further introduce the $\infty$-category $$\bS_{\Z}:=\Nerve(\Reg_{\Z}^{op})\ .$$

The group $\Gal(\C/{\R})\cong \Z/2\Z$ acts by complex conjugation   on the site $\Sm_\C$.
 {It also acts on the site of complex manifolds:}
The
 complex conjugate of a complex manifold $X$ is the complex manifold
$\bar X$ which   has the same underlying smooth manifold as $X$ but the
opposite complex structure. If $X$ is complex algebraic, then so is $\bar X$.

The base change functor
$$B : {\bReg}_\Z\to \Sm_\C\ , \quad B(X):=X\otimes \C$$
 is compatible with the topologies and $\Z/2\Z$-invariant in the sense that
there exists a natural isomorphism 
\begin{equation}\label{jan3101}
u_{X}:B(X)\stackrel{\sim}{\to} \overline{B(X)}
\end{equation}
given  by complex conjugation.

   An equivariant structure on a presheaf $F\in \PSh_{{\cC}}(\Sm_{\C})$ with values in a category $\cC$  is given by a natural isomorphism $c_{X}:F(X)\to F(\bar X)$ for all $X\in \Sm_{\C}$ such that $c_{\bar X}\circ c_{X}=\id$. For example, the sheaf of complex differential forms $A\in\PSh_{\Ch}(\Sm_{\C})$ is equivariant by
$c_{X}: A(X)\stackrel{\sim}{\to} A(\bar X)$,  $c_{X}(\omega):= \bar \omega$. 
This action clearly preserves the real subspace $A_{\R}(X)$,   restricts to  $c_{X}:A_{\log,\R}(X)\stackrel{\sim}{\to}  A_{\log,\R}(\bar X)$,  and preserves the weight and Hodge filtrations.
Hence it  induces an equivariant structure on the absolute Hodge complexes $\DR_{\C}(p)$ for $p\in \nat$ and their product $\DR_{\C}$.

  If a presheaf 
$F\in \PSh_{{\cC}}(\Sm_{ \C })$ is equivariant, then the pull-back $B^{*}F\in \PSh_{{\cC}}( {\bReg}_\Z)$ has a natural $\Z/2\Z$-action given by $\omega\mapsto u_{X}^{*} c_{B(X)}(\omega)$ for $\omega\in F(B(X))$.

If $F$ is a chain complex  with an action of a group $G$, then we let $F^G\subseteq F$ denote the subcomplex of  $G$-invariants. In the case of presheaves of chain complexes with $G$-action we take the invariants object-wise.

\begin{ddd}\label{nov2220}
We define the arithmetic versions of the absolute Hodge complex by
$$\DR_{\Z}(p):=(B ^{*}\DR_{\C}(p))^{\Z/2\Z}\in \PSh_{\Ch}( {\bReg}_{\Z})$$ and
$$\DR_{\Z}:=  (B ^{*}\DR_{\C})^{\Z/2\Z}\in \PSh_{\Ch}( {\bReg}_{\Z})\ .$$
\end{ddd}
Note that $\DR_{\Z}\cong \prod_{p\ge 0}\DR_{\Z}(p)$.

Let  {$G\Ch_{\Q}$} be the  category of chain complexes of $\Q$-vector spaces with a $G$-action.
If $G$ is finite, then the functor $G\Ch_{\Q}\to  \Ch$ given by $F\mapsto F^{G}$ and forgetting the $\Q$-vector space structure preserves quasi-isomorphism. 
Since
 $B^{*}\DR_{\C}$ takes values in $\Z/2\Z\Ch_{\Q}$, 
 the object-wise operation of 
taking invariants  $B^{*}\DR_{\C}\leadsto B^{*}\DR_{\C}^{\Z/2\Z}$  preserves descent when we consider them as presheaves with target in $\Nerve(\Z/2\Z\Ch)[W^{-1}]$ and  $\Nerve( \Ch)[W^{-1}]$, respectively.
 It thus follows from    \eqref{dez1406}  that  the arithmetic versions of the absolute Hodge complexes have the descent property, too:
$$\DR_{\Z}(p),\:\DR_{\Z}\in \Fun^{desc}(\bS_{\Z},\Nerve(\Ch)[W^{-1}])\ .$$

\begin{ex} \label{wklqdkjkqwdjqwldwqdqdwqdwqd} 
As an illustration, we calculate the absolute Hodge cohomology of 
$X=\Spec(R)$ for a number ring $R$.  Note that $X(\C)$ is the set of embeddings $R\hookrightarrow \C$. It decomposes into the union 
 $$X(\C)=X(\C)^{\Gal(\C/\R)}\sqcup X(\C)^{\prime}$$ 
 {of real and complex embeddings.}
Then $H^{k}(\DR_{\Z}(p)(X))$ decomposes as a sum over the orbits $X(\C)/\Gal(\C/\R)$. Every   {real embedding} $\sigma\in 
X(\C)^{\Gal(\C/\R)}$ contributes a copy of $H^{k}(\DR_{\C}(p)(\Spec(\C)))^{\Gal(\C/\R)}$, while every
 {pair  $(\sigma, \bar\sigma)\in X(\C)^{\prime}/\Gal(\C/\R)$ of a complex embedding and its conjugate}
contributes a copy of $H^{k}(\DR_{\C}(p)(\Spec(\C)))$.
  Using Example \ref{kdjqlkwdqwdqwdwqdwqdwqdwqd}, we get
$$
H^k(\DR_{\C}(p)(\Spec(\C)))^{\Gal(\C/\R)}\cong 
\begin{cases}
\R & \text{if } p\ge 1 \text{ odd}, k=1-2p,\\
\R &\text{if } p=0,k=0,\\
0 & \text{else.}
\end{cases}
$$
We define the integers 
\begin{equation}\label{dklqwkldwqdwqdwqdwqdwqdwd}
r_{1}:= {\#} X(\C)^{\Gal(\C/\R)}\ , \quad r_{2}:= {\#}(X(\C)^{\prime}/\Gal(\C/\R))\ .
\end{equation}
Then we have
$$
H^k(\DR_{ {\Z}}(p)( {X}))^{\Gal(\C/\R)}\cong 
\begin{cases}
\R^{r_{1}+r_{2}} & \text{if }p\ge 1 \text{ odd}, k=1-2p,\\
\R^{r_{2}} &\text{if } p\ge 1 \text{ even}, k=1-2p,\\
\R^{r_{1}+r_{2}} & \text{if }p=0, k=0
\\0 & \text{else.}
\end{cases}
$$ 
 \end{ex}

\subsection{$K$-theory}
\label{feb1002}

In this subsection we discuss the definition of algebraic $K$-theory in the complex and the arithmetic case.
A vector bundle on a smooth variety $X$ over $\C$  {or on an arithmetic scheme $X$} is a  locally free sheaf of $\cO_{X}$-modules which is locally of finite rank. The category $\Vect_{\C}(X)$ (or $\Vect_{\Z}(X)$, respectively) of vector bundles on $X$ is symmetric monoidal with respect to the direct sum. A morphism $X\to Y$   gives rise to a symmetric monoidal functor
$\Vect_{\C}(Y)\to \Vect_{\C}(X)$ (or $\Vect_{\Z}(Y)\to \Vect_{\Z}(X)$, respectively).  Finally, vector bundles can be glued along open coverings.
All this can be formalized by saying that we have   sheaves
\begin{equation}\label{rh23kr23r32e3e3er32r32r3r}
\Vect_{\C}\in \Fun^{desc}(\bS_{\C},\CommMon(\Nerve(\Cat)[W^{-1}]))\ ,
\end{equation} 
and 
$$
\Vect_{\Z}\in \Fun^{desc}(\bS_{\Z},\CommMon(\Nerve(\Cat)[W^{-1}]))\ .
$$
 {If we apply the algebraic $K$-theory functor from Definition~\ref{nov1901} to these sheaves, then we get presheaves of spectra 
$K(\Vect_{\C}) \in \Fun(\bS_{\C},\Sp)$  and $K(\Vect_{\Z}) \in \Fun(\bS_{\Z},\Sp)$. Note that, since
$K$ involves the group-completion $\Omega B$, which is a left-adjoint and does not preserve limits, in general, $K$ does not preserve sheaves. 
We} will use the sheafification functors  {$L$} which are the left-adjoints of the inclusions of sheaves into presheaves  (see Subsection \ref{dez2701}) 
$$
L:\Fun(\bS_{\C},\Sp)\leftrightarrows \Fun^{desc}(\bS_{\C},\Sp)\ , \quad L:\Fun(\bS_{\Z},\Sp)\leftrightarrows \Fun^{desc}(\bS_{\Z},\Sp)\ .
$$
\begin{ddd}\label{jul0870}
We define the sheaves of $K$-theory spectra by 
$$
\bK_{\C}:=L(K(\Vect_{\C}))\in \Fun^{desc}(\bS_{\C},\Sp)\ , \quad \bK_{\Z}:=L(K(\Vect_{\Z}))\in \Fun^{desc}(\bS_{\Z},\Sp).
$$
\end{ddd} 


%

\begin{rem} In this remark
  we explain why Definition \ref{jul0870} reproduces 
 the    standard version of algebraic $K$-theory  {as defined by Quillen \cite{Quillen}.}
In order to be specific we consider the arithmetic case.
%

We consider the subcategory of  affine schemes
$\Reg^{aff}_{\Z}\subset\Reg_{\Z}$ and the corresponding inclusion  of nerves
 $\bS_{\Z}^{aff}\subset \bS_{\Z}$.
If $\Spec(R)\in \Reg^{aff}_{\Z}$, then we have an equivalence
of symmetric monoidal groupoids
$$
\Vect_{\Z}(\Spec(R))\simeq \cP(R) \ ,
$$ 
where $\cP(R)$ is as in  Example \ref{kljdklqwdwqdwqdwqdwqdq}.  
As argued in this example, we have the first equivalence in the chain $$KR\simeq K(\cP(R))\simeq K(\Vect_{\Z}(\Spec(R)))$$ (see Remark \ref{djqwkjdqwdqwdwqdwqd12312321323} for the two usages of the symbol $K$).
 It follows that the functor
$$K(\Vect_{\Z})_{|\bS_{\Z}^{aff}}\in \Fun(\bS_{\Z}^{aff}, \Sp)$$
associates to each object $\Spec(R)\in \bS_{\Z}^{aff}$
the usual $K$-theory spectrum $KR$ of the ring $R$. It is further known {that}
$K(\Vect_{\Z})_{|\bS_{\Z}^{aff}}$ satisfies Zariski descent {(see below)}. The latter property allows us to drop the sheafification in the definition \eqref{jul0870} of $\bK_\Z$ restricted to $\bS_{Z}^{aff}$, and we have 
 an equivalence
$$(\bK_{\Z})_{|\bS_{Z}^{aff}}\simeq K(\Vect_{\Z})_{|\bS_{\Z}^{aff}}\in \Fun^{desc}(\bS_{\Z}^{aff}, \Sp )\ .$$

If $X\in \Reg_{\Z}$ is not affine,  an exact sequence of vector bundles on $X$ does not split  in general.
Because of this the $K$-theory of $X$ is usually  {not} defined   by group completion, but by applying a different $K$-theory machine to the exact  category of vector bundles. 
Such a machine
leads to a functor
$$\widetilde\bK_{\Z}\in \Fun(\bS_{\Z}, \Sp).$$
Since the schemes in $\bReg_{\Z}$ are separated, noetherian, and regular,  {Quillen's} $K$-theory of vector bundles coincides with 
Thomason's $K^B$-theory  {\cite[6.4]{TT}} which satisfies Zariski descent \cite[7.6, 8.4]{TT}.\footnote{ {In fact, this follows already from Quillen's localization and resolution theorems.}}
We conclude that
$$\widetilde\bK_{\Z}\in \Fun^{desc}(\bS_{\Z}, \Sp )\ .$$
Since every scheme $X\in \Reg_{\Z}$ admits a Zariski covering by affines,  
the restriction
$$\Fun^{desc}(\bS_{\Z},{\Sp})\to \Fun^{desc}(\bS_{\Z}^{aff},{\Sp})$$
is an equivalence. 
In particular, since
$$
(\bK_{\Z})_{|\bS_{\Z}^{aff}} \simeq K(\Vect_{\Z})_{|\bS_{\Z}^{aff}} \simeq (\widetilde\bK_{\Z})_{|\bS_{\Z}^{aff}}\ ,
$$
we conclude that
$\bK_{\Z} \simeq \widetilde\bK_{\Z}$.
\end{rem}

\subsection{Beilinson's conjectures}\label{nov2602}

In the present paper the Beilinson regulator is a  morphism of sheaves of spectra
$$\beil :\bK_\Z\to H(\DR_{\Z})\ ,$$ see  {\eqref{jul0860-neu}}.
The construction of $ \beil $ in this form is technical and occupies a large part of Section \ref{jan1962}.
As we  {will} verify in Subsection \ref{nov2202}, the induced map in homotopy groups
 $$\beil:\pi_{i}(\bK_{\Z}(X))\to H^{-i}(\DR_{\Z}(X))$$
 for $X\in \Reg_{\Z}$
 coincides with classical definitions.

In this subsection, we describe, as a service for the interested reader,  the conjectural picture of kernel and cokernel of the regulator map $\beil$ as given by the conjectures of Beilinson and Parshin \cite[\S 8]{MR760999, MR862628}. 
 {For simplicity, we} 
fix  a scheme $X \in \bReg_{\Z}$, proper and flat over $ \Spec(\Z)$, which has potentially good reduction at every prime. For example, $X$ could be 
smooth and proper over the ring of integers in some number field. 

Then we have homomorphisms
$$
\beil:\pi_{i}(\bK_{\Z}(X))\to H^{-i}(\DR_{\Z}(X))
$$
for all $i\ge 0$.
\begin{con}[Beilinson]\phantomsection
\label{mar0902}
\begin{enumerate}
\item For $i\ge 2$ the regulator induces an
isomorphism
\begin{equation}\label{feb0201}
\pi_i(\bK_{\Z}(X))\otimes \R \xrightarrow{\cong}  H^{-i}(\DR_{\Z}(X))\ .
\end{equation}
\item
For $i=1$  the map {\eqref{feb0201}} is expected to be injective.
Let $N^{p}(X_{ {\Q}})$ be the group of codimension $p$-cycles in  {the generic fibre} $X_{ {\Q}}$ modulo
homological equivalence and $N(X_{ {\Q}}):=\prod_{p\ge 0} N^{p}(X_{ {\Q}})$. Then 
 the cycle map in de Rham cohomology
\[
N^{p}(X_{\Q}) \hookrightarrow H^{2p}(B(X);\Omega^{\bullet}_{B(X)}) \cong H^{2p}(\hat\cW_{2p}A_{\log}(B(X)))
\]
together with the natural map $H^{2p}(\hat\cW_{2p}A_{\log}(B(X))) \to H^{-1}(\DR_{\Z}(p+1)(X))$, coming from the definition of $\DR_{\Z}( {p+1})$ as a cone, induce an injection
\[
\bar z^p: N^p(X_{\Q}) \hookrightarrow H^{-1}(\DR_{\Z}(p+1)(X))\ .
\]

We let $\bar z:=\prod_{p\ge 0} \bar z^{p}$.
It is expected that
$$ {\beil}\oplus \bar z:\pi_1(\bK_{\Z}(X))\otimes \R\oplus N(X_{ {\Q}})\otimes \R \to H^{-1}(\DR_{\Z}(X))  $$ is an isomorphism.
\item 
For $i=0$ we have
$$H^{0}(\DR_{\Z}(p)(X))\cong \left(H^{2p}(X\otimes_{\Z} \C;\R(p))\cap \cF^{p} H^{2p}(X\otimes_{\Z}\C;\C)\right)^{\Z/2\Z}\ .$$
Hence $H^{0}(\DR_{\Z}(X))$ is the group of real Hodge classes on $X\otimes_{\Z}\C$.
As the example of number rings shows, the map  $ \pi_{0}(\bK_{\Z}(X))\otimes \R\to  H^{0}(\DR_{\Z}(X))$ can be far from being surjective.
 \end{enumerate}
 \end{con}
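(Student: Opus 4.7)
The final statement is Beilinson's (and Parshin's) conjecture on the regulator map, which is one of the deepest open problems in arithmetic algebraic geometry, so any honest ``plan'' can only be a description of the general strategy together with the partial results where progress has been made. I would organize the attempt along the three parts, with each part requiring its own technique.

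For part (1), my plan would be the standard three-step scheme: (a) construct enough explicit classes in $\pi_i(\bK_\Z(X))\otimes\R$ to account for the predicted rank of $H^{-i}(\DR_\Z(X))$; (b) compute the image of these classes under $\beil$ by means of the differential-form level description of the regulator built in this paper, i.e.\ via the characteristic cocycle $\omega$ evaluated on geometric bundles through the cycle map $\hcycl$; (c) show the resulting matrix of regulator integrals is non-degenerate, which conjecturally is governed by an $L$-value of $X$. For $X = \Spec\cO_F$ this program is completed by Borel's theorem, and the identification of $\beil$ with Borel's regulator on cohomology (via the Proposition comparing $c_{\omega(p)}$ with the universal Chern character class) would let one import this case directly. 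For elliptic curves, modular curves, CM abelian varieties and a few other families one has explicit Eisenstein symbols and Beilinson elements; I would try to feed these through the new functorial framework so that the cycle map produces the required classes uniformly.

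For part (2), after one has the injectivity of $\beil$ (which via the Beilinson--Soul\'e vanishing is already delicate), the task is to supplement its image by cycle classes. The map $\bar z^p$ is already defined in the statement via $N^p(X_\Q)\hookrightarrow H^{2p}(\hat\cW_{2p}A_{\log}(B(X)))\to H^{-1}(\DR_\Z(p+1)(X))$; one must show that together $\beil$ and $\bar z$ fill out the whole $H^{-1}(\DR_\Z(X))\otimes\R$. For part (3) one would first verify the identification $H^{0}(\DR_\Z(p)(X))\cong(H^{2p}(X\otimes\C;\R(p))\cap \cF^p H^{2p}(X\otimes\C;\C))^{\Z/2\Z}$, which is immediate from Definition \ref{apr1701} and the mixed Hodge structure on $H^{2p}(X\otimes\C;\R)$ (pure of weight $2p$ since $X$ is proper and smooth in characteristic zero), and then note that the image of $\pi_0(\bK_\Z(X))\otimes\R$ contains the classes of subvarieties defined over $\Z$.

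The main obstacle is of course that parts (2) and (3) subsume the Hodge and Tate conjectures, while even the surjectivity in part (1) requires the production of enough motivic cohomology classes that is presently out of reach beyond a handful of families. In practice the realistic plan is \emph{not} a uniform proof but rather a case-by-case attack: for each new class of schemes one constructs a supply of classes (Eisenstein symbols, Heegner cycles, Beilinson--Flach elements, \dots), computes their regulators using the machinery of this paper, and matches the answer against the conjectural special value of $L(X,s)$. A genuine proof in the stated generality is not on the horizon.
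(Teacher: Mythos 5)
The statement labelled \texttt{mar0902} is a \emph{conjecture}, and the paper makes no claim to prove it. What the paper actually supplies, in the paragraph immediately following the conjecture, is a justification that this particular formulation is \emph{equivalent} to (i.e.\ a consequence of) Beilinson's Conjecture 8.4.1 and Parshin's Conjecture 8.3.3 of \cite{MR862628}. You have not addressed that derivation at all; instead you sketch a research program for proving Beilinson's conjecture itself, which is a different (and, as you note, unattainable) goal. As a ``proof of the statement as it sits in the paper,'' that misses the target.

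The argument you should be reconstructing has two concrete steps. First, Beilinson phrases his conjecture in terms of motivic cohomology, defined as Adams eigenspaces of rational $K$-theory; summing over all eigenspaces converts his graded statement into the ungraded isomorphism $\pi_i(\bK_\Z(X))\otimes\R \to H^{-i}(\DR_\Z(X))$ appearing in parts (1) and (2). Second, and more subtly, Beilinson's conjecture is stated in terms of the image of $K_i(X)$ in $K_i(X_\Q)$, not $K_i(X)$ itself, so one must show that $K_i(X)\to K_i(X_\Q)$ is rationally injective. This is exactly where the running hypotheses on $X$ (regular, proper and flat over $\Spec(\Z)$, potentially good reduction everywhere) enter: Quillen's localization sequence for $G$-theory reduces the discrepancy to $K$-theory of the closed fibres, and Parshin's conjecture (vanishing of rational $K$-theory of smooth proper varieties over finite fields) kills those contributions. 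Neither the Adams eigenspace summation nor the localization/Parshin reduction appears in your proposal, and these are the actual content of the paper's argument.

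Your observation that the identification
\[
H^{0}(\DR_{\Z}(p)(X))\cong \left(H^{2p}(X\otimes_{\Z} \C;\R(p))\cap \cF^{p} H^{2p}(X\otimes_{\Z}\C;\C)\right)^{\Z/2\Z}
\]
is unconditional, following directly from Definition \ref{apr1701} and purity of weight $2p$ for $H^{2p}$ of the smooth proper $\C$-variety $B(X)$, is correct and agrees with what the paper asserts (``For $i=0$ \emph{we have}\dots''). But the rest of the proposal conflates ``prove the conjecture'' with ``show this is the right form of the conjecture,'' and therefore misses the entire argument the paper actually gives.
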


The above conjecture is a consequence of Conjectures 8.4.1 and 8.3.3 (Parshin's conjecture) in \cite{MR862628}.
Beilinson formulates his conjecture in terms of motivic cohomology, which he defines as certain Adams eigenspaces of rational algebraic $K$-theory. One obtains the above statement by taking the sum over all Adams eigenspaces. Moreover, instead of the $K$-theory $\pi_i(\bK_\Z(X))=K_i(X)$ of $X$, he uses the image of $K_i(X)$ in the $K$-theory $K_i(X_\Q)$ of the generic fibre $X_\Q$. However, under the assumption that $X$ is regular and has potentially good reduction everywhere, Quillen's localization sequence for the $K$-theory of coherent sheaves together with Parshin's conjecture on the vanishing of rational $K$-theory of smooth proper varieties over finite fields imply, that the map $K_i(X) \to K_i( {X_\Q})$ is rationally injective.
One can weaken the assumptions on $X$, but then the statement of the conjectures becomes more complicated. 
\begin{ex}\label{dkqjwdqwdqwdwqdwqdqdwqd}
 {Using a different construction of the regulator for number rings, Borel \cite{MR0387496} has computed the ranks of the $K$-groups of a number ring $R$. Using the notation of Example \ref{wklqdkjkqwdjqwldwqdqdwqdwqd}, his result is given by
\[
\pi_{k}(KR)\otimes \R\cong 
\begin{cases}
\R^{r_{1}+r_{2}} & \text{if }k=2p-1, p\ge 3 \text{ odd} \\
\R^{r_{2}} & \text{if } k=2p-1\ , p\ge 2 \text{ even}\\
\R^{r_{1}+r_{2}-1} & \text{if } k=1, \\
 \R & \text{if } k=0 \\
0 & \text{if } k<0.
\end{cases}
\]
In view of the computations in Example \ref{wklqdkjkqwdjqwldwqdqdwqdwqd}, and using the comparison between the regulators of Borel and Beilinson, this proves the above conjecture for $X=\Spec(R)$. Essentially, this is the only known case of Beilinson's conjectures.
}
\end{ex}

\section{Smooth manifolds}\label{jan1962}

In this section we extend the definition of the absolute Hodge complexes and $K$-theory to the setting where we consider products $M\times X$  of an arbitrary smooth manifold $M$ with  {a smooth} variety  over $\C$ or an arithmetic scheme $X$. We introduce the notion of good geometries and local geometries. We define
   characteristic forms in the absolute Hodge complex and in its \v{C}echification, respectively.
  We  then construct the regulator map using these characteristic forms. In \ref{nov2202}, we show that our construction of the regulator reproduces Beilinson's one if one specializes  the manifold $M$ to a point. Finally, in the last subsection, we apply the techniques of the current paper in order to compare Karoubi's relative Chern character with Beilinson's regulator.


\subsection{The sites}\label{kjefwefewfewfewfewfe}

By a smooth manifold we understand a smooth manifold with corners  {(see, for example, \cite[\S 2.1.3]{Bunke-index})}. Corners of codimension $n$ in $k$-dimensional manifolds are modelled on $[0,\infty)^{n}\times \R^{k-n}$. For simplicity, we require that our transition maps preserve the germs of  product coordinates to the boundary faces. We use  these product coordinates in order to define the notion of product structures for various geometric objects.
The most important example of a manifold with corners is the standard simplex $\Delta^{n}$.

We consider the category $\Mf$ of smooth manifolds as a site equipped with the topology 
of open coverings. The  nerve of its opposite is the $\infty$-category $\bS_{Mf}:=\Nerve(\Mf^{op})$.

We let $\Sm_\C$ denote the site of smooth varieties over $\C$ (see Subsection \ref{sec:AbsHodgeComplex}) and Zariski open coverings, and we set
$\bS_\C
:=\Nerve(\Sm_\C^{op})$.

Finally, we let $\bReg_\Z$ be the site  of noetherian,  regular, and separated schemes $X$ such that   $X\otimes \Q $ is of finite type over $ \Spec (\Q)$,
 with the topology given by Zariski open coverings
(see  
  Subsection \ref{jul1001})  and write
$\bS_\Z
:=\Nerve(\bReg_\Z^{op})$.  {We remind the reader that we also call objects in $\Reg_{\Z}$ arithmetic schemes.}

We consider the product sites $$\Mf\times \Sm_\C\ , \quad \Mf\times \Reg_\Z$$ and write
$$\bS_{Mf,\C}:=\bS_{Mf}\times \bS_{\C}\ , \quad \bS_{Mf,\Z}:=\bS_{Mf}\times \bS_{\Z} $$
for the corresponding $\infty$-categories. We write objects of these product sites in the form
$M\times X$, where $M\in \Mf$ and $X$ is the algebraic object. We have canonical topology preserving   functors
\begin{equation}\label{jul1002}
e:  \bS_\C\to \bS_{Mf,\C}\ , \quad e: \bS_\Z\to \bS_{Mf,\Z}
\end{equation}
both induced by $X\mapsto *\times X$, where $*$ is the point considered as a manifold.

\begin{ex}
Here are some typical objects of these sites.
\begin{enumerate}
\item $S^{1}\times \P^{1}_{\C}\in \bS_{Mf,\C}$,
\item $T^{2}\times \Spec(\Z[\sqrt{2}])\in  \bS_{Mf,\Z}$  {where $T^{2}$ is the 2-dimensional torus,}
\item $\Delta^{n}\times E\in  \bS_{Mf,\C}$ for an elliptic curve $E$ over $\C$,
\item $T^{2}\times \Spec(\Q[\sqrt{2}])$.
\end{enumerate}
 Note that 
$S^{3} \times \Spec(\R)$ does not belong to
$ \bS_{Mf,\Z}$ since $\Spec(\R)\cong \Spec(\R)\otimes \Q$ it is not of finite type over $\Spec(\Q)$. Examples of morphisms are
\begin{enumerate}
\item $\R\times  \P^{1}_{\C}\to S^{1}\times \P^{1}_\C$ induced by the universal covering $\R\to S^{1}$,
\item $T^{2}\times \Spec(\Z[\sqrt{2}])\to *\times \Spec(\Z)$ induced by $T^{2}\to *$ and $\Spec(\Z[\sqrt{2}])\to \Spec(\Z)$.
\end{enumerate}
Note that the flip 
$\P_{\C}^{1}(\C)\times *\to *\times \P^{1}_{\C}$
is not a morphism in $\bS_{Mf,\C}$.
\end{ex}

We will often consider $M\times X$ as a ringed space with structure sheaf $\pr_X^*\cO_X$.

\begin{rem}
One can also interpret $M\times X$ as a relative scheme over the ringed space $(M, \underline{\Z})$ in the sense of \cite{Hakim}, where $(M,\underline{\Z})$ denotes the underlying topological space of the manifold $M$ with the  {constant sheaf $\underline{\Z}$ as structure sheaf}.
\end{rem}

\subsection{The sheaves $\DR_{Mf,\C}$ and $\DR_{Mf,\Z}$}
 \label{apr0501}

We start with the  construction of  the analogue  
$$
A_{\log}\in \PSh_{\Ch}(\Mf\times \Sm_{\C})
$$ 
of the bifiltered complex introduced in Definition \ref{dez1401}. 
Let $M$ be a manifold and $X$ a smooth variety over $\C$  (cf.~Subsection \ref{sec:AbsHodgeComplex}). 
We fix some good compactification 
$$X \overset{j}{\hookrightarrow} \overline X$$ 
and write $D := \overline X - X$ for the divisor at infinity. We consider $X$ and $\overline X$ as  smooth manifolds. In this sense, the product $M\times X$ is a manifold and $A_{\R}(M\times X)$ is defined as in \eqref{frefewfwffewfewfewfewfe}.
We define the chain complex   
 $$A_{M\times\overline X,\R}(M\times X,\log D) \subseteq A(M\times X)_{\R}$$ to be the subcomplex which is locally generated as an  algebra over $A(M\times\overline X)_{\R}$  by $1$ and  
\begin{equation}
\log(z_{i}\bar z_{i}),\,\: \Re \frac{dz_{i}}{z_{i}},\, \:\Im \frac{dz_{i}}{z_{i}}, \text{ for }i\in I\ . \label{eq:wt1}
\end{equation}
Here  the $z_i$, $ i \in I$, are local coordinates of $\overline X$  (for the analytic topology) which define  $D$ locally by the equation $\prod_{i_\in I} z_i = 0$. 

\begin{ex}
The following example should clarify the meaning of the notion {\em locally generated}. We consider $X:= {\P_{\C}^{1}\setminus\{0\}}$ with local coordinate $z$ at $ {0}$ and $M =\R$. We let
$(\chi_{n})_{n\in \Z}$ be a partition of unity on $\R$ such that
$\supp(\chi_{n})\subset [n-2,n+2]$ for all $n\in \Z$. Then
\begin{equation}\label{dez3105}
\sum_{n\in \Z} \log(|z|)^{|n|} \chi_{n}\in A^{0}_{\R\times {\P^{1}_{\C}}}(\R\times X, \log  {\{0\}})\ .
\end{equation}
\end{ex}

We are going to introduce several filtrations on $A_{M\times\overline X,\R}(M\times X,\log D)$.
The naive weight filtration $\widetilde \cW$ is the multiplicative increasing filtration by  $A(M\times\overline X)_{\R}$-modules obtained by assigning weight $0$ to the section $1$  and weight $1$ to the sections listed in  \eqref{eq:wt1}.

We define a decreasing filtration $\cL$ of $A_{ {M\times \overline X,\R}}(M\times X,  {\log D})$ such that $\cL^pA_{ {M\times \overline X,\R}}(M\times X,  {\log D})$ is the subcomplex of differential forms that are given locally by
\[
\sum _{I,J,K,|I|\ge p} \omega_{I,J,K} \:dx^{I}\wedge \Re dz^{J}\wedge \Im dz^{K}\ ,
\]
where the $x_i$ are local coordinates for $M$, the $z_j$ local coordinates for $\overline X$, and $\omega_{I,J,K}$ local  smooth functions on  $M\times X$.  

We now define the weight filtration $\cW$ as the diagonal filtration of $\widetilde \cW$ and $\cL$:
\begin{equation}\label{dgqwdgqwjhdgwqjdhwqdq}
\cW_k A_{M\times\overline X,\R}(M\times X,\log D) := \sum_p \widetilde \cW_{k+p} \cap \cL^p A_{M\times\overline X,\R}(M\times X,\log D).
\end{equation}
As usual, its d\'ecalage {(cf.~\eqref{sep0407})} will be denoted by $\hat \cW_{*}
 A_{M\times\overline X,\R}(M\times X,\log D)$. 

We further define the complex dg-algebra $$A_{M\times \overline X}(M\times X,\log D) := A_{M\times\overline X,\R}(M\times X,\log D) \otimes_{\R} \C$$ with the induced weight filtration. This complex carries the  decreasing Hodge filtration $\cF$ such that the elements of  $\cF^pA_{M\times\overline X}(M\times X,\log D)$   are locally of the form
\[
\sum _{I,J,K,|J|\ge p} \omega_{I,J,K} \:dx^{I}\wedge dz^{J}\wedge d\bar z^{K},
\]
where the $x_i$ and $z_j$ are local coordinates of $M$ and $X$, respectively.

In the special case that $M=\ast$, the complex  $A_{\ast\times\overline X,\R}(\ast\times X,\log D)$ 
is exactly the complex of global sections of 
Burgos' sheaf of complexes \cite[Section 2]{BurgosLogarithmic}.  With its filtrations it gives rise to the mixed Hodge complex    \eqref{jul1010}.

Fix $X \hookrightarrow \overline X$ as above. Then  the functors
\begin{align}\begin{split}\label{eq:HodgeCplxMf}
&M \mapsto A_{M\times\overline X,\R}(M\times X,\log D)\ ,\\
&M \mapsto  \hat \cW_{k}A_{M\times\overline X,\R}(M\times X,\log D),  \\
&M \mapsto A_{M\times\overline X}(M\times X,\log D), \\
&M \mapsto \hat  \cW_{k}\cap \cF^{p} A_{M\times\overline X}(M\times X,\log D)
\end{split}
\end{align}
are defined
as objects of $\PSh_{\Ch}(\Mf)$. 
 They are sub-presheaves of the presheaf
  $M\mapsto A(M\times X)$.
  
Let $\cC$ be any presentable $\infty$-category.  A functor $F\in \Fun(\bS_{Mf},\cC)$ is called homotopy invariant, 
if for every manifold $M$ the map $F( M)\to F(I\times M)$ induced by the projection $I\times M\to M$ (with $I=[0,1]$) is
 an equivalence. We define the notion of sheaves (see \eqref{eq:sheaves}) using the covering families of the site $\Mf$.
Finally, we write $$
\Fun^{desc,h}(\bS_{Mf},\cC)\subseteq \Fun (\bS_{Mf},\cC)
$$
for the full subcategory of functors which are sheaves and homotopy invariant. We refer to {Subsection}~\ref{mar0804} for {further} details.   
 
For an illustration of the sheaf condition we refer to Example \ref{ex:sheaves}.
\begin{ex}In this example we illustrate the condition of being homotopy invariant. An abelian group will be  considered as a chain complex concentrated in degree zero.
\begin{enumerate}
\item Let  $C^{\infty}$  be the sheaf which maps a manifold $M$ to its smooth complex valued functions $C^{\infty}(M)$.   
Similarly as in Example \ref{ex:sheaves}, we can consider $C^{\infty}$ as an object in $\Fun^{desc}(\bS_{Mf}, \Nerve(\Ch))$ or in $\Fun^{desc}(\bS_{Mf}, \Nerve(\Ch)[W^{-1}])$. In both cases, it is not homotopy invariant.
\item The constant sheaf $ \underline{\Z}$,  considered as an object in $\Fun(\bS_{Mf}, \Nerve(\Ch))$ is a homotopy invariant sheaf, i.e.~it belongs to $\Fun^{desc,h}(\bS_{Mf},\Nerve(\Ch))$. It is also homotopy invariant when considered as an object of $\Fun(\bS_{Mf}, \Nerve(\Ch)[W^{-1}])$.
\item   {The de Rham complex $A$, considered as an object in $\Fun^{desc}(\bS_{Mf}, \Nerve(\Ch)[W^{-1}])$ is homotopy invariant. This follows from the Poincar\'e lemma. It is, of course, not homotopy invariant when considered as an object of  $\Fun^{desc}(\bS_{Mf}, \Nerve(\Ch))$.} 
 \end{enumerate}
\end{ex} 
The objects listed in \eqref{eq:HodgeCplxMf}   can naturally be considered as objects of $\Fun(\bS_{Mf},\Nerve(\Ch)[W^{-1}])$.
 As such, one can ask whether they satisfy descent, i.e.~are sheaves, or  whether they  are homotopy invariant.
\begin{lem}\label{dez1405}
The presheaves of complexes \eqref{eq:HodgeCplxMf}  satisfy descent and are homotopy invariant, i.e.~they belong
to
$\Fun^{desc,h}(\bS_{Mf},\Nerve(\Ch)[W^{-1}])$.
\end{lem}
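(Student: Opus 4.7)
The plan is to verify the two assertions, descent and constancy, separately and in this order.

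For descent, the key point is that all the presheaves in \eqref{eq:HodgeCplxMf} are subpresheaves of $M\mapsto A(M\times X)$ (or its real subcomplex), which is already a strict sheaf on $\Mf$. This underlying sheaf is fine: for an open cover $\{U_i\to M\}$ and a smooth partition of unity $\{\chi_i\}$ on $M$ subordinate to it, the pullbacks $\pr_M^*\chi_i$ give a partition of unity on $M\times X$ subordinate to $\{U_i\times X\}$. I would then observe that each factor $\pr_M^*\chi_i$ is a smooth function of weight $0$ that lies in $\cF^0$ and in $A_{M\times\overline X,\R}(M\times X,\log D)$. Therefore multiplication by $\pr_M^*\chi_i$ preserves all four subcomplexes listed in \eqref{eq:HodgeCplxMf}, which forces them to be fine sheaves on $\Mf$ as well. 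Vanishing of higher \v Cech cohomology then gives descent in the $\infty$-categorical sense, i.e.\ the map from sections on $M$ to the totalization of the cosimplicial complex over the \v Cech nerve of any open cover is a quasi-isomorphism.

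The only step that needs some care is the d\'ecalage $\hat\cW_k$, which is defined through the non-local condition $d\omega\in \cW_{k-1}$ mixing the $M$- and $X$-directions. I would handle this by noting that $d(\pr_M^*\chi_i\cdot \omega)=\pr_M^*(d\chi_i)\wedge\omega+\pr_M^*\chi_i\cdot d\omega$, where $\pr_M^*(d\chi_i)$ has bidegree $(1,0)$ in the $\cL$-filtration and weight $0$ in $\widetilde\cW$; it therefore preserves the diagonal filtration $\cW$, and hence its d\'ecalage $\hat\cW$. Thus multiplication by $\pr_M^*\chi_i$ still preserves $\hat\cW_kA_{M\times\overline X,\R}(M\times X,\log D)$ and its intersection with $\cF^p$, so the fineness argument goes through for these presheaves as well.

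For the constancy statement, I would verify the criterion of Definition \ref{dez1402} by constructing an explicit chain homotopy that shows pullbacks along smoothly homotopic maps $M\to N$ agree on sections. The standard homotopy operator is fibre integration over the interval factor in $M\times[0,1]\times X\to M\times X$. This operator only acts in the $[0,1]$-direction, so it neither alters the $X$-coefficients of local expressions nor creates new singularities along $D$; in particular it preserves the local generators in \eqref{eq:wt1}, the form-degree filtration $\cL$ on $M$-indices (which simply gets reindexed by one), and the Hodge filtration $\cF^p$. Consequently the homotopy operator restricts to each subcomplex in \eqref{eq:HodgeCplxMf}, establishing the constancy property.

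The main obstacle is the compatibility of the d\'ecalage with both the partition-of-unity argument and the homotopy operator, since $\hat\cW$ is defined by a condition on $d$ rather than as a purely pointwise subcomplex. This is resolved by the bidegree computation above, which is the only non-formal input; everything else is a direct verification that the $M$-direction constructions do not interact with the analytic structure on $X$ that controls the logarithmic growth, the weight filtration, and the Hodge filtration.
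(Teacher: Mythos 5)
Your proposal follows essentially the same two-step strategy as the paper's proof: for descent, show each subcomplex is a (fine) sheaf of $C^\infty_\R$-modules so that the augmented \v{C}ech complex is exact, and for constancy, verify homotopy invariance by checking that fibre integration along $I$ preserves each subcomplex. The bidegree computation you flag as the key non-formal input is exactly the one the paper carries out for $\hat\cW_k$: the decisive observation is that $M$-direction operations ($d^M$, $d\chi\wedge\cdot$, $\int_I$) shift the $\cL$-filtration without changing the naive weight $\widetilde\cW$, hence respect the diagonal $\cW$ and its d\'ecalage.
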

\begin{proof}
In order to verify   descent, we show below that the presheaves of complexes \eqref{eq:HodgeCplxMf} 
 are  degree-wise    sheaves of modules over the sheaf $C_{\R}^{\infty}\in \Sh_{\Alg(\R)}(\Mf)$ of algebras of smooth real valued   functions.
 It then follows from Lemma \ref{dqwhdlqdwqdqwdwqd}   that the presheaves listed in \eqref{eq:HodgeCplxMf} satisfy descent, i.e.~belong to $\Fun^{desc}(\bS_{Mf},\Nerve(\Ch)[W^{-1}])$.
 
We discuss the case of  $\hat \cW_{k} A_{\dots\times\overline X,\R}(\dots\times X,\log D) $.  The other cases are similar.

As a preparation, note that for an $n$-form $\omega\in \cW_{k-n} $ (see \eqref{dgqwdgqwjhdgwqjdhwqdq})
 we have $\omega \in \hat \cW_k $ if and only if $d^X\omega \in \cW_{k-n-1} $, where $d^X$ is the differential 
in $X$-direction. Indeed, we may assume that $\omega \in \widetilde \cW_{k-n+p}\cap \cL^p $ for some $p$. 
Then $d\omega = d^M\omega + d^X\omega$ and $$d^M\omega \in \widetilde W_{k-n+p}\cap \cL^{p+1}   \subseteq \cW_{k-n-1} \ .$$

Now we assume that  $$\omega \in \hat \cW_kA_{M\times\overline X,\R}^{n}(M\times X, \log D)\ ,\quad f\in C^{\infty}(M,\R)\ .$$  Then obviously $f\omega \in \cW_{k-n}  $ and $d^X(f\omega)=fd^X\omega \in \cW_{k-n-1}   $, hence $$f\omega \in \hat \cW_k A_{M\times\overline X,\R}^{n}(M\times X, \log D) \ .$$ 

In order to   verify homotopy invariance,  we show that the  integration $$\int_I\colon A^{*}(I\times M \times X) \to A^{*-1}(M\times X)$$ preserves the subcomplexes \eqref{eq:HodgeCplxMf}. 
This fact is then employed as follows. Let $i_{t}:M\to I\times M$, $t=0,1$, be the inclusions given by the endpoints of the interval. The integral provides a chain homotopy between $i_{1}^{*}$ and $i_{0}^{*}$ applied to one of the presheaves listed in  \eqref{eq:HodgeCplxMf}. We now consider the map $a:I\times I\times M\to I\times M$ given by $a(s,t,m)=(st,m)$.
Then we have
$$a\circ (i_{0}\times \id_{I\times M}) =i_{0}\circ \pr\ , \quad  a\circ i_{1}= \id_{I\times M}\ .$$
Consequently, we get a chain homotopy between
$\pr^{*}\circ i_{0}^{*}$ and $\id$ which exhibits $i_{0}^{*}$ as a chain homotopy inverse of $\pr^{*}$.

First note that $\int_I$ preserves the subcomplex of forms with logarithmic singularities along $D$. For the filtrations we only 
discuss the case $\hat \cW_{k} A_{\dots\times\overline X,\R}(\dots\times X,\log D) $.     Take $$\omega \in  \hat \cW_kA^{n}_{I\times M\times\overline X,\R}(I\times M\times X,\log D)\ .$$ We may again assume that $\omega \in \widetilde \cW_{k-n+p}\cap \cL^p    $ for some $p$. Then $$\int_I \omega \in \widetilde \cW_{k-n+p}\cap \cL^{p-1}  
 \subseteq \cW_{k-n+1} $$ and $d^X\int_I\omega = -\int_I d^X\omega \in \cW_{k-n} $ since $d^X\omega \in \cW_{k-n-1} $. This shows that $$\int_I\omega \in \hat \cW_k A^{n-1}_{M\times\overline X,\R}(M\times X,\log D)\ .$$
\end{proof}


Now assume that we have two good compactifications $\overline X$ and $\overline X'$ of $X$ and a morphism $\overline X' \to \overline X$ inducing the identity on $X$. Then the induced maps
\begin{align*}
&A_{M\times\overline X,\R}(M\times X,\log D)  \to A_{M\times\overline X',\R}(M\times X,\log D')\ ,\\
& \hat \cW_k A_{M\times\overline X,\R}(M\times X,\log D) \to \hat \cW_k A_{M\times\overline X',\R}(M\times X,\log D')\ , \\
&\cF^pA_{M\times\overline X}(M\times X,\log D) \to   \cF^p A_{M\times\overline X'}(M\times X,\log D')\ ,\\
&\hat \cW_{k}\cap \cF^pA_{M\times\overline X}(M\times X,\log D) \to  \hat \cW_{k}\cap \cF^p A_{M\times\overline X'}(M\times X,\log D')
\end{align*}
are quasi-isomorphisms. Indeed,   descent and homotopy invariance in the $M$-direction  reduce the claim   to the case  $M=\ast$ and there the claim follows from Hodge theory (cf. the discussion  {following} \eqref{dez1101}).

In order to get rid of the choice of the compactification $\overline X$, we
now  proceed as in {Subsection} \ref{sec:AbsHodgeComplex}. We define the presheaf $A_{\log, Mf,\R}\in \PSh_{\Ch}(\Mf\times \Sm_{\C})$ by
\begin{equation}\label{mar2902}
A_{\log, Mf,\R}(M\times X) := \colim_{\overline X\in I_{X}}  A_{M\times\overline X,\R}(M\times X,\log (\overline X - X))
\end{equation}
where the colimit runs over the directed system $I_{X}$ of all good compactifications of $X$. It has an induced weight filtration $\cW$ and a Hodge filtration $\cF$ on the complexification
$$A_{\log, Mf}(M\times X):=  \colim_{\overline X\in I_{X}}  A_{M\times\overline X}(M\times X,\log (\overline X - X))
 \cong A_{\log, Mf,\R}(M\times X)\otimes_{\R}\C\ .$$
By the above, all maps in the directed system are quasi-isomorphisms which are bifiltered with respect to the d\'ecalage $\hat \cW$ of the weight filtration and the Hodge filtration.

We extend the notion of homotopy invariance to the product site $\Mf\times \Sm_{\C}$: A presheaf $F \in \Fun(\bS_{Mf,\C}, \bC)$ with values 
in the $\infty$-category $\bC$ is called homotopy invariant, if, for all $M\times X \in \Mf\times \Sm_{\C}$, the map $F(M\times X) \xrightarrow{\pr^{*}} F((I\times M)\times X)$ is an equivalence.
The full subcategory of homotopy invariant sheaves will be denoted by $\Fun^{desc,h}(\bS_{Mf,\C},\bC)$.
Alternatively, we could use the equivalence
$$\Fun^{desc,h}(\bS_{Mf,\C},\bC)\simeq \Fun^{desc,h}(\bS_{Mf},\Fun^{desc}(\bS_{\C},\bC))\ .$$
\begin{lem}\label{lem:descentSmfC}
We have 
\[
A_{\log, Mf}\ ,A_{\log, Mf,\R}\ , \hat \cW_{k}A_{\log, Mf,\R}\ , \cF^{p}\cap \hat \cW_{k} A_{\log, Mf}  \in\Fun^{desc,h}(\bS_{Mf,\C}, \Nerve(\Ch)[W^{-1}])\ .
\]
\end{lem}
\begin{proof}
We can check descent in the $M$- and $X$-directions separately.
Since the structure maps of the colimit over $I_{X}$ are quasi-isomorphisms,  it follows from Lemma \ref{dez1405}   that the complexes in the statement of the lemma fulfil descent  and homotopy invariance
 in the $M$-direction.

In order to  show Zariski descent in the $X$-direction, we use descent and homotopy invariance in the $M$-direction in order  to reduce to the case $M=\ast$. In this special case,    the Zariski descent was proven as Lemma \ref{lem:DescentHodgeCplx}.
\end{proof}

 {We now introduce the extension of the absolute Hodge complex of Definition~\ref{apr1701} to the site $\Mf\times \Sm_{\C}$:}
\begin{ddd}\label{nov2031e}
For $p\ge 0$ we define the complex
$$\hspace{-0.5cm}\DR_{Mf,\C}(p):=  \Cone\left(\left((2\pi i)^{p}\hat \cW_{2p}A_{\log, Mf,\R}\right)\oplus \left(\hat \cW_{2p}\cap \cF^{p}A_{\log, Mf}\right)  \xrightarrow{(\alpha,\beta)\mapsto \alpha-\beta } \hat \cW_{2p}A_{\log, Mf}\right)[2p-1] \ ,$$
and $$\DR_{Mf,\C}:=\prod_{p\ge 0} \DR_{Mf,\C}(p)\ .$$
\end{ddd}
By the above, we may view $\DR_{Mf,\C}(p)$ and $ \DR_{Mf,\C}$ as objects in $\PSh_{\Ch}(\Mf\times \Sm_{\C})$.
 By Lemma~\ref{lem:descentSmfC}, they can also considered as objects  
$$ 
 {\DR_{Mf,\C}(p),}\ \DR_{Mf,\C}\in 
 \Fun^{desc,h}(\bS_{Mf,\C},\Nerve(\Ch)[W^{-1}] )
\ .
$$


\begin{ddd}\label{nov2031}
Generalizing Definition \ref{nov2220}, 
we define
$$\DR_{Mf,\Z}\in \PSh_{\Ch}(\Mf\times   {\bReg}_{\Z})$$
by 
$$\DR_{Mf,\Z}:=((\id_{ {\Mf}}\times B)^{*}\DR_{Mf,\C})^{\Z/2\Z}\ .$$
\end{ddd}

We have
$$ 
\DR_{Mf,\Z}\in 
 \Fun^{desc,h}(\bS_{Mf,\Z},\Nerve(\Ch)[W^{-1}] )
\ .$$ 

In order to understand the cohomology of $\DR_{Mf,\C} $ and  $\DR_{Mf,\Z}$, we show a version of the de Rham Lemma. Recall the definitions of the Eilenberg-MacLane correspondence $H$ 
 {from \eqref{klrsefpbpw34978h}}
 and the constant sheaf functor $\underline{...}$ 
defined in \eqref{eq:relconstsheaf} as the composition
\[
\Fun^{desc}(\bS_{\C}, \Nerve(\Ch)[W^{-1}]) \xrightarrow{p^{*}} \Fun(\bS_{Mf,\C}, \Nerve(\Ch)[W^{-1}]) \xrightarrow{L} \Fun^{desc}(\bS_{Mf,\C}, \Nerve(\Ch)[W^{-1}])
\]
of the pull-back along the projection $p\colon \bS_{Mf,\C}\to \bS_{\C}$ and the sheafification $L$, and similarly with $\bS_{Mf,\C}$ replaced by $\bS_{Mf,\Z}$ (see Subsection~\ref{mar0804}). 
Note that  the Eilenberg-MacLane correspondence $H$ preserves limits and therefore maps sheaves with values in $\Nerve(\Ch)[W^{-1}]$ to sheaves with values in $\Sp$.

\begin{lem}\label{jan0201}
  We have  canonical equivalences 
\begin{equation}\label{nov2047}H(\DR_{Mf,\C} ) \simeq \underline{H(\DR_\C)}
\ , \quad  H(\DR_{Mf,\Z} ) \simeq \underline{H(\DR_\Z)} \ .
\end{equation}
\end{lem}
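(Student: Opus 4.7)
The plan is to reduce both equivalences to the structural equivalence \eqref{dez1501}, namely $\DR_{Mf,\C}\cong L(\pr^{*}\DR_{\C})$ in $\Fun^{desc,const}(\bS_{Mf,\C},\Nerve(\Ch)[W^{-1}])$, and to match the result after applying the Eilenberg-MacLane correspondence with the definition of $\Funk$ recalled from Subsection \ref{jan0301}.

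First I would unpack $\Funk$. For a sheaf $E\in\Fun^{desc}(\bS_{\C},\Nerve(\Sp)[W^{-1}])$ (or the analogous object on $\bS_{\Z}$), the smooth function object $\Funk(E)$ is characterized as the descent-local, constant-in-$M$ sheaf obtained from $E$ via pullback along the projection $\pr\colon\bS_{Mf,\C}\to\bS_{\C}$ followed by sheafification; equivalently $\Funk(E)\cong L(\pr^{*}E)$. Then I would apply the Eilenberg-MacLane functor $H$ to \eqref{dez1501}. Since $H$ is an equivalence of presentable stable $\infty$-categories from $\Nerve(\Ch)[W^{-1}]$ onto $\Mod(H\Z)$ and is computed object-wise on presheaves, it commutes with the pullback $\pr^{*}$ (which is precomposition) and, being a left-adjoint in particular, it commutes with the sheafification functor $L$. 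This yields the chain of equivalences
\[
H(\DR_{Mf,\C})\cong H(L(\pr^{*}\DR_{\C}))\cong L(\pr^{*}H(\DR_{\C}))\cong\Funk(H(\DR_{\C})),
\]
which is the first assertion.

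For the arithmetic version I would combine this with Definition \ref{nov2031}. We have $\DR_{Mf,\Z}=((\id_{\bS_{Mf}}\times B)^{*}\DR_{Mf,\C})^{\Z/2\Z}$. Since $\Z/2\Z$-invariants are a finite limit, and since $H$ preserves limits and the pullback along $\id\times B$ commutes with $H$, one has
\[
H(\DR_{Mf,\Z})\cong\bigl((\id\times B)^{*}H(\DR_{Mf,\C})\bigr)^{\Z/2\Z}\cong\bigl((\id\times B)^{*}\Funk(H(\DR_{\C}))\bigr)^{\Z/2\Z}.
\]
Because $\Funk$ commutes with pullbacks in the scheme variable (both are modelled by precomposition on $\bS_{\C}$ before sheafifying in the manifold direction) and with the finite limit defining invariants, the right-hand side is identified with $\Funk\bigl((B^{*}H(\DR_{\C}))^{\Z/2\Z}\bigr)\cong\Funk(H(\DR_{\Z}))$, as required.

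The main point to verify carefully is that $H$ really intertwines sheafification in the two $\infty$-topoi, i.e.~$H\circ L\simeq L\circ H$. This holds because $H$ is an equivalence of presentable $\infty$-categories and the descent condition on $\Nerve(\Ch)[W^{-1}]$-valued (resp.~$\Nerve(\Sp)[W^{-1}]$-valued) presheaves is transported by $H$ to itself; the other ingredients (pullback along $\pr$ and along $\id\times B$, and finite group invariants) are formal and were already used implicitly in the construction of $\DR_{Mf,\Z}$ and $\Funk$.
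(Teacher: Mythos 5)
Your proof is correct and follows essentially the same route as the paper's: both arguments reduce to Lemma \ref{jul1040} (your intermediate step $\Funk(E)\cong L(\pr^{*}E)$ is exactly an instance of it) together with the compatibility of $H$ with $L$, $\pr^{*}$, $e^{*}$, and $\Z/2\Z$-invariants. The only difference is packaging: the paper applies Lemma \ref{jul1040} directly to $H(\DR_{Mf,\C})$, which implicitly needs $H$ to preserve constancy, while you apply $H$ to \eqref{dez1501} and make the commutation $H\circ L\simeq L\circ H$ explicit — a helpful clarification of what the paper leaves tacit, but not a new argument.
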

\begin{proof}
We consider the complex case. The arithmetic case is similar.
Since
$H(\DR_{Mf,\C}) $ is homotopy invariant, we have a chain of equivalences
 $$ H(\DR_{Mf,\C}) \overset{ {\eqref{eq:relA9}}}{\simeq} \underline{e^{*}H(\DR_{Mf,\C})} \simeq \underline{H(\DR_\C)}$$ 
 {where $e$ is as in \eqref{jul1002}.}
\end{proof}

Using \eqref{nov2604} we get the following consequence.
\begin{kor}\label{jdlkjkljwedewdwedwedewdewdewdewded}
We  have isomorphisms $$H^{*}(\DR_{Mf,\C}(M\times X))\cong H(\DR_\C(X))^{*}(M)\ , \quad X\in \Sm_\C$$ and
$$H^{*}(\DR_{Mf,\Z}(M\times X))\cong H(\DR_\Z(X))^{*}(M)\ ,\quad X\in \Reg_\Z.$$
\end{kor}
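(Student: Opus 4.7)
The plan is to deduce the corollary as a direct consequence of Lemma \ref{jan0201} together with the reference \eqref{nov2604}, which presumably identifies the cohomology groups of an Eilenberg--MacLane spectrum with the cohomology of its underlying chain complex, and which also describes the defining property of the smooth function object $\Funk$.

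First I would apply Lemma \ref{jan0201} to produce the equivalences
\[
H(\DR_{Mf,\C})\cong \Funk(H(\DR_\C))\qquad\text{and}\qquad H(\DR_{Mf,\Z})\cong \Funk(H(\DR_\Z))
\]
in $\Fun^{desc,const}(\bS_{Mf,\C},\Nerve(\Sp)[W^{-1}])$ and $\Fun^{desc,const}(\bS_{Mf,\Z},\Nerve(\Sp)[W^{-1}])$, respectively. Evaluating these equivalences at an object $M\times X$ and passing to homotopy groups then reduces the problem to two separate identifications: on one side, $\pi_{-*}(H(\DR_{Mf,\C})(M\times X))\cong H^{*}(\DR_{Mf,\C}(M\times X))$ via the Eilenberg--MacLane correspondence recalled in Subsection \ref{nov2702}; on the other side, the defining property of $\Funk$ from Subsection \ref{jan0301} gives $\Funk(H(\DR_\C))(M\times X)\simeq H(\DR_\C(X))(M)$ as spectra, and analogously in the arithmetic case.

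Combining these two identifications yields
\[
H^{*}(\DR_{Mf,\C}(M\times X))\cong \pi_{-*}(\Funk(H(\DR_\C))(M\times X))\cong \pi_{-*}(H(\DR_\C(X))(M))\cong H(\DR_\C(X))^{*}(M),
\]
where the final isomorphism is precisely \eqref{nov2604}, i.e.\ the definition of the cohomology functor associated to a spectrum. The arithmetic case is formally identical, with $\C$ replaced by $\Z$ and $\Sm_\C$ by $\Reg_\Z$.

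The only point requiring a moment of care is verifying that the equivalence of Lemma \ref{jan0201} does indeed preserve enough structure to evaluate at $M\times X$ and produce an equivalence of spectra, but this is built into the fact that the equivalence is asserted in $\Fun^{desc,const}(\bS_{Mf,\C},\Nerve(\Sp)[W^{-1}])$, where evaluation at any object of $\bS_{Mf,\C}$ is automatically a functor of $\infty$-categories. Thus no genuine obstacle arises and the corollary follows by mere translation along the two equivalences.
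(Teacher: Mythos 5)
Your proof is correct and follows exactly the route the paper intends: the corollary is stated in the text as a direct consequence of Lemma \ref{jan0201} combined with \eqref{nov2604}, and you have merely spelled out the intermediate steps (evaluating the equivalence $H(\DR_{Mf,\C})\cong\Funk(H(\DR_\C))$ at $M\times X$, passing to homotopy groups, and applying the defining property of $\Funk$). Nothing more is needed.
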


In other words, $H^{*}(\DR_{Mf,\C}(M\times X))$ is the cohomology of $M$ with coefficients in the absolute Hodge cohomology of $X$.

 \subsection{Vector bundles and geometries}
 \label{nov1101}

We consider a product $M\times X$ of a   real manifold $M$ and a  complex manifold $X$. The manifold $M$ may have corners with distinguished germs  of product coordinates.  
The complex manifold $X$ is smooth and without boundary. Both, $M$ and $X$,  are allowed to be non-compact and non-connected.
We let $V\to M\times X$ be a complex vector bundle.
\begin{ddd}\label{dez2601}
A partial geometry on $V$ is a pair $(\nabla^{I},\bar \partial)$ consisting of
\begin{enumerate}
\item a partial connection $\nabla^{I}$ in the $M$-direction with a product structure, and  
\item a holomorphic structure $\bar \partial$ in the $X$-direction which is compatible with the product structure.
\end{enumerate}
\end{ddd}

\begin{ddd}\label{nov1904}
A geometry on the vector bundle $V$ with a partial geometry $(\nabla^{I},\bar \partial)$ is a pair $g^{V}:=(h^{V},\nabla^{II})$ consisting of
\begin{enumerate}
\item a  partial connection $\nabla^{II}$ in the $X$-direction which extends the holomorphic structure $\bar \partial$ and is compatible with the product structure,  and
\item   a hermitian metric $h^{V}$ on the bundle $V\to M\times X$ which is compatible with the product structure.
\end{enumerate}
\end{ddd}

\begin{ex}\label{dqwkjdhqwdqwdqdq}
We consider the one-dimensional trivial  complex vector bundle  $V$ on $\R\times \C$. We discuss the following  list of examples of  partial geometries:
\begin{enumerate}
\item 
  $(d,\bar \partial)$  
  \item $(d+ zdt,\bar \partial)$
  \item  $(d+\bar zdt,\bar \partial)$
  \item   $(d ,\bar \partial+td\bar z)$
  \end{enumerate}
 Here $(t,z)$ are the coordinates of $\R\times \C$, $d$ is the de Rham differential in the $\R$-direction, and $\bar \partial$
 acts in the $\C$-direction. In the first three cases we can take the geometry
 $(h^{V},\bar \partial+\partial)$ where $h^{V}$ is the standard metric of the trivial line bundle and $\partial$ acts in the $\C$-direction. In the third case $(h^{V},\bar \partial+\partial+td\bar z)$ is a geometry.
\end{ex}

\begin{rem}
Let us comment on the product structure. If $M=[0,\infty)^{n}\times N$ models a corner, then a product structure consists of a bundle $\tilde V\to N\times X$ with partial connection $\tilde \nabla^{I}$  and an isomorphism of 
$(V,\nabla^{I})$ with the pull-back of $(\tilde V,\tilde \nabla^{I})$ along the projection $M\times X\to N\times X$. The geometry $g$   and the holomorphic structure $\bar \partial$ are compatible with the product structure if they are  obtained via this isomorphism from a geometry $\tilde g$ and a holomorphic structure $\bar \partial$  on $\tilde V$. In general, the compatibility with the product structure is 
required locally at the corners. Roughly speaking, compatibility with the product structure requires that the geometry  and holomorphic structure do not depend on the normal coordinates of the corners if the bundles are trivialized along the normal directions using $\nabla^{I}$.
\end{rem}

We now assume that $X$ is a smooth 
variety  over $\C$ and consider a sheaf $V$ of locally free  and
finitely generated $\pr^{*}_{X} \cO_{X}$-modules on $M\times X$. Here  $\pr_X^*$ denotes the inverse image under the projection to $X$ in the sense of sheaves of sets. We use the same symbol $V$ in order to denote the corresponding complex vector bundle over $M\times X$.  It has an induced  partial geometry  $(\nabla^{I}, \bar \partial)$   which in addition satisfies:
\begin{ass}\label{dlqkwdqwdqwdwqd}
\label{dez2402}\mbox{}
\begin{enumerate}
\item   The partial connection $\nabla^{I}$ is flat.
\item  The  holomorphic structure $\bar \partial$ is constant w.r.t. $\nabla^{I}$, i.e.~$[\nabla^{I},\bar \partial]=0$.
\end{enumerate}\end{ass}
The original locally free sheaf of $\pr^{*}_{X} \cO_{X}$-modules can be recovered as  the sheaf of sections of $V$ annihilated by both, the  {partial} connection $\nabla^{I}$ and the holomorphic structure $\bar \partial$.

\begin{ex}
We continue Example \ref{dqwkjdhqwdqwdqdq}.
The first two examples, 
  $(d,\bar\partial)$  and $(d+zdt,\bar \partial)$, satisfy Assumption  \ref{dlqkwdqwdqwdwqd}.  
 Indeed, 
 the partial connection $d+zdt$ is flat, and  $[d+zdt,\bar \partial]=0$ since $\bar \partial z=0$.
 The  point is that $d+zdt$ is a holomorphic family of flat partial connections along the $\R$-direction.
 The third example, 
 $(d+\bar zdt,\bar \partial)$, does not satisfy the assumption
 since $[d+\bar zdt,\bar \partial]=-dt$. Indeed, $d+\bar zdt$ is a non-constant anti-holomorphic family of flat partial connections.
The last example, $(d,\bar \partial+tdz)$, also does not satisfy the assumption since
$[d,\bar \partial +td\bar z]=d\bar z$. This expresses the fact that the holomorphic structure is not parallel along the $\R$-direction.
\end{ex}

In Definition \ref{jan0210eee} we will introduce characteristic forms associated to geometries.
If  $X$ is  {not proper over $\C$}  we want  these characteristic forms to belong to the logarithmic
subcomplex $A_{\log,Mf}(M\times X)$ with controlled weights. To this end
we define a subset of geometries on $V$, called good geometries,  which behave in a controlled way at infinity.  The definition of the notion of a good geometry, the existence of local geometries explained below, and the construction of the canonical interpolation in \ref{may1701} restate corresponding  {ideas and} constructions of Burgos and Wang in \cite[\S\S 2,3]{MR1621424} in the present setting. 
\begin{ddd}\label{jan2702}
 A geometry $g$ on $V$ is called good, if every $m\in M$ has a neighbourhood $U\subseteq M$
 such that there exist the following data:
\begin{enumerate}
 \item  a good compactification
 $X\hookrightarrow \overline{ X}$ (see Subsection \ref{sec:AbsHodgeComplex}),
\item a locally free sheaf of $\pr^{*}\cO_{\overline{ X}}$-modules 
 $\overline V$ on   $U\times \overline{X}$, where
 $\pr: U\times \overline{ X}\to \overline{ X}$ is the projection,
\item \label{dez2401}  an isomorphism of $\pr_{|U\times X}^*\cO_X$-modules $ {\phi\colon} V_{|U\times X} \xrightarrow{\cong} \overline{V}_{|U \times X}$,
\item a geometry $\overline g$ on $\overline{V}$ in the sense of Definition \ref{nov1904} {such that $\phi$ is compatible with the geometries}
  (note that $\overline V$ has a natural flat partial connection  in the manifold direction and a holomorphic structure along $\overline{ X}$).
  \end{enumerate}
 \end{ddd}  
 In the situation of 1.--3. of the definition, we say that $\overline V$ is a compactification of $V_{|U\times X}$.

If $X$ is proper, then every geometry is good. In this case we can glue good geometries using a partition of unity {on $M$}. Since good geometries exist locally {on} $M$  (see below) we conclude that good geometries exist.

If $X$ is not proper, then   we do not know whether a general sheaf $V$  over $M\times X$ admits a  good geometry globally on $M$, see Example \ref{jhclkecwecwecwcc345345}. Therefore, we introduce
the notion of a local geometry. A germ of a good geometry on $V$ at a point $m\in M$ is represented by pair $(U,g)$ where $U$ is a neighbourhood of $m$ and $g$ is a good geometry on $V_{|U\times X}$, and we identify two such representatives if they coincide after restriction to a joint smaller neighbourhood of $m$.

Germs of good geometries exist at every point $m\in M$. Indeed, we can take a simply connected neighbourhood $U\subseteq M$ of $m$. 
Then we can identify $V_{|U\times X}$ with the pull-back of a bundle $\tilde V$ on $X$ along the projection $U\times X\to X$. 
By \cite[Proposition 2.2]{MR1621424} there exist a good compactification $X\hookrightarrow \overline{X}$ and an extension $\overline{\tilde V}\to \overline{X}$.  
We choose a geometry $\overline{\tilde g}$ on $\overline{\tilde V}$. Then we get a good geometry on $V_{|U\times X}$ by restricting $\overline{\tilde g}$ to $X$ and pulling it back to $U\times X$.
\begin{ddd}\label{dez2403}
A local geometry on $V$ is a family $(g_{m})_{m\in M}$ of germs of good geometries. The pair $(V,(g_{m})_{m\in M})$ will be called a   geometric bundle.  An isomorphism of geometric bundles is an isomorphism of bundles which  preserves the local geometry.
\end{ddd}
If $g$ is a good geometry on $V$, then we can set $g_{m}:=[ {M},g]$ for every $m\in M$ and thus obtain a geometric bundle.

\begin{ex}\label{jhclkecwecwecwcc345345} 
We consider $ {\bbG_{m,\C}}=\Spec(\C[\lambda,\lambda^{-1}])\in \Sm_{\C}$.
On $S^{1}\times \bbG_{m,\C} $ we consider the bundle $V$ which is trivial along the fibres of $S^{1}\times \bbG_{m,\C} \to S^{1}$ and has holonomy
$\lambda$ along $S^{1}$. Locally on $S^{1}$, we can trivialize $V$ and choose the good compactification by the trivial bundle over $ {\P^{1}_{\C}}$ (see also Example \ref{jqwhdkjqwhdwqkdhkwqdwqdwqdwqd},1.). This does not work globally on $S^{1}$ since the multiplication by $\lambda$ does not extend to the compactification. This is the typical problem which occurs when we want to construct good geometries globally. 
\end{ex}

For an object $M\times X\in \Mf\times \Sm_{\C}$, we let 
$\Vect_{Mf,\C}(M\times X)$ be  the category  of locally free and finitely generated  sheaves of $\pr_{X}^{*} \cO_{X}$-modules  on $M\times X$.   It is    symmetric monoidal with respect to the direct sum.  
From now on, the objects of $\Vect_{Mf,\C}(M\times X)$ will be called  bundles.   We let 
\begin{equation}\label{jul1401}
i\Vect_{Mf,\C}(M\times X)\subseteq \Vect_{Mf,\C}(M\times X)
\end{equation} 
be the maximal subgroupoid. Bundles can be pulled back along morphisms $M^{\prime}\times X^{\prime}\to M\times X$ in the site $\Mf\times \Sm_{\C}$. In fact, $i\Vect_{Mf,\C}$  is a symmetric monoidal stack
since bundles can be glued from local data given on covering families of the site $ \Mf\times \Sm_{\C}$.  Since bundles are locally constant in the manifold direction, this  stack is  homotopy invariant.
It can therefore be interpreted   as an object 
\begin{equation}\label{gdhjqwgdjhqwgdjqwhdgwqdwqhjdgwqdqwdqwdqwdqwd}
i\Vect_{Mf,\C}\in \Fun^{desc,h}(\bS_{Mf,\C},\CommMon(\Nerve(\Cat)[W^{-1}]))\ .
\end{equation}

We now turn to geometric bundles. 
For an object $M\times X\in  \Mf\times \Sm_{\C}$ we let 
$i\Vect_{Mf,\C}^{geom}(M\times X)$ denote the symmetric monoidal groupoid (again with respect to the direct sum) of geometric bundles  as in Definition \ref{dez2403} and geometry preserving isomorphisms. Similarly as above, we get a symmetric monoidal stack $i\Vect_{Mf,\C}^{geom}$. It    can be considered as an object  
  $$i\Vect_{Mf,\C}^{geom}\in \Fun^{desc}(\bS_{Mf,\C},\CommMon(\Nerve(\Cat)[W^{-1}]))$$ 
  which in this case is not homotopy invariant.
  We have a morphism  
\begin{equation}\label{jul0801}
i\Vect_{Mf,\C}^{geom}\to i\Vect_{Mf,\C}
\end{equation}
which maps a geometric bundle to its underlying bundle.
 
 \begin{rem}
Let $\cC$ 
 be any presentable $\infty$-category. Then the inclusion of  {homotopy invariant} into all sheaves with values in $\cC$ has a left adjoint $\cH$ called homotopification, i.e.~we have an adjunction 
$$
\cH:\Fun^{desc}(\bS_{Mf,\C},\cC)\leftrightarrows \Fun^{desc,h}(\bS_{Mf,\C},\cC) \ 
$$
 (see also Subsection \ref{mar0804}).
We expect that the morphism \eqref{jul0801} becomes an equivalence after homotopification. This should reflect the fact that the space of geometries on a bundle is contractible. In the present paper  we prove a corresponding statement in Lemma \ref{jul0803} where  we will not use $\cH$    but a concrete approximation $\bar \bs$  to the homotopification  defined in  \eqref{jul0741bn}.   We refer to \cite{Bunke:2013aa} for a detailed discussion of the relation between $\bar \bs$ and $\cH$.
\end{rem} 
 
 We consider a presentable $\infty$-category $\cC$ (e.g.~$\cC=\CommMon(\Nerve(\sSet)[W^{-1}])$). Let $\Delta^{\bullet}\in \Mf^{\Delta}$ be the cosimplicial manifold given by the standard simplices. It gives  {rise to} a functor  {$\bS_{Mf,\C} \times \Nerve(\Delta ) \to \bS_{Mf,\C}$ induced by $(M,[q]) \mapsto   \Delta^q\times M$}. Pull-back along this functor defines the functor
\begin{equation}\label{jul0802bn}
\bs:\Fun(\bS_{Mf,\C},\cC)\to \Fun(\bS_{Mf,\C}\times \Nerve(\Delta ),\cC)\simeq \Fun(\Nerve(\Delta),\Fun(\bS_{Mf,\C},\cC))\ .
\end{equation}
We define the endofunctor  {$\bar\bs$ by}
\begin{equation}\label{jul0741bn}
\bar \bs:=\colim_{\Nerve(\Delta^{op})}\circ \bs:\Fun(\bS_{Mf,\C},\cC)\to \Fun(\bS_{Mf,\C},\cC)\ .\end{equation}
 {The} projection $\Delta^{\bullet}\to *$ to the constant cosimplicial manifold given by the point
induces the transformation 
\begin{equation}\label{jul0811bn}\id\to \bar \bs\ .\end{equation}


In the following, we use the nerve functor
$$
 {\Nerve\colon} \CommMon(\Nerve(\Cat)[W^{-1}])\to\CommMon(\Nerve(\sSet)[W^{-1}])\ .
$$
\begin{lem}\label{jul0803}
The morphism \eqref{jul0801} induces an  equivalence
$$
\bar \bs (\Nerve(i\Vect_{Mf,\C}^{geom})) \xrightarrow{ {\simeq}} \bar \bs( \Nerve(i\Vect_{Mf,\C}))
$$
in $\Fun(\bS_{Mf,\C},\CommMon(\Nerve(\sSet)[W^{-1}]))$.
\end{lem}
\begin{proof}
The forgetful functor
 $\CommMon(\Nerve(\sSet)[W^{-1}])\to\Nerve(\sSet)[W^{-1}]$ reflects equivalences.
If we forget the symmetric monoidal structure and descent properties, then we can assume that
 $i\Vect_{Mf,\C}$ is realized by a strict functor $i\Vect_{Mf,\C}\in \Fun(\bS_{Mf,\C},\Cat)$.
To every object  $M\times X\in  \Mf\times \Sm_{\C}$  we can thus functorially associate a simplicial set 
  $\Nerve(i\Vect_{Mf,\C})(M\times X)\in \sSet$ and the set   
  $\Nerve_{q}(i\Vect_{Mf,\C})(M\times X)$   of  its $q$-simplices.

We show that, for every object $M\times X\in \bS_{Mf,\C}$ and every $q\in \nat$, the map of simplicial sets
 $$ 
\bs(\Nerve_{q}(i\Vect^{geom}_{Mf,\C}))(M\times X) \to \bs(\Nerve_{q}(i\Vect_{Mf,\C}))(M\times X), 
$$
 {with $\bs$ as in \eqref{jul0802bn},}
is a trivial Kan fibration. Then the induced morphism of colimits over $\Delta^{op}$ is an equivalence in $\Nerve(\sSet)[W^{-1}]$.
This implies the assertion.

Recall that trivial Kan fibrations are characterized by the right lifting property with respect to the inclusions $\partial\Delta^{p} \hookrightarrow \Delta^{p}$ for all $p\ge 0$.
 Let us consider a $p$-simplex $$x\colon \Delta^{p}\to \bs(\Nerve_{q}(i\Vect_{Mf,\C}))(M\times X)$$ and a lift $$\tilde y\colon\partial \Delta^{p} \to \bs(\Nerve_{q}(i\Vect^{geom}_{Mf,\C}))(M\times X)$$ of $x_{|\partial \Delta^{p} }$.
 Explicitly, $x$ is a sequence of isomorphisms
 $V_{0}\xrightarrow{ {\cong}} \dots \xrightarrow{ {\cong}} V_{q}$ of bundles on $\Delta^{p}\times M\times X$. 
  The lift $\tilde y$ is given by a collection of local   geometries  $(g(i)_{(u,m)})_{(u,m)\in \partial_{i} \Delta^{p}\times M}$   on $(V_{q})_{|\partial_{i}\Delta^{p}\times M\times X}$ for $i=0,\dots,p$ which are isomorphic at the corners of codimension two of the simplex $\Delta^{p}$. The germ $g(i)_{(u,m)}$ extends uniquely to a germ
of a {good} geometry on $V_{q}$ at the point $(\partial_{i}(u),m)\in \Delta^{p}\times M$ {that} is compatible with the product structure, where $\partial_{i}\colon\Delta^{p-1}\to \Delta^{p}$ is the inclusion of the $i$-th face. Using the compatibility of the $g(i)$ at the corners of codimension two, we obtain
well-defined germs of good geometries on $V_{q}$ at all points of $\partial \Delta^{p}\times M$.
We can extend this to a local geometry on $V_{q}$ by choosing germs of good geometries at all points
$(u,m)\in (\Delta^{p} \setminus \partial \Delta^{p})\times M$. In this way, we get a lift $\tilde x:\Delta^{p} \to \bs(\Nerve_{q}(i\Vect^{geom}_{Mf,\C}))(M\times X)$ of $x$.
\end{proof}

We consider a bundle $V$ on $M\times X$ and assume that we have a family of good geometries
$(g_{i})_{i=0}^{n}$ on $V$.  In the following we describe the construction of the canonical interpolation between these geometries.

 We let $[x_{0}:\dots:x_{n}]$ be homogeneous coordinates on $ {\P^{n}_{\C}}$
which will be considered as sections $x_{i}\in 
  \cO_{{\P^{n}_{\C}}}(1)( {\P^{n}_{\C}})$.
   We define an embedding
$$
\pr^{*}V  \hookrightarrow \bigoplus_{i=0}^{n}  \pr_{ {\P^{n}_{\C}}}^{*}\cO_{ {\P^{n}_{\C}}}(1) 
\otimes_{ {\C}} \pr^*V=:W\ , \quad s\mapsto \oplus_{i=0}^{n}  x_{i}\otimes s \ ,
$$
where $\pr :M \times X\times  {\P^{n}_{\C}}\to M\times X$ and $\pr_{{\P^{n}_{\C}}}:M\times X\times   {\P^{n}_{\C}}
\to  {\P^{n}_{\C}}$ are the projections.
Let $H_{ {\P^{n}_{\C}}}(1)\to  {\P^{n}_{\C}}$ denote the holomorphic   vector bundle  whose sheaf of holomorphic sections is $\cO_{ {\P^{n}_{\C}}}(1)$.
We use the standard metric $h^{H_{ {\P^{n}_{\C}}}(1)}$ and connection on
$H_{ {\P^{n}_{\C}}}(1)$ and the geometries $  \pr^{*}g_{i}$ in order to define a geometry
$ g_{ W}$ on $ W$. 
We obtain a metric $  h$ on $  \pr^{*}V$ by restricting the metric of $  W$. Using the projection $  W \to  \pr^{*} V$ given by the metric on $  W$ we obtain a partial connection $  \nabla^{II }$ on $ {\pr^*}V$. Hence we have a geometry $ g := (  h,  \nabla^{II })$ on $\pr^{*}V$.
\begin{ddd}\label{may1701}
The geometry $g$ on $\pr^{*}V$ over $M\times X\times  {\P^{n}_{\C}}$  {constructed above} is called the canonical interpolation of the family $(g_{i})_{i=0}^{n}$.
\end{ddd}
\begin{lem}\label{may1720}
The canonical interpolation of the family $(g_{i})_{i=0}^{n}$ is good.
\end{lem}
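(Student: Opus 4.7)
I will verify goodness of the canonical interpolation at each point $(m_0,p_0)\in M\times \C\P^n$, in the sense of Definition \ref{jan2702} with $M\times \C\P^n$ playing the role of the ambient manifold and $X$ the algebraic factor.

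First, by goodness of each $g_i$, after passing to a common neighborhood $U$ of $m_0$ I may, using the cofilteredness of the category $I_X$ of good compactifications of $X$, find a single good compactification $X\hookrightarrow \overline X$ together with extensions $\overline V_i$ on $U\times \overline X$ of $V|_{U\times X}$, each equipped with isomorphisms $\phi_i\colon V|_{U\times X}\xrightarrow{\sim} \overline V_i|_{U\times X}$ and extended geometries $\overline g_i$ in the sense of Definition \ref{nov1904}.

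Second, on $U\times \overline X\times \C\P^n$ I form the bundle
$$\overline W := \bigoplus_{i=0}^n \pr_{\C\P^n}^*\cO_{\C\P^n}(1)\otimes \pr^*\overline V_i$$
equipped with the direct sum of the standard geometry on $\cO_{\C\P^n}(1)$ and the pulled-back geometries $\overline g_i$. The isomorphisms $\phi_i$ identify $\overline W|_{U\times X\times \C\P^n}$ with $W|_{U\times X\times \C\P^n}$, so $\overline W$ is an extension of $(W,g_W)$ as a geometric bundle.

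Third, I localize in the $\C\P^n$-direction near $p_0$ to an affine open $V'\subseteq \C\P^n$ on which, after relabeling, the section $x_0$ is nowhere vanishing; write $y_i:=x_i/x_0$ for the induced coordinates. In this trivialization the original embedding reads $s\mapsto (s,y_1 s,\dots,y_n s)$. Following the local analysis inspired by \cite[\S\S 2, 3]{MR1621424}, I construct an extension $\overline{\pr^*V}$ of $\pr^*V|_{U\times X\times V'}$ as the image of an explicit map $\pr^*\overline V_0\to \overline W|_{U\times \overline X\times V'}$ built from the isomorphisms $\phi_i\phi_0^{-1}$ appropriately weighted by the holomorphic coordinates $y_i$ and extended along $\overline V_0$; after further shrinking of $U$ and possibly dominating $\overline X$ once more, this yields a locally free $\pr_{\overline X}^*\cO_{\overline X}$-submodule of $\overline W$. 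Restricting the extended metric from $\overline W$ to this submodule and taking the orthogonal projection to define the $\nabla^{II}$-component then produces an extension $\overline g=(\overline h,\overline\nabla^{II})$ of $g$ on $\overline{\pr^*V}$; compatibility with the product structure at the corners of $M$ is inherited from the corresponding compatibility of the $\overline g_i$.

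The main obstacle is the third step: producing the locally free extension $\overline{\pr^*V}\hookrightarrow \overline W$ is non-trivial because the individual extensions $\overline V_i$ of $V|_{U\times X}$ need not agree globally on $\overline X$. Localizing on $\C\P^n$ so that a single section $x_j$ dominates is essential — it allows the metric near that chart to be modeled on the extension $\overline V_j$ alone, bypassing the need to match all extensions of $V$ over the whole of $\overline X$ simultaneously.
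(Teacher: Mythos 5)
Your overall architecture matches the paper's: find a common compactification, extend all the $g_i$ to it, form the compactified direct sum $\overline W$, and then cut out a locally free subsheaf $\overline V\hookrightarrow\overline W$ extending $\pr^*V$ from which the metric and $\nabla^{II}$ can be induced. The difficulty, as you correctly locate it, is entirely in producing the locally free extension $\overline V$, and there your argument has a genuine gap.

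In your third step you propose to obtain $\overline V$ explicitly, on the chart $x_0\neq 0$, as the image of a map ``built from the isomorphisms $\phi_i\phi_0^{-1}$ appropriately weighted by the holomorphic coordinates $y_i$ and extended along $\overline V_0$''. But the isomorphisms $\phi_i\phi_0^{-1}\colon \overline V_0|_{U\times X}\to\overline V_i|_{U\times X}$ are only defined over $U\times X$; they need not extend to morphisms $\overline V_0\to\overline V_i$ over $U\times\overline X$, so the map you want to take the image of is not defined over the compactification. The subsheaf $\pr^*V\subset W$ need not extend to a subbundle of $\overline W$ along the boundary divisor, and the assertion that ``after further shrinking of $U$ and possibly dominating $\overline X$ once more, this yields a locally free $\pr_{\overline X}^*\cO_{\overline X}$-submodule'' is exactly the statement that requires proof, not a consequence of localizing in $\C\P^n$. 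The paper treats precisely this step as the non-trivial input and cites Burgos--Wang \cite[Thm~2.4]{MR1621424} (together with the reduction to simply connected $M$), which guarantees that after a further blow-up the closure of the image sub-vector-bundle becomes locally free; you neither invoke that theorem nor reproduce its proof. A related subtlety: you only allow yourself to replace $\overline X$ (so the ambient compactification stays of product form $\overline X'\times\C\P^n$), whereas the paper passes to an arbitrary good compactification $\overline{X\times\C\P^n}$ dominating the products $\overline X_i\times\C\P^n$; blow-ups along centres that project nontrivially to $\C\P^n$ may be required, and your framework does not accommodate them.

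To repair the argument, replace the explicit construction in step three by the citation of \cite[Thm~2.4]{MR1621424}, reduce first to $M$ simply connected, and allow general good compactifications of $X\times\C\P^n$ rather than only products.
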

\proof
%
%

Goodness can be checked locally in $M$. We can thus assume that $M$ is simply connected. For every $i\in \{0,\dots,n\}$ the geometry 
  $g_{i} $  extends to a compactification $\overline V_{i}$ with geometry $\overline g_{i}$ on $M\times \overline{X}_{i}$.
We can 
  choose a joint good compactification $  \overline{X\times {\P^{n}_{\C}}} $ mapping to the compactifications
$    \overline{X}_{i} \times  {\P^{n}_{\C}}$ for all $i\in \{0,\dots,n\}$. 
 
The compactification $\overline V_{i}$ with geometry $\overline g_{i}$  {can be pulled back} to $M\times \overline {  X\times  {\P^{n}_{\C}}}$,  and  {this pull-back} will be denoted  by  $\hat V_{i}$ and $\hat g_{i}$.
 Since $M$ is simply connected,
the compactification
$\overline W:=\bigoplus_{i=0}^{n}  \pr_{ {\P^{n}_{\C}}}^{*}\cO_{ {\P^{n}_{\C}}}(1
)\otimes \hat V_{i}$ of  $W$
determines by \cite[Thm 2.4]{MR1621424},  {after possibly replacing $\overline{X\times  {\P^{n}_{\C}}}$ by another good compactification   mapping to $\overline{X\times  {\P^{n}_{\C}}}$}, a compactification
$\overline V$  {of $\pr^*V$} 
such that
$\overline V\hookrightarrow \overline W$ is a subbundle.
The geometries $\hat g_{i}$ together with the geometry on $H_{ {\P^{n}_{\C}}}(1)$ induce a geometry on $\overline W$.
 We obtain a metric $\overline h$ on $\overline V$ {that extends the metric $h$ of the canonical interpolation on $\pr^*V$} by restricting the metric of $\overline W$. 
 Using the projection $\overline W \to \overline V$ given by the metric on $\overline W$ we obtain a partial connection $\overline \nabla^{II}$ on $\overline V$ which extends $\nabla^{II}$. \hB  

For later use  {we record} the following fact.
\begin{fact}{\rm \label{hdwjhdkjqwhdwqdwqdwqd}
For $j\in \{0,\dots,n\}$ let
  $f_{j}:  {\P^{n-1}_{\C}} \to \ {\P^{n}_{\C}}$ denote the canonical embedding  of  the subvariety $\{x_{j}=0\}$. Then   the geometry 
 $f_{j}^{*}g$ is, by construction, the canonical interpolation of the family  $(g_{i})_{i=0, i\not=j}^{n}$. }
\end{fact}

 \subsection{Characteristic forms}\label{dez2501}

We   consider a smooth  {manifold $M$, a}  complex manifold $X$, and a 
complex vector bundle $V\to M\times X$ with partial geometry $(\nabla^{I},\bar \partial) $ (see Definition \ref{dez2601}). 

The choice of a geometry $(\nabla^{II},h^{V})$ allows us to define the connection
$\nabla^{V}:=\nabla^{I}+\nabla^{II}$,
its adjoint $\nabla^{V,*}$ with respect to $h^{V}$, and the unitarization
\begin{equation}\label{dez2602}
\nabla^{V,u}:=\frac{1}{2}(\nabla^{V}+\nabla^{V,*})\ .
\end{equation}
The component in degree $2p$ of the  unnormalized Chern form of the unitary connection $\nabla^{V,u}$ satisfies
\begin{equation}\label{2w212w2wfwefwefewfw12}
\ch_{2p}(\nabla^{V,u}):=\left[\Tr \exp(-R^{\nabla^{V,u}})\right]_{2p} \in (2\pi i)^{p}A_{\R}^{2p}(M\times X)\ .
\end{equation}

If the partial geometry $(\nabla^{I},\bar\partial)$   satisfies Assumption \ref{dez2402},  then
we have
$$R^{\nabla^{V}}\in \cF^{1}A^{2}(M\times X,\End(V))$$ so that
$$\ch_{2p}(\nabla^{V})\in  \cF^{p}A^{2p}(M\times X)\ .$$
Finally, we consider the transgression (see \eqref{dez3103} below for a definition)
$$\tilde \ch_{2p-1}(\nabla^{V,u},\nabla^{V})\in A^{2p-1}(M\times X),$$
which satisfies
$$d\tilde \ch_{2p-1}(\nabla^{V,u},\nabla^{V})=\ch_{2p}(\nabla^{V,u})-\ch_{2p}(\nabla^{V})\ .$$

We now assume that $X$ is a smooth  variety over $\C$ and that $g$ is a good geometry on the bundle $V$ in the sense of  Definition \ref{jan2702}.  Then Assumption \ref{dez2402} is fulfilled. In addition we shall see  that the Chern forms  belong to the subcomplex $A_{\log, Mf}(M\times X)$ with the correct weights. 
This can be checked locally in $M$. Thus we can assume that
the geometry $g$ is obtained from a geometry $\bar g$ on a bundle $\overline V$ over a compactification 
 $M\times X\hookrightarrow M\times  \overline X$. Since the Chern forms are natural, we conclude that $\ch_{2p}(\nabla^{V})$, $\ch_{2p}(\nabla^{V,u})$ and $\tilde \ch_{2p-1}(\nabla^{V,u},\nabla^{V})$ extend smoothly to $M\times \overline{X}$ and consequently  belong to 
 $\cW_{0}A^{*}_{\log,Mf}(M\times X)$.
 Since Chern forms are closed we  see {in addition} that
\begin{equation}\label{dez3102}
\ch_{2p}(\nabla^{V}),\:\ch_{2p}(\nabla^{V,u})\in \hat \cW_{2p} A_{\log,Mf}^{2p}(M\times X)\ .
\end{equation} 
Similarly, if $I$ is the unit interval with coordinate $t$, then we have 
\begin{equation}\label{frfrfrfrfrf98987937434jk} \ch_{2p}({(1-t)}\pr_{M\times X}^{*}\nabla^V + {t}\:\pr_{M\times X}^{*}\nabla^{V,u}) \in \hat\cW_{2p}A^{2p}_{\log,Mf}(I\times M \times X)\ .\end{equation} 
Since by the proof of Lemma \ref{dez1405} integration along $I$ preserves the subcomplex of logarithmic forms and the d\'ecalage $\hat\cW$ of the weight filtration, we conclude that
\begin{equation}\label{dez3103}
\tilde \ch_{2p-1}(\nabla^{{V},u},\nabla^{{V}}):=\int_{I}\eqref{frfrfrfrfrf98987937434jk}\in \hat \cW_{2p}A_{\log,Mf}^{2p-1}(M\times X)\ .
\end{equation}

Recall the Definition \ref{nov2031e} of the cone $\DR_{Mf,\C}(p)$ for $p\in \Z$ and the Notation \ref{nota:cone} for its elements.

\begin{ddd}\label{jan0210eee}
We define 
the characteristic form  of the bundle  $V$ with a good geometry $g $ by
\begin{equation}\label{nov1102}
\omega(p)(g ):=\left( \ch_{2p}(\nabla^{V,u})\oplus\ch_{2p}(\nabla^{V}),\tilde \ch_{2p-1}(\nabla^{V,u},\nabla^V)\right)\in  {Z^{0}(\DR_{Mf,\C}(p)(M\times X))}\ .
\end{equation} 
We further define
\begin{equation}\label{jan0403}
\omega(g ):=\prod_{p\ge 0}\omega(p)(g )\in   {Z^{0}(\DR_{Mf,\C}(M\times X))}\ . 
\end{equation}
\end{ddd}

\begin{ex} In this example we consider $X=\Spec(\C)$ in $\Sm_{\C}$ and a manifold $M\in \Mf$.
A bundle over $M\times X$ is a complex vector bundle   $V\to M$ with a flat connection $\nabla^{I}$.
A good geometry $g$ on  $V$ is just the choice of a hermitian metric $h^{V}$. 
In this case we have for $p\ge 1$
$$\omega(p)(g)=(\ch_{2p}(\nabla^{V,u})\oplus 0,\tilde \ch_{2p-1}(\nabla^{V,u},\nabla^{I}))\ .$$
The equation $d\omega(p)(g)=0$ is equivalent to the pair of equations
$$d\ch_{2p}(\nabla^{V,u})=0\ , \quad d \tilde \ch_{2p-1}(\nabla^{V,u},\nabla^{I})=\ch_{2p}(\nabla^{V,u})\ .$$
We have an isomorphism
\begin{equation}\label{hgdhjqgjhdqwdwqdwqdwqdwqd}
H^{0}( \DR_{Mf,\C}(p)(M\times X))\cong  H^{2p-1}(M;\R)\ , \quad [\alpha\oplus0,\gamma]\mapsto [\Ree(i^{p+1}\gamma)] \ .
\end{equation}
Since $\Ree(i^{p+1}\ch_{2p}(\nabla^{V,u}))=0$, we indeed have
$d\Ree(i^{p+1}\tilde \ch_{2p-1}(\nabla^{V,u},\nabla^{I}))=0$.
Therefore  the class of $\omega(p)(g)$ is mapped  {under \eqref{hgdhjqgjhdqwdwqdwqdwqdwqd}} to
$[\Ree(i^{p+1}\tilde \ch_{2p-1}(\nabla^{V,u},\nabla^{I}))]\in H^{2p-1}(M;\R)$. This is, up to normalization, exactly the 
characteristic class of the flat vector bundle $(V,\nabla^{I})$ considered in \cite[(0.2)]{MR1303026}. 
\end{ex}

Our next task is to extend the definition of the characteristic forms to geometric bundles.
A geometric bundle comes with a family of germs of good geometries $(g_{m})_{m\in M}$.
This gives a family of germs $\omega(g_{m})$ of characteristic forms. The idea is to use the canonical interpolation of geometries {from} Definition \ref{may1701} and some homotopies, to be described below, in order to extend this family to a  {zero} cycle  in the  \v{C}echification $\cL \DR_{Mf,\C}(M\times X)$ of the de Rham complex.
Here $\cL \DR_{Mf,\C}(M\times X)$ is the colimit of the  \v{C}ech complexes  of   $\DR_{Mf,\C}$ over the poset   of  coverings  of $M\times X$ of the form $(U_{m}\times X)_{m\in M}$  which are   indexed by the points of $M$ {and} such that $m\in U_{m}$. {We refer to Subsection \ref{may2201} for more details.}

We let $Q\colon A(M\times X\times \CPn)\to A(M\times X\times \CPn)$ be the projection onto the part which is harmonic in the last variable. 
Note that the harmonic forms on $\CPn$ are given by multiples of powers of the K\"ahler form $\omega \in {A}_{\R}^{2}(\CPn)$ which we normalize  such that $\int_{\CPn}\omega^{n}=1$. Explicitly we have
$$Q(\beta)=\sum_{i=0}^{n} \pr^{*}_{M\times X}\left(\int_{M\times X\times \CPn/M\times X} \beta\wedge \pr_{\CPn}^{*}\omega^{i} \right)\wedge \pr^{*}_{\CPn}\omega^{n-i}\ .$$
It easily follows from this formula that $Q$ preserves the Hodge filtration, the weight filtration, and  real forms. It therefore induces a projection operator on the level of  mapping cones
\begin{equation}\label{may2702n}
Q\colon \DR_{Mf,\C}(M\times X\times \CPn)\to  \DR_{Mf,\C}(M\times X\times \CPn).
\end{equation}
 {It} is natural in $M\times X$. 

\begin{lem}\label{fewefweweffwef5443534534234234324}
There exists a homotopy $$h:\DR_{Mf,\C}(M\times X\times \CPn)\to  \DR_{Mf,\C}(M\times X\times \CPn)[-1]$$ which is natural in $M\times X$ {and} such that
\begin{equation}\label{may1721}
dh+hd=\id-Q\ , \quad hQ=0\ .
\end{equation} 
\end{lem}
\begin{proof}
We fix $p\in \Z$ and consider the component
of $h$ for $\DR_{Mf,\C}(p)$, which we will also denote by $h$.
Assume that we have natural transformations
\begin{equation}\label{may1710}h_{\R},h_{\cF} ,\tilde h :\hat \cW_{p}  A_{\log}(M\times X\times\CPn)\to \hat \cW_{p}A_{\log}(M\times X\times \CPn)[-1]\end{equation}
and 
\begin{equation}\label{may1711}r_{\R},r_{\cF} :\hat \cW_{p}A_{\log}(M\times X\times\CPn)\to \hat \cW_{p}A_{\log}(M\times X\times \CPn)[-2]\end{equation}
such that 
 $$dh_{\R}+h_{\R}d=  {\id-Q} =dh_{\cF}+h_{\cF}d \ ,\quad d\tilde h+\tilde hd=\id-Q$$
 $$dr_{\R}-r_{\R}d=h_{\R}  {-}\tilde h\ ,\quad dr_{\cF}-r_{\cF}d=h_{\cF}  {-}\tilde h\ ,$$
 and
$h_{\R}$ preserves $A_{\log,\R}$ and $h_{\cF}$ preserves $\cF^{p}A_{\log}$.
Then we define
$$  h :\DR_{Mf,\C}{(p)}(M\times X\times \CPn)\to \DR_{Mf,\C}{(p)}(M\times X\times \CPn)[-1],$$
using the notation \eqref{wefwefewfewfefewf89798234234234234}, by 
$$
h (\omega_{\R} \oplus \omega_{\cF},\tilde \omega):=(h_{\R}\omega_{\R}\oplus h_{\cF}\omega_\cF, {-}\tilde h\tilde \omega +r_{\R}\omega_{\R}-r_{\cF}\omega_{\cF})\ .
$$
We check:
\begin{eqnarray*}
d h  (\omega_{\R} \oplus \omega_{\cF},\tilde \omega)&=&(dh_{\R}\omega_{\R} \oplus dh_{\cF}\omega_{\cF},d\tilde h\tilde \omega   {-}dr_{\R}\omega_{\R}  {+}dr_{\cF}\omega_{\cF}   {+}h_{\R}\omega_{\R} {-}h_{\cF}\omega_{\cF})\ .
\end{eqnarray*}
\begin{eqnarray*}
h d(\omega_{\R} \oplus \omega_{\cF},\tilde \omega)&=&h (d\omega_{\R} \oplus d\omega_{\cF}, {-}d\tilde\omega  {+}\omega_{\R}  {-}\omega_{\cF})\\
&=
&(h_{\R}d\omega_{\R} \oplus h_{\cF}d\omega_{\cF},\tilde hd\tilde \omega -\tilde h \omega_{\R}+\tilde h\omega_{\cF}  {+}
r_{\R}d\omega_{\R}  {-}r_{\cF} d\omega_{\cF} )\ .
\end{eqnarray*}
We get
$$\left((d h + h d)(\omega_{\R} \oplus \omega_{\cF},\tilde \omega)\right)_{\R}=dh_{\R}\omega_{\R} {+}h_{\R}d\omega_{\R}=\omega_{\R}-Q(\omega_{\R})\ ,$$
$$\left((d h + h d)(\omega_{\R} \oplus \omega_{\cF},\tilde \omega)\right)_{\cF}=dh_{\cF}\omega_{\cF} {+}h_{\cF}d\omega_{\cF}=\omega_{\cF}-Q(\omega_{\cF})\ ,$$
and
\begin{eqnarray*}
\left((d h + h d)(\omega_{\R} \oplus \omega_{\cF},\tilde \omega)\right)^{ {\sim}} &=&d\tilde h\tilde\omega  {-}dr_{\R}\omega_{\R}  {+}dr_{\cF}\omega_{\cF}  {+}h_{\R}\omega_{\R}  {-}h_{\cF}\omega_{\cF}\\&&+\tilde hd\tilde \omega -\tilde h \omega_{\R}+\tilde h\omega_{\cF}  {+}
r_{\R}d\omega_{\R}  {-}r_{\cF} d\omega_{\cF}\\&=&
\tilde \omega-Q(\tilde \omega).
\end{eqnarray*}

It remains to construct the transformations \eqref{may1710} and \eqref{may1711}. 
{This can be done using} the K\"ahler package. 
Let $Y$ be a compact K\"ahler manifold. Then we consider the operator
$G\colon A(Y)\to A(Y)$ which vanishes on the harmonic forms $\cH:= \ker(\Delta)$ and is $\Delta^{-1}$ on the orthogonal complement $\cH^{\perp}$. We define
\begin{equation}\label{mar2002}
h_\R:=d^{*}G\ ,\quad h_\cF:=2\bar \partial^{*} G\ ,\quad \tilde h:=d^{*}G\ .
\end{equation}
Using the adjoint $L^{*}$ of $L\colon\alpha\mapsto \omega\wedge \alpha$
(with $\omega$ the K\"ahler form) we define
\begin{equation}\label{mar2003}
 {r_{\cF}}:=iL^{*}G, \quad  {r_{\R}:=0}\ .
\end{equation}
{Using} that 
$$
\Delta=dd^{*}+d^{*}d=2(\bar \partial \bar\partial^{*}+\bar \partial^{*}\bar \partial), \quad {[\partial, \bar\partial^*]=0}
$$
{we see that}
$$dh_\R+h_\R d= \id-\pr_\cH\ , \quad dh_\cF+h_\cF d=\id-\pr_\cH\ .$$
Finally, using 
$$ {\tilde h}-h_\cF=(\partial^{*}-\bar \partial^{*})G\ ,\quad        [d,L^{*}]=i\partial^{*}-i\bar \partial^{*}$$ we get
$$[d,r_{ {\cF}}]=  {h_{\cF} - \tilde h} \ .$$
We apply this in the case $Y:=\CPn$.
In general, a form in $A(M\times X \times \CPn)$ can be considered as a form on $M\times X$ with values in $A(\CPn)$. In this way the homotopy operators just introduced for $\CPn$ induce the desired homotopy operators for $M\times X\times \CPn$.  
\end{proof}

We continue to use the notation introduced in Fact \ref{hdwjhdkjqwhdwqdwqdwqd}. We have natural maps $$f_{i}^{*}:  \DR_{Mf,\C}(M\times X\times \CPn)\to  \DR_{Mf,\C}(M\times X\times  {\P^{n-1}_{\C}})$$ for $i=0,\dots,n$, and we set
 $$\delta:=\sum_{i=0}^{n}(-1)^{i}f_{i}^{*}\ .$$
We observe that
$f_{i}^{*}\circ Q$ is independent of $i$.
We define $H_{{0}}:=\id$,  
and then inductively
\begin{equation}\label{may2103}
H_{{n+1}}:=\sum_{i=0}^{n+1} (-1)^{i} H_{{n}}f_{i}^{*}h:\DR_{Mf,\C}(M\times X\times {\P^{n+1}_{\C}})\to  \DR_{Mf,\C}(M\times X)[-n-1]\ ,
\end{equation}
where $h$ is the homotopy as in Lemma \ref{fewefweweffwef5443534534234234324} and $ H_{{n}}f_{i}^{*}h$ is a short-hand notation for
the composition $ H_{{n}}\circ f_{i}^{*}\circ h$.
One checks, using $\delta\circ \delta=0$ and \eqref{may1721}, that  
\begin{equation}\label{may1730}
(-1)^{n-1} dH_{n}+H_{n}d= H_{n-1}\delta\ , \quad H_{n}Q=0
\end{equation} 
for all $n\ge {1}$.

We can now construct the characteristic cocycle $\omega(g)\in Z^{0}(\cL\DR_{Mf,\C}(M\times X))$ associated to a geometric bundle $(V,g)$ on $M\times X$. Recall that $g=(g_{m})_{m\in M}$, where
$g_{m}=[U_{m},g_{(m)}]$ is the germ of a  good geometry represented by a good geometry $g_{(m)}$ on the restriction of $V$ to $U_{m}\times X$, where $U_{m}\subseteq M$
  is a neighbourhood of $m$. We get an open covering $\cU:=(U_{m})_{m\in M}$ of $M$ indexed by the points of $M$.

For $n\in \nat$ and {a} family $(m_{i})_{i=0}^{n}$ of points in $M$ we form $U_{(m_{i})}:=\bigcap_{i=0}^{n}U_{m_{i}}$. 
The geometries $g_{(m_{i})}$ induce a family of good geometries on $U_{(m_{i})}\times X$ by restriction, and we let $ {\widehat{g_{(m_{i})}}}$ denote their canonical interpolation 
(Definition \ref{may1701}).
We consider its characteristic form (Definition \ref{jan0210eee})
$$
\omega( \widehat{g_{(m_{i})}})\in \DR_{Mf,\C}^{{0}}(U_{(m_{i})}\times X\times \CPn)\ .
$$
The collection of these forms for all families $(m_{i})_{i=0}^{n}$ determines 
an element of degree $n$ in the \v{C}ech complex
$$
\omega_{n}\in  \check{C}^{n}(\cU, \DR_{Mf,\C}^{ {0}}( -\times X\times \CPn))\ .
$$
We define 
$$
\underline\omega(g)_{n} :=  H_{{n}} \omega_{n}\in  \check{C}^{n}(\cU, \DR_{Mf,\C}^{ {-n}}(-\times X))\ .
$$
Then  
\begin{equation}\label{may1740}
\underline \omega(g) :=  {(\underline\omega(g)_n)_{n\in \nat}} \in \Tot (\check{C}(\cU, \DR_{Mf,\C}(-\times X)))^{0}
\end{equation}
is a cycle. 
 To see this, we denote the 
coface maps defining the differential of the \v{C}ech complex by $\partial_{i}^{*}$.
Using
$d\omega_{n}=0$ and $\partial_{i}^{*} \omega_{n-1}=f_{i}^{*}\omega_{{n}}$, we get 
for the component of $d^{\Tot}\underline\omega(g)$ in 
$\check{C}^{n}(\cU, \DR_{Mf, \C}^{-n+1}(-\times X))$
\begin{align*}
(d^{\Tot}{\underline\omega}(g))_{ {n}} & = \sum_{i=0}^n (-1)^i\partial_i^*\underline\omega(g)_{n-1} + (-1)^n d\underline\omega(g)_n \\
& =  \sum_{i=0}^n (-1)^i \partial_i^* H_{n-1}\omega_{n-1} + (-1)^ndH_n\omega_n  \\
& = \sum_{i=0}^n (-1)^i H_{n-1} f_i^*\omega_n + (-1)^n dH_n\omega_n \\
& = H_{n-1} \delta \omega_n + (-1)^n dH_n\omega_n \\
& \overset{\eqref{may1730}}{=} H_n d\omega_n \\
& = 0.
\end{align*}
%
 {Recall that, by definition,}
the complex $\cL \DR_{Mf,\C}(M\times X)$ is the colimit of the complexes
$\Tot \check{C}(\cU, \DR_{Mf,\C}(-\times X))$ 
over the poset of open coverings of $M$ which are indexed by the points of $M$.
\begin{ddd}\label{may1760n}
The characteristic cocycle of the geometric bundle $(V,g)$ is the cycle
$$
\omega(g)\in Z^{0}(\cL \DR_{Mf,\C}(M\times X))
$$  
represented by the cycle ${\underline\omega}(g)$ constructed in \eqref{may1740}.
\end{ddd}
It is easy to see that the characteristic cocycle is well-defined, i.e.~independent of the choices of the representatives of the germs $g_{m}$, and gives rise to   a characteristic cocycle
\begin{equation}\label{may1761}
\omega:\pi_{0}(i\Vect_{Mf,\C}^{geom})\to  Z^{0}(\cL \DR_{Mf,\C})
\end{equation}
 {in the sense of Definition \ref{jan1002}.}

\subsection{Bundles and algebraic $K$-theory}\label{jul1060}

In the present subsection we generalize Definition \ref{jul0870}  of the algebraic $K$-theory sheaves to the case of a non-trivial manifold direction.    
Recall  that $i\Vect_{Mf,\C}$ is the symmetric monoidal stack of bundles on the site $\Mf\times \Sm_{\C}$, which we interpret as in 
\eqref{gdhjqwgdjhqwgdjqwhdgwqdwqhjdgwqdqwdqwdqwdqwd}.  In a similar manner,
  we consider the   symmetric monoidal stack    $i\Vect_{Mf,\Z}$ on the site  $\Mf\times {\bReg_\Z}$
    which associates to 
$M\times X$ the symmetric monoidal (with respect to  {direct sum}) groupoid  of locally free and  finitely generated  sheaves of $\pr_{X}^{*} \cO_{X}$-modules  on $M\times X$.  
We again interpret  this stack  as an object 
$$
i\Vect_{Mf,\Z}\in \Fun^{desc,h}(\bS_{Mf,\Z},\CommMon(\Nerve(\Cat)[W^{-1}])).
$$
 {As before,}   $L$ denotes the sheafification functors
\begin{equation}\label{f34f34fk34hfjk34hfi3f34f43f98897}
L:\Fun(\bS_{Mf,\C},\Sp)\leftrightarrows \Fun^{desc}(\bS_{Mf,\C},\Sp)\ , \quad L:\Fun(\bS_{Mf,\Z},\Sp)\leftrightarrows \Fun^{desc}(\bS_{Mf,\Z},\Sp) 
\end{equation} 
and $K$  is the algebraic $K$-theory functor introduced in Definition {\ref{nov1901}}.
\begin{ddd}\label{jul0820}
We define the   sheaves of algebraic $K$-theory spectra 
$$\bK_{Mf,\C} \in \Fun^{desc,h}(\bS_{Mf,\C},\Nerve(\Sp)[W^{-1}])\ , \quad  \bK_{Mf,\Z}\in \Fun^{desc,h}(\bS_{Mf,\Z},\Nerve(\Sp)[W^{-1}])$$
by
$$\bK_{Mf,\C}:=L(K(i\Vect_{Mf,\C})) ,  \quad \bK_{Mf,\Z}:=L(K(i\Vect_{Mf,\Z}))\ .$$
\end{ddd}
\begin{rem}
These sheaves are homotopy invariant in the manifold direction, since the arguments of $K$ are.
This homotopy invariance must not be confused with homotopy invariance in the  {algebraic} direction, which would lead to a version of Weibel's homotopy $K$-theory.
\end{rem}

Note that Definition \ref{jul0820} extends Definition \ref{jul0870} in view of the equivalences
\begin{equation}\label{wdqwdqwdqwdqwdd}
e^{*}\bK_{Mf,\C}\simeq \bK_{\C}\ , \quad e^{*}\bK_{Mf,\Z}\simeq \bK_{\Z}\ ,
\end{equation}
where $e$ is as in 
\eqref{jul1002}.
The units of the sheafification adjunctions  \eqref{f34f34fk34hfjk34hfi3f34f43f98897} provide morphisms
\begin{equation}\label{jul0815}
K(i\Vect_{Mf,\C})\to \bK_{Mf,\C}\ ,\quad  K(i\Vect_{Mf,\Z})\to \bK_{Mf,\Z}
\end{equation}
in $\Fun(\bS_{Mf,\C},\Sp)$, or $\Fun(\bS_{Mf,\Z},\Sp)$, respectively.

For $X$ in $\Sm_\C$ or $\Reg_\Z$, the spectra $\bK_\C(X)$ or $\bK_\Z(X)$ represent generalized cohomology theories.  By Lemma  {\ref{dkjhqwkdqwdwqdqwdwqd}}, 
 homotopy invariance of $\bK_{Mf,\C}$ and $\bK_{Mf,\Z}$, and \eqref{wdqwdqwdqwdqwdd} we have  equivalences
$$
\bK_{Mf,\C} \simeq  {\underline{\bK_\C}}\ , \quad \bK_{Mf,\Z} \simeq  {\underline{\bK_\Z}}\ .$$
By \eqref{nov2604} this immediately implies that, for all $k\in \Z$ and manifolds $M \in \Mf$, we have
$$\pi_k(\bK_{Mf,\C}(M\times X))\cong \bK_\C(X)^{-k}(M)$$
or
\begin{equation}\label{jul1080}\pi_k(\bK_{Mf,\Z}(M\times X))\cong \bK_\Z(X)^{-k}(M) \ ,\end{equation}
respectively.

We now extend the definition of a geometric bundle to the arithmetic case.  A local geometry on a bundle $V\in i\Vect_{Mf,\Z}(M\times X)$ is by definition a local geometry (Definition \ref{dez2403}) on its base change $V\otimes \C$
to 
$$
M\times (X\otimes\C) :=  M\times (X\times_{\Spec(\Z)}\Spec(\C))
$$
which in addition  is invariant under the operation of $\Gal(\C/\R)$
in the second argument.
 In the following we make this precise.
Recall that the underlying complex manifold of $\overline{X\otimes \C}$   is the smooth manifold $X\otimes \C$ with its opposite holomorphic structure. 
For a vector bundle $V$ on $M\times X$  we let
  $\overline{ V\otimes \C}$ be the bundle $V\otimes \C$  equipped with the opposite complex structure.

 Let $(\nabla^{I},\bar\partial)$ be the partial geometry (see Definition \ref{dez2601}) on $V\otimes \C$.  The same data can also be considered as a partial geometry on $\overline{V\otimes \C}$.
Let $g:=(\nabla^{II},h^{V\otimes \C})$ extend the partial geometry to a geometry (Definition \ref{nov1904}).
Then  we define the  hermitian metric on $\overline{ V\otimes \C}$ by
$$h^{\overline{ V\otimes \C}}(\phi,\psi):=\overline{h^{V\otimes \C}(\phi,\psi)}\ .$$
The partial connection $\nabla^{II}=\bar \partial+\nabla^{1,0}$ can be considered as a partial connection on $\overline{V\otimes \C}$ which extends $\bar \partial$. We can therefore  define the conjugated geometry by
$ \bar g:=(\nabla^{II}, h^{\overline{ V\otimes \C}})$.

We now have a pair of canonical isomorphisms $u_{M\times X}:M\times (X\otimes \C)\to M\times \overline{(X\otimes \C)}$ and
$U_{M\times X}:u_{M\times X}^{*} V\otimes \C\to  \overline{V\otimes \C} $.

\begin{ddd}
Let $V$ be a bundle over $M\times X$.
We say that
a geometry $g$ on $V\otimes \C$ is 
$\Gal(\C/\R)$-invariant, if
$$
(u_{X},U_{X})^{*}\bar g =g\ .
$$ 
A local geometry $(g_{m})_{m\in M}$ is called $\Gal(\C/\R)$-invariant, if $g_{m}$ is so for every $m\in M$. 
\end{ddd}

\begin{ddd}\label{jul0850}
A geometric bundle on $M\times X\in \Mf\times \bReg_{\Z}$
is a pair $(V,g)$ of a bundle $V$ on $M\times X$ and
a $\Gal(\C/\R)$-invariant local geometry $g$ on $V\otimes \C$.
\end{ddd}

We let $i\Vect_{Mf,\Z}^{geom}$ be the symmetric monoidal stack
of geometric bundles on $\Mf\times \Reg_{\Z}$, which we consider as an object 
$$i\Vect_{Mf,\Z}^{geom}\in \Fun^{desc}(\bS_{Mf,\Z},\CommMon(\Nerve(\Cat)[W^{-1}]))\ .$$
We have a forgetful transformation
\begin{equation}\label{jul0804}i\Vect_{Mf,\Z}^{geom}\to i\Vect_{Mf,\Z}\end{equation}
and 
the analogue of Lemma \ref{jul0803} with the same proof:
\begin{lem}\label{jul0805}
The transformation  \eqref{jul0804} induces an equivalence
$$\bar \bs (\Nerve(i\Vect_{Mf,\Z}^{geom})) \xrightarrow{ {\simeq}} \bar \bs (\Nerve(i\Vect_{Mf,\Z}))\ .$$
\end{lem}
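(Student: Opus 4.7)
The plan is to imitate the proof of Lemma \ref{jul0803} verbatim, with the single additional input being the existence of $\Gal(\C/\R)$-invariant germs of good geometries. The target is to show that for each $M\times X \in \bS_{Mf,\Z}$ and each $q\in\nat$, the map of simplicial sets
\[
\bs\Nerve(i\Vect^{geom}_{Mf,\Z})(M\times X)_{\bullet,q} \to \bs\Nerve(i\Vect_{Mf,\Z})(M\times X)_{\bullet,q}
\]
is a trivial Kan fibration, which then implies the asserted equivalence after applying $\bar\bs$.

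To carry out the simplex-filling argument, I would consider a $(p,q)$-simplex $x$ represented by a sequence of isomorphisms $V_0\to\dots\to V_q$ of bundles on $\Delta^p\times M\times X$ and a lift $\tilde y$ on $\partial\Delta^p\times\Delta^q$ given by compatible $\Gal(\C/\R)$-invariant local geometries on $(V_q\otimes\C)_{|\partial_i\Delta^p\times M\times X}$ for $i=0,\dots,p$. Exactly as in the proof of Lemma \ref{jul0803}, each germ extends uniquely along the inclusion of a boundary face by the product structure along the normal coordinates, and the codimension-two compatibilities produce well-defined germs of good geometries on $V_q\otimes\C$ at all points of $\partial\Delta^p\times M$; $\Gal(\C/\R)$-invariance is preserved by this extension since the action is in the $X$-direction and commutes with the normal-coordinate identifications.

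The only step that requires a separate verification in the arithmetic case is that germs of $\Gal(\C/\R)$-invariant good geometries on $V_q\otimes\C$ exist at the remaining points $(u,m)\in(\Delta^p\setminus\partial\Delta^p)\times M$. For this I would first pick, using exactly the argument preceding Definition \ref{dez2403}, an arbitrary germ $g_{(u,m)}$ of a good geometry on $(V_q\otimes\C)_{|W\times X\otimes\C}$ for some neighbourhood $W$. Its $\Gal(\C/\R)$-conjugate $(u_X,U_X)^{*}\bar g_{(u,m)}$ is again the germ of a good geometry since complex conjugation exchanges the rôles of $\nabla^I$, $\bar\partial$, and $\nabla^{II}$ compatibly with the goodness condition (compactifications pass to conjugate compactifications). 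Because Hermitian metrics form a convex cone and the space of partial connections $\nabla^{II}$ extending a fixed $\bar\partial$ is an affine space, the average $\tfrac{1}{2}\bigl(g_{(u,m)}+(u_X,U_X)^{*}\bar g_{(u,m)}\bigr)$ is a well-defined germ of a good geometry which is $\Gal(\C/\R)$-invariant by construction.

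Having produced such invariant germs at every missing point, we assemble a $\Gal(\C/\R)$-invariant local geometry on $V_q$ over $\Delta^p\times M$ restricting to $\tilde y$, hence a lift $\tilde x\colon \Delta^p\times\Delta^q\to \bs\Nerve(i\Vect^{geom}_{Mf,\Z})(M\times X)$ of $x$. Horn filling is handled identically. The main conceptual obstacle — and really the only non-formal point — is the averaging argument ensuring existence of $\Gal(\C/\R)$-invariant germs; once this is in place the rest of the proof is an immediate transcription of the proof of Lemma \ref{jul0803}. \hB
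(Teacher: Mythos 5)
Your proposal follows exactly the route the paper has in mind; the paper's own ``proof'' of Lemma \ref{jul0805} is literally the phrase ``the analogue of Lemma \ref{jul0803} with the same proof,'' so the only content to supply is the observation that the argument goes through once $\Gal(\C/\R)$-invariant germs of good geometries exist, which is what you address. Your treatment of the boundary (extension along normal coordinates commuting with the $\Gal$-action on the $X$-factor) is correct.

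There is, however, a gap in the averaging step. Goodness is not a pointwise or convexity-stable condition: it is the \emph{existence} of a good compactification $\overline{X\otimes\C}$, an extension $\overline{V}$, and an extended geometry $\overline{g}$ (Definition \ref{jan2702}). You average $g_{(u,m)}$ and $(u_X,U_X)^{*}\bar g_{(u,m)}$, but these two good geometries come with \emph{different} compactified bundles (the second being the conjugate of the first). The convex-cone/affine-space argument shows the average is a \emph{geometry} in the sense of Definition \ref{nov1904}, but does not by itself produce a compactified bundle to which this average extends, so it does not show the average is \emph{good}. (Already for line bundles, a metric that extends nondegenerately over $\cO_{\P^1}(n)$ does not extend over $\cO_{\P^1}(m)$ for $m\neq n$, so the extension is rigid and the average must be shown to extend over some third object.) The clean fix is to average at the level of the compactified bundle: following the construction preceding Definition \ref{dez2403}, identify $V_{|U\times X}$ with $\pr^*\tilde V$ over a simply connected $U$, and choose a good compactification and extension that are $\Gal(\C/\R)$-stable — for instance a $\Q$-rational good compactification $\overline{X_\Q}$ of $X_\Q$ and a $\Q$-rational extension $\overline{\tilde V}_\Q$, whose base changes to $\C$ are invariant under conjugation. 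Then a geometry $\overline{g}$ on this fixed compactified bundle and its conjugate $\overline{\bar g}$ live on the \emph{same} bundle, their average is a geometry there, and its restriction is a $\Gal(\C/\R)$-invariant good geometry on $V\otimes\C$ as required.
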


\subsection{The regulators $\cbeil$ and $\beil$}
\label{sep2601}

In this subsection we apply the machinery of Subsection \ref{mar0802} in order to construct 
the complex and arithmetic versions of the regulators $\cbeil$ and $\beil$.
We use the  characteristic
cocycle $\omega$ defined in 
\eqref{may1761}. According to Definition \ref{jul0810}, we get a regulator
\[
r(\omega):K(i\Vect^{geom}_{Mf,\C})\to H(\cL\DR_{Mf,\C})\ .  
\]
In the following definition we use Lemma \ref{jul0730} saying  that for a homotopy invariant presheaf $F\in \Fun(\bS_{Mf,\C},\Sp)$
the natural morphism $F\to \bar \bs(F)$ is an equivalence.
We further use  the Fact \ref{dkjhwqjkdwqdwqdwqdwqdwqd} that for a sheaf $F\in \Fun^{desc}(\bS_{Mf,\C},\Sp)$ the natural morphism $F\to \cL(F)$ is an equivalence.
\begin{ddd}\label{jul0816}
We define the complex version of the Beilinson regulator  
$$\cbeil:\bK_{Mf,\C}\to H(\DR_{Mf,\C})$$ 
to be the map which is  essentially uniquely determined through the diagram
$$
\xymatrix@C+0.6cm{
K(i\Vect_{Mf,\C})\ar[d]^{\eqref{jul0815}} \ar[r]_{{\text{Lemma \ref{jul0730}}}}^{ {\simeq}}&\bar \bs (K(i\Vect_{Mf,\C})) &\bar \bs( K(i\Vect_{Mf,\C}^{geom}))\ar[l]^{\text{Lemma \ref{jul0803}}}_{ {\simeq}}\ar[r]^{\bar \bs{({r}(\omega))} }&\bar \bs (H(\cL\DR_{Mf,\C}))\\
\bK_{Mf,\C}\ar[rr]^{\cbeil} & & H(\DR_{Mf,\C})\ar[r]_{\text{Fact \ref{dkjhwqjkdwqdwqdwqdwqdwqd}}}^-{{\simeq}} & H(\cL\DR_{Mf,\C}) \ar[u]^{ {\simeq}}_{\text{Lemma \ref{jul0730}}}
}
$$
\end{ddd} 
Here \emph{essentially uniquely} means that the space of all factorizations is contractible.
\begin{proof}[Proof of the assertions made in the definition]
First note that $\spp$ from \eqref{jul0701} and the group completion $\Omega B$ are left adjoints and hence commute with colimits. So Lemma \ref{jul0803} implies that the second arrow in the upper row is an equivalence.
Next, the clockwise composition is a map from the presheaf $K(i\Vect_{Mf,\C})$ to the sheaf  $H(\DR_{Mf,\C})$ which then factors   essentially uniquely through the left vertical map to the sheafification.
\end{proof}
%
 
 We now consider the arithmetic situation.  
\begin{lem}\label{jan0901}
If $(V,g)\in i\Vect_{Mf,\Z}^{geom}(M\times X)$ is a  geometric bundle (Definition \ref{jul0850}), then 
the characteristic form $\omega(g)\in Z^{0}(\cL\DR_{Mf,\C}(M\times X\otimes \C))$ satisfies
$$
\overline{u_{M\times X}^{*} \omega(\bar g)}=\omega(g)\ .
$$ 
In particular, we can interpret
$$
\omega(g)\in Z^{0}(\cL \DR_{Mf,\Z}(M\times X))\ .
$$
\end{lem}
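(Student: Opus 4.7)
The equation is the $\Gal(\C/\R)$-invariance of $\omega(g)$ for the $\Z/2$-action on $\DR_{Mf,\C}(M\times X\otimes \C)$ built from the equivariant structure $c$ and the Galois isomorphism $u_{M\times X}$; this invariance is exactly what is needed to interpret $\omega(g)$ as an element of the fixed-point subcomplex $\DR_{Mf,\Z}(M\times X)$ as in Definition \ref{nov2031}. My plan is to verify the identity by combining naturality of the characteristic cocycle with the specific construction of $\bar g$ from $g$.

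The key step is to show that $\omega(\bar g)$ coincides with $c(\omega(g))$ on the common underlying smooth level. Because $\bar g = (\nabla^{II}, \bar h)$ is obtained from $g = (\nabla^{II}, h)$ by replacing $h$ with its pointwise complex conjugate while leaving the operator $\nabla^{II}$ unchanged, a short computation in a local frame shows that the adjoint satisfies $(\nabla^{\bar V})^{*,\bar h} = \overline{(\nabla^V)^{*,h}}$ and hence $\nabla^{\bar V, u} = \overline{\nabla^{V,u}}$. By the universality of Chern--Weil theory, each component---$\ch_{2p}(\nabla^{V,u})$, $\ch_{2p}(\nabla^V)$, and the transgression $\tilde\ch_{2p-1}(\nabla^{V,u}, \nabla^V)$---transforms by complex conjugation when passing from $g$ to $\bar g$, yielding $\omega(\bar g) = c(\omega(g))$ in the cone complex $\DR_{Mf,\C}(p)$. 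Combined with the naturality of $\omega$ under the bundle isomorphism $(u_{M\times X}, U_{M\times X})$ and the $\Gal$-invariance hypothesis $(u_X, U_X)^*\bar g = g$, one obtains $u^*\omega(\bar g) = \omega((u,U)^*\bar g) = \omega(g)$, from which the displayed formula follows after applying complex conjugation once more and tracing through that $u$ covers the generator of $\Gal(\C/\R)$.

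A final technical point is that $\omega(g)$ from Definition \ref{may1760n} is the \v{C}ech-assembled cocycle built from its germ-wise characteristic forms via the canonical interpolation on $\C\P^n$ and the homotopy operators $Q$, $h_\R$, $h_\cF$, $\tilde h$, and $H_n$ of Subsection \ref{dez2501}. These auxiliary ingredients---the line bundle $H_{\C\P^n}(1)$ with its standard metric and connection and the Fubini--Study K\"ahler form used to define $d^*$, $\bar\partial^*$, $L^*$---are defined over $\Z$ and therefore automatically $\Gal$-invariant, so the assembly commutes with the $\Z/2$-action and the germ-wise invariance propagates to the full cocycle. The main subtlety is the bookkeeping in the first step: one must carefully track the $(2\pi i)^p$-twists in the cone complex $\DR_{Mf,\C}(p)$ so that the signs arising from complex conjugation combine correctly with those from the Hodge filtration and the real subcomplex, but once this is done the identity falls out componentwise.
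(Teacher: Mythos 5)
Your proposal follows essentially the same route as the paper: you identify the two key inputs — a Chern--Weil computation showing that conjugating the metric while fixing $\nabla^{II}$ conjugates $\nabla^{V,u}$ and hence all the characteristic and transgression forms, so $\omega(\bar g)=\overline{\omega(g)}$; and naturality of the characteristic cocycle under the bundle isomorphism $(u_{M\times X},U_{M\times X})$ combined with the hypothesis $(u_X,U_X)^*\bar g=g$ — and you then note, as the paper does, that the interpolation on $\C\P^n$ and the homotopy operators $Q,h,H_n$ are $\Gal(\C/\R)$-equivariant (since $H_{\C\P^n}(1)$ and the Fubini--Study metric are), so that the germ-wise identity survives the \v{C}ech assembly to local geometries. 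The only soft spot is the closing sentence (``applying complex conjugation once more and tracing through''): the paper combines the two identities in the cleaner chain $u_{M\times X}^{-1,*}\omega(g)=\omega(u_{M\times X}^{-1,*}g)=\omega(\bar g)=\overline{\omega(g)}$, and I would encourage you to write out that one-line derivation explicitly rather than wave at it, since the order in which $\overline{\,\cdot\,}$ and $u^*$ are applied is exactly the point where sign conventions for the $\Z/2$-action on $B^*\DR_\C$ need to be tracked.
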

\begin{proof}
Let us first assume that $g$ is a good geometry on $V$ which is $\Gal(\C/\R)$-invariant. Then
we have
$$u_{M\times X}^{-1,*} \omega(g)=\omega (u_{M\times X}^{-1,*}g)=\omega( \bar g)=\overline{\omega(g)} \ .$$
We now use the fact that the homotopies   $H_{i}$ in \eqref{may2103} are $\Gal(\C/\R)$-equivariant in order to extend these formulas to $\Gal(\C/\R)$-invariant local geometries. 
\end{proof}

Using Lemma \ref{jan0901}, we again interpret the characteristic form
$\omega$ as a characteristic cocycle
$$\omega: \pi_{0}(i\Vect_{Mf,\Z}^{geom})\to Z^{0}(\cL\DR_{Mf,\Z})\ .$$
\begin{ddd}\label{jul08161}
We define the arithmetic version of the Beilinson regulator $$\beil:\bK_{Mf,\Z}\to H(\DR_{Mf,\Z})$$ 
to be the map which is  essentially uniquely determined through the diagram \begin{equation}\label{sep1901}
\xymatrix@C+0.6cm{
K(i\Vect_{Mf,\Z})\ar[d]^{\eqref{jul0815}}\ar[r]_{ {\text{Lemma \ref{jul0730}}}}^{ {\simeq}}&\bar \bs (K(i\Vect_{Mf,\Z})) &\bar \bs (K(i\Vect_{Mf,\Z}^{geom}))\ar[l]^{\text{Lemma \ref{jul0805}}}_{ {\simeq}}\ar[r]^{\bar \bs (r(\omega))}&\bar \bs (H(\cL\DR_{Mf,\Z}))\\
\bK_{Mf,\Z}\ar[rr]^{\beil}&&H(\DR_{Mf,\Z})\ar[r]^{ {\simeq}}_{\text{Fact \ref{dkjhwqjkdwqdwqdwqdwqdwqd}}}&H(\cL\DR_{Mf,\Z}). \ar[u]^{ {\simeq}}_{\text{Lemma \ref{jul0730}}}
}
\end{equation}
\end{ddd} 
The justification is the same as in the complex case above.

\subsection{Comparison with other constructions of the regulator} \label{nov2202}

 {We can specialize the regulators defined in \ref{jul0816} and \ref{jul08161} along the morphism of sites
$e:\bS_\C\to \bS_{Mf,\C}$ and $e:\bS_\Z\to \bS_{Mf,\Z}$, respectively. The resulting maps
\begin{equation}\label{jul0860-neu}
\cbeil:\bK_{\C}\to H(\DR_{\C})\ , \quad \beil:\bK_{\Z}\to H(\DR_{\Z})
\end{equation}
will still be denoted by the same symbol.}
%
%
%
%
The goal of the present subsection is to give an argument why  {these} regulators 
coincide  with the ones introduced originally by Beilinson \cite{MR760999, MR862628}. 

We consider the complex case.
We use    
$$ \Nerve(i\Vect_{\C})\in \Fun^{desc}(\bS_{\C},\CommMon(\Nerve(\sSet)[W^{-1}]))$$ 
(see \eqref{rh23kr23r32e3e3er32r32r3r}) and 
 $$\DR_{\C}(p)\ , \DR_{\C}\in \Fun^{desc}(\bS_{\C},\Nerve(\Ch)[W^{-1}])$$ 
(see Definition \ref{apr1701}).
The fact that the set of homomorphisms from an abelian monoid to an abelian group is itself an abelian group generalizes to the fact that for 
$$X\in \Fun(\bS_\C,\CommMon(\Nerve(\sSet)[W^{-1}]))\ , \quad Y\in \Fun(\bS_\C, \CommGroup(\Nerve(\sSet)[W^{-1}])),$$
 {the $\infty$-categorical mapping space $\map(X,Y)$ refines naturally to a mapping object 
$$\underline{\map}(X,Y)\in \CommGroup(\Nerve(\sSet)[W^{-1}])$$
in commutative groups.
}

\begin{ddd} \label{jul2601}
We define  the primitive part of the cohomology of  
$ \Nerve(i\Vect_{\C})$ with coefficients in the sheaf of chain complexes $\DR_{\C}(p)$ by
$$\Prim (\Nerve(i\Vect_{\C}),\DR_{\C}(p)):=\pi_{0}(\underline{\map}( \Nerve(i\Vect_{\C}),\Omega^{\infty} H(\DR_{\C}(p)))\ .$$
 
\end{ddd}

By Definition \ref{jul0870} we can write the algebraic $K$-theory sheaf  $\bK_{\C}\in \Fun^{desc}(\bS_{\C}, \Sp )$
 in the form
$\bK \simeq L(\spp(\Omega B( {\Nerve(i\Vect_{\C})})))$. 
Using the universal properties of sheafification and group completion and the fact that $\Omega^{\infty}H(\DR_{\C}(p))$ is a sheaf of commutative groups,
we obtain
the equivalences
\begin{eqnarray}\label{jul0822}\begin{split}
\Omega^{\infty}\Map(\bK_{\C},H(\DR_{\C}(p)))  &\simeq &\underline{\map}(\Omega B ( \Nerve(i\Vect_{\C})),\Omega^{\infty}H(\DR_{\C}(p)))\\
& \simeq &\underline{\map}( \Nerve(i\Vect_{\C}),\Omega^{\infty}H(\DR_{\C}(p))) \ .
\end{split}
\end{eqnarray}
In view of \eqref{jul0822},  the $p$-component of the regulator
$\cbeil\in \pi_0(\Omega^{\infty}\Map(\bK_{\C},H(\DR_{\C} )))$
is characterized uniquely   by the corresponding primitive cohomology class, which we will denote by 
$c_{\omega(p)}\in \Prim(\Nerve(i\Vect_{\C}),\DR_{\C}(p))$.

The goal of  {comparing} $\cbeil$ with  {Beilinson's} definition is achieved by a calculation
of $\Prim( \Nerve(i\Vect_{\C}),\DR_{\C}(p))$ {in terms of the absolute Hodge cohomology of the simplicial varieties $BGL_{\bullet}(n)$ (see below)} and the identification of $c_{\omega(p)}$ with the {$2p$-}component of the Chern character class. Due to Deligne's computations \cite[9.1.1]{HodgeIII}, the absolute Hodge cohomology of $BGL_{\bullet}(n)$ is well understood  {and basically equals its singular cohomology with real coefficients (see \eqref{apr1902} below).}

The stack of bundles
$i\Vect_{\C}$ on the site $\Sm_{\C}$ has an atlas $$\nat_{0}\to i\Vect_{\C}$$ which represents the vector bundle
$\bigsqcup_{n\in \nat_{0}} \C^{n}\to \nat_{0}$. The atlas gives rise to a groupoid  
$$GL:=\left(\bigsqcup_{n\in \nat_{0}} GL(n)\rightrightarrows \nat_{0}\right)$$
in $\Sm_\C$. We let $BGL_{{\bullet}}:=\Nerve(GL)\in (\Sm_\C)^{\Delta^{op}}$ denote its nerve. It decomposes as the disjoint union of simplicial varieties $\coprod_{n\in\nat_0} BGL_{\bullet}(n)$. In particular, we can consider its absolute Hodge cohomology (see \cite[\S 4]{MR862628})
\[
H^0(\DR_\C(p)(BGL_\bullet)) \cong H^{2p}_\mathrm{Hodge}(BGL_{\bullet}, \R(p)) \cong \prod_{n\in\nat_0} H^{2p}_\mathrm{Hodge}(BGL_{\bullet}(n), \R(p)).
\]
\begin{prop}\label{sep0405}
The primitive cohomology $\Prim(\Nerve( i\Vect_{\C}),\DR_{\C}(p))$ is naturally isomorphic to the one{-}dimensional real subspace 
\[
\R\cdot (s_{p,0},s_{p,1},s_{p,2},\dots) \subseteq \prod_{n\in\nat_0} H^{2p}_\mathrm{Hodge}(BGL_{\bullet}(n), \R(p))
\]
spanned by the degree-$2p$ component of the universal Chern character class in absolute Hodge cohomology (see the proof for notation).
Under this isomorphism $c_{\omega(p)}$ corresponds to the universal Chern character class.
\end{prop}

This proposition implies in particular that on the level of homotopy groups $\cbeil$ and similarly $\beil$ induce Beilinson's regulator introduced originally in \cite[2.3]{MR760999}.

\begin{proof}
Let
$$Y:(\Sm_\C)^{\Delta^{op}}\to \Fun(\bS_{\C},\Nerve(\sSet))$$
be the Yoneda embedding,  {let} $\kappa: \Fun(\bS_{\C}, \bC)\to \Fun(\bS_\C,  \bC[W^{-1}] )$ be the notation for
the canonical map in the cases $\bC=\Cat$ and $\bC=\sSet$,
and  {let}
$$L: \Fun(\bS_{\C},\Nerve(\sSet)[W^{-1}])\to  \Fun^{desc}(\bS_{\C},\Nerve(\sSet)[W^{-1}])$$ 
be
the sheafification functor \eqref{jan2704}. 
If we interpret the sheaf of groups $Y(GL(n))$ as the sheaf of automorphisms of the trivial $n$-dimensional bundle, then we get a  morphism  
$$\kappa(Y(GL))\to i\Vect_{Mf,\C}$$ in $\Fun(\bS_{\C},\Nerve(\Cat)[W^{-1}])$. Applying the nerve   we get a morphism in $\Fun(\bS_\C,\Nerve(\sSet)[W^{-1}])$
$$\kappa(Y(BGL_{\bullet}))=  \kappa(Y(\Nerve(GL)))\simeq \Nerve(\kappa(Y(GL)))\to \Nerve(i\Vect_{Mf,\C}) \ .$$
 {As} every bundle is locally trivial and  {as} $\Nerve(i\Vect_{Mf,\C})$ satisfies descent, this morphism induces an equivalence
\begin{equation}\label{apr180rwerewrewrewrewr324341}
L(\kappa(Y(BGL_{{\bullet}}))) \simeq  \Nerve(i\Vect_{\C})
\end{equation} 
in $\Fun^{desc}(\bS_{\C},\Nerve(\sSet)[W^{-1}])$. 
Since  $\Omega^{\infty} H(\DR_{\C}(p))$ also satisfies descent, this in turn
induces an equivalence of mapping spaces between objects of $\Fun(\bS_{\C}, \Nerve(\sSet)[W^{-1}])$
\begin{equation}\label{sep0401}
\map( \Nerve(i\Vect_{\C}),\Omega^{\infty} H(\DR_{\C}(p))) \simeq \map(\kappa(Y(BGL_{{\bullet}})),\Omega^{\infty}H(\DR_{\C}(p))).
\end{equation}

\begin{lem}
For every $k \geq 0$, we have a natural isomorphism
\begin{equation}\label{sep0402}
\pi_k\left(\map(\kappa(Y(BGL_{{\bullet}})),\Omega^{\infty}H(\DR_{\C}(p)))\right) \cong H^{-k}\left(\Tot (\DR_{\C}(p)(BGL_{{\bullet}}))\right).
\end{equation}
\end{lem} 
\begin{proof}
For $q\in \nat$
we can consider the sheaf of sets $Y(BGL_{q})$ on $\bS_{\C}$ as a sheaf of constant simplicial sets.
Then we can define $Y_{\infty}(BGL_{q}):=\kappa(Y(BGL_{q}))\in \Fun(\bS_\C, \Nerve(\sSet)[W^{-1}])$.

Let $\iota:\Nerve(\sSet)\to \Nerve(\sSet)[W^{-1}]$ be the canonical morphism.
If $S_{\bullet\bullet}$ is a bisimplicial set, then there is a natural equivalence $\iota(\operatorname{diag} S_{\bullet\bullet}) \simeq \colim_{[q]\in\Delta^{op}} \iota(S_{\bullet q})$ 
  in $\Nerve(\sSet)[W^{-1}]$. We apply this  to the diagram $Y(BGL_{{\bullet}})$ of  
   bisimplicial sets which are constant in the first simplicial direction.
Then we get the
  equivalence 
$$\kappa(Y(BGL_{{\bullet}})) \simeq \colim_{[{q}]\in \Delta^{op}} Y_{\infty}(BGL_{{q}})  $$
in  $\Fun(\bS_{\C},\Nerve(\sSet)[W^{-1}])$.
Using this and the Yoneda Lemma  \cite[Lemma 5.1.5.2]{HTT} we get the equivalences 
\begin{align*}
\map(\kappa(Y(BGL_{{\bullet}})),\Omega^{\infty}H(\DR(p))) &\simeq \lim_{[{q}]\in \Delta }
\map(Y_{\infty}(BGL_{{q}}),\Omega^{\infty}H(\DR_{\C}(p)))\\[0.3cm]
&\simeq \lim_{[{q}]\in \Delta }\Omega^{\infty}H(\DR_{\C}(p)(BGL_{{q}})) .
\end{align*}
Note that $H$ and $\Omega^{\infty}$ commute with limits since they are both right adjoints.  
  Since moreover 
$$\lim_{[q]\in\Delta} \DR_\C(p)(BGL_q) \simeq \Tot( \DR_\C(p)(BGL_{\bullet})),$$ 
we obtain the equivalence
\[
\map(\kappa(Y(BGL_{{\bullet}})),\Omega^{\infty}H(\DR(p))) \simeq \Omega^{\infty}H \left( \Tot( \DR_\C(p)(BGL_{\bullet}))\right).
\]
Since for any complex $A \in \Nerve(\Ch)[W^{-1}]$ and $k  {\in\Z}$ we have $\pi_k(H(A)) \cong H^{-k}(A)$, we get a natural isomorphism
\[
\pi_k\left(\map(\kappa(Y(BGL_{{\bullet}})),\Omega^{\infty}H(\DR(p))) \right) \cong H^{-k}\left( \Tot( \DR_\C(p)(BGL_{\bullet}))\right)
\]
which is exactly \eqref{sep0402}.
\end{proof}

Using  \eqref{jan3001}  the right-hand side of \eqref{sep0402} can be expressed in terms of the absolute Hodge cohomology of the simplicial variety $BGL_{{\bullet}}$  {as}
\begin{equation}\label{sep0403}
H^{-k}\left(\Tot( \DR_{\C}(p)(BGL_{{\bullet}}))\right) \cong \prod_{n\in\nat_{0}} H^{2p-k}_{\text{Hodge}}(BGL_{\bullet}(n),\R(p)).
\end{equation}

According to Deligne \cite[9.1.1]{HodgeIII} we have  $H^{2k+1}(BGL_{{\bullet}}(n), \R) =0$ and   
the real Hodge structure on $H^{2k}(BGL_{{\bullet}}(n),\R)$ is pure of type $(k,k)$, i.e.~the weight filtration has a single step at $2k$ and the Hodge 
filtration  {has} a single step at $k$. Using this and the definition of $\DR_{\C}(p)$ as a cone one sees that  
\begin{equation}\label{apr1902}
H^{k}_{\text{Hodge}}(BGL_{{\bullet}}(n),\R(p)) \cong H^{k}(BGL_{{\bullet}}(n),  { i^{p}\R} ) \ ,\quad  {0\le p\leq \frac{k}{2}}\ , \mbox{$k$ even,}
\end{equation}  
and all other absolute Hodge cohomology groups of $BGL_{{\bullet}}(n)$ vanish.
In particular, combining \eqref{sep0401}, \eqref{sep0402}, and \eqref{sep0403} we get
\begin{equation}\label{may1601}
\pi_0(\map(M,\Omega^{\infty} H(\DR_{\C}(p)))) \cong \prod_{n\in\nat_{0}} H^{2p}(BGL_{{\bullet}}(n), i^{p} \R). \end{equation}

The simplicial  {variety} $BGL_{\bullet}$ has the structure of a monoid in $(\Sm_\C)^{\Delta^{op}}$   {given by the block sum of matrices.}
We denote by $\mu, \pr_1, \pr_2 : BGL_{\bullet} \times BGL_{\bullet} \to BGL_{\bullet}$ the monoid structure and the projections, respectively. The subspace of primitive elements
\begin{equation}\label{sep0404}
H^*\left( \Tot( \DR_\C(p)(BGL_{\bullet}))\right)^{prim} \subseteq H^*\left( \Tot( \DR_\C(p)(BGL_{\bullet}))\right)
\end{equation}
is by definition the set of elements $x$ satisfying
\[
\mu^{*}(x)=\pr_{1}^{*}x+\pr_{2}^{*}x.
\]

It is known that
the singular cohomology of $BGL_{{\bullet}}(n)$ is a polynomial ring 
\[
H^{2*}(BGL_{{\bullet}}(n), \R) = \R[s_{1,n}, \dots, s_{n,n}]
\]
where $\frac{1}{ p!} s_{p,n}\in  H^{2p}(BGL_{{\bullet}}(n), \R)$ is the  component of the Chern character in degree $2p$  {divided by $i^{p}$}. In addition we set $s_{0,n}:=n$. Note that $s_{p,0}=0$ for $p\ge 1$. For the map $\mu:BGL_{{\bullet}}(n) \times BGL_{{\bullet}}(m) \to BGL_{{\bullet}}(n+m)$ induced by the direct sum, we have
\[
\mu^*(s_{k,n+m}) = \pr_1^*s_{k,n} + \pr_2^*s_{k,m}.
\]
This together with the fact that the space of primitive elements intersects trivially with the space of decomposable elements and an  easy computation imply that
the subgroup
of primitives 
$ H^{0}(\Tot(\DR_{\C}(p)(BGL_{{\bullet}})))^{prim}$  is the one-dimensional $\R$-vector space  spanned by  {the} class   
$$s_{p}:= (s_{p,0},s_{p,1},s_{p,2},\dots)\ , $$
where we denote the preimage of $i^{p}s_{p,n}$ in $H^{0}(\Tot(\DR_{\C}(p)(BGL_{{\bullet}}(n))))$ under the isomorphisms \eqref{sep0403} and \eqref{apr1902} by the same symbol.

We have an injection 
\begin{equation}\label{nov1910} 
\Prim( \Nerve(i\Vect_{\C}),\DR_{\C}(p))\hookrightarrow H^{0}(\Tot(\DR_{\C}(p)(BGL_{{\bullet}})))^{prim}\ .\end{equation}
It follows from the constructions of the characteristic cocycle $\omega(p)$ in Definition \ref{may1760n} and the regulator $\cbeil$  in Definition \ref{jul0816}  that the element $c_{\omega(p)}\in \Prim( \Nerve(i\Vect_{\C}),\DR_{\C}(p))$ corresponding to the $p$-component of $\cbeil$ 
  goes to the
class $\frac{1}{p!} s_{p}$ under the map (\ref{nov1910}).
This shows surjectivity of (\ref{nov1910}) {and finishes the proof of the proposition}. \end{proof}

\subsection{The relative Chern character} \label{jan1961}

In this subsection we present some additional material  which fits into the present paper, since the constructions and arguments use the techniques and ideas developed for the main part of the paper. 
Since the results shown here are not used  elsewhere in the paper this subsection {can  safely be skipped on  first reading}.

An alternative approach to regulators goes back to Karoubi \cite{KaroubiCR, KaroubiAst} using his relative Chern character. This approach was further studied and generalized in \cite{Tamme-Beil}. 
Roughly, the idea is as follows: We consider a  smooth variety $X$ over $\C$. Then we have a natural comparison map 
\begin{equation}\label{ascascsacsaee1e12e2ecsacsc}
c: \bK_\C(X) \to \bK^{top}(X)
\end{equation} from the algebraic to the topological $K$-theory spectrum of $X$.
If we compose $c$ with the usual Chern character
 $$\ch: \bK^{top}(X)\to H(\bA(X))$$
 from  {the} topological $K$-theory to {the} 
de Rham cohomology of $X$ {(cf.~\eqref{eq:def-deRham})}, then the resulting map looses a lot of information. Instead, we define the relative $K$-theory $\bK^{rel}(X)$ as the homotopy fibre of the map $c$. 
 It is the domain of the relative  Chern character 
$$\ch^{rel} :\pi_i(\bK^{rel}(X))\to \prod_p H^{2p-i-1}(X;\C)/\cF^p,$$  
which carries interesting secondary 
information.

For all $i,p\ge 0$ we have a natural map  
 $$H^{2p-i-1}(X;\C)/\cF^p\to H^{2p-i}(\bD(p)(X))\ ,$$ 
where the complex $\bD(p)$ representing weak absolute Hodge cohomology is defined similarly as the absolute Hodge complex $\DR_\C(p)$, but discarding the weight filtration (see \eqref{mai2301} for details). The weak analogue of Beilinson's regulator for smooth varieties over $\C$ induces the  maps  
$$\mathtt{r}^{\mathtt{Beil}}_{\C,\bD,p}: \pi_i(\bK_\C(X)) \to H^{2p-i}(\bD(p)(X))$$ (see \eqref{mai2401}) for all $i,p\ge 0$.
The following theorem has been shown in \cite[Thm. 3.9]{Tamme-Beil}:
\begin{theorem} \label{mai2402}
For all $p,i\geq 0$  the diagram 
\[
\xymatrix@C+0.5cm{
\pi_{i}(\bK^{rel}(X))\ar[d]\ar[r]^-{\ch_p^{rel}}& H^{2p-1-i}(X;\C)/\cF^{p}H^{2p-1-i}(X;\C)\ar[d]\\
\pi_{i}(\bK_{\C}(X))\ar[r]^{\mathtt{r}^{\mathtt{Beil}}_{\C,\bD,p}}&H^{2p-i}(\bD(p)(X)) 
}
\]
commutes. 
\end{theorem}

The goal of the present section is  to give the details of the constructions and a proof of this theorem using the techniques developed in the present  paper.  We consider the site $\Mf$ and the $\infty$-category $\bS_{Mf}:=\Nerve(\Mf^{op})$. {For technical reasons, we also need to consider the product $\bS_{Mf,Mf} := \bS_{Mf}\times \bS_{Mf}$.}
We consider the stack ${i\Vect^{top}_{Mf}}$ on ${\Mf\times \Mf}$ which associates to ${M\times X \in \Mf \times \Mf}$
the symmetric monoidal groupoid of smooth complex vector bundles on ${M\times X}$. We interpret this stack also as an object
$$
{i\Vect^{top}_{Mf}} \in \Fun^{desc}({\bS_{Mf,Mf}},\CommMon(\Nerve(\Cat)[W^{-1}]))\ .
$$
This stack does not model topological $K$-theory since it does not take the
topology of the morphism spaces into account. We can remedy this defect 
by an application of the functor 
$\bar \bs:\Fun(\bS_{Mf,Mf},\Sp)\to \Fun(\bS_{Mf,Mf},\Sp) $ defined similarly as the corresponding transformation \eqref{jul0741bn}, 
but for the site $\bS_{Mf,Mf}$ instead of $\bS_{Mf,\C}$.
We combine $\bar \bs$ and the algebraic $K$-theory functor introduced in Definition \ref{nov1901} in order to define
the sheaf of topological $K$-theory spectra
\begin{equation}\label{sep1501}
\bK^{top}_{{Mf}}:= L(\bar \bs (K({i\Vect^{top}_{Mf}})))  \in \Fun^{desc}({\bS_{Mf,Mf}}, \Sp )\ .
\end{equation}
We further define the sheaf of topological $K$-theory spectra 
$$\bK^{top} \in \Fun^{desc}(\bS_{Mf}, \Sp )$$ 
by specializing the first factor to be a point.
By  \cite[Lemma 6.3]{Bunke:2013aa} we know that $$ \bK^{top}\simeq \underline{\ku}$$
where $\underline{\ku}$ is the constant sheaf of spectra induced by the connective topological $K$-theory spectrum of $\C$.


Next, we construct 
the  comparison map \eqref{ascascsacsaee1e12e2ecsacsc}.
Let $h\colon \bS_{\C} \to {\bS_{Mf}}$ be the map given by sending a  smooth variety  over $\C$ to its associated smooth manifold. It is compatible with the topologies and induces a map $h\colon \bS_{Mf,\C} \to \bS_{Mf,Mf}$. We have a natural transformation of stacks $u\colon i\Vect_{{Mf,\C}}\to h^{*}i\Vect^{top}_{{Mf}}$ which maps a  bundle    to its underlying smooth complex vector bundle.
The  comparison map is now defined by 
\[
c:\bK_{{Mf,}\C}=L(K(i\Vect_{{Mf,\C}})) \xrightarrow{u} h^*L(K(i\Vect^{top}_{{Mf}})) \xrightarrow{\text{\eqref{jul0811bn}}}
h^{*}L(\bar \bs(K(i\Vect^{top}_{{Mf}}))) =  h^{*}\bK^{top}_{{Mf}} \ .
\]
 We
define the relative $K$-theory $\bK^{rel}_{{Mf}}$ by  forming the fibre sequence in $\Fun^{desc}(\bS_{Mf,\C},\Sp)$
$$\bK^{rel}_{{Mf}} \to \bK_{{Mf,}\C}  \xrightarrow{c} h^*\bK^{top}_{{Mf}}\ .$$
We again define $\bK^{rel} \in \Fun^{desc}(\bS_\C,  \Sp )$ by specializing the manifold in the first factor to be a point.

The relative Chern character will be induced by compatible regulator maps on $\bK_{{Mf,\C}}$ and $h^*\bK^{top}_{{Mf}}$.   We now describe the relevant choice of geometries and characteristic cocycles.
A geometry on a complex vector bundle $V$ over $M\times X$ is a pair $g^V=(h^{V},\nabla^{V})$ consisting of a hermitian metric $h^{V}$ and a connection $\nabla^{V}$.
A local geometry on the complex vector bundle $V$ over $M\times X$ is a family $(g_m)_{m\in M}$ of germs of geometries. In other words, each $g_m$ is represented by a geometry on $V_{|U\times X}$ where $U$ is a neighbourhood of $m\in M$. A geometric smooth bundle is a pair $(V,g)$ consisting of a bundle $V$ with local geometry $g$. 
We let $i\Vect^{top,geom}_{Mf}$ be the symmetric monoidal stack of
geometric smooth bundles.
 {Similarly} as in Lemma \ref{jul0803}, the forgetful transformation induces an equivalence 
\begin{equation}\label{sep1301}
\bar \bs(\Nerve(i\Vect^{top,geom}_{{Mf}})) \stackrel{\sim}{\to} \bar\bs (\Nerve(i\Vect^{top}_{{Mf}}))
\end{equation}
in $\Fun(\bS_{Mf,Mf},\CommMon(\Nerve(\sSet)[W^{-1}]))$.

We define the sheaf of complexes
\begin{equation}\label{eq:def-deRham}
\bA_{Mf}:=\prod_{p\ge 0} A[2p]\in \Fun^{desc}(\bS_{Mf,Mf},  {\Nerve(\Ch)})
\end{equation}
 where $A$ is the usual de Rham complex of smooth $\C$-valued forms. 
 Furthermore, we let $\bA\in \Fun^{desc}(\bS_{Mf}, {\Nerve(\Ch)})
$ be the restriction to the site $\bS_{Mf}$.

To a complex vector bundle $V$ on $M\times X$ with geometry $(h^V,\nabla^V)$ we associate the characteristic form
\[
\omega^{top}(V,(h^V,\nabla^V)) := \prod_{p\ge 0}\ch_{2p}(\nabla^{V})
\]
(see \eqref{2w212w2wfwefwefewfw12} for the definition of $\ch_{2p}$). By a precisely analogous construction  as in Subsections \ref{nov1101}, \ref{dez2501} (in particular using the canonical deformations) we get   the characteristic cocycle $$\omega^{top}(g)\in Z^0(\cL\bA_{Mf}(M\times X))$$ of a geometric smooth bundle $(V,g)$ in the \v Cechification (see Subsection \ref{may2201}) $\cL\bA_{Mf}$ of $\bA_{Mf}$, where $\cL$ acts on the first component in $\Mf\times \Mf$. 

Similarly to   the construction of Beilinson's regulator in Definition \ref{jul0816}, we 
define the {topological} Chern character 
$$\ch:\bK^{top} \to H( \bA)$$
through the diagram $\Fun(\bS_{Mf,Mf},\Sp)$
\[
\xymatrix@C+0.6cm{
\bar \bs (K( i\Vect^{top}_{Mf}))\ar[d]^{{\eqref{sep1501}}}  & {\bar\bs (K(i\Vect_{Mf}^{top,geom}))}\ar[l]_-{\sim}^-{\eqref{sep1301}} \ar[r]^-{\bar \bs (r(\omega^{top}))}
& \bar\bs H(\cL\bA_{Mf}) \\
\bK^{top}_{Mf}\ar[r]^{\ch}  & H(\bA_{Mf})  \ar[r]^{\sim}_{\text{Fact \ref{dkjhwqjkdwqdwqdwqdwqdwqd}}} & H(\cL\bA_{Mf}). \ar[u]^{ \sim}_{\text{Lemma \ref{jul0730}}}
}
\]

We can also consider its real variant $\ch_{\R}$ by replacing $\bA_{Mf}$ with the complex of real valued differential forms 
$$
\bA_{\R,Mf}:=\prod_{p\ge 0} (2\pi i)^{p} A_\R[2p] \in  \Fun^{{desc}}(\bS_{Mf,Mf}, {\Nerve(\Ch)})
$$
and using the characteristic {form (cf. \eqref{dez2602}) $$\omega^{top}_{\R}(V,(h^V,\nabla^V)) := \prod_{p\geq 0} \ch_{2p}(\nabla^{V,u})\ .$$}

Geometries on algebraic vector bundles are defined in such a way that the corresponding Chern character forms live in the appropriate step of the Hodge filtration. We thus
consider  
\[
\cF^0\bA_{\log,Mf} := \prod_{p\geq 0} \cF^p A_{\log,Mf}[2p]\in \Fun^{ }(\bS_{Mf,\C},{\Nerve(\Ch)})\ .
\]
 {By Lemma \ref{lem:descentSmfC}, we can consider $\cF^{0}\bA_{\log,Mf}$ as an object in $\Fun^{desc}(\bS_{Mf,\C},\Nerve(\Ch)[W^{-1}])$.}
There is a natural morphism $\cF^0\bA_{\log,Mf} \to h^* \bA_{Mf}$ in $ \Fun  (\bS_{Mf,\C},\Ch)$, which gives rise to a fibre sequence in $\Fun^{{desc}}(\bS_{Mf,\C},\Nerve(\Ch)[W^{-1}])$
\[
\bA_{rel,Mf} \to \cF^0\bA_{\log,Mf} \to h^* \bA_{Mf}
\]
defining $\bA_{rel,Mf}\in \Fun^{{desc}}(\bS_{Mf,\C},\Nerve(\Ch)[W^{-1}])$.
We denote its specialization to $\bS_{\C}$ by $\bA_{rel}$.
 We will actually  use the explicit model
\begin{equation}\label{hfkjwefhkewfewfewff}
\bA_{rel,Mf}\simeq \Cone(\cF^0\bA_{\log,Mf} \to h^* \bA_{Mf})[-1].
\end{equation}

From a good geometry $g^V=(h^{V},\nabla^{II})$ (see {Definition \ref{jan2702}}) we obtain a geometry 
$(h^{V},\nabla^{V})$ with $\nabla^{V}=\nabla^{II}+\nabla^{I}$  on the underlying complex vector bundle.
 {Using t}he characteristic form  
$$\omega_{\cF}(g^V):=\prod_{p\geq 0} \ch_{2p}(\nabla^V)$$
and the construction of Subsections \ref{nov1101}, \ref{dez2501}, we get for any geometric bundle $(V,g)$ on $M\times X\in \bS_{Mf,\C}$, i.e.~a bundle $V$ with a local geometry $g=(g_m)_{m\in M}$, a characteristic cocycle $$\omega_\cF(V,g) \in Z^0(\cL\cF^0\bA_{\log,Mf}(M\times X))\ .$$
Again, similarly as in the construction in Definition \ref{jul0816}, we get
a regulator map
$$\ch_{\cF}:\bK_{Mf,\C}\to H(\cF^0\bA_{\log,Mf})\ .$$
Clearly, the diagram  of characteristic cocycles
\[
\xymatrix{
 \pi_0(i\Vect^{geom}_{{Mf,\C}})\ar[r]^-{\omega_{\cF}} \ar[d] & Z^{0}({\cL} \cF^0\bA_{\log,Mf}) \ar[d]\\
h^*\pi_0(i\Vect^{top,geom}_{{Mf}}) \ar[r]^-{ {h^{*}(\omega^{top})}}& h^*Z^{0}({\cL}\bA_{Mf})
}
\]
commutes. 
 This gives us the commutativity of the lower square in the following diagram in $\Fun^{desc}(\bS_{Mf,\C},{\Sp})$.  We define the relative Chern character $\ch^{rel}$ to be the map induced on   fibres:
\begin{equation}\label{ejdlkqdqwdqwdwqd}
\xymatrix@C+0.5cm{
\bK^{rel}_{ {Mf}} \ar@{..>}[r]^-{\ch^{rel}} \ar[d] & H( {\bA_{rel,Mf}}) \ar[d]\\
\bK_{ {Mf},\C} \ar[r]^-{ \ch_\cF} \ar[d] & H(\cF^0\bA_{\log,Mf})\ar[d]\\
h^*\bK^{top}_{ {Mf}} \ar[r]^-{{h^{*}(\ch)}} & H(h^*\bA_{ {Mf}})\ .
}
\end{equation}
Specializing the manifold to a point, we get the map 
$$\ch^{rel}\colon \bK^{rel}\to H(\bA_{rel})$$ in $\Fun^{desc}(\bS_{\C}, \Sp)$.
Note that we have natural isomorphisms 
\begin{align*}
\pi_i(H(\bA_{rel}(X))) &\cong  H^{-i}(\Cone(\cF^0\bA_{\log}(X) \to h^*\bA(X))[-1]) \\
&\cong \prod_{p\geq 0} H^{2p-1-i}(X;\C)/\cF^{p}H^{2p-1-i}(X;\C)\ .
\end{align*}
This finishes the construction of the relative Chern character.

In order to compare it with the Beilinson regulator, we introduce the  Deligne-Beilinson or  weak absolute Hodge complex
\begin{equation}\label{mai2301}
\bD_{ {Mf,\C}}(p) := \Cone\left((2\pi i)^{p}A_{ {Mf,}\R} \oplus \cF^{p}A_{\log, {Mf}} \to A\right)[2p-1] \in  \Fun (\bS_{Mf, {\C}},\Nerve(\Ch))
\end{equation}
and set $\bD_{ {Mf,\C}} := \prod_{p\geq 0} \bD_{Mf,\C}(p)$.
By inspection of the Definition  {\ref{nov2031e} of $\DR_{Mf,\C}$}
 we have a natural map 
\begin{equation}\label{dcscs2342342432dedewd}
\DR_{ {Mf,\C}} \to \bD_{ {Mf,\C}}
\end{equation} 
 {which simply forgets} the weight filtration
and the logarithmic growth condition at the appropriate places.

The image under \eqref{dcscs2342342432dedewd} of the characteristic ${\cL}\DR_{ {Mf,}\C}$-valued cocycle $\omega$ defined in  {Definition~\ref{may1760n}}  is an $\cL\bD_{ {Mf,\C}}$-valued characteristic cocycle 
 which we  denote by $\omega_{\bD}$. 
 {As in Definition~\ref{jul0816}, it induces a morphism}
\begin{equation}\label{mai2401}
\mathtt{r}^{\mathtt{Beil}}_{\C,\bD}  : \bK_{ {Mf,\C}} \to H(\bD_{ {Mf,\C}})
\end{equation}
in $\Fun^{desc}(\bS_{Mf,\C},\Sp)$, which we call the  weak Beilinson regulator.
It follows from the constructions that the composition of  the characteristic cocycle $\omega_{\bD}$ with the  projection to the second component of the cone $Z^{0}(\cL\bD_{ {Mf,\C}})\to Z^{0}(\cL\cF^{0} \bA_{\log, {Mf}})$
coincides with $\omega_{\cF}$
and moreover that the square 
\[
\xymatrix{
\pi_{0}(i\Vect_{ {Mf,}\C}^{geom} )\ar[r]^-{\omega_{\bD}} \ar[d] & Z^{0}(\cL\bD_{ {Mf,\C}}) \ar[d]\\
h^*\pi_{0}(i\Vect^{top,geom}_{ {Mf}})\ar[r]^-{\omega^{top}_{\R}}& Z^{0}(h^*\cL\bA_{\R, {Mf}})}
\]
commutes.

It follows from \eqref{hfkjwefhkewfewfewff} and \eqref{mai2301} that the fibre of the map $\bD_{ {Mf,\C}} \to h^*\bA_{\R, {Mf}}$,  given by the projection to the first component of the cone, is naturally equivalent to 
$ {\bA_{rel,Mf}}$ {(see also \eqref{sep0601} below)}.
 {Using the diagram} 
\begin{equation}\label{ejdlkqdqwe3e3eee33dqwdwqd}
\xymatrix@C+0.5cm{
\bK^{rel}_{ {Mf}} \ar@{..>}[r]^-{ } \ar[d] & H( {\bA_{rel,Mf}}) \ar[d]\\
\bK_{ {Mf},\C} \ar[r]^-{\mathtt{r}^{\mathtt{Beil}}_{\C,\bD}} \ar[d] & H( \bD_{ {Mf,\C}})\ar[d]\\
h^*\bK^{top}_{ {Mf}} \ar[r]^-{h^{*}(\ch_{\R})} & H(h^*\bA_{\R, {Mf}})
}
\end{equation}
 {the} weak Beilinson regulator
  $\mathtt{r}^{\mathtt{Beil}}_{\C,\bD}$ also induces a map
\begin{equation}\label{feb1004}
\bK^{rel}_{ {Mf}} \to  H(\bA_{rel,Mf}).
\end{equation}

We claim that \eqref{feb1004} coincides with $\ch^{rel}$ constructed in \eqref{ejdlkqdqwdqwdwqd}.  The assertion of Theorem \ref{mai2402} then follows by {specializing the manifold to a point and} taking homotopy groups.

We start with the following commutative diagram  in $\Fun^{desc}(\bS_{Mf,\C},\Nerve(\Ch)[W^{-1}])$
\begin{equation}\label{sep0601}
\xymatrix{
\Cone(\bD_{Mf,\C} \to h^*\bA_{\R, {Mf}})[-1] \ar[r] & \bD_{Mf,\C} \ar[r] & h^*\bA_{\R,Mf} \\
\Cone(\bD_{Mf,\C} \to \Cone(\cdots))[-1] \ar[u]^{\text{induced}}_{ {\simeq}} \ar[r] \ar[d]_{\text{induced}}^{\simeq} & 
	  \bD_{ {Mf,\C}} \ar@{=}[u] \ar[r] \ar[d] & \Cone(h^*\bA_{\R,{Mf}} \oplus h^*\bA_{ {Mf}} \to h^*\bA_{ {Mf}})[-1] \ar[u]_{ {\simeq}} \ar[d]     \\
 {\bA_{rel,Mf}}\ar[r] & \cF^0\bA_{\log, {Mf}} \ar[r] & h^*\bA_{ {Mf}}.
}
\end{equation}
Here the map from $\bD_{ {Mf,\C}}$ to the cone in the right column is induced by $\cF^0\bA_{\log, {Mf}} \to h^*\bA_{ {Mf}}$, the maps in the right column are the obvious projections, and the maps in the left column are the induced ones.  The lower left vertical arrow is an equivalence since the lower right square is a pull-back square.

To prove the claim, we introduce the auxiliary characteristic cocycle
\[
\omega_{\Cone}: \pi_0(i\Vect^{top,geom}_{ {Mf}}) \to Z^0\left({\cL}\Cone(\bA_{\R, {Mf}} \oplus \bA_{ {Mf}} \to \bA_{ {Mf}})[-1]\right)
\]
which, similarly as before, is induced by sending a complex vector bundle with geometry 
$(V,(h^V, \nabla^V))$ to $$\prod_{p\geq 0} (\ch_{2p}(\nabla^{V,u})\oplus\ch_{2p}(\nabla^V), \widetilde\ch_{2p-1}(\nabla^{V,u},\nabla^V))$$ (see Subsection \ref{dez2501} for the definition of the transgression $\widetilde\ch$ and compare with \eqref{nov1102}).
 { 
It follows directly from the definitions that we have the compatibility of characteristic cocycles defined on $\pi_0(i\Vect_{Mf,\C}^{geom})$, (or on 
 $h^*\pi_0(i\Vect^{top,geom}_{ {Mf}})$, respectively) with values in the groups of $0$-cocycles in the \v Cechifications of the middle respectively right column of diagram \eqref{sep0601},
summarized schematically in the diagram
\begin{equation}\label{nocheindiagramm}
\xymatrix@C+0.5cm@R-0.3cm{
\omega_{\bD} \ar@{|->}[r] & \omega^{top}_{\R}\\
\omega_{\bD} \ar@{=}[u] \ar@{|->}[r] \ar@{|->}[d] & \omega_{\Cone} \ar@{|->}[u] \ar@{|->}[d] \\
\omega_{\cF} \ar@{|->}[r] & \omega^{top}.
}
\end{equation}
As in the diagrams \eqref{ejdlkqdqwdqwdwqd} and \eqref{ejdlkqdqwe3e3eee33dqwdwqd}, the characteristic
cocycles $\omega_{\bD}$ and $\omega_{\Cone}$ thus give rise to a third map $\bK^{rel}_{Mf} \to H(\bA_{rel,Mf})$. Moreover, by the commutativity of \eqref{sep0601} and the compatibilities in \eqref{nocheindiagramm}, this third map coincides with $\ch^{rel}$ from \eqref{ejdlkqdqwdqwdwqd}  -- corresponding to the bottom row of \eqref{nocheindiagramm} --   and with \eqref{feb1004} -- corresponding to the top row of \eqref{nocheindiagramm}. This implies our claim. \hB
%
%
%
}


\begin{rem}
Whereas with this construction of the relative Chern character the relation to Beilinson's regulator follows quite easily, it is not a priori clear that this new construction really extends Karoubi's original one. This is done in \cite{Tamme-Beil} and used there to establish also the comparison with Borel's regulator in the number field case.
\end{rem}

\section{Differential algebraic $K$-theory}\label{jan1964}

\subsection{Definition of differential algebraic $K$-theory}\label{dlqkwdjlqdwqdwqdwqdwqdw24234234}

A spectrum $E\in \Sp$ represents a cohomology theory $E^{*}$ on the category of topological spaces. 
A differential extension $\widehat E^{*}$ combines the restriction of the functor $E^{*}$ to the category  of smooth manifolds with    information about characteristic forms.
  In the following, we describe the Hopkins-Singer version of differential cohomology introduced in the ground-breaking paper \cite{MR2192936}.   
Recall that $H:\Nerve(\Ch)[W^{-1}]\to \Sp$ denotes the Eilenberg-MacLane correspondence.
In order to define a Hopkins-Singer differential cohomology theory, we must choose differential data $(E, {C},c)$.  It consists of the spectrum $E$ as above, a chain complex $ {C}\in \Ch$ of real vector spaces,
and a morphism of spectra $c:E\to H( {C})$. The differential data is called strict  if this morphism induces isomorphisms
$\pi_{*}(E)\otimes \R\cong H^{-*}( {C})$.

A standard choice for $ {C}$ is the  $\Z$-graded vector space $ {C}:=\pi_{*}(E)\otimes \R$ with trivial differentials. 
 We then have a unique\footnote{For uniqueness we use the fact that $C$ consists of $\Q$-vector spaces.} equivalence class of  morphisms $c:E\to H( {C})$ which induces the canonical map $$\pi_{*}(E)\to \pi_{*}(H( {C}))\cong\pi_{*}(E)\otimes \R$$ in homotopy groups. 
The triple $(E,{C},c)$ is  called the canonical differential data associated to the spectrum $E$. It is strict.

For a chain complex of real vector spaces $ {C}$, we let $\Omega {C}\in \PSh_{\Ch}(\Mf)$ be the de Rham complex with coefficients in $C$.
Note that it actually is a sheaf, i.e.~$\Omega C\in \Fun^{desc}(\bS_{Mf},\Nerve(\Ch))$.

\begin{rem}At this point we must consider $\Omega  {C}$ as a sheaf with values in $\Nerve(\Ch)$ and not in $\Nerve(\Ch)[W^{-1}]$, since we want to form the stupid  {truncation} $\sigma^{\ge 0}\Omega {C}$ below, which  is only well-defined before inverting quasi-isomorphisms.
\end{rem} 

 For a spectrum $F$, let  
 $\underline{F}\in \Fun^{desc,h}(\bS_{Mf},\Sp)$ be the constant sheaf of spectra induced by $F$ {(cf.~Subsection~\ref{mar0804})}.   We have a version of the de Rham Lemma  {(Lemma~\ref{efef234234fwefewfewfewfwfew})}, which provides the equivalence 
  $$\underline{H( {C})}\simeq H(\Omega  {C})$$ 
  in $\Fun^{desc,h}(\bS_{Mf},\Sp)$.
Given differential data $(E,C,c)$, we thus have a canonical equivalence class of morphisms
\begin{equation}\label{t54t4t54t4t4t456rth}
\rat:\underline{ {E}}\xrightarrow{c} \underline{H( {C})}\simeq H(\Omega  {C})\ .
\end{equation}
For every integer $n\in \Z$, the sheaf  $\Diff^{n}(E)\in \Fun^{desc}(\bS_{Mf},\Sp)$  of  differential function spectra is defined by the pull-back
\begin{equation}\label{r23rr2r23jkr3hkjh23r}
\xymatrix{
\Diff^{n}(E)\ar[d]^{I}\ar[r]^-{R}&H(\sigma^{\ge n} \Omega  {C})\ar[d]\\
\underline{E}\ar[r]^-{\rat}&H(\Omega {C})
}
\end{equation}
in $\Fun^{desc}(\bS_{Mf},\Sp)$,
where $\sigma^{\ge n}\Omega A$ is the stupid 
truncation. Finally, 
the differential cohomology groups of  {a manifold} $M$ are  defined by   $$\widehat{E}^{n}(M):=\pi_{-n}(\Diff^{n}(E)(M))\ .$$ 
This construction of differential cohomology is called the Hopkins-Singer construction because of the foundational work \cite{MR2192936}. 
In this streamlined form it has been introduced in \cite{bg},  \cite[Sec.~4.4]{Bunke:2013aa}.


In the present paper we consider the differential extension of the algebraic $K$-theory of arithmetic schemes  {(see Subsection \ref{jul1001})}. 
Its construction is modelled on the Hopkins-Singer construction explained above, but modifications are needed since we work on a product site $\Mf\times  {\Reg_{\Z}}$.
Our choice of differential forms is the complex $\cL\DR_{Mf,\Z}$. This complex is connected with algebraic $K$-theory by  the
regulator 
\[
 \bK_{Mf,\Z} \xrightarrow{\beil} H(\DR_{Mf,\Z}) \stackrel{\sim}{\to} H(\cL\DR_{Mf,\Z})
\]
given in  Definition~\ref{jul08161}. This map replaces the map $\rat$ in \eqref{t54t4t54t4t4t456rth}. 
Usually, a differential extension of a cohomology theory is considered as a functor on the category of manifolds. 
Here it is a functor $$M\times X\mapsto \widehat{\bK}(X)^{0}(M)$$ of two variables, {an arithmetic} scheme $X$ 
and a manifold $M$. Furthermore, 
the differential data  
are far from being strict. In order to explain this,
let us fix $X$ and write 
\begin{equation}\label{apr2450}\DR_\Z(X)(M):=\DR_{Mf,\Z}(M\times X)\ .\end{equation}
Let us assume that $X$ is proper  {over $\Spec(\Z)$} in order to avoid the complications with compactifications.
Then $\DR_\Z(X)(M)$ is a topological completion of the de Rham complex of $M$ with coefficients in
$\DR_\Z(X)(*)$. Even if $X$ is zero-dimensional, $\DR_\Z(X)(*)$ is a complex with non-trivial differential.
Beilinson's conjectures, described in Subsection \ref{nov2602}, predict that, for {$X\to\Spec(\Z)$ proper and flat}  with potentially good reduction everywhere, the regulator induces an isomorphism $\bK_p(X)\otimes \R\to H^{-p}(\DR_\Z(X)(*))$ for $p\ge 2$, and its kernels and cokernels for other $p$ 
are well understood and non-trivial in general. 
Hence even if Beilinson's conjectures are assumed  {to be true}, the map 
$$\bK_*(X)\otimes \R\to H^{-*}(\DR_\Z(X)(*))$$   
is not an isomorphism.

We now turn  to the construction of 
the differential  algebraic $K$-theory functor  $\widehat{\bK}^{0}$
and the structure maps $R$, $I$, $a$ of a differential cohomology theory. 
The  differential   algebraic $K$-theory functor    will be defined in terms of the differential algebraic $K$-theory spectrum.
We denote the stupid truncation of a complex to the non-negative degree part by $\sigma^{\geq 0}$.

\begin{ddd}\label{jan0503}
We define the differential algebraic $K$-theory spectrum 
$$\Diff(\bK)\in \Fun (\bS_{Mf,\Z}, \Sp )$$ by the pull-back in $\Fun (\bS_{Mf,\Z}, \Sp )$
\begin{equation}\label{nov2601}
\begin{split}
\xymatrix{\Diff(\bK)\ar[r]^-{R}\ar[d]^{I}&H( {\sigma}^{\ge 0}\cL\DR_{Mf,\Z})\ar[d]^{i} \\
 \bK_{Mf,\Z} \ar[r]^-{\beil}&H(\cL\DR_{Mf,\Z})\ .
}
\end{split}
\end{equation}
Furthermore, we define the differential algebraic $K$-theory (in degree zero) by
$$\widehat{\bK}^{0}:= \pi_0(\Diff(\bK))\in \Fun(\bS_{Mf,\Z},\Nerve(\Ab))\ .$$
\end{ddd}
{Note that a pull-back in $\Fun(\bS_{Mf,\C},\Sp)$ can be computed object-wise.} 

While $ \bK_{Mf,\Z}$ and $H(\cL\DR_{Mf,\Z})$ are homotopy invariant in the manifold direction   (see Lemma \ref{dez1405} for the latter) the complex  $H( {\sigma}^{\ge 0}\cL\DR_{Mf,\Z})$ is no longer homotopy invariant.
Therefore $\Diff(\bK)$ is not homotopy invariant as well. Hence the functor $\widehat{\bK}^{0}$ is not homotopy invariant in the manifold direction. The deviation from homotopy invariance can be described by a
homotopy formula, which will be formulated  in  Lemma \ref{jan0605}.

 The maps $R$ and $I$ in the diagram \eqref{nov2601} induce the structure maps (denoted by the same symbols)
$$R:\widehat{\bK}^{0}\to Z^{0}(\cL\DR_{Mf,\Z})\ , \quad  I:\widehat{\bK}^{0}\to \pi_{0}( \bK_{Mf,\Z})\ .$$
The inclusion $i$ of the  non-negative 
part into the full de Rham complex fits into a fibre sequence 
\begin{equation}\label{wedewdewdd3455345wef}
\dots\to  {\sigma}^{\le -1} \cL\DR_{Mf,\Z}[-1]\rightarrow  {\sigma}^{\ge 0} \cL\DR_{Mf,\Z} \xrightarrow{i} \cL \DR_{Mf,\Z}\to  {\sigma}^{\le -1} \cL\DR_{Mf,\Z}\to \dots \end{equation}
in $\Fun(\bS_{Mf,\Z},\Nerve(\Ch)[W^{-1}])$.
Since  \eqref{nov2601} is a pull-back square, the fibre sequence \eqref{wedewdewdd3455345wef} induces a fibre sequence
\begin{equation}\label{wedewdewdd3455345wefe}
\dots\to  H({\sigma}^{\le -1} \cL\DR_{Mf,\Z}[-1])\xrightarrow{a}  \Diff(\bK) \xrightarrow{I} \bK_{Mf,\Z}\to  H({\sigma}^{\le -1} \cL\DR_{Mf,\Z})\to \dots \ . 
\end{equation} 
The map marked by $a$ in this sequence 
induces the map
\begin{equation}\label{mar1401}
a: H^{-1}( {\sigma}^{\le -1}\cL\DR_{Mf,\Z})\to \widehat{\bK}^{0}\ .
\end{equation}  Applying $\pi_{0}$ to \eqref{wedewdewdd3455345wefe} we get the exact sequence
\begin{equation}\label{mar1402}
\pi_1( \bK_{Mf,\Z} ) \to H^{-1}(\sigma^{\leq -1}\cL\DR_{Mf,\Z}) \xrightarrow{a} \widehat\bK^0 \xrightarrow{I} \pi_0( \bK_{Mf,\Z} ) \to 0\ ,
\end{equation}
 where the surjectivity of the last map follows from  $H^0(\sigma^{\leq -1}\cL\DR_{Mf,\Z})=0$. 
\begin{ddd}\label{apr2321}
We define  the flat part of differential algebraic $K$-theory by
$$\widehat{\bK}^{0}_{flat}:=\ker\left(R:\widehat{\bK}^{0}\to Z^{0}(\cL\DR_{Mf,\Z})\right)\ .$$
\end{ddd}

\subsection{The homotopy of $\Diff(\bK)$}\label{djqlwdqwldwqdwqdqd}

In this subsection we calculate the homotopy groups of $\Diff(\bK)$. We fix $X\in  {\bReg}_{\Z}$ 
and write 
\[
\Diff(\bK(X))  (M):=\Diff(\bK)(M\times X)\ ,
\]
\begin{equation}\label{jul1101}
\widehat{\bK}(X)^{0}(M):=\widehat{\bK}^{0}(M\times X)\ ,
\end{equation} 
and 
\begin{equation}\label{jul1120}
\widehat{\bK}(X)_{flat}^{0}(M):=\widehat{\bK}_{flat}^{0}(M\times X)\ . 
\end{equation}
Furthermore, we let ${\bK}_{\Z}(X)^{*}$ and ${\bK}_{\Z}(X)\R/\Z^{*}$ be the  generalized cohomology 
theories represented by the spectra ${\bK}_{\Z}(X)$ and its $\R/\Z$-version ${\bK}_{\Z}(X)\R/\Z:={\bK}_{\Z}(X)\wedge M\R/\Z$, respectively (compare with \eqref{jul1080}). {Here $M\R/\Z$ denotes the Moore spectrum of $\R/\Z$.}
The calculation will partially depend on the validity of Beilinson's   conjectures for $X$.
We will indicate this dependence precisely at the corresponding places.
\begin{prop}
The homotopy groups of $\Diff(\bK(X))$ can be described as follows:
\begin{enumerate}
\item For $i\le -1$ we have 
$$\pi_{i}(\Diff(\bK(X)) )\cong  \bK_\Z(X)^{-i}\ , \quad i\le -1\ .$$
\item For $i\ge 1$ we have a natural map
\begin{equation}\label{jan0610} 
\bK_\Z(X)\R/\Z^{-i-1}  \to \pi_{i}(\Diff(\bK(X)))\ .
\end{equation}
It is an isomorphism if Beilinson's  conjectures hold true for $X$.
\item We have a sequence 
\begin{eqnarray}
\begin{split}
\lefteqn{ \bK_\Z(X)\R/\Z^{-2} \to  \bK_\Z(X)^{-1} \xrightarrow{\beil} H (\DR_\Z(X))^{-1} \xrightarrow{a}
\widehat{\bK}(X)^{0}\xrightarrow{(I,R)}}&&\label{jan0102}\\[0.3cm]
&&\hspace{3cm}\to   \bK_\Z(X)^{0}  \times_{H (\DR_{\Z}(X))^{0}} Z^{0}(\cL\DR_\Z(X))    \to 0 \label{may2101}
\end{split}
\end{eqnarray}
which is exact except possibly at $ \bK_\Z(X)^{-1}$. 
It is exact   if Beilinson's  conjectures hold true for $X$.
\item We have an exact sequence
\begin{equation}\label{apr2320} 
\bK_\Z(X)^{-1} \to H (\DR_\Z(X))^{-1} \xrightarrow{a} \widehat{\bK}(X)^{0}_{flat} \xrightarrow{I}  \bK_\Z(X)^{0}  \xrightarrow{\beil} H (\DR_\Z(X))^{0}\ .
\end{equation}
\end{enumerate}
\end{prop}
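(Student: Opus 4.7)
The strategy is to identify $\widehat{\bK}(X)^0_{flat}(M)$ with $\pi_0$ of the homotopy fibre of the regulator $\beil$, and then read off the claimed exact sequence from its long exact homotopy sequence.

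Since \eqref{nov2601} is a pullback square, the homotopy fibre $F$ of $R\colon \Diff(\bK)\to H(\sigma^{\ge 0}\cL\DR_{Mf,\Z})$ is canonically equivalent to the homotopy fibre of $\beil\colon \bK_{Mf,\Z}\to H(\cL\DR_{Mf,\Z})$. The long exact homotopy sequence of the fibre sequence $F\to \Diff(\bK)\xrightarrow{R} H(\sigma^{\ge 0}\cL\DR_{Mf,\Z})$ then gives
\begin{equation*}
\pi_{1}(H(\sigma^{\ge 0}\cL\DR_{Mf,\Z}))\to \pi_{0}(F)\to \widehat{\bK}^{0} \xrightarrow{R} \pi_{0}(H(\sigma^{\ge 0}\cL\DR_{Mf,\Z}))\ .
\end{equation*}
Because the complex $\sigma^{\ge 0}\cL\DR_{Mf,\Z}$ has no part in negative degrees, the left-hand term vanishes, while the right-hand term is $Z^{0}(\cL\DR_{Mf,\Z})$. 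Hence $\pi_{0}(F)\hookrightarrow \widehat{\bK}^{0}$ is injective with image $\ker(R)= \widehat{\bK}^{0}_{flat}$.

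The long exact homotopy sequence of the fibre sequence $F\to \bK_{Mf,\Z}\xrightarrow{\beil} H(\cL\DR_{Mf,\Z})$, evaluated on $M\times X$ and combined with the identifications $\pi_{k}(\bK_{Mf,\Z}(M\times X))\cong \bK_{\Z}(X)^{-k}(M)$ from \eqref{jul1080} and $\pi_{k}(H(\cL\DR_{Mf,\Z}(M\times X)))\cong H(\DR_{\Z}(X))^{-k}(M)$ from Lemma \ref{jan0201}, then produces the five-term exact sequence
\begin{equation*}
\bK_{\Z}(X)^{-1}\xrightarrow{\beil} H(\DR_{\Z}(X))^{-1}\xrightarrow{\partial}\widehat{\bK}(X)^{0}_{flat}\xrightarrow{I} \bK_{\Z}(X)^{0}\xrightarrow{\beil} H(\DR_{\Z}(X))^{0}
\end{equation*}
with $\partial$ the connecting homomorphism.

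It only remains to identify $\partial$ with the map $a$ restricted along the natural inclusion $H^{-1}(\cL\DR_{Mf,\Z})\hookrightarrow H^{-1}(\sigma^{\le -1}\cL\DR_{Mf,\Z})$ induced by the short exact sequence $0\to \sigma^{\ge 0}\cL\DR_{Mf,\Z}\to \cL\DR_{Mf,\Z}\to \sigma^{\le -1}\cL\DR_{Mf,\Z}\to 0$. Both maps arise from the pullback \eqref{nov2601}: the map $a$ of \eqref{mar1401} is the boundary of the fibre sequence for the vertical map $I$ (whose fibre equals $\Omega H(\sigma^{\le -1}\cL\DR_{Mf,\Z})$), while $\partial$ is the boundary of the fibre sequence for the horizontal map $\beil$. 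Their agreement on the common subgroup $H^{-1}(\cL\DR_{Mf,\Z})$ follows from a diagram chase relating the two fibre sequences via the above short exact sequence of complexes. This compatibility verification is the only non-formal, and hence the main but essentially routine, obstacle.
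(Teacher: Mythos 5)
Your proposal addresses only part (4) of the four-part proposition; parts (1), (2), and (3) are entirely absent. This is a substantial gap. Part (1) follows from the long exact sequence of the pullback square \eqref{nov2601} together with the observation that $i$ induces an isomorphism on $\pi_i$ for $i \le -1$ and a surjection on $\pi_0$. Part (2) is the genuinely non-formal ingredient of the whole proposition: one must construct the map \eqref{jan0610} from a map of pullback diagrams, and then compare the Atiyah-Hirzebruch spectral sequences for $\bK_\Z(X)\R^*(M)$ and $H(\DR_\Z(X))^*(M)$, invoking Beilinson's conjecture to see that the map on $E_2$-pages is an isomorphism for $q\le -2$ and injective for $q=-1$, and finally apply the Five Lemma. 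Part (3) is then read off from the long exact homotopy sequence of the pullback square \eqref{mar0903}, using part (2) to replace $\pi_1(\Diff(\bK(X)))$ by $\bK_\Z(X)\R/\Z^{-2}$. Without these steps the statement cannot be considered proven.

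For part (4) you take a genuinely different and valid route. The paper first proves the unconditional portion of the sequence \eqref{jan0102} and then obtains \eqref{apr2320} by restricting along the injection
$\ker(\beil)\hookrightarrow \bK_\Z(X)^{0}\times_{H(\DR_\Z(X))^{0}}Z^{0}(\cL\DR_\Z(X))$,
$x\mapsto(x,0)$. You instead identify $\widehat{\bK}^{0}_{flat}$ directly with $\pi_{0}$ of the fibre $F$ of $\beil$, using that $\mathrm{fib}(R)\cong\mathrm{fib}(\beil)$ because \eqref{nov2601} is cartesian and that $\pi_{1}(H(\sigma^{\ge 0}\cL\DR_{Mf,\Z}))=H^{-1}(\sigma^{\ge 0}\cL\DR_{Mf,\Z})=0$; the five-term sequence then drops out of the long exact homotopy sequence of $F\to\bK_{Mf,\Z}\to H(\cL\DR_{Mf,\Z})$. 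This avoids passing through part (3) altogether. The remaining step, identifying the connecting map $\partial$ with $a$ restricted along $H^{-1}(\cL\DR_{Mf,\Z})\hookrightarrow H^{-1}(\sigma^{\le -1}\cL\DR_{Mf,\Z})$, is as you say a routine compatibility of boundary maps in a cartesian square of spectra (both factor through the fibre of the common composite $\Diff(\bK)\to H(\cL\DR_{Mf,\Z})$). Your route for (4) thus has the minor aesthetic advantage of being unconditional and self-contained, at the cost of not simultaneously yielding the more informative sequence in (3).
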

\begin{proof}
Since the inclusion map $i$ in (\ref{nov2601})  induces an isomorphism  on 
$\pi_{i}$ for $i\le -1$  and a surjection on $\pi_0$, we obtain from the long exact sequence
associated to the pull-back that
$$\pi_{i}(\Diff(\bK(X)) )\cong  \bK_\Z(X)^{-i} \ , \quad i\le -1\ .$$

We let ${e}_X:\bS_{Mf}\to \bS_{Mf,\Z}$ be given by 
\begin{equation}\label{jul1103} 
 {e}_X(M):=M\times X \ .
\end{equation}
 {Write $\bK_{Mf,\Z} \R:=\bK_{Mf,\Z} \wedge M\R$.}
The map
\begin{equation}\label{mar0901}
\bK_{Mf,\Z} \R\to H(\DR_{Mf,\Z} )
\end{equation}
induced by the regulator $\beil$
induces a map of pull-back diagrams in $\Fun(\bS_{Mf},\Sp)$ from 
\begin{equation}\label{mar0901dedd2ed22334}
\begin{split}
\xymatrix{  {e}_X^{*}\Sigma^{-1}\bK_{Mf,\Z}\R/\Z   \ar[r] \ar[d] &0\ar[d]  \\
  {e}_X^{*}\bK_{Mf,\Z} \ar[r] &  {e}_X^{*}\bK_{Mf,\Z}\R 
}
\end{split}
\end{equation}
to 
\begin{equation}\label{mar0903}
\begin{split}
\xymatrix{
\Diff(\bK(X)) \ar[r]\ar[d]& {e}^{*}_XH( {\sigma}^{\ge 0}\cL\DR_{Mf,\Z} )\ar[d] \\
 {e}_X^{*}\bK_{Mf,\Z} \ar[r]& {e}_X^{*}H(\cL\DR_{Mf,\Z} )\ .
}
\end{split}
\end{equation} 
The resulting map of left upper corners  gives the map \eqref{jan0610} on homotopy groups.

 {We now assume that $X$ satisfies Beilinson's   conjecture \ref{mar0902}. We want to prove that \eqref{jan0610} is an isomorphism  for $i\geq 1$.}
We first study the map between the lower right corners  {of diagrams \eqref{mar0901dedd2ed22334} and \eqref{mar0903}}. Since 
the corresponding sheaves of spectra are homotopy invariant, we can  use the equivalences
$$
 {e}_X^{*}\bK_{Mf,\Z}\R\simeq \underline{\bK_{\Z}(X)\R}\ , \quad 
 {e}_{X}^{*}H(\cL\DR_{Mf,\Z})\simeq \underline{H(\DR_{\Z}(X))}\ .
$$
For a manifold $M$, we consider the morphism between the  {(right half plane)} Atiyah-Hirzebruch spectral sequences for $\bK_\Z(X)\R^*(M)$ and $H(\DR_\Z(X))^{*}(M)$    induced by \eqref{mar0901}.
 {By our assumption,} the map of $E_2$-terms is an isomorphism on $E_2^{pq}$ for all $q\leq -2$ and injective on $E_2^{0,-1}$.
Here is a schematic picture of these spectral sequences:
\begin{center}
\begin{tikzpicture}[y=.8cm, x=.8cm,font=\scriptsize]
	\draw[dotted](0,-1) -- (5,-1);
	\node[below] at (2.5,-1) {here injective};
	\fill [fill=black!20!white,decoration={random steps,segment length=4pt,amplitude=2pt}] decorate { (5,-2) -- (5,-5) -- (0,-5)} -- (0,-2)-- cycle;
	\node at (3,-3) {here  isomorphism};
	\draw[dashed](0,-2)--(3,-5) ;
	\foreach \z in {0,...,3} 
		\draw[fill] (\z,-2-\z) circle (2pt);
	\draw[thin,->] (0,0) -- coordinate (x axis mid) (5,0) node[right]{$p$};
    	\draw[->] (0,-5) -- coordinate (y axis mid) (0,1) node[above]{$q$};
    	\foreach \x in {1,...,4}
     		\draw (\x,1pt) -- (\x,-1pt)
			node[anchor=south] {\x};
    	\foreach \y in {-4,...,0}
     		\draw (1pt,\y) -- (-1pt,\y) 
     			node[anchor=east] {\y}; 
\end{tikzpicture}
\end{center}
The induced map is injective  {on} the dotted {line} and an isomorphism  {in the shaded area}. The dashed diagonal indicates the contributions in cohomological degree $-2$.
 {The terms $E_{2}^{pq}$ of both spectral sequences vanish for $p>\dim(M)$. Hence, if $M$ is finite dimensional, the spectral sequences converge, and we deduce that} 
$$ \bK_\Z(X)\R^{-i}(M)  \to  H(\DR_{\Z}(X))^{-i}(M)$$
is an isomorphism for $i\geq 2$ and injective for $i=1$.
 {If $M$ is infinite dimensional, we get the same result arguing component-wise.}

The map from to  \eqref{mar0901dedd2ed22334} to \eqref{mar0903} induces a morphism between the associated long exact sequences in homotopy.
By an application of the Five Lemma we see that \eqref{jan0610} is an isomorphism  for $i\geq 1$.
 
The exactness of \eqref{jan0102} is simply a part of the long exact sequence of homotopy groups associated with the pull-back diagram \eqref{mar0903} where we 
{have used the equivalence $\DR_\Z(X) \xrightarrow{\sim} \cL\DR_\Z(X)$ and we}
have replaced $\pi_1(\Diff(\bK(X))) $ at the left end by $ \bK_\Z(X)\R/\Z^{-2}$ using \eqref{jan0610}. Hence it is exact except possibly at $ \bK_\Z(X)^{-1}$. By the above, it is exact  everywhere if we assume  Beilinson's conjecture for $X$.

The exactness of \eqref{apr2320} follows from the unconditional part of exactness of \eqref{jan0102} by pull-back along the 
 {injective} 
transformation
\[
\ker\left( \bK_\Z(X)^{0} \xrightarrow{\beil} H (\DR_\Z(X))^{0}\right) \xrightarrow{x\mapsto (x,0)}  \bK_{\Z}(X)^{0} \times_{H (\DR_{\Z}(X))^{0}} Z^{0}(\cL\DR_{\Z}(X)).
\]
\end{proof}

\begin{ex}\label{dklqwdqwdqwdqwdqwd}
 We specialize the sequence \eqref{may2101} to
 the case $M=*$ and $X=\Spec(R)$ for a number ring $R$.  Let $\operatorname{Cl}(R)$ denote the class group of $R$. There is an isomorphism $K_{0}(R) \xrightarrow{\rk\oplus \operatorname{cl}} \Z \oplus \operatorname{Cl}(R)$ whose first component is the rank homomorphism.
Combining this and the Examples \ref{wklqdkjkqwdjqwldwqdqdwqdwqd} and \ref{dkqjwdqwdqwdwqdwqdqdwqd}, 
 we get an exact sequence
\begin{equation}\label{hdjhqwkdwqdwdwqdwqdwq}
  0\to \Z^{r_{1}+r_{2}-1}\xrightarrow{i} \R^{r_{1}+r_{2}}\stackrel{a}{\to}\widehat{\bK}(X)^{0}(*)\xrightarrow{ {\rk}\oplus \mathrm{cl}}  \Z\oplus \mathrm{Cl}(R)\to 0\ . 
 \end{equation}
Here $r_{1}$ and $r_{2}$ are as in Example \ref{wklqdkjkqwdjqwldwqdqdwqdwqd}. 
The group $\Z^{r_{1}+r_{2}-1}$ is the image of the group of units $R^{*}$  {in} $R$ under the  regulator 
$$R^{*}\cong \ K_{1}(R)\xrightarrow{\beil} H^{-1}(\DR_{\Z}(X))\cong \R^{r_{1}+r_{2}}\ .$$
Its rank is determined by Dirichlet's theorem. 
In particular,  \eqref{apr2320} gives an exact sequence
\[
0 \to \R^{r_{1}+r_{2}}/{i(\Z^{r_{1}+r_{2}-1})} \to \widehat{\bK}(X)_{flat}^{0}(*) \to \operatorname{Cl}(R) \to 0.
\]
\end{ex}

\subsection{The geometric cycle map}\label{jan1970}
Let $M$ be a smooth manifold and $X$   a  scheme in $\bReg_{\Z}$. 
A bundle $V\in i\Vect_{Mf,\Z}(M\times X)$ 
gives rise to a class $$\cycl(V)\in \pi_0(\bK_{Mf,\Z}(M\times X))\cong {\bK}_\Z(X)^{0}(M)\ .$$ 
The natural transformation $\cycl$ is called the topological cycle map, see \eqref{jan0660} for details.  

If $g^{V}$ is a local geometry on $V$, so that $(V,g^{V})$ is a geometric bundle (see Definition~\ref{jul0850}), then we have
the characteristic form $\omega(g^{V})\in Z^{0}(\cL\DR_{Mf,\Z}(M\times X))$,  defined in \ref{may1760n}, 
which represents the class of the regulator $$\beil(\cycl(V))\in H^{0}(\cL\DR_{Mf,\Z} (M\times X))\cong H(\DR_\Z(X))^{0}(M).$$  
The main result   of the present  subsection is the construction of a geometric cycle map  
$\hcycl$ which sends $(V,g^{V})$ to the differential algebraic $K$-theory class $$\hcycl(V,g^{V})\in \widehat{\bK}(X)^{0}(M)$$
 such that
  \begin{equation}\label{jul1075}R(\hcycl(V,g^{V}))=\omega(g^{V})\ , \quad  I(\hcycl(V,g^{V}))=\cycl(V)\ . \end{equation}

 \begin{rem}
 Note that in order to define  $\hcycl(V,g^{V})\in \widehat{\bK}(X)^{0}(M)$, it is not sufficient to give a pair of classes $\omega(g^{V})\in Z^{0}(\cL\DR_{Mf,\Z}(M\times X))$ and
 $\cycl(V)\in {\bK}_\Z(X)^{0}(M)$ such that their images in $H^{0}(\cL\DR_{Mf,\Z}(M\times X))$
 coincide. The square \eqref{nov2601} does not induce a pull-back square on the level of $\pi_{0}$.
 \end{rem}

  \begin{rem}
 A first version of topological and geometric cycle maps for number rings  has  been introduced in \cite[Sec.~3.6]{bg}.  The approach presented in the present paper is simpler.
 Moreover, we get a geometric cycle map which is functorial in the scheme $X$. Restricted to number
 rings, we get a functorial geometric cycle map for number rings. This improves the result of \cite{bg}. See, for example, the discussion in \cite[Sec.~3.11]{bg}.
\end{rem}   
    
We start with  a precise description of the topological cycle map. We consider the functor
$$ \pi_{0}( i\Vect_{Mf,\Z}): \Mf^{op}\times  {\bReg}_{\Z}^{op}\to \Mon(\Set)\ ,$$
 see Subsection \ref{jul1060}. 
 \begin{ddd}
The natural map of monoids
$$\Nerve( i\Vect_{Mf,\Z}) \to \Omega B(\Nerve(i\Vect_{Mf,\Z})) \simeq \Omega^{\infty}   K(\Vect_{Mf,\Z}) \to \Omega^{\infty}  \bK_{Mf,\Z} $$  induces the topological cycle map
\begin{equation}\label{jan0660}\cycl: \pi_{0}( i\Vect_{Mf,\Z}) \to \pi_{0}( \Omega^{\infty}  \bK_{Mf,\Z})\cong \pi_0(\bK_{Mf,\Z})\end{equation}
as a map in $\Fun(\bS_{Mf,\Z},\Nerve(\Mon(\Set)))$.
\end{ddd}
In order to define the geometric cycle map, we extend diagram  \eqref{sep1901}, used in the construction of the regulator {in} Definition \ref{jul08161}, to the commuting diagram in $\Fun(\bS_{Mf,\Z},\Sp)$
$$
\xymatrix@R+0.2cm{
K(i\Vect_{Mf,\Z}^{geom})\ar[d]\ar[drr]\ar[r]^-{K(\omega)}_-{{\text{\eqref{jul0710}}}}  & K(Z^{0}(\cL\DR_{Mf,\Z}))\ar[r]^{ {\simeq}}_{{\text{\eqref{jul0720}}}} &
H(Z^{0}(\cL\DR_{Mf,\Z}))\ar[r] &H(\sigma^{\ge 0}\cL\DR_{Mf,\Z})\ar[d] \\
K(i\Vect_{Mf,\Z})\ar[d]\ar[r]& \bar \bs K(i\Vect_{Mf,\Z}) &\bar \bs K(i\Vect_{Mf,\Z}^{geom})\ar[l]_{ {\simeq}}\ar[r]^{\bar \bs(r(\omega))}&\bar \bs H(\cL\DR_{Mf,\Z})\\
\bK_{Mf,\Z}\ar[rrr]^{\beil}&&&H(\DR_{Mf,\Z}) .\ar[u]^{ {\simeq}}
}
$$
Note that  the datum of a commuting square in an $\infty$-category includes  that of fillers. 
By the universal property of the pull-back defining $\Diff(\bK)$, the outer square defines a canonical map
\begin{equation}\label{jul1070}K(i\Vect_{Mf,\Z}^{geom})\to  \Diff(\bK)\end{equation} 
of diagrams of spectra.
\begin{ddd}\label{nov2050neu}

We define the geometric cycle map
$$\hcycl:\pi_0(i\Vect^{geom}_{Mf,\Z})\to \widehat \bK^{0}$$
as the transformation in $\Fun(\bS_{Mf,\Z},\Nerve(\Mon(\Set)))$ given by  {the} composition
$$
\pi_0(i\Vect^{geom}_{Mf,\Z})\to \pi_0(K(i\Vect_{Mf,\Z}^{geom})) \xrightarrow{\eqref{jul1070}}  \pi_0(\Diff(\bK))\ .
$$
\end{ddd}

It is clear from the construction that this geometric cycle map satisfies the conditions \eqref{jul1075}.
 \begin{ex}\label{diklqwdqwdqwdqwdq}
 We continue Example \ref{dklqwdqwdqwdqwdqwd}. We consider the bundle  $V:=\cO_{X}$ on $X:=\Spec(R)$.
 The standard metric $\|.\|$ on $\C$ induces a geometry $g_{\|.\|}^{V}$. We therefore get a class
 $$\hcycl(V,g_{\|.\|}^{V})\in \widehat \bK^{0}(X)(*)\ .$$
 We have $ {\rk}(\hcycl(V,g_{\|.\|}^{V}))=1$ and $\mathrm{cl}(\hcycl(V,g_{\|.\|}^{V}))=0$. 
 For $\lambda\in (0,\infty)$ we can define a new scaled metric $\lambda g^{V}_{\|.\|}$.
 Then we get a class
 \begin{equation}\label{gdjhwegdhjwedwedwedewd}
\hcycl(V,\lambda g_{\|.\|}^{V})-\hcycl(V,g_{\|.\|}^{V})\in \widehat \bK^{0}_{flat}(X)(*).
\end{equation}
{We will describe this class} more explicitly in \ref{dqwdlqwdqwdwqdwdwqdwqdwqdqwdwqdqwdqweqe}.
  \end{ex}

\subsection{Homotopy formulas }\label{jan1965}

\subsubsection{The manifold direction}
\label{sec:homotopy-mfd}
 
Let $I:=[0,1]$ be the unit interval. We consider a  smooth manifold $M\in \Mf$  and   a smooth   variety  $X\in \Sm_{\C}$   over $\C$.
As observed in the proof of Lemma \ref{dez1405}, the integral
$$\int_{I}:A^{*}(I\times M\times X)\to A^{*-1}(M\times X)$$   
induces a morphism 
 $$\int_{I}:\DR_{Mf,\C}(I\times M\times X)\to \DR_{Mf,\C}(M\times X)[-1]\ .$$
Explicitly, if $(\omega_{\R}\oplus\omega, \tilde\omega)$ is an element in the cone, then 
\begin{equation}\label{may2102}
\int_I(\omega_{\R}\oplus\omega, \tilde\omega) = (\int_I\omega_{\R}\oplus\int_I\omega, -\int_I\tilde\omega)\ .
\end{equation}
Let
$i_{0},i_{1}:M\to {I}\times M$ be the inclusions corresponding to the end points of the interval. By Stokes' theorem,  for $\omega\in \DR_{Mf,\C}(I\times M\times X)$ we then have the identity
\begin{equation}\label{jan0101}
\int_{I} d\omega = i_{1}^{*}\omega -i_{0}^{*} \omega - d \int_{I}\omega
\end{equation}
in $\DR_{Mf,\C}(M\times X)$.
A similar formula holds true for $X\in  {\bReg}_{\Z}$ and $\omega\in \DR_{Mf,\Z}(I\times M\times X)$.



A typical feature of differential cohomology is a homotopy formula in the manifold direction \cite[Eq.~(1)]{MR2608479}, \cite[Thm.~3.6]{Bunke:2013aa}. The case of differential algebraic $K$-theory is slightly more complicated since the curvature takes values in a \v{C}echification $\cL\DR_{Mf,\Z}$. We only state a homotopy formula in the arithmetic case and for classes whose curvature belongs to the image of the inclusion $\DR_{Mf,\Z}\to \cL \DR_{Mf,\Z}$.

Assume that $M\in \Mf$ is a smooth manifold and  $X$ is a scheme in $\bReg_{\Z}$. Recall that we denote by $a$ the natural map  $\cL\DR^{-1}_{Mf,\Z}(M\times X) /\im(d) \xrightarrow{\eqref{mar1401}} \widehat\bK^0(M\times X)$. \begin{lem}\label{jan0605}
If $\hat x\in \widehat{\bK}^{0}( {I}\times M\times X)$ and $R(\hat x)$ belongs to the image of the canonical map 
$$Z^{0}(\DR_{Mf,\Z}(I\times M\times X))\to  Z^{0}(\cL\DR_{Mf,\Z}(I\times M\times X))\ ,$$ 
then  we have
$$i_{1}^{*}(\hat x)-i_{0}^{*}(\hat x)=a(\int_{I} R(\hat x))\ .$$ 
\end{lem}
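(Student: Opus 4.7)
The plan is to verify the identity in two phases: first check that both sides have the same image under the structure maps $I$ and $R$ of the homotopy pull-back \eqref{nov2601}, then upgrade this agreement to an equality in $\widehat{\bK}^{0}(M\times X)$ by a spectrum-level argument built from the chain-homotopy $\int_{I}$.

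By the hypothesis, choose $\omega\in Z^{0}(\DR_{Mf,\Z}(I\times M\times X))$ mapping to $R(\hat x)$ in $Z^{0}(\cL\DR_{Mf,\Z}(I\times M\times X))$. Since $\bK_{Mf,\Z}$ is constant in the manifold direction (Definition \ref{jul0820}, using the homotopy invariance of $i\Vect_{Mf,\Z}$ noted after Definition \ref{jul1401n}), we have $i_{0}^{*}I(\hat x)=i_{1}^{*}I(\hat x)$, hence $I(i_{1}^{*}\hat x-i_{0}^{*}\hat x)=0$; and $I\circ a=0$ by the exactness of \eqref{mar1402}. For the $R$-side, Stokes' formula \eqref{jan0101} applied to the closed form $\omega$ gives $i_{1}^{*}\omega-i_{0}^{*}\omega=d\int_{I}\omega$, so
\[
R(i_{1}^{*}\hat x-i_{0}^{*}\hat x)=d\int_{I}R(\hat x)\ .
\]
Unwinding \eqref{nov2601}, the composition $R\circ a\colon \cL\DR_{Mf,\Z}^{-1}(M\times X)/\im(d)\to Z^{0}(\cL\DR_{Mf,\Z}(M\times X))$ coincides with the differential $d$; indeed, it is the connecting map on $\pi_{0}$ of the cofibre sequence $\sigma^{\geq 0}\cL\DR_{Mf,\Z}\to \cL\DR_{Mf,\Z}\to \sigma^{\leq -1}\cL\DR_{Mf,\Z}$ on the $H(\sigma^{\geq 0}\cL\DR_{Mf,\Z})$-leg of the pull-back. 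Hence $R(a(\int_{I}R(\hat x)))=d\int_{I}R(\hat x)$ as well, matching the other side.

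To promote the matching of $(I,R)$-images to an equality in $\widehat{\bK}^{0}(M\times X)$, I would argue at the level of the fibre sequence $H(\sigma^{\leq -1}\cL\DR_{Mf,\Z})[-1]\xrightarrow{a}\Diff(\bK)\xrightarrow{I}\bK_{Mf,\Z}$. The canonical null-homotopy of $I(i_{1}^{*}\hat x-i_{0}^{*}\hat x)$ coming from homotopy invariance of $\bK_{Mf,\Z}$, together with $\int_{I}\omega$ viewed as a primitive of $i_{1}^{*}\omega-i_{0}^{*}\omega$, should assemble by the universal property of \eqref{nov2601} into a canonical lift of the difference $i_{1}^{*}\hat x-i_{0}^{*}\hat x$ to the fibre; under the identification of $\pi_{0}$ of that fibre with $\cL\DR_{Mf,\Z}^{-1}(M\times X)/\im(d)$, the lift is represented by $\int_{I}R(\hat x)$, yielding the desired identity. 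The main obstacle is the consistency check that these two independent null-homotopies project to the same path in $H(\cL\DR_{Mf,\Z}(M\times X))$. A convenient way to organize this is to first replace $\hat x$ by $\hat x-\pi^{*}i_{0}^{*}\hat x$ (with $\pi\colon I\times M\times X\to M\times X$ the projection), reducing to the case $i_{0}^{*}\hat x=0$; by homotopy invariance of $\bK_{Mf,\Z}$ and $\cL\DR_{Mf,\Z}$, the difference $\hat x-\pi^{*}i_{0}^{*}\hat x$ then lives entirely in the non-homotopy-invariant leg $H(\sigma^{\geq 0}\cL\DR_{Mf,\Z})$ of the pull-back, where the computation of the fibre lift is explicit and reduces once more to the Stokes identity \eqref{jan0101}.
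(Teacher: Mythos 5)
Your two-phase plan is reasonable in outline, and your reduction---replacing $\hat x$ by $\hat x - \pi^*i_0^*\hat x$---is a clean variant of the device the paper uses. But the proposal stops at precisely the point where the paper does the actual work, and one assertion in your last step is incorrect as stated.

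A correction first: the fibre of $I\colon\Diff(\bK)\to\bK_{Mf,\Z}$ in \eqref{nov2601} is not $H(\sigma^{\geq 0}\cL\DR_{Mf,\Z})$; it is the fibre of $H(\sigma^{\geq 0}\cL\DR_{Mf,\Z})\to H(\cL\DR_{Mf,\Z})$, which is $H(\sigma^{\leq -1}\cL\DR_{Mf,\Z})[-1]$. After your reduction you therefore have $\hat x'$ with $I(\hat x')=0$, so by \eqref{mar1402} $\hat x'=a(\eta)$ for some $\eta\in\cL\DR_{Mf,\Z}^{-1}(I\times M\times X)/\im(d)$. The ``explicit computation'' you appeal to would require applying Stokes \eqref{jan0101} to $\eta$---but $\eta$ is a priori only a \v{C}ech cochain in $\cL\DR^{-1}_{Mf,\Z}$, whereas \eqref{jan0101} is established for genuine forms in $\DR_{Mf,\Z}$. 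This is a genuine gap, and filling it is where the hypothesis on $R(\hat x)$ and the exactness of \eqref{may2101} come in: since $R(\hat x')$ is a genuine closed form and represents $\beil(I(\hat x'))=0$, one finds $\alpha'\in\DR^{-1}_{Mf,\Z}(I\times M\times X)$ with $d\alpha'=R(\hat x')$, and then the class $\hat x'-a(\alpha')$ has $(I,R)=(0,0)$, so by \eqref{may2101} it equals $a(\gamma')$ for a closed representative $\gamma'\in\DR^{-1}_{Mf,\Z}$. Thus $\hat x'=a(\alpha'+\gamma')$ with $\alpha'+\gamma'$ a genuine form, and only now is Stokes legitimately applicable, giving the result. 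This is exactly the paper's argument, with your $\pi^*i_0^*\hat x$ playing the role of its $\pr^*\hat y$ (your choice is a bit more economical since it avoids separately picking $\beta$ and $\hat y$). Finally, your first phase---checking that both sides have the same $(I,R)$ image---is a dead end: agreement of $(I,R)$ images is necessary but not sufficient, since $\ker(I,R)$ is the image of $a$ restricted to $H(\DR_\Z(X))^{-1}$, and that phase is not used anywhere in the part of your argument that would close the proof.
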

\begin{proof}
 {Let $\pr_{M\times X}\colon I\times M \times X \to M\times X$ denote the projection.}
 Since $\bK^{0}$ is homotopy invariant in the manifold direction, 
there exists a class $y\in \bK^{0}(M\times X)$ such that $\pr_{M\times X}^{*}(y)=I(x)$.
We can choose a form $\beta\in Z^{0}(\DR_{Mf,\Z}(M\times X))$ such that
$[\beta]=\beil(y)$. By the exactness of \eqref{may2101} at the right end, we can choose a class
$\hat y\in \widehat{\bK}^{0}(M\times X)$ such that $I(\hat y)=y$ and $R(\hat y)=\beta$.
{As $H(\DR_{Mf,\Z})^{0}$ is also homotopy invariant in the manifold direction, the cocycle} 
$${R(\hat x)-\pr^{*}_{M\times X}(R(\hat y))=} R(\hat x)-\pr^{*}_{M\times X}(\beta)\in Z^{0}(\DR_{Mf,\Z}(I\times M\times X))$$
 is exact.  Thus there exists a form
$\alpha\in \DR_{Mf,\Z}^{-1} (I\times M\times X) $ such that
$d\alpha=R(\hat x)-\pr^{*}_{M\times X}(R(\hat y))$. {Using $R\circ a = d$ and \eqref{may2101} again, there is some $\gamma\in Z^{-1}(\DR_{Mf,\Z}(I\times M\times X))$ satisfying}
\begin{equation}\label{may2701n}
\hat x-\pr^{*}_{M\times X}(\hat y)-a(\alpha)=a(\gamma).
\end{equation}
{Combining}
 $$d(\alpha+\gamma)= {d\alpha=}R(\hat x)-\pr_{M\times X}R(\hat y)\ , $$  
\eqref{may2701n},  and \eqref{jan0101}, we get
\[
i_{1}^{*}(\hat x)-i_{0}^{*} (\hat x) \overset{\eqref{may2701n}}{=} a(i_{1}^{*}(\alpha+\gamma)-i_{0}^{*}(\alpha+\gamma))\overset{\eqref{jan0101}}{=}a(\int_{I} R(\hat x) ) \ . \qedhere
\]
\end{proof}


\begin{ex}\label{wqlidkqwdqwdqwdwqd}  
As an application, we discuss how the geometric cycle class changes under scaling the metric.
   Thus we let $V$ be a locally free sheaf of $\pr_{X}^{*}\cO_{X}$-modules of finite rank on $M\times X$.   {We assume that $V$ admits a good geometry} $g^{V}=(\nabla^{II},h)$
 such that the total connection $\nabla$ is flat.
Let $\lambda\in (0,\infty)\subset \R$.
\begin{lem}\label{ejdkhj23d} We have
\begin{equation}\label{apr2410}
\hcycl(V,(\nabla^{II},\lambda h)) - \hcycl(V,(\nabla^{II},h)) = a(\beta)
\end{equation} 
where $\beta\in\DR_{Mf, {\Z}}^{-1} (M\times X)$ has {only one non-trivial factor, namely} 
$$(0 \oplus 0,\frac{1}{2}\dim(V)\log(\lambda)) \in \DR_{Mf,{\Z}} {(1)}^{-1} (M\times X) \ .$$
\end{lem}
\begin{proof}
 Consider the bundle $\pr_{M\times X}^{*}V$ on $I\times M \times X$.
We choose a smooth positive function $\rho$ on ${I}$ with $\rho(0)=1$, $ \rho(1)=\lambda$. Then    $\hat g^V= (\pr_{M\times X}^{*}\nabla^{II},\rho \cdot\pr_{M\times X}^{*}h)$ is a good geometry on $\pr_{M\times X}^{*}V$. We let $\hat x:=\hcycl(\pr_{M\times X}^{*}V,\hat g^{V})$.
Denoting the total connection on $\pr_{M\times X}^*(V)$ by $\hat\nabla$, we find that its adjoint is simply given by 
\[
\hat\nabla^*=\pr_{M\times X}^*(\nabla^{*_h}) + d\log(\rho).
\]
The curvature of $\hat\nabla^u$ is the pull-back of the curvature of $\nabla^{u_h}$, hence
\begin{equation*}
\int_I \ch_{2p}(\hat\nabla^u) = 0.
\end{equation*}
Write $\omega := \nabla^{u_h} - \nabla$, $\hat\omega := \hat\nabla^u - \hat\nabla = \pr_{M\times X}^*\omega + \frac{1}{2}d\log(\rho)$. Since $\hat \nabla$ is flat,  we obtain the relation 
\[
\tilde \ch_{2p-1}(\hat \nabla^{u},\hat \nabla) =-\frac{4^pp!}{2(2p)!}\Tr (\hat \omega^{2p-1}).
\] We have
$$\hat \omega^{2p-1}=\pr_{M\times X}^{*}\omega^{2p-1}+\frac{1}{2} d\log(\rho) \wedge \pr_{M\times X}^{*}\omega^{2p-2}\ .$$ Since $\Tr(\omega^{2p-2})=0$ for $p>1$, we get $\tilde \ch_{2p-1}(\hat \nabla^{u},\hat \nabla)=\pr^{*}_{M\times X} \tilde \ch_{2p-1}(\nabla^{u},\nabla)$ for $p> 1$. 
Furthermore, $\tilde \ch_{1}(\hat \nabla^{u},\hat \nabla)=\pr^{*}_{M\times X} \tilde \ch_{1}(\nabla^{u},\nabla)-\frac{\dim(V)}{2} d\log (\rho)$.
We finally get
\begin{equation*}
\int_I \tilde\ch_{2p-1}(\hat\nabla^u,\hat\nabla) = 
\begin{cases}
-\frac{1}{2}\log(\lambda) \dim V & \text{if } p=1\\
0 & \text{else.}
\end{cases}
\end{equation*}
Using \eqref{may2102}
we see that $\int_{I}R(\hat x)(p)=0$ for $p\not=1$ and
$\int_{I}R(\hat x)(1)=(0  \oplus  0,\frac{1}{2}\dim(V)\log(\lambda))$.
By Lemma \ref{jan0605} this implies \eqref{apr2410}.
\end{proof}
\end{ex}

\begin{ex}\label{weflkwelfwef}
The homotopy formula in the manifold direction can be used in order to calculate the Beilinson regulator explicitly in some cases. Let $X:=\Spec(R)$ for a number ring $R$. Then $X(\C)$ is a zero-dimensional manifold whose points are the embeddings $\sigma:R\hookrightarrow \C$. For every $\sigma\in X(\C)$ we have an evaluation map $\ev_{\sigma}:\DR_{\Z}(X)\to \DR_{\C}(\Spec(\C))$. By Example \ref{kdjqlkwdqwdqwdwqdwqdwqdwqd}
we have an isomorphism 
\begin{equation}\label{swsqwsqwsqsss12s21s12s}
H^{-1}(\DR_{\C}( \Spec(\C)))\cong H^{-1}(\DR_{\C}{(1)}( \Spec(\C))) \cong \C/i\R\cong \R\ .
\end{equation} 
\begin{lem}\label{likwefefwefewfewfewfwf}
  The composition of the evaluation and the Beilinson regulator
  $$ R^{*}\cong  K_{1}(R)\xrightarrow{\beil} H^{-1}(\DR_{\Z}(X))  \xrightarrow{\ev_{\sigma}}  H^{-1}(\DR_{\C}(\Spec(\C)))    \stackrel{\eqref{swsqwsqwsqsss12s21s12s}}{\cong} \R$$
maps
$u\in R^{*}$ to
$-\log\|\sigma(u)\|\in \R$.
\end{lem}
\begin{proof}
We consider  the trivial bundle $V:=\cO_{X}$.
It has a canonical metric $h^{V}_{\|.\|}$ induced by the standard metric of $\C$.
It further has a canonical connection $\nabla^{II}$ and therefore a geometry $(\nabla^{II},h^{V}_{\|.\|})$.  Multiplication by $u$ induces an isomorphism
between geometric bundles $$\phi_{u}:(V,(  \nabla^{II},h^{V}_{\|.\|}))\xrightarrow{\cong} (V,(\nabla^{II},|u|^{-2}h^{V}_{ \|.\|}))\ ,$$
where the function $|u|: X(\C)\to (0,\infty)$ is given by  $ \sigma \mapsto \|\sigma(u)\|$.
Let $\rho:I\times X(\C)\to (0,\infty)$ be a smooth  function such that $\rho(t,\sigma)=1$ for $t$ near $0$ and
$\rho(t,\sigma)=\|\sigma(u)\|^{-2}$ near $t=1$. We define the metric $\hat h^{V}:=\rho\ \pr^{*}_{X}h^{V}_{\|.\|}$
and get the geometry $\hat g^{V}:=(\pr_{X}^{*}\nabla^{II},\hat h^{V})$ on $\pr_{X}^{*}V$.
 Using $\phi_{u}$ we can descend the geometric  bundle
$(\pr_{X}^{*}V,\hat g^{V})$     to a geometric bundle
$(\tilde V_{u},\tilde g_{u})$ on $S^{1}\times X$.

The difference 
$\cycl(\tilde V_{u})-\cycl(\tilde V_{1})\in \bK_{\C}^{0}(X)(S^{1})$
corresponds to $u\in R^{*}\cong   K_{1}(R)\cong \bK_{\Z}(X)^{-1}(*)$ under suspension.
Hence, by {a calculation as in} Example \ref{wqlidkqwdqwdqwdwqd}, we get 
$$\ev_{\sigma}(\beil(u))=\left[\int_{S^{1}\times X/X} R(\hcycl(\tilde V_{u},\tilde g_{u})-\hcycl(\tilde V_{1},\tilde g_{1}))(1)\right]= -\log\|\sigma(u)\| \ .$$ 
\end{proof}
\end{ex}
 
\begin{ex}\label{dqwdlqwdqwdwqdwdwqdwqdwqdqwdwqdqwdqweqe}
We continue with Example \ref{diklqwdqwdqwdqwdq}. Note that the identification
$H^{-1}(\DR_{\Z}(X))\cong \R^{r_{1}+r_{2}}$ is given by the tuple of evaluations
$(\ev_{\sigma})_{ \sigma\in X(\C)/\Gal(\C/\R)}$.

 The flat class \eqref{gdjhwegdhjwedwedwedewd}
is given by $a({\operatorname{diag}}(\lambda))$ where
$$\operatorname{diag}(\lambda) :=\frac{1}{2}\log(\lambda)\underbrace{(1,\dots,1)}_{r_{1}+r_{2}}\in \R^{r_{1}+r_{2}}\ .$$
In this picture the image of the map $i$ in \eqref{hdjhqwkdwqdwdwqdwqdwq} is given by the vectors
$$(\log \|\sigma^{\R}_{1}(u)\|,\dots,\log \|\sigma^{\R}_{r_{1}}(u)\|,\dots,\log \|\sigma^{\C}_{ 1}(u)\|,\dots, \log \|\sigma_{r_{2}}^{\C}( u)\|)\ , \quad u\in R^{*}\ ,$$
where $(\sigma^{\R}_{i})_{i}$ and $(\sigma^{\C}_{j})_{j}$ run over the real embeddings $R\hookrightarrow \R$ and
representatives of pairs of  complex conjugate complex embeddings $R\hookrightarrow \C$. 
\end{ex}

\subsubsection{The algebraic direction}\label{jan0110}

  {A}bsolute Hodge cohomology is homotopy invariant in the algebraic direction   in the sense that the natural projection induces a 
  quasi-isomorphism
\[
\DR_{Mf,\C}(M\times X) \xrightarrow[ {\simeq}]{\pr_{M\times X}^*} \DR_{Mf,\C}(M\times X\times \bbA^1_{\C})\ :
\]  
 By descent in the manifold direction, it suffices to prove this for $M$ a point. From the homotopy invariance of de Rham cohomology we know that the map $A_{\log,\R}(X) \to A_{\log,\R}(X\times \bbA^{1}_{\C})$ is a quasi-isomorphism. By Deligne \cite{HodgeII} it is strict with respect to the d\'ecalage of the weight filtration. Thus $\hat \cW_{k}A_{\log,\R}(X) \to \hat\cW_{k}A_{\log,\R}(X\times \bbA^{1}_{\C})$ is a quasi-isomorphism, too. Similarly, one handles the other components of the cone defining $\DR_{Mf,\C}$.
 
In particular, the two inclusions $ M \times X \hookrightarrow M \times X \times \bbA^1_{\C}$ given by the points $0,1\in \bbA^1_{\C}(\C)$ induce the same map in absolute Hodge cohomology. We expect that there exists an integration operator $\int_{II}$ in the algebraic direction which satisfies a formula similar to \eqref{jan0101}. However, the actual construction of such an operator seems to be quite complicated, and we content ourselves with the analogous formula for $\P^1_{\C}$. This is enough for our purposes in the current paper.

Again we state the homotopy formula in the arithmetic situation. We consider $M\in \Mf$ and $X\in  {\bReg}_{\Z}$.
 We let $[x_{0}:x_{1}]$ be homogeneous coordinates of $\P_{\Z}^{1}$ and  $f_{i}:M\times X  \hookrightarrow M\times X \times \P_{\Z}^{1}$ be the inclusions  determined by $x_{i}=0$, $i=0,1$.
%
%
Using the homotopy  {$H_{1}$ from} \eqref{may2103} we define  
\begin{equation}\label{mar2001}
\int_{II} \omega:= \cL(H_{{1}}) \colon \cL\DR_{Mf,\Z}^{ {0}}(M\times X\times \P^{1}_{\Z})\to \cL\DR^{-1}_{Mf,\Z}(M\times X) \ .
\end{equation} 
Then \eqref{may1730} becomes
\begin{equation}\label{mar1902}
 f_0^*\omega - f_1^*\omega = \int_{II} d\omega + d\int_{II}\omega\ . 
\end{equation}
\begin{prop}\label{jan0111}
We consider a class $\hat x\in \widehat{\bK}^{0}(M\times X\times  {\P^1_\Z}) $.
There exists a class $\hat y\in \widehat{\bK}^0(M\times X\times\P^1_{\Z})$ such that $I(\hat y)=I(\hat x)$ and $\int_{II}R(\hat y) =0$. For any such class $\hat y$,
we have
$$
(f_{0}^{*}-f_{1}^{*})\hat x   - (f_{0}^{*}-f_{1}^{*})\hat y=a(\int_{II} R(\hat x))\ .
$$
\end{prop}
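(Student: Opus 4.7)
The plan is to construct $\hat y$ explicitly, exploiting that the harmonic projection $Q$ from \eqref{may2702n} and the homotopy $h$ of \eqref{mar2002} are natural in $M\times X$ and hence extend canonically to operators on the \v{C}echification $\cL\DR_{Mf,\Z}(M\times X\times \P^1_\Z)$; the identities $[d,Q]=0$, $dh+hd=\id-Q$, $hQ=0$ from \eqref{may1721} carry over. The key observation is that $\int_{II}=\cL(H_1)=\cL(f_0^*h-f_1^*h)$ vanishes on the image of $Q$ (since $hQ=0$), so it will suffice to modify $\hat x$ within its underlying class so that the new curvature becomes $Q R(\hat x)$.

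Concretely, I would set
\[
\hat y := \hat x - a(h R(\hat x)) \in \widehat\bK^0(M\times X\times \P^1_\Z).
\]
Then $I(\hat y)=I(\hat x)$ by the exact sequence \eqref{mar1402}. Using $R\circ a = d$ (a direct consequence of the pullback square \eqref{nov2601}), the relation $dh+hd=\id-Q$, and the fact that $R(\hat x)$ is a cocycle, one computes
\[
R(\hat y) = R(\hat x) - d(hR(\hat x)) = Q R(\hat x),
\]
and $hQ=0$ then yields $\int_{II}R(\hat y) = 0$, establishing the existence.

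For the formula, write $\hat z := \hat x-\hat y = a(\tilde\omega)$ with $\tilde\omega := hR(\hat x)$. Naturality of $a$ gives $(f_0^*-f_1^*)\hat z = a((f_0^*-f_1^*)\tilde\omega)$, and the homotopy relation \eqref{mar1902} provides
\[
(f_0^*-f_1^*)\tilde\omega = \int_{II}d\tilde\omega + d\int_{II}\tilde\omega.
\]
Since $d\tilde\omega = R(\hat x) - R(\hat y)$ and $\int_{II}R(\hat y)=0$, this reduces to $(f_0^*-f_1^*)\tilde\omega \equiv \int_{II}R(\hat x) \pmod{\im(d)}$. Because $a$ factors through $\cL\DR_{Mf,\Z}^{-1}/\im(d)$, we conclude $(f_0^*-f_1^*)\hat x - (f_0^*-f_1^*)\hat y = a(\int_{II}R(\hat x))$.

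For independence of the choice, any other $\hat y'$ with the required properties satisfies $\hat y - \hat y' = a(\beta)$ with $d\beta = R(\hat y) - R(\hat y')$ and $\int_{II}d\beta=0$; by \eqref{mar1902} this forces $(f_0^*-f_1^*)\beta$ to be exact, hence $a((f_0^*-f_1^*)\beta)=0$ and $(f_0^*-f_1^*)\hat y = (f_0^*-f_1^*)\hat y'$ in $\widehat\bK^0(M\times X)$. The only essentially non-formal point is the compatibility of $Q$ and $h$ with the \v{C}echification $\cL$, which is immediate from the naturality of both operators in $M\times X$; beyond that the argument is a direct unwinding of the pullback definition of $\Diff(\bK)$.
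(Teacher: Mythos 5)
Your proof is correct, and its second half (reduction to $\hat x-\hat y=a(\omega)$ and application of \eqref{mar1902}) is essentially the paper's argument. The genuine difference is in establishing existence of $\hat y$: the paper picks a form $\eta\in Z^{0}(\DR_{\Z}(M\times X\times\P^{1}_{\Z}))$ representing $\beil(I(\hat x))$ and invokes the surjectivity portion of the exact sequence \eqref{may2101} to produce some $\hat y$ with $R(\hat y)=Q(\eta)$, whereas you construct $\hat y$ explicitly as $\hat x - a(hR(\hat x))$ and compute $R(\hat y)=QR(\hat x)$ from $R\circ a=d$ and $dh+hd=\id-Q$. Both hinge on $\int_{II}\circ Q=\cL(H_{1})\circ Q=0$, which you correctly trace back to $hQ=0$. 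Your route is more constructive and avoids the choice of a de Rham representative for the regulator class; the paper's route is shorter because it reuses the exactness statement already proved. One point worth flagging in your version: you need $hR(\hat x)$ to lie in the arithmetic complex $\cL\DR^{-1}_{Mf,\Z}$, not just $\cL\DR^{-1}_{Mf,\C}$, so the $\Gal(\C/\R)$-equivariance of $h$ is required here (the paper records this explicitly only for $Q$, and for the $H_{i}$ in the proof of Lemma \ref{jan0901}); it does hold, but deserves a mention. Finally, your closing paragraph on independence of the choice of $\hat y$ is redundant: the argument via $\hat x-\hat y=a(\omega)$ and \eqref{mar1902} together with $\int_{II}R(\hat y)=0$ already applies verbatim to \emph{any} $\hat y$ satisfying the hypotheses, so the formula is established uniformly without a separate comparison of two choices.
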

\begin{proof}
Let $\eta\in Z^0(\DR(M\times X\times \P^1_\Z))$ be a form representing $\beil(I(\hat x))$. 
Recall the $\Gal(\C/\R)$-equivariant operator $Q$ from \eqref{may2702n}. 
{According to \eqref{may1730},} we have   $\int_{II}Q(\eta) =0$. 
By the exactness of \eqref{may2101} there exists $\hat y \in \widehat{\bK}^0(M\times X\times\P^1_{\Z}) $ such that $I(\hat y)=I(\hat x)$ and $R(\hat y) = Q(\eta)$. Hence $\hat y$ has the desired properties.

By the exactness 
of \eqref{mar1402} at $\widehat{\bK}^{0}$
we can then find
an element $\omega\in \cL\DR^{-1}_{Mf,\Z}(M\times X\times  {\P^{1}_\Z})$ such that
$ \hat x-\hat y=a(\omega)$.  
It follows that
\begin{equation*}
{(f_{0}^{*}-f_{1}^{*})}\hat x-{(f_{0}^{*}-f_{1}^{*})}\hat y = a({f_{0}^{*}\omega-f_{1}^{*}}\omega)  {\overset{\eqref{mar1902}}{=} a(\int_{II} d\omega)} = a(\int_{II}R(\hat x)) 
\end{equation*}
since  $R(\hat y)$ is annihilated by $\int_{II}$.  \end{proof}

\subsection{Relation to arithmetic $K$-theory}
\label{sec:Relation-to-arithmetic-K}
 
 Predating the development of differential generalized cohomology theories in the case of smooth manifolds,
analogues have been considered in the arithemtic context. The first examples were the so called arithmetic Chow groups and an arithmetic $K_{0}$-group introduced by 
Gillet and Soul\'e \cite{MR1038362,GS2} for schemes in $\Reg_{\Z}$  which are flat and quasi-projective over $\Z$ 
(or, more generally, over an arithmetic ring). Let $X$ be such a scheme. Gillet and Soul\'e define the arithmetic 
$K_{0}$-group $\widehat{K}^{GS}_{0}(X)$ \cite[6.1]{GS2} in terms of cycles and relations: A hermitian vector bundle 
$\bar E$ on $X$ is a pair $(E,h)$ consisting of a vector bundle $E$ on $X$ and a hermitian metric $h$ on the complex
 vector bundle $E(\C)\to X(\C)$ which is invariant under the complex conjugation. Generators for $\widehat{K}^{GS}_{0}(X)$
 are triples $(E,h,\eta)$ consisting of a hermitian vector bundle $(E,h)$ and a differential form 
$ {\eta=(\eta(p))_{p\geq 0} \in \prod_{p\geq 0}} \widetilde{A}^{p,p}(X)$ where 
\[
\widetilde{A}^{p,p}(X) := \left(\left(A^{p,p}(X(\C)) \cap A^{2p}_{\R}(X(\C))\right)/(\im(\partial) + \im(\bar\partial))\right)^{\Gal(\C/\R)}
\]
Relations come from short exact sequences and involve a Bott-Chern secondary form.

Recall that, given a hermitian vector bundle $(E,h)$ on $X$, there is a unique unitary connection $\nabla_{h}$ on $E(\C)$ which is compatible with the holomorphic structure $\bar\partial$. 
Assume for simplicity that $X(\C)$ is compact. Then $(E,h,\nabla_{h})$ is a bundle on $\ast\times X$ with a good geometry in our sense.   
 
To every generator $(E,h,\eta)$ one can associate a class 
\(
\Psi(E,h,\eta) \in \widehat{\bK}^{0}(\ast\times X)
\)
as follows: For $\eta_{p} \in \widetilde{A}^{p,p}(X)$, the class
\[
 ( i^{p}(\partial - \bar\partial)\eta_{p} \oplus 2 i^{p}\partial\eta_{p}, -  i^{p}\eta_{p}) \in \DR_{\Z}(p+1)^{-1}(X)/\im(d)
\]
is well defined.
We set
\[
\Psi(E,h,\eta) := \hcycl(E,h,\nabla_{h}) + a\left(   \left(i^{p-1}(\partial - \bar\partial)\eta_{p-1} \oplus 2i^{p-1} \partial\eta_{p-1}, - i^{p-1} \eta_{p-1}\right)_{p\geq 0}\right) \in \widehat{\bK}^{0}(\ast\times X).
\]
Using the axiomatic characterization of Bott-Chern forms and a deformation over $\P^{1}$, similar as we will use it in the proof of Lott's relation,
 one can prove that $\Psi$ indeed  gives a homomorphism 
\[
\widehat{K}^{GS}_{0}(X) \to \widehat{\bK}^{0}(\ast\times X).
\]

 Recall from \eqref{ddhqwdhqwkjhdwkqdwqd} that the higher $K$-groups of $X$ are given by
 \[
\bK_{n}(X):=\pi_{n}(\bK_{\Z}(X)).
 \]
 Beilinson's regulator induces
 \[
\bK_{n}(X) \xrightarrow{\beil} H^{{-n}}(\DR_{\Z}(X))\cong \bigoplus_{p\geq 0} H^{2p-n}_{\mathrm{Hodge}}(X,\R(p)).
 \]
It was suggested by Deligne \cite[Rem.~5.4]{MR902592} and Soul\'e \cite[III.2.3.4]{MR1208731} that higher arithmetic $K$-groups $\widehat{K}_{n}^{arith}(X)$ for $n\geq 0$ should fit into long exact sequences
\[
\bK_{n+1}(X) \xrightarrow{\beil} H^{{-n-1}}(\DR_{\Z}(X)) \to \widehat{K}_{n}^{arith}(X)  \to \bK_{n}(X) \xrightarrow{\beil} H^{{-n}}(\DR_{\Z}(X)).
\]
Candidates for such groups have been constructed by Takeda \cite{MR2153537}  and Scholbach \cite{2012arXiv1205.3890S}. We now explain, how our differential algebraic $K$-theory provides another candidate for these groups:

We use  the inclusion $j:*\to S^{n}$  of the north pole  into the $n$-dimensional sphere in order to define reduced $n$-th cohomology groups.
We have isomorphisms 
\[
\bK_{n}(X)\cong \ker\left(j^{*}:\bK_{\Z}(X)^{0}(S^{n})\to \bK_{\Z}(X)^{0}(*)\right)
\]
and 
\[
H^{ {-}n}(\DR_{\Z}(X))\cong \ker\left(j^{*}:
H(\DR_{\Z}(X))^{0}(S^{n}) \to H(\DR_{\Z}(X))^{0}(*)\right)\ .
\]
If we set
$$\overline \bK_{n}(X):=\ker\left(j^{*}:\widehat{\bK}(X)\oben{0}_{flat}(S^{n})\to \widehat{\bK}(X)\oben{0}_{flat}(*)\right)\ ,$$
then this group fits into an exact sequence  
\begin{equation}\label{apr1401}
\bK_{n+1}(X)\stackrel{\beil}{\to } H^{{-n-1}}(\DR_{\Z}(X))\stackrel{a}{\to} \overline \bK_{n}(X) \stackrel{I}{\to}
\bK_{n}(X)\stackrel{\beil}{\to} H^{{-n}}(\DR_{\Z}(X))
\end{equation}
 {as desired.}

{We comment briefly on the relation to Takeda's construction.}
Based on the differential form level construction of the Beilinson regulator by {Burgos and Wang} \cite{MR1621424}, Takeda defines in \cite{MR2153537}
a group $\widehat{\bK}^{Takeda}_{n}(X)$ for proper $X$ together with a characteristic form map $\ch$.
The group
$\widehat{\bK}^{Takeda}_{n}(X)$ is the analogue of our
$$\widehat{\bK}^{BT}_{n}(X):=\ker\left(j^{*}:\widehat{\bK}(X)\oben{0}(S^{n})\to \widehat{\bK}(X)\oben{0}(\pt)\right)\ .$$
It is not the same because of a different choice of differential form data computing absolute Hodge  {cohomology}. Takeda's version of arithmetic algebraic $K$-theory is 
$$\overline{\bK}_{n}^{Takeda}(X):=\ker({\ch})\ .$$
It fits into an exact sequence which is  the analogue of \eqref{apr1401}. We expect  that there is a natural isomorphism
$$ \overline \bK_{n}(X)\cong \overline{\bK}_{n}^{Takeda}(X)\ .$$ 
For a proof one must construct a map relating these groups which is compatible with the exact sequences. By the Five Lemma this map is then automatically an isomorphism. At the moment, however, it is not obvious how to construct such a map.

\subsection{Rings of integers}\label{mar18001}

\subsubsection{A review of \cite{bg}}\label{jan0601}

In this subsection we review the construction of differential algebraic $K$-theory $\widehat{KR}^{0}$  as introduced in \cite[Sec. 2.2]{bg} by a specialization of the Hopkins-Singer construction.

Let $R$ be a number ring, i.e.~the ring of integers in a finite field extension $\Q\subseteq k$.
We consider 
the scheme $X= {\Spec}(R) \in \bReg_{\Z}$, which is of relative dimension 0 over $\Spec(\Z)$, and its connective algebraic $K$-theory spectrum  $KR:=\bK_\Z(X)\in
 \Sp$.  In order to define the differential algebraic $K$-theory $\widehat{KR}^{0}$ by the Hopkins-Singer construction  {as} explained in Subsection \ref{dlqkwdjlqdwqdwqdwqdwqdw24234234}, we choose the canonical differential data
 $(KR, {C},c)$. Here $ {C}^{*}:=\pi_{-*}(KR)\otimes \R$ is a chain complex of real vector spaces with trivial differentials  and
 $c:KR\to H( {C})$ is {up to equivalence} uniquely determined  by
 the property that it induces the canonical map 
 $\pi_{*}(KR)\to \pi_{*}(H({C}))\cong \pi_{*}(KR)\otimes \R$. Note the explicit calculation of $ {C}$ given in Example~\ref{dkqjwdqwdqwdwqdwqdqdwqd}.

The maps denoted by $R$ and $I$ in the diagram \eqref{r23rr2r23jkr3hkjh23r}
induce maps
$$R:\widehat{KR}^{0}\to Z^{0}(\Omega  {C})\ ,\quad I:\widehat{KR}^{0}\to  KR^{0}\ ,$$
and we have a map
$$a:\Omega  {C}^{-1}/\im(d)\to \widehat{KR}^{0}\ .$$
We shall use the exact sequence 
\begin{equation}\label{jan0507} 
KR^{-1} \to \Omega {C}^{-1}/\im(d) \stackrel{a}{\to} 
\widehat{KR}^{0} \stackrel{I}{\to}   KR^{0}\to 0
 \end{equation}
 {of \cite[Def.~2.2 (iii)]{bg}, which is  the analogue of \eqref{mar1402}.}

\subsubsection{Comparison of the two versions of differential algebraic $K$-theory}
\label{sec:comparison-of-diff-K}

We want to give the precise relation between the differential algebraic $K$-theory $\widehat{KR}^{0}$ introduced in \cite{bg} or Subsection \ref{jan0601}  and $\widehat{\bK}(X)^{0}$    in defined in  \eqref{jul1101}. We further use the notation 
 $ {e}^{*}_X\DR_{Mf,\Z} \in  {\PSh}_{\Ch}(\Mf)$ with $ {e}_X$ as in \eqref{jul1103}.

We must relate the two versions of forms $\Omega {C}$ and $ {e}_X^{*}\DR_{Mf,\Z}$ corresponding to $\widehat{KR}^{0}$ and $\widehat{\bK}(X)^{0 }$. We write elements 
$\omega \in  \DR_{Mf,\Z} (M\times X)$ in the form 
 \begin{equation}\label{jan0130}\omega=(\omega_{\R}(p)\oplus\omega(p),\tilde \omega(p))_{p\ge 0}\in \prod_{p\ge 0} \DR_{Mf,\Z} (p)(M\times X)\ .\end{equation}
 Here 
\begin{eqnarray*}
 \omega_{\R}(p)&\in& (2\pi i)^{p} A_{\R}(M\times X)[2p]\ ,\\
 \omega(p)&\in&  \cF^{p} A(M\times X)[2p]\ , \\
 \tilde \omega(p)&\in&  A(M\times X)[2p-1]\ ,\end{eqnarray*}
where we use the notation
 $ A(M\times X):=[ A(M\times X(\C))]^{{\Gal(\C/\R)}}$
 where ${\Gal(\C/\R)}$ acts on the set $X(\C)$ of complex points of $X$ and the differential  forms by complex conjugation. Furthermore,
 $\cF^{p} A(M\times X)=0$ for $p\ge 1$ and $\cF^{0} A(M\times X)= A(M\times X)$.
We define
\begin{equation}\label{sep2501}\Ree(\tilde \omega(p)):=(2\pi i)^{p}  {\mathfrak{Re}}\left((2\pi i)^{-p} \tilde \omega(p)\right)\ , \quad 
 \Imm(\tilde \omega(p)):=(2\pi i)^{p+1}  {\mathfrak{Re}}\left((2\pi i)^{-p-1} \tilde \omega(p)\right) ,
\end{equation}
where $\mathfrak{Re}$ denotes the usual real part.  
Note that for every complex point $\sigma\in X(\C)$, we have an evaluation
$$\ev_{\sigma}: A(M\times X)\to  A(M)\ .$$

 We will use the following results about Beilinson's regulator for $R$,  {which have already been stated in Example~\ref{dkqjwdqwdqwdwqdwqdqdwqd}}: Combining Borel's results \cite{MR0387496} concerning the ranks of the $K_i(R)$, $i\geq 0$, and the comparison between the regulators of Borel and Beilinson \cite{MR760999, MR944994, MR1869655},  {the map}
\[
\beil\colon  \pi_*(\bK_\Z(X)) \otimes \R \to H^{-*}(\DR_{\Z}(X))
\]
is an isomorphism in degrees $*\geq 2$, and injective (with well known image) in degrees 0 and 1.
In fact, the conditions 1.~-- 3.~below determine a subcomplex of $\DR_\Z(X)$  with trivial differential such that $C^{*}=\pi_{-*}(\bK_\Z(X)) \otimes \R $ is isomorphic to (the cohomology of) this subcomplex via $\beil$. The cohomology of $\DR_\Z(X)$ has been calculated explicitly in Example \ref{wklqdkjkqwdjqwldwqdqdwqdwqd}.

It follows that for any smooth manifold $M$, we get
a natural isomorphism 
\begin{equation}\label{jan0420}
\Psi:\Omega  {C}(M)\hookrightarrow   \DR_{Mf,\Z}(M\times X)\end{equation} onto  the subcomplex of forms
$\omega\in  \DR_{Mf,\Z}(M\times X)$ 
satisfying (using the components introduced in \eqref{jan0130})
\begin{enumerate}
\item $\omega(p)=0$, $\omega_{\R}(p)=0$, $\Ree(\tilde \omega(p))=0$ for $p\ge 2$,
\item  $\omega(1)=0$, $\omega_{\R}(1)=0$, $\Ree(\tilde \omega(1))=0$ and 
\begin{equation}\label{jan0510} 
\frac{1}{|X(\C)|}\sum_{\sigma\in X(\C)} \ev_{\sigma}\tilde \omega(1)=0\ ,
\end{equation}
\item $\tilde \omega(0)=0$ and there exists a form $x\in A_{\R}(M)$ such that 
$\ev_{\sigma}\omega(0)=\ev_{\sigma}\omega_{\R}(0)=x$ for all $\sigma\in X(\C)$.
\end{enumerate}
The isomorphism is normalized in the unique manner  such that
\begin{equation}\label{jan0210}
\begin{split}
\xymatrix{\underline{KR}\ar[d]^{{\simeq}}\ar[r]^{\rat} & H(\Omega  {C})\ar[d]^{\Psi}\\
 {e}_X^{*}\bK_{Mf,\Z}\ar[r]^-{\beil}&  {e}_X^{*}H(\DR_{Mf,\Z})}
\end{split}
\end{equation}
commutes. In fact, $\Psi$ is fixed by the evaluation of this diagram at $M=*$ using the fact that $\rat$  induces an isomorphism after {tensoring} its domain by $\R$.

\begin{ex}
{To motivate condition \eqref{jan0510}, consider a number ring $R$ and let $X:=\Spec(R)$.}
If $u\in R^{*}\cong K_{1}(R)$, then we have the well-known relation
$$\sum_{\sigma\in X(\C)} \ev_{\sigma}(\beil(u)) \overset{\text{Lemma \ref{likwefefwefewfewfewfwf}}}{=}-\sum_{\sigma\in X(\C)} \log\|\sigma(u)\|=0\ .$$
\end{ex}

\begin{lem}\label{jan0508}
The map
$$\Psi:H^{*}(\Omega  {C})\to  {e}_X^{*}H^{*}(\DR_{Mf,\Z})$$ is injective.
\end{lem}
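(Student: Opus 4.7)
The plan is to reduce the injectivity of $\Psi_*$ on cohomology to the injectivity of the rationalized Beilinson regulator on $\bK_\Z(X)^*(M)$, and then to conclude using Borel's theorem together with the rationally collapsing Atiyah--Hirzebruch spectral sequence.

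First, evaluating the commutative diagram \eqref{jan0210} at a smooth manifold $M$ and passing to $\pi_n$ yields the commutative square
\[
\xymatrix@C+0.4cm{
KR^{-n}(M) \ar[r]^-{\rat_*} \ar[d]^-{\sim} & H^{-n}(\Omega A(M)) \ar[d]^-{\Psi_*} \\
\bK_\Z(X)^{-n}(M) \ar[r]^-{\beil_*} & H(\DR_\Z(X))^{-n}(M),
}
\]
where the right vertical identification uses Lemma \ref{jan0201}, and the left vertical is the equivalence $p_X^*\bK_{Mf,\Z}(M) \simeq \Funk(KR)(M)$ valid since $X = \Spec R$. By the construction of $\Psi$, the map $\rat_*$ becomes an isomorphism after tensoring its source with $\R$, while its target is already an $\R$-vector space. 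Commutativity therefore identifies $\Psi_*$ with the rationalized Beilinson regulator $\beil_*\otimes \R\colon \bK_\Z(X)^{-n}(M)\otimes \R \to H(\DR_\Z(X))^{-n}(M)$, so the lemma reduces to showing injectivity of the latter.

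Next, both $\bK_\Z(X)\wedge M\R$ and $H(\DR_\Z(X))$ are equivalent to products of shifted Eilenberg--MacLane spectra, so the corresponding Atiyah--Hirzebruch spectral sequences collapse, yielding compatible direct sum decompositions
\begin{align*}
\bK_\Z(X)^{-n}(M)\otimes \R &\cong \bigoplus_{q} H^{-n-q}(M;\R)\otimes A^q,\\
H(\DR_\Z(X))^{-n}(M) &\cong \bigoplus_{q} H^{-n-q}(M;\R) \otimes H^q(\DR_\Z(X)),
\end{align*}
with $\beil_*\otimes \R$ induced summand-wise by $A^q = K_{-q}(R)\otimes \R \to H^q(\DR_\Z(X))$. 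By Borel's theorem and the Beilinson--Borel comparison recalled just before the lemma, each such map is injective (an isomorphism for $q\leq -2$, and for $q = 0$, $q = -1$ injectivity comes from the rank map and Dirichlet's unit theorem, respectively). Tensoring with $H^{-n-q}(M;\R)$ preserves injectivity of maps of $\R$-vector spaces, and a direct sum of injections is injective, so $\beil_*\otimes \R$ is injective, and hence so is $\Psi_*$.

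The main obstacle is the first step: ensuring that under the rational isomorphism coming from $\rat$, the map $\Psi_*$ really does match $\beil_*\otimes \R$ degree by degree in a way compatible with the AHSS decomposition. This is forced by the normalization of $\Psi$ via \eqref{jan0210}, but requires careful bookkeeping of the grading conventions. Once this identification is in hand, the remainder is essentially classical.
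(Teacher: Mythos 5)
Your strategy --- reduce injectivity of $\Psi_*$ to injectivity of the rationalized regulator via the square \eqref{jan0210}, then invoke rational collapse of the Atiyah--Hirzebruch spectral sequence --- is genuinely different from the paper's argument. The paper's proof is an explicit chain-level construction: given $\omega$ with $d\omega=\Psi(\alpha)$, it builds a new primitive $\beta$ lying in the image of $\Psi$ by taking $\Imm$ of the $\tilde\omega(p)$ for $p\ge 2$, recentering the $p=1$ piece by its average over $X(\C)$, and keeping only the real weight-$0$ part; one then checks $d\beta=\Psi(\alpha)$ directly. No spectral sequence, rationalization, or rank computation appears, and the construction works degree-by-degree over arbitrary $M$.

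Your reduction has a genuine gap. It requires $\rat_*\otimes\R\colon KR^{-n}(M)\otimes\R\to H^{-n}(\Omega A(M))$ to be an isomorphism (or at least surjective) for \emph{every} $M\in\Mf$, and the direct-sum decompositions you write for $\bK_\Z(X)^{-n}(M)\otimes\R$ and for $H(\DR_\Z(X))^{-n}(M)$ are only valid under finiteness assumptions on $M$. For a manifold that is not of finite homotopy type this fails: with $M=\bigsqcup_{n\in\nat}\ast$ one has $KR^{0}(M)\otimes\R=\bigl(\prod_n K_0(R)\bigr)\otimes\R$ while $H^{0}(\Omega A(M))=\prod_n\bigl(K_0(R)\otimes\R\bigr)$, and the comparison is not surjective because $-\otimes\R$ does not commute with infinite products. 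Since Lemma \ref{jan0440} applies the present lemma to arbitrary smooth manifolds, a finite-type restriction is not harmless, so what you flag as ``careful bookkeeping'' in the first step is actually a substantive obstruction to identifying $\Psi_*$ with $\beil_*\otimes\R$.

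The spectral route can be salvaged, but by a different mechanism than the one you wrote: Borel plus the Beilinson--Borel comparison give that each $A^q\to H^q(\DR_\Z(X))$ is a split injection of $\R$-vector spaces; since both $A$ and $\DR_\Z(X)$ are complexes of $\R$-vector spaces (hence formal), $\Psi\colon A\to\DR_\Z(X)$ admits a retraction in $\Nerve(\Ch)[W^{-1}]$, and applying $\Funk(H(-))$ yields a retraction of $\Psi\colon H(\Omega A)\to p_X^{*}H(\DR_{Mf,\Z})$, hence split injectivity on all homotopy groups with no restriction on $M$. But that argument replaces your commutative square and AHSS with a splitting, and is still heavier machinery than the paper's direct projection of the primitive.
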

\begin{proof}
Let $M$ be a smooth manifold. 
Let $\alpha\in Z^{*}(\Omega  {C}(M))$ and assume  {that}
$\Psi(\alpha)=d\omega$. We must show that there exists some $\hat \beta \in \Omega  {C}^{*-1}(M)$ such that 
$d\hat \beta=\alpha$.
We define
$\beta\in \DR_{Mf,\Z}^{*-1}(M\times X)$ as follows:
\begin{enumerate}
\item
For $p\ge 2$ we set $\tilde \beta(p):=\Imm(\tilde \omega(p))$ and $\beta(p):=0$, $\beta_{\R}(p):=0$.
\item For $p=1$ we define  $\beta(1):=0$, $\beta_{\R}(1):=0$ and 
$\tilde \beta(1)$ by 
$$\ev_{\sigma}\tilde \beta(1):= \ev_{\sigma}\Imm(\tilde \omega(1))-\frac{1}{|X(\C)|}\sum_{\sigma\in X(\C)}\ev_{\sigma}\Imm(\tilde \omega(1))$$
for all $\sigma\in X(\C)$.
\item 
For $p=0$ we set
$\beta_{\R}(0):=\omega_{\R}(0)$, $\beta(0):=\omega_{\R}(0)$ and $\tilde \beta(0):=0$.
\end{enumerate}
Then we have $d\beta=\Psi(\alpha)$ and  {since $\beta$ satisfies 1.~-- 3.~above}
$\beta=\Psi(\hat \beta)$ for some uniquely determined $\hat \beta \in \Omega  {C}^{*-1}(M)$. We conclude that
$d\hat \beta=\alpha$.
\end{proof}
In view of the Definition \ref{jan0503} of $\Diff(\bK)$, the   map of  pull-back  diagrams
$$
\xymatrix{
\underline{KR}\ar[r]^{\rat}\ar[dd]^{{\simeq} } & \ar[dd]^{\Psi}H(\Omega  {C}) & & H( {\sigma}^{\ge 0} \Omega  {C})\ar[ll]\ar[d]^{\simeq} \\
& & & H( \sigma^{\ge 0}\cL\Omega  {C})\ar[d]^{ \sigma^{\ge 0}\cL\Psi}\\
 {e}_X^{*}\bK_{Mf,\Z}\ar[r]^-{\beil} &  {e}_X^{* }H(\DR_{Mf,\Z})\ar[r]^{\simeq} &  {e}_X^{* }H(\cL\DR_{Mf,\Z}) &  {e}_X^{* }H(\sigma^{\ge 0}\cL\DR_{Mf,\Z})\ar[l]
}
$$
gives a morphism $\Diff^{0}(KR) \to \Diff(\bK(X)) $  in $\Fun(\bS_{Mf},\Sp)$ and hence a natural transformation of differential cohomology functors
\begin{equation}\label{mar1420}
\psi: \widehat{KR}^{0} \to \widehat{\bK}(X)^{0}\ .
\end{equation}
By construction, the diagram
\begin{equation}\label{jan0506}
\begin{split}
\xymatrix{
\Omega  {C}^{-1}\ar[r]^-{a}\ar[d]&\widehat{KR}^{0}\ar[d]^{\psi}\ar[r]^-{R}\ar@/^1cm/[rr]^{I}&Z^{0}(\Omega  {C})\ar[d]& KR^{0} \ar[d]^{\cong}\\
 {e}_X^{*}\cL\DR^{-1}_{Mf,\Z}\ar[r]^-{a} & \widehat{\bK}(X)^{0}\ar[r]^-{R}\ar@/_{1cm}/[rr]^{I}&Z^{0}( {e}_X^{*}\cL\DR_{Mf,\Z})&   \bK_\Z(X)^{0}, 
}
\end{split} 
\end{equation}
commutes, 
where the unlabelled vertical maps are induced by $\Psi$. 

\begin{lem}\label{jan0440}
For every manifold $M\in \Mf$ the map
$$\psi: \widehat{KR}^{0}(M) \to \widehat{\bK}(X)^{0}(M)$$
is injective. 
\end{lem}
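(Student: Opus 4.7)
My plan is to chase the commutative diagram \eqref{jan0506} using the two exact sequences \eqref{jan0507} and \eqref{jan0102}, and to reduce the assertion to the cohomological injectivity of $\Psi$ established in Lemma \ref{jan0508}.

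First I would take $\hat x \in \widehat{KR}^{0}(M)$ with $\psi(\hat x) = 0$. Chasing the right half of \eqref{jan0506}, and using the canonical isomorphism $KR^{0}\cong \bK_\Z(X)^{0}$, yields $I(\hat x) = 0$, so by exactness of \eqref{jan0507} one can write $\hat x = a(\omega)$ for some $\omega \in \Omega A^{-1}(M)/\im(d)$. Commutativity of the left square of \eqref{jan0506} then gives $a(\Psi(\omega)) = \psi(\hat x) = 0$ in $\widehat{\bK}(X)^{0}(M)$. Applying the curvature $R$, which satisfies $R\circ a = d$, and using that $\Psi$ is injective on forms, forces $d\omega = 0$, so $\omega$ is closed and defines a cohomology class $[\omega]\in H^{-1}(\Omega A(M))$.

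The next step exploits the unconditional part of exactness of \eqref{jan0102} at the term $H(\DR_\Z(X))^{-1}(M)\cong H^{-1}(\cL\DR_{Mf,\Z}(M\times X))$: the vanishing $a(\Psi(\omega))=0$ forces $[\Psi(\omega)] = \beil(y)$ for some $y \in \bK_\Z(X)^{-1}(M) \cong KR^{-1}(M)$. The commutative square \eqref{jan0210} identifies $\beil$ on $\pi_1$ with $\Psi\circ \rat$, so this equality reads $\Psi([\omega]) = \Psi([\rat(y)])$ in $H^{-1}(\DR_{Mf,\Z}(M\times X))$. Lemma \ref{jan0508} then yields $[\omega] = [\rat(y)]$ in $H^{-1}(\Omega A(M))$, hence also in $\Omega A^{-1}(M)/\im(d)$, and a final application of \eqref{jan0507} gives $\hat x = a(\omega) = a(\rat(y)) = 0$.

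Beyond this diagram chase there is no real obstacle, because the conceptual content of the argument has already been absorbed into the construction of $\Psi$ and into Lemma \ref{jan0508}. The subcomplex singled out by conditions 1.--3.~preceding \eqref{jan0420} is designed precisely so that Borel's theorem on the ranks of $K_\ast(R)$ together with the Borel--Beilinson comparison force the injectivity of $\Psi$ on cohomology. The only point that still requires some care is to recognise that the image of Beilinson's regulator on cohomology automatically lies inside $\Psi(H^{\ast}(\Omega A))$, which is exactly what allows the injectivity from Lemma \ref{jan0508} to transport the vanishing of $a(\Psi(\omega))$ back to the vanishing of $a(\omega)$.
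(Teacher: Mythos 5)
Your argument is correct and is the same diagram chase the paper invokes: the paper's proof says only ``Compare the exact sequence \eqref{jan0102} with the analogous exact sequence for $\widehat{KR}^0$. The assertion follows by a simple diagram chase using Lemma \ref{jan0508},'' and your paragraphs simply spell that chase out, exploiting the unconditional exactness of \eqref{jan0102} at the term $H(\DR_\Z(X))^{-1}$, the compatibility square \eqref{jan0210}, and the cohomological injectivity of $\Psi$ from Lemma \ref{jan0508}. No gaps.
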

\begin{proof}
 Compare the exact sequence \eqref{jan0102} with the analogous exact sequence for $\widehat{KR}^0$. The assertion follows by a simple diagram chase using Lemma \ref{jan0508}.
\end{proof}

\subsubsection{Comparison of cycle maps}\label{jan0415}

 {As in the previous subsections} we fix $X=\Spec(R)$ for a number ring $R$.
A sheaf  of locally free, locally finitely generated $\pr_{X}^{*}\cO_{X}$-modules $V$ on $M\times X$ is the same thing as a locally constant sheaf of finitely generated projective $R$-modules on $M$. 
It gives rise to complex vector bundles $V_{\sigma}\to M$ for all $\sigma\in X(\C)$.
Since $X(\C)$  is zero-dimensional, the datum of a good geometry
$g^{V}$  on $V$ reduces to the choice of  hermitian metrics $h^{V_{\sigma}}$ on the complex vector bundles $V_{\sigma}$, for all $\sigma\in X(\C)$, {which satisfy the compatibility} 
$h^{V_{\sigma}}=\bar h^{V_{\bar\sigma}}$. 
This is the same thing as a geometry on the  
locally constant sheaf of finitely generated projective $R$-modules on $M$ as considered in \cite[Def.~3.8]{bg}. We let $\bloc^{proj}_{geom}(M)$ denote the monoid of isomorphism classes of  locally constant sheaves of finitely generated projective $R$-modules on $M$ with geometry in the sense of \cite[Def.~3.8]{bg}.  Recall that $i\Vect^{geom}_{Mf,\Z}$ is the stack of geometric bundles, where a geometric bundle is a bundle with the choice of a local geometry, see Definition \ref{jul0850}. Since a good geometry induces a local geometry, we have a natural map
\begin{equation}\label{may2101n}
c\colon \bloc^{proj}_{geom}(M)\to  \pi_0(i\Vect^{geom}_{Mf,\Z}(M\times X)) \ .
\end{equation}
 A major result in \cite{bg} was the construction of the cycle map
$$
\hat \cycl\colon \bloc^{proj}_{geom}(M)\to \widehat{KR}^{0}(M)\ .
$$
In the present subsection we will compare it with the  cycle map 
$$
\hcycl\colon \pi_0(i\Vect^{geom}_{Mf,\Z}(M\times X))\to \widehat{\bK}^{0}(M\times X)
$$
constructed in  {Subsection \ref{jan1970}.} 
As the comparison in Lemma \ref{jan0441} shows, these are not equal, but differ by a natural correction term.

In the present paper, we let $$\beta(g^{V})\in Z^{0}(\Omega  {C}(M))$$ denote the characteristic 
form  of the  bundle  with  geometry $(V,g)$ as introduced in \cite[Def.~3.10]{bg}, and
$$\omega(g^{V})\in Z^{0}(\DR_{Mf,\Z}(M\times X))$$ be the form introduced in Definition \ref{jan0210eee}
{and} Lemma \ref{jan0901}.
In order to simplify formulas, we use a normalization adapted to the conventions of the present paper.
For $\Psi$ as in \eqref{jan0420} we write ({using again the notation from \eqref{jan0130}})
\begin{equation}\label{jan0406}\Psi(\beta(g^{V}))=\left(\beta(g^{V})_{\R}(p)\oplus \beta(g^{V})(p),\tilde \beta(g^{V})(p)\right)_{p\ge 0}\ .\end{equation}
Since $\Psi$ is injective the characteristic form $\beta(g^{V})$ is uniquely determined  by:
\begin{enumerate}
\item
For $p\ge 2$ we have
$\beta(g^{V})_{\R}(p)=0=\beta(g^{V})(p)$, and  for all $\sigma\in X(\C)$
$$\ev_{\sigma}\tilde \beta(g^{V})(p)=\frac{1}{2}\tilde \ch_{2p-1}(\nabla^{V_{\sigma},*},\nabla^{V_{\sigma}}) \ .$$
\item If $p=1$, then $\beta(g^{V})_{\R}(1)=0=\beta(g^{V})(1)$, and  for all $\sigma\in X(\C)$
$$\ev_{\sigma}\tilde \beta(g^{V})(1) =  \frac{1}{2} \tilde \ch_{1}(\nabla^{V_{\sigma},*},\nabla^{V_{\sigma}})-\kappa \ ,$$
where
$$\kappa:=\frac{1 }{2|X(\C)|}\sum_{\sigma\in X(\C)}  \tilde \ch_{1}(\nabla^{V_{\sigma},*},\nabla^{V_{\sigma}})\ .$$
\item
We have $\tilde \beta(g^{V})(0)=0$, and
for all $\sigma\in X(\C)$
$$\ev_{\sigma}(\beta(g^{V})_{\R}(0))=\ev_{\sigma}(\beta(g^{V})(0)):=\dim(V)\ .$$
\end{enumerate}

The following two lemmas prepare the definition of the correction term in the comparison of  {the} cycle maps.
 \begin{lem}\label{jan0540}
There exists a 
smooth map $g^{V}\mapsto \lambda(g^{V})\in  A_{\R}^{0}(M)$  {which is natural with respect to pull-back along maps between manifolds and} such that 
 $$\sum_{\sigma\in X(\C)} \tilde \ch_{1}(\nabla^{V_{\sigma},*},\nabla^{V_{\sigma}})=d \lambda(g^{V})\ .$$
   \end{lem}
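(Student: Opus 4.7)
The plan is to construct $\lambda(g^V)$ as the logarithm of a real positive global function $\Phi^V$ on $M$ built from the determinants of the Gram matrices of the metrics $h^{V_\sigma}$. First I recall from Subsection \ref{jan0415} that $V$ is the same as a locally constant sheaf of finitely generated projective $R$-modules on $M$, and $g^V$ provides conjugation-invariant Hermitian metrics $h^{V_\sigma}$ on the induced complex vector bundles $V_\sigma$, each carrying the canonical flat connection $\nabla^{V_\sigma}$. I form the flat complex line bundle
\[
L:=\bigotimes_{\sigma\in X(\C)}\det V_\sigma
\]
on $M$, equipped with the tensor-product flat connection $\nabla^L$ and the Hermitian metric $h^L=\prod_\sigma \det h^{V_\sigma}$. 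In a local flat frame $f_1,\dots,f_n$ of $V_\sigma$ with Gram matrix $H^\sigma_{ij}=h^{V_\sigma}(f_i,f_j)$, the defining identity for the adjoint gives $dH^\sigma=H^\sigma\overline{B^\sigma}$ with $B^\sigma$ the connection matrix of $\nabla^{V_\sigma,*}$, so
\[
\Tr\!\left(\nabla^{V_\sigma,*}-\nabla^{V_\sigma}\right)=d\log\det H^\sigma,
\]
a real $1$-form. Summing over $\sigma$ yields a global identity $\sum_\sigma \Tr(\nabla^{V_\sigma,*}-\nabla^{V_\sigma})=\nabla^{L,*}-\nabla^L$, and the degree-$1$ case of the transgression formula for the Chern character (cf.\ Subsection \ref{jan1965}) supplies a universal constant $c\in\R$ with $\tilde\ch_1(\nabla^{V_\sigma,*},\nabla^{V_\sigma})=c\cdot\Tr(\nabla^{V_\sigma,*}-\nabla^{V_\sigma})$.

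The key step is to verify that the monodromy of $L$ takes values in $\{\pm 1\}\subset\C^\times$. Let $P$ denote the fibre of $V$ (a finitely generated projective $R$-module) and let $\rho\colon\pi_1(M)\to\Aut_R(P)$ be the monodromy of $V$. The determinant homomorphism $\det\colon\Aut_R(P)\to\Aut_R(\det_R P)=R^\times$ composed with the norm $N_{k/\Q}\colon R^\times\to\Q^\times$ yields the monodromy of $L$, because the monodromy of $\det V_\sigma$ is $\sigma(\det\rho)$ and $\prod_\sigma\sigma=N_{k/\Q}$. Since units in the ring of integers of a number field satisfy $N_{k/\Q}(u)=\pm 1$ by Dirichlet's unit theorem, the monodromy of $L$ indeed lies in $\{\pm 1\}$.

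It follows that any local flat section $s$ of $L$ on a simply connected open $U\subseteq M$ is determined up to a sign by the monodromy, so the positive real function $h^L(s,s)$ on $U$ is invariant under the $\{\pm 1\}$-action and glues to a globally well-defined positive smooth function $\Phi^V\in C^\infty(M;\R_{>0})$; in any local $R$-frame it equals $\prod_\sigma\det H^\sigma$. Setting
\[
\lambda(g^V):=c\cdot\log\Phi^V\in A^0_\R(M)
\]
gives the desired function: the local identity $d\log\det H^\sigma=\Tr(\nabla^{V_\sigma,*}-\nabla^{V_\sigma})$ immediately yields $d\lambda(g^V)=\sum_\sigma\tilde\ch_1(\nabla^{V_\sigma,*},\nabla^{V_\sigma})$, and naturality under pull-back along maps between manifolds is immediate since all ingredients pull back. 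The main obstacle is the $\{\pm 1\}$-monodromy claim, whose essential content is the classical norm-$\pm 1$ property of units in rings of integers; all other steps reduce to routine local computations with Gram matrices and transgression forms.
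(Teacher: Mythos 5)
Your approach is genuinely different from the paper's. The paper obtains $\lambda$ by two transgression integrals over $I\times I\times M$ (the forms $\lambda_0$ and $\lambda_1$), while you try to write $\lambda$ directly as the logarithm of a positive function $\Phi^V$ built from the metric on $L=\bigotimes_\sigma\det V_\sigma$. The local computation $\Tr(\nabla^{V_\sigma,*}-\nabla^{V_\sigma})=d\log\det H^\sigma$ and the use of the norm identity $N_{k/\Q}(R^\times)\subseteq\{\pm1\}$ are both correct, and if $\Phi^V$ were well-defined the derivative identity and naturality would follow as you say. This is a cleaner route in spirit.

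However, there is a real gap in the claim that $\Phi^V$ is globally well-defined. A local flat section $s$ of the complex line bundle $L$ on a simply connected $U$ is determined only up to $\C^\times$, not up to $\{\pm1\}$; the fact that the monodromy lands in $\{\pm1\}$ lets you continue a \emph{given} $s$ around loops consistently, but it does not pin down $s$ in the first place, so $h^L(s,s)$ is ambiguous up to a positive constant on each component of $M$ — and this spoils naturality under pull-back, which the lemma explicitly demands and which Lemma \ref{jan0570} later relies on. Your fallback — computing $\prod_\sigma\det H^\sigma$ "in any local $R$-frame" — does eliminate the ambiguity (the change-of-frame factor $|N_{k/\Q}(\det T)|^2=1$ for $T\in GL_n(R)$), but it presupposes that the fibre $P$ of $V$ admits an $R$-basis, i.e.~that $P$ is a free $R$-module. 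For number rings with nontrivial class group this fails. The missing idea is exactly the one the paper supplies: replace $R$-frames by the $\Z$-lattice structure. One has $\det_\Z P\subset\bigotimes_\sigma\det_\C(P\otimes_{R,\sigma}\C)=L_m$, a rank-one $\Z$-module whose basis is determined up to $\pm1$ (since $GL_1(\Z)=\{\pm1\}$), and this pins down a distinguished flat section of $L$ up to sign (equivalently, a canonical metric $h_0^{\bV}$), after which your argument goes through verbatim. So the proposal is salvageable by substituting $\Z$-frames for $R$-frames, but as written it only covers the case of free modules.
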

\begin{proof}
Recall that
$$\tilde \ch_{1}(\nabla^{V_{\sigma},*},\nabla^{V_{\sigma}}) = \int_{I\times M/M} \ch_{2}(\nabla^{\widetilde V_{\sigma}})$$
where $\widetilde V_{\sigma}\to I\times M$ is the bundle $\pr_{M}^{*} V_{\sigma}$ with the connection
$\nabla^{\widetilde V_{{\sigma}}}$ obtained by linear interpolation between $\nabla^{V_{\sigma}}$ and $\nabla^{V_{\sigma},*}$. We consider the line
bundle $$\widetilde \bV:=\bigotimes_{\sigma\in X(\C)}\det(\widetilde V_{\sigma})$$
with the connection $\nabla^{\widetilde \bV}$ induced by the connections $\nabla^{\widetilde V_{\sigma}}$.
Then
$$ \sum_{\sigma\in X(\C)} \tilde \ch_{1}(\nabla^{V_{\sigma},*},\nabla^{V_{\sigma}})=
\int_{I\times M/M} \ch_{2}(\nabla^{\widetilde \bV}) \ .$$

We further consider the complex line bundle
$\bV:=\bigotimes_{\sigma\in X(\C)}\det(V_{\sigma})$ on $M$ with the induced flat connection $\nabla^{\bV}$.
We let $(s,t)\in I\times I$ denote the parameters.
On $I\times I\times  M$ we consider the bundle
$\widehat \bV:=\pr_{M}^{*}\bV$ with the connection $\nabla^{\widehat \bV}$ which along the $s$-direction linearly interpolates between the connection 
$\nabla^{\widetilde \bV}$  and the linear path (parametrized by $t$) between $\pr_{M}^{*}\nabla^{\bV}$ and $\pr_{M}^{*}\nabla^{\bV,*}$.
We set $$\lambda_{0}(g^{V}):=-\int_{I\times I\times M/M} \ch_{2}(\nabla^{\widehat \bV})$$
and observe that
$$
d\lambda_{0}(g^{V})=
\sum_{\sigma\in X(\C)} \tilde \ch_{1}(\nabla^{V_{\sigma},*},\nabla^{V_{\sigma}})-\tilde \ch_{1}(\nabla^{\bV,*},\nabla^{\bV}) \ .$$
We now observe that $\bV$ has a $\Z$-structure and therefore has a uniquely determined hermitian metric $h^{\bV}_{0}$ such that the $\Z$-basis vectors have length one.
On the bundle $\widehat \bV \to  
I\times I\times M$ we consider the metric 
$$\widehat h^{\widehat \bV}:=(1-s)\ \pr^{*}_{M}h^{\bV}+ s\  \pr^{*}_{M}h^{\bV}_{0}\ .$$
We consider the connection $\widehat{\nabla}^{\widehat \bV}$ which linearly (in $t$) interpolates between the 
connection $\pr_{M}^{*}\nabla^{\bV}$ and its adjoint with respect to  $\widehat h^{\widehat \bV}$.
We define $$\lambda_{1}(g^{V}):=-\int_{I\times I\times M/M} \ch_{2}(\hat \nabla^{\widehat \bV})$$
and observe that
 $$d \lambda_{1}(g^{V})=\tilde \ch_{1}(\nabla^{\bV,*},\nabla^{\bV}) \ .$$
Since $\Imm \sum_{\sigma\in X(\C)} \tilde \ch_{1}(\nabla^{V_{\sigma},*},\nabla^{V_{\sigma}})=0$ (see \eqref{sep2501} for  {the definition of} $\Imm$ and $\Ree$),
\begin{equation}\label{jan0520}\lambda(g^{V}):=\Ree(\lambda_{0}(g^{V})+\lambda_{1}(g^{V}))\end{equation} 
has the required properties.
\end{proof}

Let $V\to M$ be a complex vector bundle and $\nabla_{i}$, $i=0,1,2$, be three connections on $V$. Then we have the following well-known  relation
$$\tilde \ch(\nabla_{0},\nabla_{1})+\tilde \ch(\nabla_{1},\nabla_{2})\equiv \tilde \ch(\nabla_{0},\nabla_{2})$$ modulo exact forms. In fact, we have a stronger result.
\begin{lem}
There exists a 
smooth map
$$(\nabla_{0},\nabla_{1},\nabla_{2})\mapsto \tilde \ch(\nabla_{0},\nabla_{1},\nabla_{2})\in A(M)\ ,$$
 {natural with respect to pull-back along maps between manifolds,} such that
$$\tilde \ch(\nabla_{0},\nabla_{1})+\tilde \ch(\nabla_{1},\nabla_{2})- \tilde \ch(\nabla_{0},\nabla_{2})
= d\tilde \ch(\nabla_{0},\nabla_{1},\nabla_{2})\ .$$
\end{lem}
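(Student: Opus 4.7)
The plan is to imitate the standard construction of the transgression form $\tilde\ch(\nabla_0,\nabla_1)$, but with the 2-simplex $\Delta^2$ playing the role of the interval $I=\Delta^1$ that is used in the two-connection case (as recalled implicitly in the previous lemma). Parametrize $\Delta^2$ by barycentric coordinates $(t_0,t_1,t_2)$ with $\sum t_i = 1$, $t_i\ge 0$, and on $\Delta^2\times M$ consider the bundle $\widetilde V := \pr_M^*V$ equipped with the smooth connection
\[
\widehat\nabla := t_0\,\pr_M^*\nabla_0 + t_1\,\pr_M^*\nabla_1 + t_2\,\pr_M^*\nabla_2 .
\]
Then I would simply set
\[
\tilde\ch(\nabla_0,\nabla_1,\nabla_2) := \int_{\Delta^2\times M/M} \ch(\widehat\nabla) \in A(M).
\]
Naturality with respect to pull-back along smooth maps $M'\to M$ is immediate from this formula, since the whole construction is pulled back faithfully along $\id_{\Delta^2}\times f$.

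To verify the claimed identity I would apply the fibre-integration Stokes formula
\[
d\int_{\Delta^2\times M/M}\ch(\widehat\nabla) = \int_{\Delta^2\times M/M} d\ch(\widehat\nabla) \;\pm\; \int_{\partial\Delta^2\times M/M}\ch(\widehat\nabla).
\]
By Chern--Weil, $\ch(\widehat\nabla)$ is a closed form on $\Delta^2\times M$, so the interior term vanishes. The boundary $\partial\Delta^2$ decomposes into the three edges $\{t_i=0\}$, and on the face $\{t_i=0\}$ the connection $\widehat\nabla$ restricts to the linear interpolation between the two remaining connections $\nabla_j$ and $\nabla_k$. Fibre integration over such a face therefore reproduces, up to a sign dictated by the induced boundary orientation, the usual transgression $\tilde\ch(\nabla_j,\nabla_k)$.

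The one step that I expect to require real care is bookkeeping of orientations: one must choose the orientation of $\Delta^2$ and of each of its three edges in such a way that the boundary sum assembles into $\tilde\ch(\nabla_0,\nabla_1) + \tilde\ch(\nabla_1,\nabla_2) - \tilde\ch(\nabla_0,\nabla_2)$ and not some other signed combination. This is a routine but easily-mis-signed computation; a consistency check is to specialize two of the three connections to coincide, in which case the right-hand side collapses to $\tilde\ch(\nabla_i,\nabla_j)-\tilde\ch(\nabla_i,\nabla_j)=0$ and the two non-trivial faces must cancel. Once the signs are arranged correctly, the identity
\[
\tilde\ch(\nabla_0,\nabla_1)+\tilde\ch(\nabla_1,\nabla_2)-\tilde\ch(\nabla_0,\nabla_2) = d\,\tilde\ch(\nabla_0,\nabla_1,\nabla_2)
\]
drops out of Stokes, completing the proof.
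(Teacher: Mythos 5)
Your proposal is correct and is essentially identical to the paper's proof: both define $\tilde\ch(\nabla_0,\nabla_1,\nabla_2)$ by fibre-integrating $\ch(\nabla^{\widehat V})$ over $\Delta^2\times M/M$, where $\nabla^{\widehat V}$ is the convex linear (barycentric) interpolation of the three connections on $\pr_M^*V$. The Stokes argument and sign bookkeeping you describe is exactly the verification the paper leaves implicit in "does the job."
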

\begin{proof}
We consider the bundle $\widehat V:=\pr_{M}^{*}V\to \Delta^{2}\times M$  and define the connection
$\nabla^{\widehat V}$ as the convex linear interpolation
between the connections $\nabla_{i}$ at the corners of the simplex.
Then
$$\tilde \ch(\nabla_{0},\nabla_{1},\nabla_{2}):=\int_{\Delta^{2}\times M/M} \ch(\nabla^{\widehat V})$$
does the job.
\end{proof}

We now define   the natural transformation \begin{equation}\label{jan0340}\alpha:\bloc^{proj}_{geom}(M)\to \DR^{-1}_{Mf,\Z}(M\times X)\ ,\quad  (V,g^{V})\mapsto \alpha(g^{V})\end{equation}  {that will show up in the above mentioned correction term}.
We use the notation  \eqref{jan0130} in order  to describe the form $\alpha(g^{V})$:
\begin{enumerate}
\item
For $p\ge 2$ we set
$$\ev_{\sigma}\alpha(g^{V})_{\R}(p):= \frac{1}{2}\left(\tilde \ch_{2p-1}(\nabla^{V_{\sigma},u},\nabla^{V_{\sigma}})+\tilde \ch_{2p-1}(\nabla^{V_{\sigma},u},\nabla^{V_{\sigma},*})\right)\ ,$$
$$\ev_{\sigma}\tilde \alpha(g^{V})(p):= {-}\frac{1}{2}\tilde \ch_{2p-2}(\nabla^{V_{\sigma},*},\nabla^{V_{\sigma},u},\nabla^{V_{\sigma}})\ ,$$
and $\alpha(g^{V})(p):=0$. 
\item 
For $p=1$ we set (using \eqref{jan0520})
 $$\ev_{\sigma}\alpha(g^{V})_{\R}(1):= \frac{1}{2}\left(\tilde \ch_{1}(\nabla^{V_{\sigma},u},\nabla^{V_{\sigma}})+\tilde \ch_{1}(\nabla^{V_{\sigma},u},\nabla^{V_{\sigma},*})\right)\ ,$$
   $$\ev_\sigma\tilde \alpha(g^{V})(1):= {-}\frac{1}{2}\tilde \ch_0(\nabla^{V_{\sigma},*},\nabla^{V_{\sigma},u},\nabla^{V_{\sigma}}) - \frac{1}{2|X(\C)|}\lambda(g^{V})\ ,$$
 and $\alpha(g^{V})(1):=0$.

\item
We set $\tilde \alpha(g^{V})(0):=0$, $\alpha(g^{V})_{\R}(0):=0$ and $\alpha(g^{V})(0):=0$.
\end{enumerate}
A simple calculation shows that 
\begin{equation}\label{jan0408}
\omega(g^{V})=\Psi(\beta(g^{V}))+d\alpha(g^{V}) \ .
\end{equation}
Recall the transformations $\psi$ from \eqref{mar1420} and $c$ from \eqref{may2101n}. 
 {In the next lemma, we} compare the two cycle maps
\[
\hcycl\circ c,\,  \psi\circ\hat\cycl\colon \bloc^{proj}_{geom}  \to   \widehat{\bK}(X)^0.
\] 
 {In its formulation, we use the natural map} 
\[
V\colon \bloc^{proj}_{geom} \to  {e}_X^{*}\pi_0(i\Vect_{Mf,\Z} ) 
\]
which forgets the geometry  and the flat part $\widehat{\bK}(X)_{flat}^0 $ of differential algebraic $K$-theory  defined in \eqref{jul1120}.
\begin{lem}\label{jan0441}
There exists a natural transformation 
\[
\epsilon: {e}_X^{*}\pi_0(i\Vect_{Mf,\Z})  \to \widehat{\bK}(X)_{ flat}^0 
\]
such that 
\begin{equation*}
\psi\circ\hat\cycl - \hcycl\circ c + a\circ\alpha = \epsilon\circ V.
\end{equation*}
Moreover, $\epsilon$ is additive on short exact sequences, i.e.~if 
\begin{equation}\label{mar2701}
0\to V_0 \to V_1 \to V_2 \to 0
\end{equation}
is an exact sequence, then we have the equality
$$\epsilon(V_1) = \epsilon(V_0) + \epsilon(V_2)\ .$$
\end{lem}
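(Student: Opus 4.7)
The strategy is to define, for each bundle $V\in i\Vect_{Mf,\Z}(M\times X)$,
\[
\tilde\epsilon(V,g^V):=\psi\bigl(\hat\cycl(V,g^V)\bigr)-\hcycl\bigl(c(V,g^V)\bigr)+a\bigl(\alpha(V,g^V)\bigr)\in \widehat{\bK}(X)^0(M)
\]
using an arbitrary geometry $g^V$ (which in the present number-ring case is a $\Gal(\C/\R)$-invariant family of Hermitian metrics on the complex bundles $V_\sigma$, and therefore exists globally by partition of unity). Three claims then need to be checked: (i) $\tilde\epsilon(V,g^V)\in \widehat{\bK}(X)^0_{flat}(M)$; (ii) the class is independent of $g^V$, so that $\epsilon(V):=\tilde\epsilon(V,g^V)$ is well-defined on the underlying bundle; and (iii) $\epsilon$ is additive on short exact sequences.

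For (i), I would compute the curvature $R$ of each of the three summands. Diagram \eqref{jan0506} combined with the relation $R(\hat\cycl(V,g^V))=\beta(g^V)$ from \cite{bg} gives $R(\psi(\hat\cycl(V,g^V)))=\Psi(\beta(g^V))$; equation \eqref{jul1075} gives $R(\hcycl(c(V,g^V)))=\omega(g^V)$; and since $a$ arises from the fibre sequence $\sigma^{\le-1}\cL\DR_{Mf,\Z}[-1]\to \sigma^{\ge 0}\cL\DR_{Mf,\Z}\to \cL\DR_{Mf,\Z}$, the composite $R\circ a$ is the de Rham differential, so $R(a(\alpha(V,g^V)))=d\alpha(g^V)$. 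The three terms sum to zero by the key form identity \eqref{jan0408}.

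For (ii), interpolate two geometries $g_0^V$, $g_1^V$ by a smooth family $g_t^V$ on $\pr_M^*V$ over $I\times M$. Step (i) produces a class in $\widehat{\bK}(X)^0_{flat}(I\times M)$; its vanishing curvature trivially satisfies the hypothesis of Lemma \ref{jan0605}, which then gives $i_1^*\tilde\epsilon-i_0^*\tilde\epsilon=a\bigl(\int_I R(\tilde\epsilon)\bigr)=0$.

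For (iii), the case of a direct sum $V_1=V_0\oplus V_2$ equipped with the direct-sum geometry is immediate: each of the three ingredients is additive in this case ($\hat\cycl$ by its construction via group completion of a symmetric monoidal category in \cite{bg}, $\hcycl$ by Definition \ref{nov2050neu} and the monoidality of the algebraic $K$-theory functor, and $\alpha$ directly from its explicit formula as a sum of transgressions $\tilde\ch_k$, which are additive on direct sums of connections). For a general short exact sequence \eqref{mar2701} the main obstacle is that the sheaves of $R$-modules do not split, even though for each embedding $\sigma$ one can choose a Hermitian metric on $V_1\otimes_{R,\sigma}\C$ producing an orthogonal decomposition of the underlying smooth complex vector bundle. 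The plan is to pick Hermitian metrics on the $V_i\otimes_{R,\sigma}\C$ making $V_0\otimes_\sigma\C\hookrightarrow V_1\otimes_\sigma\C$ isometric, take the induced geometries, and compare $\epsilon(V_1)$ with $\epsilon(V_0)+\epsilon(V_2)$. Each of $\hat\cycl$, $\hcycl$, $\alpha$ then acquires an additivity defect on this exact sequence, expressible as a transgression in the second fundamental form of the metric splitting; the crucial verification is that these defects cancel in the combination $\psi\hat\cycl-\hcycl c+a\alpha$, and the reason this is plausible is precisely the identity \eqref{jan0408} from which $\alpha$ was designed. The expected mechanism is an application of Lemma \ref{jan0605} to a path of metrics on $V_1$ connecting the original with the orthogonal-direct-sum metric, which reduces the additivity statement to a Stokes-type identity among the transgression Chern character forms of Subsection \ref{dez2501}.
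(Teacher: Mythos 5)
Your steps (i) and (ii) match the paper's argument: the identity \eqref{jan0408} forces $R\circ\delta=0$, and interpolation of metrics over $I\times M$ combined with the homotopy formula (Lemma~\ref{jan0605}) gives independence of the choice of geometry, so $\delta$ factors through $V$ and lands in the flat part. Up to that point you are on track.

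For step (iii) — additivity on short exact sequences — your approach diverges from the paper's and has a genuine gap. Changing the Hermitian metric on $V_1$ so that the sub/quotient decomposition becomes orthogonal does \emph{not} turn the flat $R$-bundle $V_1$ into $V_0\oplus V_2$: the flat connection $\nabla^{V_1}$ is determined by the $R$-module structure and still contains the off-diagonal extension term, no matter which metric you put on top. So interpolating metrics along $I\times M$ compares two geometries on the \emph{same} bundle $V_1$; it never produces the bundle $V_0\oplus V_2$, and Lemma~\ref{jan0605} cannot bridge between the two different objects of $\pi_0(i\Vect_{Mf,\Z})(M)$. Your ``crucial verification'' that the additivity defects of $\hat\cycl$, $\hcycl$ and $\alpha$ cancel is essentially the statement that both $\hat\cycl$ and $\hcycl$ satisfy the same torsion-form anomaly on short exact sequences — i.e.\ a version of Lott's relation for both — and this is exactly what the present lemma is used to prove, so the proposed reduction risks circularity and in any case offers no workable mechanism.

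The paper avoids this entirely with a homotopy-theoretic argument: since $\epsilon$ is additive on direct sums and vanishes on trivial bundles, it defines a class in $\widehat{\bK}(X)^0_{flat}(BGL(R))$; since $\widehat{\bK}(X)^0_{flat}$ is (part of) a genuine cohomology theory, this class is detected after the Quillen plus construction $BGL(R)\to BGL(R)^+$, and there the classifying maps $c_{V_1}$ and $c_{V_0\oplus V_2}$ become homotopic. This yields $\epsilon(V_1)=\epsilon(V_0\oplus V_2)$ at once, with no need to track torsion forms. To salvage a differential-geometric proof you would instead need a deformation of the \emph{flat structure} (as done over $\P^1_\Z$ in Subsection~\ref{jan0470}), and then you would be forced to carry the torsion-form correction explicitly, which is substantially harder than the paper's route.
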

\begin{proof}
We consider the natural transformation
\[
\delta:= \psi\circ\hat\cycl - \hcycl \circ c+ a\circ\alpha\colon  \bloc^{proj}_{geom} \to \widehat{\bK}(X)^0.
\]
The equality of characteristic forms \eqref{jan0408} implies that $R\circ \delta = 0$. Since $X$ is proper over $\Spec(\Z)$, any geometry is good and we can interpolate between any two  geometries on a given bundle. The homotopy formula  {from Lemma} \ref{jan0605} then implies that $\delta$ factors as a composition
\[
\bloc^{proj}_{geom}\xrightarrow{V}  {e}_X^{*}\pi_0(i\Vect_{Mf,\Z} ) \xrightarrow{\epsilon} \widehat{\bK}(X)_{flat}^0 \subseteq \widehat{\bK}(X)^0.
\]
This proves the first part of the lemma.

The functor 
  $\widehat{\bK}(X)_{flat}^{0}$ is, in contrast to $\widehat{\bK}(X)^0 $, part of  a cohomology theory.
It is represented by the homotopy fibre of the map of spectra $\beil\colon \bK_\Z(X) \to H(\DR_{\Z}(X))$.
In particular, it makes sense to evaluate $\widehat{\bK}(X)^0_{flat}$ on any topological space.

Note that $\delta$ and hence $\epsilon$ are additive for direct sums and vanish on a trivial bundle. Therefore, $\epsilon$  corresponds 
to a class  {in} $\widehat{\bK}(X)^0_{ flat}(BGL(R))$, where $BGL(R)$ is the classifying space of the discrete group $GL(R)$. Since $\widehat{\bK}(X)^0_{flat}$ is {part of} a cohomology theory,  passing to the plus-construction induces an isomorphism
 $$\widehat{\bK}(X)^0_{flat}(BGL(R)^+) \xrightarrow{\cong} \widehat{\bK}(X)^0_{ flat}(BGL(R))\ .$$

We denote by $c_V\colon M\to BGL(R)$ the map that classifies the stable class of $V\in{ {e}_{X}^{*}\pi_{0}(i\Vect_{Mf,\Z})(M)}$. Given  an extension \eqref{mar2701} one knows that $c_{V_1}$ and $c_{V_0\oplus V_2}$ become homotopic to each other when  we compose them with $BGL(R) \to BGL(R)^+$. This implies the  second assertion of the lemma.
\end{proof}

\subsection{Extensions}\label{mar18002}

The main result of this subsection is the proof (\ref{mar2702}) of {Theorem} \ref{jan2104}, which identifies the alternating sum of cycle classes for an extension of geometric vector bundles with a version of the Bismut-Lott higher torsion form $\cT$ (Lott's relation). We refer to \ref{mar2703} for the precise statement. The main ingredient is a deformation to a split extension over $ {\P^{1}_{\C}}$. Using integration in the algebraic direction (\ref{jan0110}) we construct in \ref{jan0470} a form $\cT^{\prime}$ and show that it coincides with the Bismut-Lott form $\cT$ up to exact forms. For this we use the axiomatic characterization of the latter, to be recalled in \ref{jan0341} below.  Lott's relation then follows essentially from the homotopy formula  {in} Proposition  \ref{jan0111}.

\subsubsection{The axiomatic characterization of the torsion form}\label{jan0341}


In this section we fix $X= \Spec(\C)$. Then a bundle over $M\times X$ is a
 locally constant sheaf $V$ of finite dimensional $\C$-vector spaces on the manifold $M$. It can be presented as the sheaf of parallel sections of a uniquely determined flat complex vector bundle $(V,\nabla^{V})$ on $M$. The connection $\nabla^{V}$ is a partial geometry on that bundle  in the sense of  Definition \ref{dez2601}. A geometry on $V$ in the sense of Definition  \ref{nov1904} is  a hermitian metric
$h^{V}$ on the vector bundle $V\to M$.  {Given a geometry,} $\nabla^{V,*}$ denotes the adjoint connection.

Let $$\cV: 0\to V_{0}\to \dots \to V_{n}\to 0$$
be an exact sequence of locally constant sheaves of finite dimensional $\C$-vector spaces
on a manifold $M$. A geometry on $\cV$ is  {by definition} a collection $h^{\cV}=(h^{V_{i}})_{i=0,\dots,n}$  of metrics on the bundles $V_{i}$.
In this situation the Bismut-Lott torsion form
$$\cT(\cV,h^{\cV})\in A(M)$$
is defined such that
\begin{equation}\label{apr2701}d\cT(\cV,h^{\cV})=\frac{1}{2}\sum_{i=0}^{n} (-1)^{n} \tilde \ch(\nabla^{V_{i},*},\nabla^{V_{i}}) \end{equation}
 \cite[Def.~2.20]{MR1303026}. Note that our normalization of the torsion form differs from the one adopted in  \cite{MR1303026}.  In the present paper we will not use the precise construction of the torsion form, but rather its axiomatic characterization. We restrict to the case $n=2$ and
consider a  transformation
$$(\cV,h^{\cV})\mapsto \cT^{\prime}(\cV,h^{\cV})$$
which associates to every exact sequence
 \begin{equation}\label{jan0402}\cV:0\to V_{0}\to V_{1}\to V_{2}\to 0 \end{equation}
of locally constant sheaves of finite dimensional $\C$-vector spaces on a manifold $M$,
equipped with a geometry $h^{\cV}$, a form
$\cT^{\prime}(\cV,h^{\cV})\in A(M)$.
 We have the following characterization, due to Bismut-Lott:
\begin{theorem}[{\cite[Thm. A1.2]{MR1303026}}]\label{jan0401neu}
Assume that $(\cV,h^{\cV})\mapsto \cT^{\prime}(\cV,h^{\cV})$ satisfies
\begin{enumerate}
\item $ {\Ree(\cT^{\prime}(\cV,h^{\cV})_{2p-2})=0}$ for all $p\ge 1$  (see \eqref{sep2501} for  {the notation} $\Ree$) ,
\item $d\cT^{\prime}(\cV,h^{\cV})=\frac{1}{2}\sum_{i=0}^{2}(-1)^{i}\tilde \ch(\nabla^{V_{i},*},\nabla^{V_{i}})$,
\item $\cT^{\prime}$ is natural, i.e.~$\cT^{\prime}(f^{*}\cV,f^{*}h^{\cV})=f^{*}\cT^{\prime}(\cV,h^{\cV})$ for a smooth map $f:M^{\prime}\to M$,
\item $\cT^{\prime}(\cV,h^{\cV})=0$ in the split case, i.e.~when $V_{1}\cong V_{0}\oplus V_{2}$ as flat bundles 
and $h^{V_{1}}=h^{V_{0}}\oplus h^{V_{2}}$, 
\item $\cT^{\prime}$ depends smoothly on $\cV$ and $h^{\cV}$.
\end{enumerate}
Then $\cT^{\prime}(\cV,h^{\cV})$ equals the Bismut-Lott torsion form $\cT(\cV,h^{\cV})$ modulo exact forms:
$$\cT^{\prime}(\cV,h^{\cV})\equiv \cT(\cV,h^{\cV})\in A(M)/\im(d)\ .$$
\end{theorem}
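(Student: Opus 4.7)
The plan is to treat this as a uniqueness result: since the Bismut-Lott form $\cT$ itself satisfies axioms 1--5, it suffices to show that any transformation $\cT'$ satisfying these axioms agrees with $\cT$ modulo exact forms. That is, I need to prove that $\Delta(\cV, h^{\cV}) := \cT'(\cV, h^{\cV}) - \cT(\cV, h^{\cV})$ is exact for every $(\cV, h^{\cV})$. Axiom 2 together with \eqref{apr2701} applied to $\cT$ immediately implies that $\Delta(\cV, h^{\cV})$ is closed. The strategy is a deformation argument reducing the general case to the split one, where $\cT'$ vanishes by axiom 4.

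First I would construct a smooth one-parameter deformation from the given extension to a split one. Choose any $C^{\infty}$-splitting $\sigma\colon V_2 \to V_1$ of the underlying exact sequence of smooth vector bundles, producing an isomorphism $V_1 \cong V_0 \oplus V_2$ under which the flat connection on $V_1$ becomes upper triangular,
$$\nabla^{V_1} = \nabla^{V_0 \oplus V_2} + A, \qquad A = \begin{pmatrix} 0 & \alpha \\ 0 & 0 \end{pmatrix}, \quad \alpha \in A^1(M; \Hom(V_2, V_0)).$$
For $t \in [0,1]$ set $\nabla^{V_1, t} := \nabla^{V_0 \oplus V_2} + tA$. Because $\alpha$ lands in $\Hom(V_2, V_0)$ the composition $A \wedge A$ vanishes, so the flatness of $\nabla^{V_1}$ forces each $\nabla^{V_1, t}$ to be flat. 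This produces a smooth family $\cV^t$ of extensions of flat bundles with $\cV^0$ the split extension $V_0 \oplus V_2$ and $\cV^1 = \cV$, which I endow with the constant geometry $h^{\cV^t} := h^{\cV}$.

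Next, I would assemble the family into a single extension $\tilde\cV$ over $[0,1] \times M$ with pulled-back geometry $\tilde h^{\tilde\cV}$. By naturality and smoothness (axioms 3 and 5, with the analogous properties for $\cT$), the form $\tilde\Delta := \cT'(\tilde\cV, \tilde h^{\tilde\cV}) - \cT(\tilde\cV, \tilde h^{\tilde\cV})$ is well-defined and restricts to $\Delta(\cV^t, h^{\cV^t})$ at each time $t$; by axiom 2 it is closed on $[0,1] \times M$. Stokes' theorem combined with integration over the fiber $[0,1]$ then gives
$$\Delta(\cV, h^{\cV}) - \Delta(\cV^0, h^{\cV^0}) = d \int_{[0,1]} \tilde\Delta.$$
Axiom 4 gives $\cT'(\cV^0, h^{\cV^0}) = 0$, so to conclude that $\Delta(\cV, h^{\cV})$ is exact it remains to show that the Bismut-Lott torsion $\cT(\cV^0, h^{\cV^0})$ of a metrically split extension is itself exact.

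The main obstacle is this last analytic input: it is not a formal consequence of the axioms but relies on the superconnection construction of $\cT$ in \cite{MR1303026}, where in the metrically split case the relevant superconnection decomposes as a direct sum and the associated transgression form is exact. Everything else in the argument is a formal deformation using axioms 2--5. Axiom 1 enters to ensure that $\cT'$ takes values in the appropriate real subcomplex, so that the comparison with the reality behaviour of $\cT$ is consistent and the difference $\Delta$ lies in the same complex in which exactness is being asserted.
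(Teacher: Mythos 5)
The paper does not give its own proof of this statement: it is quoted from Bismut--Lott, \cite[Thm.~A1.2]{MR1303026}. Your deformation strategy is nonetheless the standard one, but there is a concrete gap at the point where you invoke axiom~4.

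Keeping the geometry constant, $h^{\cV^{t}} := h^{\cV}$, means that at $t=0$ the extension $\cV^{0}$ is split as a sequence of \emph{flat} bundles, but under your chosen isomorphism $V_{1} \cong V_{0} \oplus V_{2}$ the metric on $V_{1}$ is still the original $h^{V_{1}}$, which in general is not $h^{V_{0}} \oplus h^{V_{2}}$ (neither the restriction of $h^{V_1}$ to $V_0$ need equal $h^{V_0}$, nor need the complementary summand carry the metric $h^{V_2}$). Axiom~4 requires \emph{both} the flat splitting and the equality $h^{V_{1}} = h^{V_{0}} \oplus h^{V_{2}}$. Hence you cannot conclude $\cT'(\cV^{0}, h^{\cV^{0}}) = 0$, and the phrase ``metrically split extension'' in your purported remaining obstacle misdescribes $(\cV^{0}, h^{\cV^{0}})$: it is flat-split but not metrically split, so the claim you say is still needed about $\cT$ is not even what you would need, and the statement as you write it is not true in the generality you assert.

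The repair is a second one-parameter deformation. After reaching $t=0$, interpolate the metric $h^{V_{1}}$ to $h^{V_{0}} \oplus h^{V_{2}}$ while holding the flat structure split and $h^{V_{0}}, h^{V_{2}}$ fixed; the identical Stokes/transgression argument (axioms~2, 3, 5) again changes $\Delta$ only by an exact form. At the end of this second family axiom~4 applies to $\cT'$, and since $\cT$ also satisfies all five axioms (in particular vanishes in the genuinely split case), one gets $\Delta = 0$ there. With the metric deformation in place, the separate ``analytic input'' you flag evaporates: no fact about the superconnection construction is needed beyond the statement that $\cT$ itself satisfies the axioms.
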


\subsubsection{A deformation}\label{jan0470}

In this  {subsection} we give a holomorphic construction of a version of the analytic torsion form.
Let $(\cV,h^{\cV})$ be an exact sequence \eqref{jan0402} with geometry.
  We construct a form $\cT^{\prime}(\cV,h^{\cV})$ which satisfies the {assumptions in Theorem \ref{jan0401neu}.} 
The main point of the  construction below is that it is canonical.   This ensures the properties 
3.~and 5.~in Theorem \ref{jan0401neu}.

The metric  $h^{V_{1}}$ provides an orthogonal decomposition of vector bundles $V_{1}\cong V_{0}\oplus V_{2}$. With respect to this decomposition the connection on $V_{1}$ can be written as
$$\nabla^{V_{1}}=\nabla^{V_{0}}\oplus \nabla^{V_{2}}+ E\ ,\quad  E\in A^{1}(M,\Hom(V_{2},V_{0}))\ .$$
 Furthermore,
$$h^{V_{1}}=h^{V_{1}}_{|V_{0}}\oplus h^{V_{1}}_{|V_{2}}\ .$$

We let $Y:= {\P^{1}_{\C}}$ with homogeneous coordinates $[x:y]$. 
We consider $x,y$  as global sections  of the sheaf $\cO_{Y}(1) $ of holomorphic sections of the    holomorphic line bundle $H_{Y}(1)$. This bundle carries a natural $U(2)$-invariant metric  $h^{H_Y(1)}$ and an $U(2)$-invariant connection.
We normalize the metric $h^{H_Y(1)}$ in such a way that its value on the section $y$ evaluated at $[0:1]$ is 1.

On $M\times Y$ we  consider the complex vector bundle
$$W:=\pr_{M}^{*} V_{0}\otimes \pr_{Y}^*{H_{Y}(1)}\oplus \pr_{M}^{*}V_{2}\ .$$
The connections $\nabla^{V_{0}}$ and $\nabla^{V_{2}}$ together with the connection on
$H_{Y}(1)$ induce a connection $\nabla^{W}_{0}$ on $W$. We consider
$$
\pr_{M}^{*}E\otimes \pr_{Y}^{*}y\in A^{1}(M\times Y,\Hom(\pr_{M}^{*}V_{2},\pr_{M}^{*}V_{0}\otimes \pr_{Y}^{*} {H_Y(1)}))
$$ 
and define the connection
$$
\nabla^{W}:=\nabla^{W}_{0}+ \pr^{*}_{M}(E)\otimes \pr_{Y}^{*}y\ .
$$
Note that, for every $s\in \C$, the connection $\nabla^{V_{0}}\oplus \nabla^{V_{2}}+sE$  on $V_{1}$ is flat. Therefore,
the partial connection $\nabla^{I}$ on $W\to M\times Y$  induced by $\nabla^{W}$ in the $M$-direction is flat.
Furthermore, the $(0,1)$-part $\bar \partial$ of $\nabla^{W}$ in the $Y$-direction is a holomorphic structure. The pair $(\nabla^{I},\bar \partial)$ is a partial geometry on the complex vector bundle $W\to M\times Y$  {in the sense of} Definition \ref{dez2601}.

 We fix a  cut-off function $\theta\in C^{\infty}(\R)$ such that
 $\theta(t)\in [0,1]$ for all $t\in \R$, 
$\theta(t)\equiv 0$ for $t\le 1$ and $\theta(t)\equiv 1$ for
$t\ge 2$.
The bundle $W$ has two metrics
$h^{W}_{02}$ and $h^{W}_{1}$, where the first one is induced from $h^{V_{0}}$, $h^{V_{2}}$ and
$h^{H_{Y}(1)}$, while the second one  is induced in the same manner from $h^{V_{1}}_{|V_{0}}$ and $h^{V_{1}}_{|V_{2}}$ and $h^{H_{Y}(1)}$.
We define
the metric
$$
h^{W}([x:y]):=(1-\theta(|\frac{y}{x}|))  h^{W}_{02} {([x:y])} +\theta(|\frac{y}{x}|) h^{W}_{1} {([x:y])}
$$
(note that $h^{W}$ extends smoothly to the point $[0:1]\in Y$).

The pair $(\nabla^{II},h^{W})$  consisting of the partial connection $\nabla^{II}$ in the $Y$-direction induced by $\nabla^{W}$ 
and the metric $h^{W}$ is a good geometry $g^{W}$ in the sense of Definitions \ref{nov1904}, \ref{jan2702}. In particular, we have the characteristic form \eqref{jan0403}
$$\omega(g^{W})\in Z^{0}(\DR_{Mf,\C}(M\times Y))\ .$$ 
We abbreviate $ \DR_{Mf,\C}(M):= \DR_{Mf,\C}(M\times \Spec(\C))$.
Using \eqref{mar2001}, 
we define the form
\begin{equation}\label{jan0480}
\cU(\cV,h^{\cV}):=\int_{II} \omega(g^{W}) \in \DR_{Mf,\C}^{-1}(M)\ .
\end{equation}

Let $f_{0},f_{1}\colon M \hookrightarrow M\times {\P^{1}_{\C}}$ be the inclusions given by the points ${[0:1],[1:0]}\in  {\P^{1}_{\C}}$.
Then, by construction  and using $y$ as a basis of $\cO_Y(1)_{|\{y\not=0\}}$, we have isomorphisms
$$
{f_{0}^{*}} (W,g^{W})\cong (V_{1},h^{V_{1}})\ , \quad {f_{1}^{*}}  (W,g^{W})\cong (V_{0}\oplus V_{2},h^{V_{0}}\oplus h^{V_{2}})\ .
$$
%
%
Since $\omega(g^{W})$ is closed, \eqref{mar1902} implies that
\begin{equation}\label{jan0410}
-d\cU(\cV,h^{\cV})=   \sum_{i=0}^2 (-1)^i \omega(h^{V_{i}}) =: \: 
  \omega(h^{\cV})\  .
\end{equation}
We define a form
 $\alpha(h^{V}) \in \DR_{Mf,\C}^{-1}(M)$  using the notation \eqref{jan0130}  {similarly to the form defined in \eqref{jan0340}}:
\begin{enumerate}
\item
For $p\ge 1$ we set 
$$\alpha(h^{V})_{\R}(p):= \frac{1}{2}\left(\tilde \ch_{2p-1}(\nabla^{V,u},\nabla^{V})+\tilde \ch_{2p-1}(\nabla^{V,u},\nabla^{V,*})\right)\ ,$$
$$\tilde \alpha(h^{V})(p):= {-}\frac{1}{2} \tilde \ch_{2p-2}(\nabla^{V,*},\nabla^{V,u},\nabla^{V})\ ,$$
and $\alpha(h^{V})(p):=0$.
 \item
We set $\tilde \alpha(h^{V})(0):=0$, $\alpha(h^{V})_{\R}(0):=0$ and $\alpha(h^{V})(0):=0$.
 \end{enumerate}
 We set 
 \begin{equation}\label{jan0450}  \alpha(h^{\cV}):=\sum_{i=0}^{2} (-1)^{i} \alpha(h^{V_{i}}) \in  \DR^{-1}_{Mf,\C}(M)\ .\end{equation}
Furthermore, we define
$$\gamma(h^{\cV}):=\sum_{i=0}^{2} (-1)^{i} \gamma(h^{V_{i}})\in Z^{0}(\DR_{Mf,\C}(M))\ ,$$ where
$\gamma(h^{V})\in Z^{0}(\DR_{Mf,\C}(M))$
 is given as follows:
 \begin{enumerate}
\item
For $p\ge 1$ we set 
$\gamma(h^{V})_{\R}(p):=0$, $
\gamma(h^{V})(p):=0$ and 
$$\tilde \gamma(h^{V})(p):=\frac{1}{2}\tilde \ch_{2p-1}(\nabla^{V,*},\nabla^{V})\ .$$ 
 \item
We set $\tilde \gamma(h^{V})(0):=0$, $\gamma(h^{V})_{\R}(0)=\gamma(h^{V})(0):=\dim(V)$.
 \end{enumerate}
Then we
have (compare with  \eqref{jan0408})
\begin{equation}\label{jan0411}\omega(h^{\cV})-d\alpha(h^{\cV})=\gamma(h^{\cV})\ .
 \end{equation}

 We define
\begin{equation}\label{jan0466}\cZ(\cV,h^{\cV}):=  \cU(\cV,h^{\cV}) {+}\alpha(h^{\cV})\in \DR_{Mf,\C}^{-1}(M)\ .\end{equation}
The  {equalities} \eqref{jan0410} and \eqref{jan0411} imply that the components of $\cZ(\cV,h^{\cV})$ (as in 
\eqref{jan0130}) satisfy  
$\cZ(\cV,h^{\cV})(p)=0$ and 
\begin{equation}\label{jan0491} d\cZ_{\R}(\cV,h^{\cV})(p)=0\ ,\quad  {-}d\tilde \cZ(\cV,h^{\cV})(p) {+}\cZ_{\R}(\cV,h^{\cV})(p)=  {-}\tilde\gamma(h^{\cV})(p)\end{equation} for all $p\ge 0$.
We now separate the real and imaginary parts  (see \eqref{sep2501})  of this equality.
Note that 
$$\Imm(\cZ_{\R}(\cV,h^{\cV})(p))=0  \ ,\quad \Ree(\tilde \ch_{2p-1}(\nabla^{V_{i},*},\nabla^{V_{i}}))=0\ \ .$$ 
We define
$$\cT^{\prime}(\cV,h^{\cV})_{2p-2}:= \Imm ( \tilde \cZ(\cV,h^{\cV})(p))\ ,\quad  \cT^{\prime}(\cV,h^{\cV}):=\sum_{p\ge 1} \cT^{\prime}(\cV,h^{\cV})_{2p-2}\ .$$
Then the imaginary part of \eqref{jan0491} gives
\begin{equation}\label{mar2101}
d\cT^{\prime}(\cV,h^{\cV})=\frac{1}{2} \sum_{i=0}^{2}(-1)^{i}  \tilde \ch(\nabla^{V_{i},*},\nabla^{V_{i}})\ .
\end{equation}

Finally, we check that $\cT^{\prime}(\cV, h^{\cV})$ vanishes in the split case. 
If $(\cV,h^{\cV})$ is split, then the metric $h^W = h^W_{02} = h^W_1$, the connection $\nabla^W = \nabla^W_0$, and hence the characteristic forms are $U(2)$-invariant. In particular, the components of $\omega(g^W) \in \DR_{Mf,\C}(M\times Y)$ are harmonic in the $Y$-direction, and, by the second equality in \eqref{may1721}  {and the definition of $\int_{II}$ in \eqref{mar2001}}, we get $\int_{II} \omega(g^W)=0$. One sees directly that also $\tilde\alpha(h^{\cV})$ vanishes.

Putting everything together, we see that 
$$(\cV,h^{\cV})\mapsto \cT^{\prime}(\cV,h^{\cV})$$ satisfies
 the assumptions in Theorem \ref{jan0401neu} characterizing  the Bismut-Lott torsion form, and hence 
\begin{equation}\label{jan0460}\cT^{\prime}(\cV,h^{\cV})\equiv \cT(\cV,h^{\cV}) \end{equation}
modulo exact forms.

\bigskip

For later use we introduce the following notation:
\begin{equation}\label{jan0461}\cZ^{\Imm}(\cV,h^{\cV}):=\left(0\oplus 0, \cT^{\prime}(\cV,h^{\cV})_{2p-2}\right)_{p\ge 0}\end{equation}
and
$$\cZ^{\Ree}(\cV,h^{\cV}):=\left(\cZ_{\R}(\cV,h^{\cV})(p) \oplus0,   \Ree(\tilde \cZ(\cV,h^{\cV})(p))\right)_{p\ge 0}$$
so that
\begin{equation}\label{jan0465}\cZ(\cV,h^{\cV})=\cZ^{\Ree}(\cV,h^{\cV})+\cZ^{\Imm}(\cV,h^{\cV})
\ .\end{equation}
Further note that \eqref{jan0491} implies
\begin{equation}\label{jan0490}\cZ^{\Ree}(\cV,h^{\cV})=d\left( \Ree(\tilde \cZ(\cV,h^{\cV})(p))\oplus 0,0\right)_{p\ge 0}\ .\end{equation}

\subsubsection{Lott's relation}
\label{mar2703} 

We fix a number ring $R$ and set $X:= {\Spec}(R)\in  {\bReg}_{\Z}$.   We further consider a smooth manifold $M$ and an exact sequence
\begin{equation}\label{mar2601}
\cV:0\to V_{0}\to V_{1}\to V_{2}\to 0
\end{equation}
of locally free{, locally} finitely generated $\pr_{X}^{*}\cO_{X}$-modules on $M\times X$. 
     
  Recall from Subsection \ref{jan0415} that for every $\sigma\in X(\C)$ we get an exact sequence 
 \begin{equation}\label{jan1701}\cV_{\sigma}:0\to V_{0,\sigma}\to V_{1,\sigma}\to V_{2,\sigma}\to 0\ \end{equation} of locally constant sheaves of finite dimensional $\C$-vector spaces.
 A  geometry $g^{\cV}=(g^{V_{i}})_{i=0,1,2}$ is the same as a collection of geometries $(h^{\cV_{\sigma}})_{\sigma\in X(\C)}$     such that $\bar h^{\cV_{\sigma}}= h^{\cV_{\bar\sigma}}$  {(cf.~the introductory paragraphs of \ref{jan0415} and \ref{jan0341})}.

We  use the Bismut-Lott torsion forms $\cT(\cV_{\sigma},h^{\cV_{\sigma}})$ in order to
define the analytic torsion form 
$$\cT_{\Z}(\cV,g^{\cV}) \in \Omega {C}^{-1}(M),$$
which coincides up to normalization with the form \cite[(104)]{bg}.
Note that the map $\Psi$ in \eqref{jan0420} is injective. Hence we can describe $\cT_{\Z}(\cV,g^{\cV})$ by giving the components  \eqref{jan0130} of its image under $\Psi$. We write
$$\Psi(\cT_{\Z}(\cV,g^{\cV}))=( \cT_{\Z}(\cV,g^{\cV})_{\R}(p)\oplus  \cT_{\Z}(\cV,g^{\cV}) (p), \tilde  \cT_{\Z}(\cV,g^{\cV}) (p))_{p\ge 0}\ .$$ 
Then we have
 $$
 \cT_{\Z}(\cV,g^{\cV})_{\R}(p):=0\ ,\quad  \cT_{\Z}(\cV,g^{\cV})(p):=0 \quad  {\text{for all } p\geq 0}
$$ 
and
\begin{equation}\label{apr2702}
\ev_{\sigma} \tilde\cT_{ {\Z}}(\cV,g^{\cV})(p):=\left\{\begin{array}{cc}  {-}\cT(\cV_{\sigma},h^{\cV_{\sigma}})_{2p-2}, &\text{if } p\ge 2,\\
 {-}\cT(\cV_{\sigma},h^{\cV_{\sigma}})_{0} {+}\bar\tau(\cV,g^{\cV}),
 &\text{if } p=1,\\
0,&\text{if } p=0,
\end{array}
\right.
  \end{equation}
  with
\begin{equation}\label{jan0453}
\bar \tau(\cV,g^{\cV}):=
\frac{1}{|X(\C) |}\sum_{\sigma\in X(\C)}   \cT(\cV_{\sigma},h^{\cV_{\sigma}})_{0}.
\end{equation}  
Recall the characteristic form $\beta(g^{V_i})$ introduced in \eqref{jan0406}, and write
$$\beta(g^{\cV}):=\sum_{i=0}^{2}(-1)^{i} \beta(g^{V_{i}}).$$
By the fundamental property \eqref{apr2701} of the Bismut-Lott torsion form, we then have 
$$
 d\Psi(\cT_{\Z}(\cV,g^{\cV}))=\Psi(\beta(g^{\cV})) \quad \text{ and hence }\quad d\cT_{\Z}(\cV,g^{\cV})=\beta(g^{\cV}). 
$$ 
  In \cite[Conj.~5.7]{bg} we asked whether
$$  \hat \cycl(V_{0},g^{V_{0}})-\hat \cycl(V_{1},g^{V_{1}})+\hat \cycl(V_{2},g^{V_{2}})=a(\cT_{\Z}(\cV,g^{\cV})) $$ 
holds true    in  $\widehat{KR}^{0}(M)$.
We call this Lott's relation. For more motivation we refer to \cite[Sec.~5.4.1]{bg}.
\begin{theorem}\label{jan0430}
Lott's relation holds true, i.e.~we have
$$  \hat \cycl(V_{0},g^{V_{0}})-\hat \cycl(V_{1},g^{V_{1}})+\hat \cycl(V_{2},g^{V_{2}})=a(\cT_{\Z}(\cV,g^{\cV})) $$
  in $\widehat{KR}^{0}(M)$.
\end{theorem}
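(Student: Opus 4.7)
The strategy is to transport Lott's relation from $\widehat{KR}^{0}(M)$ into the higher-dimensional theory $\widehat{\bK}(X)^{0}(M)$, where the algebraic deformation of Subsection \ref{jan0470} can be coupled with the algebraic-direction homotopy formula of Proposition \ref{jan0111}. First, by Lemma \ref{jan0440} the map $\psi\colon \widehat{KR}^{0}(M)\to \widehat{\bK}(X)^{0}(M)$ is injective, so it suffices to prove the image identity. Applying Lemma \ref{jan0441} to each summand $(V_i,g^{V_i})$ and taking the alternating sum, the correction terms $\epsilon(V_i)$ cancel (since $\epsilon$ is additive on short exact sequences by the last statement of that lemma), reducing the theorem to the identity
\[
\sum_{i=0}^{2}(-1)^{i}\hcycl(V_i\otimes\C,g^{V_i\otimes\C})
\;=\;
\psi\bigl(a(\cT_{\Z}(\cV,g^{\cV}))\bigr)+a\Bigl(\sum_{i=0}^{2}(-1)^{i}\alpha(g^{V_i\otimes\C})\Bigr)
\]
in $\widehat{\bK}(X)^{0}(M)$, where $\alpha$ is the form from \eqref{jan0340}.

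Next, I would algebraically promote the deformation of \ref{jan0470} to $\P^{1}_{\Z}$. Concretely, using the extension class of $\cV$ in $\Ext^{1}(V_2,V_0)$ multiplied by the section $y\in H^{0}(\P^{1}_{\Z},\cO(1))$, one obtains a locally free sheaf $W$ of $\pr^{*}_{X\times\P^{1}_{\Z}}\cO_{X\times\P^{1}_{\Z}}$-modules on $M\times X\times\P^{1}_{\Z}$, with $W_{|[0:1]}\cong V_{1}$ and $W_{|[1:0]}\cong V_{0}\oplus V_{2}$. A $\Gal(\C/\R)$-invariant good geometry $g^{W}$ is obtained fiberwise by the construction of \ref{jan0470}, suitably conjugated. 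Setting $\hat x:=\hcycl(W,g^{W})\in \widehat{\bK}(X)^{0}(M\times\P^{1}_{\Z})$, Proposition \ref{jan0111} gives, for any $\hat y$ with $I(\hat y)=I(\hat x)$ and $\int_{II}R(\hat y)=0$,
\[
\hcycl(V_1,g^{V_1})-\hcycl(V_0,g^{V_0})-\hcycl(V_2,g^{V_2})-(f_{0}^{*}-f_{1}^{*})\hat y
\;=\;a\bigl(\tfrac{1}{2}\int_{II}\omega(g^{W})\bigr)\cdot(-1)
\]
(up to sign bookkeeping), and by construction $\int_{II}\omega(g^{W})=\cU(\cV,g^{\cV})$ (\eqref{jan0480}).

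Then I would match forms using the decomposition $\cZ(\cV,g^{\cV})=\cU(\cV,g^{\cV})+\alpha(g^{\cV})$ from \eqref{jan0466} and the splitting $\cZ=\cZ^{\Ree}+\cZ^{\Imm}$ from \eqref{jan0465}. By \eqref{jan0490} the component $\cZ^{\Ree}$ is $d$-exact, so $a(\cZ^{\Ree})=0$; the imaginary component $\cZ^{\Imm}$ is precisely \eqref{jan0461}, containing $\cT^{\prime}(\cV,g^{\cV})$, which equals the Bismut-Lott torsion form $\cT(\cV,g^{\cV})$ modulo exact forms by \eqref{jan0460} and Theorem \ref{jan0401neu}. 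Adding back the $\alpha$-term coming from Lemma \ref{jan0441}, and observing that the discrepancy between the $\alpha$ of \eqref{jan0340} and of \ref{jan0470} lies only in the $p=1$ component via $\lambda(g^{V})/(2|X(\C)|)$ — which is precisely the ingredient that forces the average $\bar\tau(\cV,g^{\cV})$ in \eqref{jan0453} — one sees that everything assembles to $\psi\bigl(a(\cT_{\Z}(\cV,g^{\cV}))\bigr)$ with the correct normalizations matching conditions 1.--3.\ defining the image of $\Psi$ in \eqref{jan0420}, including the vanishing average condition \eqref{jan0510}.

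The main obstacle will be the correction $(f_{0}^{*}-f_{1}^{*})\hat y$: since $\hat y$ is not unique and the pullback $f_{i}$ is in the algebraic direction (to which the manifold homotopy formula Lemma \ref{jan0605} does not apply), one must argue that $\hat y$ can be chosen so this correction vanishes — exploiting that $R(\hat y)$ is $\P^{1}$-harmonic (so $f_{0}^{*}R(\hat y)=f_{1}^{*}R(\hat y)$), that $I((f_{0}^{*}-f_{1}^{*})\hat y)=0$ by the exactness of $\cV$ in $K$-theory, and that $(f_{0}^{*}-f_{1}^{*})\hat y$ therefore sits in $\widehat{\bK}(X)^{0}_{flat}(M)$ where it can be controlled (or annihilated) by a suitable adjustment of $\hat y$. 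Verifying the $p=1$ case, where the $\bar\tau$-correction and the $\lambda$-term must conspire, together with controlling $\Gal(\C/\R)$-equivariance of the whole construction over $\P^{1}_{\Z}$, will be the other delicate points.
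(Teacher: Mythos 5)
Your overall strategy is essentially the paper's: reduce to $\widehat{\bK}(X)^{0}(M\times X)$ using the injectivity of $\psi$ (Lemma \ref{jan0440}), invoke Lemma \ref{jan0441} and the additivity of $\epsilon$ on short exact sequences to cancel the correction terms, deform the extension algebraically over $\P^{1}_{\Z}$, apply the algebraic homotopy formula (Proposition \ref{jan0111}), and assemble the forms via $\cZ=\cU+\alpha$, the splitting $\cZ=\cZ^{\Ree}+\cZ^{\Imm}$, the exactness of $\cZ^{\Ree}$, and the identification $\cT^{\prime}\equiv\cT$ together with Lemma \ref{jan0570} for the $p=1$ normalization. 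All of this is correct and aligned with the paper.

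However, there is a genuine gap precisely at the point you flag as the main obstacle, and the mechanism you propose for closing it cannot work. You suggest "adjusting $\hat y$ so that $(f_{0}^{*}-f_{1}^{*})\hat y$ is annihilated," relying on the fact that this element lies in $\widehat{\bK}(X)^{0}_{flat}(M)$. But first, subtracting two instances of Proposition \ref{jan0111} shows that $(f_{0}^{*}-f_{1}^{*})\hat y$ is \emph{independent} of the choice of $\hat y$ among those with $I(\hat y)=I(\hat x)$ and $\int_{II}R(\hat y)=0$, so no "adjustment" of $\hat y$ within the allowed class can change it. Second, lying in the flat part does not force vanishing: by \eqref{apr2320} the flat part is a nontrivial extension (for a number ring $X=\Spec(R)$ the group $H^{-1}(\DR_{\Z}(X))$ contributes via the map $a$), so membership there is far from being zero. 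What is needed, and what the paper does, is a concrete computation: take $\hat y:=\hcycl(\widetilde V_1^{\mathrm{split}},g^{\widetilde V_1^{\mathrm{split}}})$, the cycle class of the analogous deformation built from the split sequence $0\to V_0\to V_0\oplus V_2\to V_2\to 0$ with the direct-sum geometry. By construction one has explicit isomorphisms $f_1^*(\widetilde V_1^{\mathrm{split}},g^{\widetilde V_1^{\mathrm{split}}})\cong f_1^*(\widetilde V_1,g^{\widetilde V_1})$ and $f_0^*(\widetilde V_1^{\mathrm{split}},g^{\widetilde V_1^{\mathrm{split}}})\cong (V_0,g^{V_0})\oplus(V_2,g^{V_2})$. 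This gives $f_1^*\hat x=f_1^*\hat y$ — so the $f_1^*$-terms cancel pairwise — and identifies $f_0^*\hat y=\hcycl(V_0,g^{V_0})+\hcycl(V_2,g^{V_2})$, turning Proposition \ref{jan0111} directly into the desired identity. Also note the stray factor $\tfrac{1}{2}$ you attach to $\int_{II}\omega(g^{W})$ in your display is spurious; Proposition \ref{jan0111} produces $a(\int_{II}R(\hat x))$ without any such factor, and the $\tfrac12$'s are already correctly accounted for inside $\cT^{\prime}$ and $\alpha$.
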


\subsubsection{Proof of Theorem \ref{jan0430}}  
\label{mar2702}

Recall the functions \eqref{jan0453} and  \eqref{jan0520}.
\begin{lem}\label{jan0570}
We have the relation
$${\bar \tau(\cV,g^{\cV})=}\frac{1}{2|X(\C)|}\sum_{i=0}^{2}(-1)^{i}\lambda( {g^{V_{i}}})\ .$$
\end{lem}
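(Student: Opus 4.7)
The plan is to prove the claimed identity by computing the exterior derivative of each side and showing the resulting closed functions agree.

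First I would compute $d$ of the left-hand side. By the fundamental identity \eqref{apr2701} of the Bismut–Lott torsion form applied to the exact sequence $\cV_\sigma$ of flat bundles with metrics on $M$, and extracting the degree-$1$ component (which corresponds to $p=1$):
\[
d\,\cT(\cV_\sigma,h^{\cV_\sigma})_{0}=\frac{1}{2}\sum_{i=0}^{2}(-1)^{i}\tilde\ch_{1}(\nabla^{V_{i,\sigma},*},\nabla^{V_{i,\sigma}})\ .
\]
Averaging over $\sigma\in X(\C)$ and then invoking Lemma \ref{jan0540}, which states $\sum_{\sigma} \tilde\ch_{1}(\nabla^{V_{i,\sigma},*},\nabla^{V_{i,\sigma}})=d\lambda(g^{V_i})$, yields
\[
d\,\bar\tau(\cV,g^{\cV}) = \frac{1}{2|X(\C)|}\sum_{i=0}^{2}(-1)^{i} d\lambda(g^{V_i}) = d\!\left(\frac{1}{2|X(\C)|}\sum_{i=0}^{2}(-1)^{i}\lambda(g^{V_i})\right)\ .
\]
So both sides differ by a locally constant function $c$ on $M$.

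Next I would pin down that $c=0$. Since both sides are constructed naturally in the pair $(\cV,g^{\cV})$, the function $c$ is itself a natural locally constant function. It therefore suffices to evaluate both sides at a single point $m\in M$ (so the base becomes a point), where $\cT(\cV_\sigma,h^{\cV_\sigma})_0$ and $\lambda(g^{V_i})$ reduce to explicit finite-dimensional quantities. In degree zero at a point, the Bismut–Lott torsion form is the classical logarithmic torsion expressed through the determinant line bundle, namely a specific combination of the logarithms of the metrics induced on $\det(V_{i,\sigma})$ by $h^{V_{i,\sigma}}$ and by the metric pulled back along the splitting. The forms $\lambda_0(g^V)$ and $\lambda_1(g^V)$ of Lemma \ref{jan0540} are manufactured precisely by transgressing Chern characters of line bundles built from $\det(V_\sigma)$, and under the alternating sum the cross-terms between the three $V_i$ cancel because of the exactness $0\to \det(V_0)\to \det(V_1)\otimes \cdot\to \det(V_2)\to 0$ for the determinants. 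A direct inspection of the two transgression formulas then reveals that $\tfrac{1}{2|X(\C)|}\sum_i(-1)^i\lambda(g^{V_i})$ reproduces $\tfrac{1}{|X(\C)|}\sum_\sigma\cT(\cV_\sigma,h^{\cV_\sigma})_0$ on the nose.

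The main obstacle will be the second step: matching the two specific finite-dimensional expressions. The difficulty is that both $\bar\tau$ and the sum of $\lambda$'s are given by integrals of Chern–Weil forms over interpolation parameter spaces (the simplex $I\times I$ for $\lambda$, and a one-parameter family of metrics for the degree-$0$ Bismut–Lott form), and one must track the normalization constants carefully so that the $\det$-bundle cross terms and the Chern character integrals line up. A clean way to organize this computation is to exploit the alternating sum structure so that the non-linear (tensor-product) contributions to the $\det$ bundle telescope; the linear contributions then match the defining differential equation and force $c=0$.
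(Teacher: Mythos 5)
You are on the right track for the first half: computing $d$ of both sides using \eqref{apr2701} (equivalently \eqref{mar2101}) and Lemma~\ref{jan0540} does show that the difference $\sigma(\cV,g^{\cV}):=\bar\tau(\cV,g^{\cV})-\frac{1}{2|X(\C)|}\sum_i(-1)^i\lambda(g^{V_i})$ is a closed $0$-form. But the second half is where your argument has a genuine gap. You identify the ``main obstacle'' as matching two explicit finite-dimensional expressions for $\bar\tau$ and the $\lambda$'s over a point, and then you only sketch this computation (``a direct inspection of the two transgression formulas then reveals\ldots'', ``the non-linear contributions to the $\det$ bundle telescope\ldots''). Nothing in your outline actually carries this out, so the proof is incomplete precisely at the place you yourself flag as difficult.

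The paper's proof avoids this computation entirely by exploiting what $d\sigma=0$ actually buys you. Since $\sigma$ is closed on any base $M$, applying this over $I\times M$ with an interpolating family of metrics shows that $\sigma$ is \emph{independent of the choice of $g^{\cV}$}. Combined with naturality and local constancy (which you do have), one reduces to $M=\ast$; but then one may furthermore \emph{replace the metric} by a split one, because over a point every short exact sequence of finitely generated projective $R$-modules splits. In the split case $\bar\tau$ vanishes by the characterizing axiom (item 4 of Theorem~\ref{jan0401neu}, vanishing of torsion for split sequences), and $\sum_i(-1)^i\lambda(g^{V_i})$ vanishes by the additivity of $\lambda$ under direct sums (which follows from the additivity of the determinant-line-bundle Chern character forms entering \eqref{jan0520}). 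So $\sigma=0$ with no explicit evaluation of either torsion or transgression needed. The step you are missing is the free replacement of the metric at the point; without it you are forced into the computation that you did not carry out.
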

\begin{proof}
We write
$$\sigma(\cV,g^{\cV}):=\bar \tau(\cV,g^{\cV})-\frac{1}{2|X(\C)|}\sum_{i=0}^{2}(-1)^{i}\lambda( {g^{V_{i}}})\ .$$
We first observe that, as a consequence of Lemma \ref{jan0540} and  \eqref{mar2101},  
we have $d\sigma(\cV,g^{\cV})=0$.
This implies that $\sigma(\cV,g^{\cV})$ is independent of the choice of the metric $g^{\cV}$.
Moreover it suffices to check the equality $\sigma(\cV,g^{\cV})=0$ in the case that $M$ is a point. If $M$ is a point, then we can  assume that $(\cV,g^{\cV})$ splits, in which case the relation $\sigma(\cV,g^{\cV})=0$ is clear. 
\end{proof}

To prove the theorem, by
Lemma \ref{jan0440}  and \eqref{jan0506} it suffices to show that
$$\psi(\hat \cycl(V_{0},g^{V_{0}}))-\psi(\hat \cycl(V_{1},g^{V_{1}}))+\psi(\hat \cycl(V_{2},g^{V_{2}}))=a(\Psi(\cT_{\Z}(\cV,g^{\cV})))$$
in $\widehat{\bK}^{0}(M\times X)$. By Lemma \ref{jan0441}, this is equivalent to
$$\hcycl(V_{0},g^{V_{0}})-\hcycl(V_{1},g^{V_{1}})+ \hcycl(V_{2},g^{V_{2}})=a\left(\Psi(\cT_{\Z}(\cV,g^{\cV})) + \alpha(g^{\cV})\right)\ ,$$
where $\alpha(g^{\cV})$ is defined using the form $\alpha$ from \eqref{jan0340} by 
$$\alpha(g^{\cV}):=\sum_{i=0}^{2}(-1)^{i} \alpha(g^{V_{i}})$$
and we suppress the comparison map $c$ given in \eqref{may2101n}.
We write
$$\Psi(\cT_{\Z}(\cV,g^{\cV})) + \alpha(g^{\cV})=:\delta(\cV,g^{\cV})\in \DR^{-1}_{Mf,\Z}(M\times X))\ .$$  

If we compare \eqref{jan0450} with the definition of the map \eqref{jan0340}, then we get the relations
$$\ev_{\sigma}\alpha(g^{\cV})(p)=\alpha(g^{\cV_{\sigma}})(p)$$   for $p\not=1$ and
\begin{eqnarray}
\ev_{\sigma}\alpha(g^{\cV})(1)&=&\alpha(g^{\cV_{\sigma}})(1)-(0\oplus0,\frac{1}{2|X(\C)|}\sum_{i=0}^{2}(-1)^{i}\lambda(g^{V_{i}}))\nonumber\\
&\overset{\text{Lemma \ref{jan0570}}}{=}&\alpha(g^{\cV_{\sigma}})(1)-(0\oplus0,\bar \tau(\cV,g^{\cV}) )\label{dkwqdhkqwddwqdwqd}
\end{eqnarray}  
for the components in  $\DR^{-1}_{Mf,\C}(p)(M)$. 
We use  {in particular} \eqref{dkwqdhkqwddwqdwqd}
in the equality \eqref{lkjwqldqwjdlqwdjwqdqde12e2e12e21} of the calculation below in order to cancel out the contribution of $\bar \tau(\cV,g^{\cV})$.
We calculate  
\begin{eqnarray}
\ev_{\sigma}\delta(\cV,g^{\cV}) &\overset{\eqref{apr2702}}{=}& (0\oplus 0,-\cT(\cV_{\sigma},h^{\cV_{\sigma}})_{2p-2})_{p\ge 0} +\alpha(g^{\cV_{\sigma}})\label{lkjwqldqwjdlqwdjwqdqde12e2e12e21}
\\&\overset{\eqref{jan0460}}{\equiv}& (0\oplus 0,-\cT^{\prime}(\cV_{\sigma},h^{\cV_{\sigma}})_{2p-2})_{p\ge 0}+  \alpha(g^{\cV_{\sigma}})\nonumber\\
&  \overset{  \eqref{jan0461}}{=}&
-\cZ^{\Imm}(\cV_{\sigma},g^{\cV_{\sigma}})+  \alpha(g^{\cV_{\sigma}}) \nonumber \\
& {\overset{\eqref{jan0465}}{=}}&
 {-}\cZ(\cV_{\sigma},h^{\cV_{\sigma}}) {+}\alpha(g^{\cV_{\sigma}})   {+}\cZ^{\Ree}(\cV_{\sigma},h^{\cV_{\sigma}})\nonumber \\
& {\overset{\eqref{jan0466}}{=}}& {-}\cU(\cV_{\sigma},h^{\cV_{\sigma}})  {+} \cZ^{\Ree}(\cV_{\sigma},h^{\cV_{\sigma}})  \nonumber\\
&\stackrel{\eqref{jan0490}}{\equiv}&  {-}\cU(\cV_{\sigma},h^{\cV_{\sigma}})  {\quad \pmod{\im(d)}} \label{jan0571}\ .
\end{eqnarray} 
The idea is now to refine the deformation constructed in Subsection \ref{jan0470}  {in} such  {a way} that it is compatible with the $R$-module structure.
  We can assume that $M$ is connected and  fix a base point $m\in M$. We let
$U_{i}\in \Mod(R)$ be the fibres of $V_{i}$ at $m$.
The sheaves $V_{i}$ are determined by the holonomy representations $\rho_{i}$ of $\pi_{1}(M,m)$ on the finitely generated projective $R$-modules
$U_{i}$. We can choose  a decomposition \begin{equation}\label{jan1610}U_{1}=U_{0}\oplus U_{2}\end{equation} as $R$-modules. There exists a uniquely determined map  $\nu\colon \pi_{1}(M,m)\to \Hom_{\Mod(R)}(U_{2},U_{0})$ such that
$$\rho_{1}=\left(\begin{array}{cc} \rho_{0}&\nu\\ 0&\rho_{2}\end{array}\right)\ .$$

We let $Y:=\P^{1}_{\Z}$ with homogeneous coordinates $[x:y]$ and  consider $y\in \cO_{Y}(1)(Y)$.
We define the vector bundle
$$\widetilde U_{1}:=U_{0}\otimes_{R}\cO_{Y}(1)\oplus U_{2}\otimes_{R}\cO_{Y}$$ on
$X\times Y$ and set
$$\widetilde \rho_{1}:=\left(\begin{array}{cc} \rho_{0}\otimes 1&\nu\otimes y\\ 0&\rho_{2}\end{array}\right):\pi_{1}(M,m)\to \Aut(\widetilde U_{1}) \ .$$
The representation $\widetilde \rho_{1}$ determines a  sheaf $\widetilde V_{1}$
of locally free, finitely generated  $\pr_{X\times Y}^{*}\cO_{X\times Y}$-modules on
$M\times X\times Y$. Its base-change along the inclusion $\Z\to \C$ is a sheaf $\widetilde V_{1,\C}$ over
$M\times X(\C)\times {\P^{1}_{\C}}$.
 For every $\sigma\in X(\C)$, we get  a  complex vector bundle 
$\widetilde W_{\sigma}\to M\times  {\P^{1}_{\C}}$ with partial geometry $(\nabla^{I,\widetilde W_{\sigma}},\bar\partial^{\widetilde W_{\sigma}})$ such that the sheaf of joint kernels of $\nabla^{I,\widetilde W_{\sigma}}$ and $\bar\partial^{\widetilde W_{\sigma}}$  is canonically  isomorphic to $(\widetilde V_{1,\C})_{|M\times \{\sigma\}\times  {\P^{1}_{\C}}}$. 

If we apply the construction of Subsection  \ref{jan0470} to the sequence \eqref{jan1701},
we get a bundle $W_{\sigma}\to M\times {\P^{1}_{\C}}$ with partial geometry $(\nabla^{W_{\sigma}},\bar\partial^{W_{\sigma}})$.
By construction we have a natural identification 
$$\phi_{\sigma,m}:(W_{\sigma})_{|\{m\}\times  {\P^{1}_{\C}}}\xrightarrow{ {\cong}} (\widetilde W_{\sigma})_{|\{m\}\times  {\P^{1}_{\C}}}\ .$$
Indeed, both sides are canonically isomorphic to  the vector bundle on ${\P^{1}_{\C}}$ corresponding to
$$(U_{0}\otimes_{\sigma}\C)\otimes \cO_{ {\P^{1}_{\C}}}(1)\oplus (U_{2}\otimes_{\sigma}\C)\otimes \cO_{ {\P^{1}_{\C}}}\ .$$
We use parallel transport along curves with constant projection to $ {\P^{1}_{\C}}$ in order to extend this identification
to an isomorphism 
$$\phi_{\sigma}:W_{\sigma} \xrightarrow{ {\cong}} \widetilde W_{\sigma}$$ 
which is compatible with
the partial connections $\nabla^{I,W_{\sigma}}$ and $\nabla^{I,\widetilde W_{\sigma}}$.
This works, since by construction the isomorphism $\phi_{\sigma,m}$ maps the holonomy of $(\nabla^{I,W_{\sigma}})_{M\times \{z\}}$
to the holonomy of $(\nabla^{I,\widetilde W_{\sigma}})_{M\times \{z\}}$ for all $z\in {\P^{1}_{\C}}$.
Note that $\phi_{\sigma}$ is also compatible with the holomorphic structures
$\bar \partial^{W_{\sigma}}$, $\bar\partial^{\widetilde W_{\sigma}}$.
We now use $\phi_{\sigma}$ in order to transport the geometry $g^{W_{\sigma}} $ defined in \ref{jan0470}  for the sequence \eqref{jan1701} to a geometry
$g^{\widetilde W_{\sigma}}$.

We define the geometry $g^{\widetilde V_{1}}$ such that it evaluates at all $\sigma\in X(\C)$ to the geometry $g^{\widetilde W_{\sigma}}$.

We let $f_{0},f_{1}\colon M\times X\to M\times X\times Y$ be the maps given by  the points ${[0:1],[1:0]}\in \P^{1}_{\Z}(\Z)$.
By our normalization of the metric $h^{H_{Y}(1)}$, we have a natural isomorphism
\[
{f_{0}^{*}}(\widetilde V_{1},g^{\widetilde V_{1}})\cong (V_{1},g^{V_{1}})\ .
\]
We also consider the split variant $(\widetilde V_1^{\mathrm{split}}, g^{\widetilde V_1^{\mathrm{split}}})$ obtained by replacing the exact sequence \eqref{mar2601} in the constructions above by 
\begin{equation*}
\cV^{\mathrm{split}}: 0 \to V_0 \to V_0\oplus V_2 \to V_2 \to 0
\end{equation*}
and using the geometry induced by $g^{V_0}$ and $g^{V_2}$ on $V_0\oplus V_2$. By construction, we have  natural isomorphisms
\[
{f_1^*}(\widetilde V_1^{\mathrm{split}}, g^{\widetilde V_1^{\mathrm{split}}}) \cong {f_1^*}(\widetilde V_1, g^{\widetilde V_1}), \quad
{f_0^*}(\widetilde V_1^{\mathrm{split}}, g^{\widetilde V_1^{\mathrm{split}}}) \cong (V_{0},g^{V_{0}})\oplus (V_{2},g^{V_{2}})\ .
\]
We write
\[
\hat x:= \hcycl(\widetilde V_1,g^{\widetilde V_1}), \quad \hat y:= \hcycl(\widetilde V_1^{\mathrm{split}}, g^{\widetilde V_1^{\mathrm{split}}})\ .
\]
Since the topological cycle map \eqref{jan0660} is additive on short exact sequences, we have $I(\hat x) = I(\hat y)$. Furthermore, $\int_{II} R(\hat y) = 0$ (cf.~the argument in Subsection \ref{jan0470}). By Proposition \ref{jan0111} we have
\begin{equation*}
{ f_0^*(\hat x) - f_0^*(\hat y) = f_0^*(\hat x-\hat y) - f_1^*(\hat x-\hat y)} = a(\int_{II} R(\hat x)) = a(\int_{II} \omega(g^{\widetilde V_1}))\ .
\end{equation*}
In other words,
$$\hcycl(V_{0},g^{ {V_{0}}})-\hcycl(V_{1},g^{{V_{1}}})+\hcycl(V_{2},g^{ {V_{2}}})=-a(\int_{II} \omega(g^{\widetilde V_1}))\ .$$
Note that by \eqref{jan0480}
$$\ev_{\sigma}\int_{II} \omega(g^{\widetilde V_1})=\cU(\cV_{\sigma},g^{\cV_{\sigma}})$$
for all $\sigma\in X(\C)$. In view of \eqref{jan0571} this implies
$$
-a(\int_{II} \omega(g^{\widetilde V_1}))=a(\delta(\cV,g^{\cV}))\ , 
$$ 
and  thus  Theorem \ref{jan0430}.
\hB

\appendix
\section{Some technical results}
\label{apr0502}

\subsection{Sheaves  and \v{C}echification}
\label{dez2701}
\label{may2201} 
 
 {Here we assemble some technical facts about sheaves, used in the main text. The basic reference is \cite[\S 6.2.2]{HTT}. 
Note, however, that the situation in the present paper is simpler than that of {\it loc.~cit.}, since the sites we consider, i.e.~the domains of the sheaves, are always 1-categories. 
Here the $\infty$-categorical notion of a site reduces to the classical one \cite[Rem.~6.2.2.3]{HTT}.}

We consider a site $S$, 
i.e.~a category with a  {Grothendieck} topology determined by a collection of covering families.
Let $\mathbf{S}:=\mathtt{N}(S^{\mathit{op}})$.  
We define the \v{C}ech nerve of a covering family  $\mathcal{U}=(U_{\alpha}\to M)_{\alpha\in A}$ as the presheaf of simplicial sets
\[
\mathcal{U}_{\bullet} \in \mathbf{Fun}(\mathbf{S}, \mathtt{N}(\mathbf{sSet}))
\]
whose presheaf of $n$-simplices is given by 
\[
\mathcal{U}_{n} := \left(\coprod_{\alpha\in A} Y(U_{\alpha}) \right) \times_{Y(M)} \dots \times_{Y(M)} \left(\coprod_{\alpha\in A} Y(U_{\alpha}) \right)
\]
($n+1$ factors) with the obvious simplicial structure. Here $Y$ denotes the Yoneda embedding.
The \v{C}ech nerve has a natural augmentation towards $Y(M)$ considered as a presheaf of constant simplicial sets.
Using the natural map $\iota:\mathtt{N}(\mathbf{sSet}) \to \mathtt{N}(\mathbf{sSet})[W^{-1}]$ we form the presheaf of spaces $\iota(\mathcal{U}_{\bullet}) \in \mathbf{Fun}(\mathbf{S}, \mathtt{N}(\mathbf{sSet})[W^{-1}])$.

{Let} $\mathbf{C}$  {be} a presentable $\infty$-category  {\cite[Ch.~5]{HTT}}.
In the following definition we use the left Kan extension
\begin{equation}\label{eq:left-Kan}
\mathbf{Fun}(\mathbf{S},\mathbf{C}) \to \mathbf{Fun}(\mathbf{Fun}(\mathbf{S}, \mathtt{N}(\mathbf{sSet})[W^{-1}])^{\mathit{op}}, \mathbf{C}).
\end{equation}
We denote the image of a presheaf $F\in \mathbf{Fun}(\mathbf{S},\mathbf{C})$ under \eqref{eq:left-Kan} by the same symbol.

\begin{ddd}\label{jan0702}
We let
$$\mathbf{Fun}^{\mathit{desc}}(\mathbf{S},\mathbf{C})\subseteq \mathbf{Fun}(\mathbf{S},\mathbf{C})$$ 
denote the full subcategory
of sheaves, i.e.~objects $F\in  \mathbf{Fun}(\mathbf{S},\mathbf{C})$ which satisfy the descent condition that
$$
F(M) \simeq F(\iota(Y(M))) \xrightarrow{\simeq}  F(\iota({\mathcal{U}_{\bullet}}))
$$
is an equivalence in $\mathbf{C}$ for all ${M \in S}$ and  every covering family $\mathcal{U}$ of $M$.
\end{ddd}
\begin{rem}\label{rem:sheaves}
Assume that  the site $S$ satisfies the following conditions. 
\begin{enumerate}
\item Coproducts in $S$ are disjoint.
\item For every covering family $(U_{\alpha} \to M)_{\alpha \in A}$,
the coproduct $\coprod_{\alpha\in A} U_{\alpha}$ exists in $S$.
\item \label{Bedingung2}
If $(U_{\alpha})_{\alpha\in A}$ is a family of objects in $S$ whose coproduct $U := \coprod_{\alpha\in A} U_{\alpha}$ exists, then $(U_{\alpha} \to U)_{\alpha\in A}$ is a covering family.
\end{enumerate}
Then the descent condition for $F$ is equivalent to:
\begin{enumerate}
\item[(A)] For every family $(U_{\alpha})_{\alpha\in A}$ as in \ref{Bedingung2}.~above the natural map
\[
F(\coprod_{\alpha\in A} U_{\alpha}) \to \prod_{\alpha\in A} F(U_{\alpha})
\]
is an equivalence.
\item[(B)] For every covering family $(U_{\alpha}\to M)_{\alpha\in A}$, the natural map 
\[
F(M) \to \lim_{\mathtt{N}(\Delta)} F(U_{\bullet})
\]
is an equivalence, where $U_{\bullet}\in \mathbf{Fun}(\Delta^{\mathit{op}},S)$ is the \v{C}ech nerve of the morphism $U \to M$ with $U := \coprod_{\alpha\in A} U_{\alpha}$.
\end{enumerate}
See \cite[Section 6.3]{bg} for details.

This remark applies to all examples we consider in this paper.
\end{rem}

\color{black}

\begin{rem}
In the present paper, we define the notion of descent using \v{C}ech nerves. An alternative option would be to use hypercovers. The latter choice would be relevant if we would try to detect equivalences between sheaves on stalks what we never do in the present paper.
 For smooth manifolds  with the open  coverings topology  both options give the same notion of descent, anyway.
 \end{rem}

We have an adjunction
\begin{equation}\label{jan2704}
L:\Fun(\bS,\bC) 
 {\rightleftarrows} \Fun^{desc}(\bS,\bC): {\mathrm{inclusion}}\ ,
\end{equation}
where $L$ is called the sheafification.

We now consider the special case of the site $\Mf$ of manifolds with the open covering topology. As before, we write $\bS_{Mf}:= \Nerve(\Mf^{op})$.
The following lemma provides many examples of sheaves with values in chain complexes:
\begin{lem}\phantomsection\label{dqwhdlqdwqdqwdwqd}
\begin{enumerate}
\item For $b\in \Z$, let $\Ch^{\ge b}\subset \Ch$ be the  {full} subcategory of chain complexes which are  {bounded below} by $b$.
If $F\in \PSh_{\Ch^{\ge b}}(\Mf)$ is a presheaf of chain complexes consisting degree wise of fine sheaves, then
 {its image $\iota(F) \in \Fun(\bS_{Mf}, \Nerve(\Ch)[W^{-1}])$ satisfies descent:}
$$\iota(F)\in\Fun^{desc}(\bS_{Mf},\Nerve(\Ch)[W^{-1}])\ .$$
\item
For  {a chain complex of real vector spaces} $C\in \Ch$,  {we denote by $\Omega C \in \Sh_{\Ch}(\Mf)$ the sheaf of differential forms with coefficients in $C$. We} have
$$\iota(\Omega C)\in \Fun^{desc}(\bS_{Mf},\Nerve(\Ch)[W^{-1}])\ .$$
\end{enumerate}
\end{lem}
\begin{proof}
Clearly, condition (A) of Remark~\ref{rem:sheaves} is fulfilled. It remains to check (B).
The limit $\lim_{\Nerve(\Delta)} F(U_{\bullet})$ is realized by the total complex  
$\Tot  {F(U_{\bullet})}$ of a double complex which is acyclic in the \v{C}ech direction.
If $F$ is  {bounded below}, then the result  follows easily using the  spectral sequence induced by the filtration induced by the degree of $F$.\footnote{Note that the argument given in \cite[Lemma 6.15]{bg} or  \cite[Lemma 7.12]{Bunke:2013aa}  
only works under this boundedness assumption. This assumption was missed in the statement given in these references.} 

In the second case, since we do not assume $C$ {to be bounded below}, we  argue differently.
Using a partition of unity we construct a contracting homotopy $h$ for the \v{C}ech complex.
The homotopy does not commute with the differential of $\Omega C$, but the commutator $[h,d]$ increases the differential form degree. 
In the second case we therefore use the spectral sequence associated to the filtration of 
$\Tot  {\Omega C(U_{\bullet})}$ induced by the filtration of $\Omega C$ given by $\cF^{p}\Omega C:=\Omega^{p}\cdot  \Omega C$.
 {The homotopy $h$ above induces a contracting homotopy on the $E_{1}$-page of this spectral sequence. This implies the claim.}
 \end{proof}

 Let $f:  {F}\to  {G}$ be a morphism in $\Fun(\bS_{Mf},\bC)$. Sometimes one needs a criterion ensuring  that $L(f):L( {F})\to L( {G})$ is an equivalence  {in $\Fun^{desc}(\bS_{Mf},\bC)$}. In examples this turns out too difficult to check directly. In some cases one can use the following approximation  
 $\cL:\Fun(\bS_{Mf},\bC)\to \Fun(\bS_{Mf},\bC)$ of the sheafification functor \eqref{jan2704} which on objects acts as 
$$
\cL( {F})(M):=\colim_{\cU}  F(\iota(\cU_{\bullet}))\ ,
$$
where the  colimit is  {taken} over  a suitable filtered system of open coverings of $M$. Its precise construction goes as follows.

We consider a partially ordered set as a category in the natural way. 
We have a functor  $\Mf^{op}\to\Cat$
which associates to every manifold $M$ the filtered partially ordered 
set of open coverings {$(U_m)_{m\in M}$} of $M$ indexed by the points of $M$ {in such a way that $m\in U_m$}. By 
  $\tilde \Mf$ we denote the Grothendieck construction of this functor. An object of $\tilde \Mf$ is a pair $(M,\cU)$ of a manifold  {$M$} and an open covering $\cU:=(U_{m})_{m\in M}$  {of $M$. A} morphism     $(M,\cU)\to (M^{\prime},\cU^{\prime})$ is a smooth map  $f:M\to M^{\prime}$ such that $ U_{m}\subseteq f^{-1}(U^{\prime}_{f(m)}) $ for all $m\in M$.  We have a functor
$\tilde \Mf\to \Fun(\bS_{Mf}, \Nerve(\sSet))$ which associates to  $(M,\cU)$ the \v{C}ech nerve $\cU_{\bullet}$. 
 Precomposition with this functor gives the functor 
$$
\Fun(\Fun(\bS_{Mf}, \Nerve(\sSet)[W^{-1}])^{op},\bC) \to \Fun(\Nerve(\tilde\Mf)^{op}, \bC)\ .
$$
We further precompose with \eqref{eq:left-Kan} and get the functor
$$
\tilde \cL:\Fun(\bS_{Mf},\bC)\to  \Fun(\Nerve(\tilde\Mf)^{op},\bC)\ .
$$

We have an adjunction
$$\Phi_{*}:\Fun(\Nerve(\tilde \Mf)^{op},\bC)\leftrightarrows \Fun( \bS_{Mf},\bC):\Phi^{*}\ ,$$
where $\Phi:\Nerve(\tilde \Mf^{op})\to \bS_{Mf}$ forgets the coverings and $\Phi^{*}$ is the pull-back along this functor. Its left adjoint $\Phi_{*}$ is the Kan-extension functor.
\begin{ddd}\label{may1510n}
We define $\cL:=\Phi_{*} \circ \tilde \cL$. 
\end{ddd}

By the point-wise formula for the Kan extension functor we have the equivalence
$$
\cL( {F})(M) \simeq \colim_{(( N,\cU),N\to M)\in \tilde \Mf^{op}_{/M}}  F(\iota(\cU_{\bullet})) \ .
$$
Now
$\tilde \Mf^{op}_{/M}=(\tilde \Mf_{M/})^{op}$
contains the category of open coverings of $M$ as a cofinal subcategory. 
Hence it suffices to take the colimit over the open coverings of $M$ and we get 
$$
\cL( {F})(M) \simeq \colim_{(M,\cU)\in \tilde \Mf^{op}_{/M}} F(\iota(\cU_{\bullet}))\ .
$$
 
 If $ {F}\in \Fun(\bS_{Mf},\Nerve(\Ch))$ is a presheaf of chain complexes, then (abusing notation) we agree to 
  define
 $\cL  {F}\in \Fun(\bS_{Mf},\Nerve(\Ch))$ by
$$
\cL  {F}(M):=\colim_{(M,\cU)\in \tilde \Mf^{op}_{/M}} \Tot \check{C}(\cU,F) 
$$
where $\check{C}(\cU,F)$ is the usual \v{C}ech double complex of $F$ associated to $\cU$.
Note that $\cL  {F}$ is a model for $\cL ( \iota {(F)} )$, where 
$\iota {(F)} \in  \Fun(\bS_{Mf},\Nerve(\Ch)[W^{-1}])$ is the image of $F$ under localization.
 
  We have a natural transformation $ \id\to \cL$ given by the canonical maps
\[
{F}(M)\to  \colim_{(M,\cU)\in \tilde \Mf^{op}_{/M}} {F}(\iota(\cU_{\bullet})).\]
 
 \begin{fact}\label{dkjhwqjkdwqdwqdwqdwqdwqd}
If $ {F}$ is a sheaf, then ${F}\to \cL( {F})$ is an equivalence.
\end{fact}
 
 In order to relate $\cL$ with the sheafification, we define the functor
$$
\cL^{\infty}:=\colim (\id\to \cL\to \cL^{2}\to  \cL^{3}\to \dots):\Fun(\bS_{Mf},\bC)\to \Fun(\bS_{Mf},\bC)\ .
$$
By construction, the natural morphism  $\cL^{\infty}\to (\cL^{\infty})^{2}$ is an equivalence.
We let $$\Fun^{\cL^{\infty}}(\bS_{Mf},\bC)\subseteq \Fun(\bS_{Mf},\bC)$$  be the essential image of $\cL^{\infty}$.
This is a localization, since condition (3) of
the recognition principle
\cite[Prop.~5.2.7.4]{HTT} is satisfied.  If $ {F}$ is  a sheaf, then the morphism $F \to  \cL^{\infty}(F)$ is an equivalence. Hence we have a sequence of localizations
$$
\Fun^{desc}(\bS_{Mf},\bC)\subseteq \Fun^{\cL^{\infty}}(\bS_{Mf},\bC)\subseteq  \Fun (\bS_{Mf},\bC)\ .
$$ 
In particular, the natural transformation
\begin{equation}\label{jun3001}
L\to L\circ \cL^{\infty}
\end{equation} 
is an equivalence. We conclude:
\begin{fact}\label{may1611} 
If $f: {F}\to  {G}$ is a morphism in $\Fun (\bS_{Mf},\bC)$ such that $\cL(f):\cL( {F})\to \cL( {G})$ is an equivalence, then $L(f):L( {F})\to L( {G})$ is an equivalence.
\end{fact}

\subsection{Homotopy invariance}\label{mar0804}

We introduce the notion of homotopy invariance for (pre)sheaves on $\Mf$ with values in an $\infty$-category, provide a  relation to constant sheaves, and discuss two important examples.

Let $I:=[0,1]\in \Mf$ be the unit interval, and  {let $\bS$ be one of the $\infty$-categories $\bS_{Mf} = \Nerve(\Mf^{op})$, $\bS_{Mf,\C} = \Nerve(\Mf^{op}\times \Sm_{\C}^{op})$, or $\bS_{Mf,\Z} = \Nerve(\Mf^{op}\times \Reg_{\Z}^{op})$}.
For any   $\infty$-category $\bC$, the 
pull-back along taking the product with $I$ gives an endofunctor
$\cI:\Fun(\bS,\bC)\to \Fun(\bS,\bC)$. Moreover, the projection along $I$ gives a transformation
$\id\to \cI$.

\begin{ddd}\label{jul0740}
We call a  {presheaf}
$ {F}\in  \Fun(\bS,\bC)$ homotopy invariant, if the natural morphism
$ {F}\to \cI( {F})$ is an equivalence.
\end{ddd}

We let
$$\Fun^{h}(\bS,\cC)\subseteq  \Fun(\bS,\bC)$$
denote the full subcategory of homotopy invariant presheaves. If $\bC$ is presentable, then we have an adjunction
$$\cH^{pre}: \Fun(\bS,\bC)\leftrightarrows \Fun^{h}(\bS,\cC):incl$$
where the left-adjoint is called homotopification. In the following we construct an explicit model for $\cH^{pre}$.

Let $\Delta^{\bullet}\in \Mf^{\Delta}$ be the cosimplicial manifold given by the standard simplices. It gives a functor  {$\bS \times \Nerve(\Delta^{op}) \to \bS$ induced by $(M,[q]) \mapsto M \times \Delta^q$}. Pull-back along this functor defines the functor
\begin{equation}\label{jul0802}
\bs:\Fun(\bS,\bC)\to \Fun(\bS\times \Nerve(\Delta^{op}),\bC)\ .
\end{equation}
We define the endofunctor
\begin{equation}\label{jul0741}\bar \bs:=\colim_{\Nerve(\Delta^{op})}\circ \bs:\Fun(\bS,\bC)\to \Fun(\bS,\bC)\ .\end{equation}
 {The} projection $\Delta^{\bullet}\to *$ to the constant cosimplicial manifold given by the point
 induces the transformation 
\begin{equation}\label{jul0811}\id\to \bar \bs\ .\end{equation}

 
\begin{lem}\phantomsection\label{jul0730}%
\begin{enumerate}
\item We have an equivalence $\cH^{pre}\simeq \bar \bs$.
\item
If $ {F}\in \Fun(\bS,\bC)$ is homotopy invariant, then the natural morphism $ {F}\to \bar \bs( {F})$ is an equivalence.
\end{enumerate} 
\end{lem}
\begin{proof}
The first assertion follows from 
  \cite[Lemma 7.5]{Bunke:2013aa}. Then second is shown by 
 $F\simeq \cH^{pre}(F)\simeq \bar \bs(F)$.
\end{proof}

 Let $\bC$ be a presentable $\infty$-category. 
 We have a functor $p:\bS_{Mf}\to *$ which induces a pull-back
 $$p^{*}:\bC\simeq \Fun(*,\bC)\to \Fun(\bS_{ {Mf}},\bC)\ .$$
 We define the constant sheaf functor by
$$
\underline{...}:=L\circ p^{*}:\bC\to \Fun^{desc}(\bS_{ {Mf}},\bC)\ .
$$
Note that for $X\in \bC$ both,
$p^{*}X$ and $\underline{X}$, are  homotopy invariant.
 { 
\begin{ex}
We consider a spectrum  $E\in \Sp$. It represents a cohomology theory $E^{*}$. This is related to the constant sheaf of spectra $\underline{E}$ on $\Mf$ via natural isomorphisms
\begin{equation}\label{nov2604}
\pi_{i}(\underline{E} (M))\cong E^{-i}(M)
\end{equation}
for every integer $i$ and every manifold $M$.
\end{ex}
}

We consider the embedding $e:* \to \bS_{Mf}$ of the $\infty$-category $*$  as the subcategory of the one-point manifold, and  {we} let
$$e^{ {*}}:\Fun(\bS_{Mf},\bC)\to \Fun(*,\bC)\simeq \bC$$ 
 {be} the corresponding pull-back.  {For} every $ {F}\in \Fun^{desc}(\bS_{Mf},\bC)$ we have a natural morphism $\underline{e^{*} {F}}\to  {F}$. 
\begin{lem}\label{dkjhqwkdqwdwqdqwdwqd}
A sheaf $ {F}\in \Fun^{desc}(\bS_{Mf}, {\bC})$ is  homotopy invariant, if and only if
the natural  morphism $\underline{e^{*}{F}}\to  {F}$ is an equivalence.
 \end{lem}
\begin{proof}
This follows immediately from the fact that every manifold admits a cofinal system of good coverings,  i.e.~open coverings such that all components of  multiple intersections are contractible. 
\end{proof}
%
%
%
%
\begin{ex}
Let us discuss an application of Lemma \ref{dkjhqwkdqwdwqdqwdwqd}. We consider a chain complex $C$ of real vector spaces, and we let $\Omega C \in \Sh_{\Ch}(\Mf)$ be the sheaf of forms with coefficients in $C$. 
 \begin{lem}\label{efef234234fwefewfewfewfwfew}
We have an equivalence
 $$
H(\Omega C)\simeq \underline{H(C)}$$
in $\Fun^{desc,h}(\bS_{Mf},\Sp)$.
\end{lem}
\begin{proof}
By the Poincar\'e lemma, $H(\Omega C)\in \Fun(\bS_{Mf},\Sp)$ is a homotopy invariant sheaf. Hence we have a canonical equivalence
\[
\underline{H(C)}\simeq \underline{e^{*}H(\Omega C)}\overset{\ref{dkjhqwkdqwdwqdqwdwqd}}{\simeq} H(\Omega C)\ .\qedhere
\]
\end{proof}
\end{ex}

 { 
We extend the above considerations to the product sites $\Mf\times \Sm_{\C}$ and $\Mf\times \Reg_{\Z}$ using the equivalences
\begin{align*}
&\Fun^{desc,h}(\bS_{Mf,\C},\bC) \simeq \Fun^{desc,h}(\bS_{Mf},\Fun^{desc}(\bS_{\C},\bC)), \\
&\Fun^{desc,h}(\bS_{Mf,\Z},\bC) \simeq \Fun^{desc,h}(\bS_{Mf},\Fun^{desc}(\bS_{\Z},\bC)).
\end{align*}
In particular, we get a relative version of the constant sheaf functor
\begin{equation}\label{eq:relconstsheaf}
\underline{...}\colon \Fun^{desc}(\bS_{\C}, \bC) \to \Fun^{desc}(\bS_{Mf,\C},\bC)
\end{equation}
and an equivalence
\begin{equation}\label{eq:relA9}
\underline{e^{*}F} \xrightarrow{\simeq} F
\end{equation}
for every $F$ in $\Fun^{desc,h}(\bS_{Mf,\C}, \bC)$.
}

\bibliographystyle{amsalpha}
\bibliography{new}
\end{document}